\documentclass{article}
\usepackage{geometry}
\usepackage{amsmath,amsthm,amssymb,mathtools,microtype,needspace}
\usepackage{hyperref}
\hypersetup{
    pdftitle={Weyl's law and Pólya's conjecture for the Vladimirov-Taibleson operator},
    pdfauthor={Yaojia Sun},
    pdfsubject={Spectral geometry of the Vladimirov-Taibleson operator},
    pdfkeywords={Vladimirov-Taibleson operator, Weyl's law, Pólya's conjecture, p-adic spectral geometry}
}

\theoremstyle{plain}
\newtheorem{theorem}{Theorem}[section]
\newtheorem{lemma}[theorem]{Lemma}
\newtheorem{corollary}[theorem]{Corollary}
\newtheorem{prop}[theorem]{Proposition}

\theoremstyle{definition}
\newtheorem{definition}{Definition}[section]
\newtheorem{example}{Example}[section]

\newtheoremstyle{boldremark}
  {\topsep}{\topsep}
  {\normalfont}{}
  {\bfseries}{.}{0.5em}{}
\theoremstyle{boldremark}
\newtheorem{remark}[theorem]{Remark}

\numberwithin{equation}{section}

\title{Weyl's law and Pólya's conjecture for the 

Vladimirov-Taibleson operator}
\author{Yaojia Sun}
\date{}

\begin{document}

\maketitle

\begin{abstract}
    This paper studies some fundamental problems in spectral geometry in the $p$-adic setting. By viewing the Vladimirov-Taibleson operator $D^{\alpha}$ as the $p$-adic counterpart of the fractional Laplacian $(-\Delta)^{\frac{\alpha}{2}}$ in the Archimedean setting, we prove Weyl's law for the Dirichlet operator and establish it for the Neumann operator outside an exceptional set with zero Lebesgue asymptotic density. For the Dirichlet operator, we derive the sharp estimate for the remainder and prove that the Weyl-Berry conjecture fails. Furthermore, we show that Pólya's conjecture fails in general, and we give geometric necessary and sufficient conditions for it to hold.
\end{abstract}

\section{Introduction}
In 1912, Weyl \cite{MR1511670} showed that the Dirichlet and Neumann eigenvalue counting functions for the Laplacian $\Delta$ on a bounded open domain $\Omega\subseteq\mathbb{R}^n$ obey the following asymptotic behavior:
\begin{align*}
    N_{\mathcal{D}}(\lambda),N_{\mathcal{N}}(\lambda)\sim\frac{\omega_n}{(2\pi)^n}|\Omega|\lambda^{\frac{n}{2}},\ \lambda\to+\infty.
\end{align*}
One year later \cite{MR1580880}, he conjectured that when $\partial\Omega$ is sufficiently smooth, the asymptotic expansion should contain a subleading term determined by the $(n-1)$-dimensional measure of the boundary:
\begin{align*}
    N_{\mathcal{D}}(\lambda)\sim\frac{\omega_n}{(2\pi)^n}|\Omega|\lambda^{\frac{n}{2}}-C_{n-1}|\partial\Omega|\lambda^{\frac{n-1}{2}},\ \lambda\to+\infty;\\
    N_{\mathcal{N}}(\lambda)\sim\frac{\omega_n}{(2\pi)^n}|\Omega|\lambda^{\frac{n}{2}}+C_{n-1}|\partial\Omega|\lambda^{\frac{n-1}{2}},\ \lambda\to+\infty.
\end{align*}
In 1980, Ivriĭ \cite{MR564330} and Melrose \cite{MR573438} independently proved Weyl's conjecture for manifolds with boundary. Their theorem states that on a domain or manifold with smooth boundary, the two-term Weyl asymptotic is valid, provided that the set of periodic billiard trajectories in the dynamical system has Liouville measure zero on the unit cotangent bundle.

In 1979, Berry proposed \cite{MR556688} a generalization of Weyl's conjecture to domains with fractal boundaries. He conjectured that
\begin{align*}
    N_{\mathcal{D}}(\lambda)\sim\frac{\omega_n}{(2\pi)^n}|\Omega|\lambda^{\frac{n}{2}}-C_{n,d}\mathcal{H}^d(\partial\Omega)\lambda^{\frac{d}{2}},\ \lambda\to+\infty.
\end{align*}
Here $d\in(n-1,n)$. Later, Brossard and Carmona \cite{MR834484} proved that Berry’s original conjecture based on the Hausdorff measure is false. Inspired by their work, Lapidus \cite{MR994168} proposed the modified Weyl–Berry conjecture by replacing the Hausdorff measure by the Minkowski content:
\begin{align*}
    N_{\mathcal{D}}(\lambda)\sim\frac{\omega_n}{(2\pi)^n}|\Omega|\lambda^{\frac{n}{2}}-C_{n,d}\mathcal{M}^d(\partial\Omega)\lambda^{\frac{d}{2}},\ \lambda\to+\infty.
\end{align*}
He showed that the conjecture holds when $n=1$. Two years later, Lapidus and Pomerance \cite{MR1189091} gave a rigorous formulation of the problem for fractal drums, and in particular for one-dimensional fractal strings. However, in 1996, Lapidus and Pomerance \cite{MR1356166} showed that the conjecture fails when $n\geq2$.

In 1954, Pólya \cite{MR66321} proposed the following global inequality:
\begin{align*}
    N_{\mathcal{D}}(\lambda)\leq\frac{\omega_n}{(2\pi)^n}|\Omega|\lambda^{\frac{n}{2}}\leq N_{\mathcal{N}}(\lambda),\ \forall\lambda>0.
\end{align*}
Later he proved that this conjecture holds for tiling domains \cite{MR129219}. In a recent breakthrough, Filonov, Levitin, Polterovich and Sher \cite{MR4635832} showed that Pólya's conjecture holds for Euclidean balls: in the Dirichlet case for all dimensions, and in the Neumann case for the two-dimensional disk. Subsequently, the same authors proved Pólya's conjecture for Dirichlet eigenvalues on arbitrary planar annuli \cite{MR5028853}. However, for the fractional Laplacian $(-\Delta)^s$, Kwaśnicki, Laugesen and Siudeja \cite{MR3900781} showed that Pólya's conjecture fails.

In the non-Archimedean setting, there is no canonical Laplacian on $\mathbb{Q}_p$ or its higher-dimensional analogue $\mathbb{Q}_p^n$. Inspired by the harmonic analysis on local fields developed by Taibleson \cite{MR226394}, Vladimirov \cite{MR971464} developed a systematic theory of $p$-adic generalized functions and fractional derivatives $D^{\alpha}$. Subsequently, he determined the Dirichlet eigenvalue on the ball \cite{MR1092528}, which naturally leads to Weyl's law on the ball in $\mathbb{Q}_p$. Later Kochubei \cite{MR1209031} extended these results to compact open domains in $\mathbb{Q}_p$. In the same year, Haran \cite{MR1252936} established the $p$-adic Weyl quantization and a global symbol calculus, obtaining a $p$-adic phase-space Weyl theorem under sufficient smoothness conditions.

In 2017, Chacón-Cortés and Zúñiga \cite{MR3708861,MR3793137} studied Weyl's law for Dirichlet eigenvalues of a ball in $\mathbb{Q}_p^n$ using heat trace and spectral zeta function. However, the spectral zeta function has multiple poles on the critical line $\Re(s)=\frac{n}{\alpha}$, which prevents a direct application of the classical Ikehara Tauberian theorem to derive the asymptotic behavior of the eigenvalue counting function. This shows that some classical methods in Archimedean spaces do not apply directly in the $p$-adic setting. They conjectured that
\begin{align*}
    N_{D_{\mathbb{Z}_p^n,\mathcal{D}}^{\alpha}}(\lambda)\sim C\lambda^{\frac{n}{\alpha}},\ \lambda\to+\infty.
\end{align*}

In this paper, we study these spectral-geometric problems in the $p$-adic setting. For a bounded domain $\Omega\subseteq\mathbb{Q}_p^n$, consider the Dirichlet Vladimirov-Taibleson operator $D_{\Omega,\mathcal{D}}^{\alpha}$ (see Definition \ref{def:Dirichlet_Vladimirov-Taibleson_operator}) and the Neumann Vladimirov-Taibleson operator $D_{\Omega,\mathcal{N}}^{\alpha}$ (see Definition \ref{def:Neumann_Vladimirov-Taibleson_operator}) with the corresponding eigenvalue counting functions $N_{D_{\Omega,\mathcal{D}}^{\alpha}}(\lambda)$ and $N_{D_{\Omega,\mathcal{N}}^{\alpha}}(\lambda)$. Then our first result is:

\begin{theorem}[Weyl's law for the Vladimirov-Taibleson operator]
    Let $\Omega\subseteq\mathbb{Q}_p^n$ be a bounded, open, Jordan measurable set (i.e. $\mathcal{H}^n(\partial\Omega)=0$). Then the Dirichlet operator satisfies Weyl's law:
    \begin{align*}
        N_{D_{\Omega,\mathcal{D}}^{\alpha}}(\lambda)\sim|\Omega|p^{n[\frac{1}{\alpha}\log_p\lambda]},\ \lambda\to+\infty.
    \end{align*}
    Furthermore, write $\Omega$ as the disjoint union of its maximal balls
    $B_{L_i}(\boldsymbol a_i)$ and let
    \begin{align*}
    \Gamma_i=\frac{p^\alpha-1}{1-p^{-\alpha-n}}
    \int_{\Omega^c}
    \frac{d\boldsymbol y}
    {\|\boldsymbol x-\boldsymbol y\|_p^{n+\alpha}},
    \qquad \boldsymbol x\in B_{L_i}(\boldsymbol a_i).
    \end{align*}
    Suppose $\Gamma_{\max}:=\sup_{i\geq1}\Gamma_i<+\infty$.
    Define $E=\displaystyle\bigcup_{j=1}^{\infty}[p^{\alpha j}-\Gamma_{\max},p^{\alpha j})$. Then we have
    \begin{align*}
        \lim_{\substack{\lambda\to+\infty\\ \lambda\notin E}}\frac{N_{D_{\Omega,\mathcal{N}}^{\alpha}}(\lambda)}{|\Omega|p^{n[\frac{1}{\alpha}\log_p\lambda]}}=1,
        \qquad
        \lim_{R\to\infty}\frac{|E\cap[0,R]|}{R}=0.
    \end{align*}
    Thus the Neumann operator $D_{\Omega,\mathcal{N}}^{\alpha}$ satisfies Weyl's law outside an exceptional set $E$ of zero Lebesgue asymptotic density.
\end{theorem}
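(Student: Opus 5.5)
The plan is to diagonalize both operators almost completely using the $p$-adic wavelet basis adapted to the decomposition $\Omega=\bigsqcup_i B_{L_i}(\boldsymbol a_i)$ into maximal balls, and then to count explicitly. Any open set in $\mathbb{Q}_p^n$ is such a disjoint union of maximal balls. So $L^2(\Omega)=\bigoplus_i L^2(B_{L_i}(\boldsymbol a_i))$. Each summand splits as the span of the indicator $\mathbf 1_{B_{L_i}}$ plus the mean-zero subspace.

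\textbf{Step 1 (mean-zero part).} The mean-zero subspace is spanned by Kozyrev-type wavelets $\psi$. Each such $\psi$ is supported in a ball $B_\gamma\subseteq B_{L_i}$ and has mean zero over every ball strictly larger than its support. For such $\psi$ I will compute $D^\alpha\psi$ directly from the integral kernel. Outside $B_\gamma$ the kernel $\|\boldsymbol x-\boldsymbol y\|_p^{-n-\alpha}$ is constant in $\boldsymbol y\in B_\gamma$, so the mean-zero property kills the contribution there. Inside $B_\gamma$ one gets $D^\alpha\psi=p^{\alpha(1-\gamma)}\psi$.

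\begin{itemize}
\item \emph{Dirichlet operator.} The exterior region $\Omega^c$ contributes nothing beyond what is already in $D^\alpha$ on $\mathbb{Q}_p^n$. Hence $\psi$ is an eigenfunction with eigenvalue exactly $p^{\alpha(1-\gamma)}$.
\item \emph{Neumann (regional) operator.} The integration over $\Omega^c$ is removed. For $\boldsymbol x\in B_\gamma\subseteq B_{L_i}$ this subtracts the quantity $\psi(\boldsymbol x)\int_{\Omega^c}\|\boldsymbol x-\boldsymbol y\|_p^{-n-\alpha}d\boldsymbol y$, which is constant on $B_{L_i}$. I expect the eigenvalue to become $p^{\alpha(1-\gamma)}-\Gamma_i$, after checking the normalisation constant $\frac{p^\alpha-1}{1-p^{-\alpha-n}}$ in the definition of $D^\alpha$.
\end{itemize}

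Counting the wavelets of level $\gamma$ in a ball of radius $p^{L}$ gives $(p^n-1)p^{n(L-\gamma)}$ of them. Summing the geometric series, the number of mean-zero eigenvalues $\le p^{\alpha k}$ in $B_{L_i}$ is $p^{n(L_i+k-1)}-1=|B_{L_i}|p^{n(k-1)}\cdot p^{n}\cdot(\text{normalisation})-1$. This is $|B_{L_i}|p^{nk}-1$ up to the measure normalisation, provided $L_i\ge 1-k$, and $0$ otherwise.

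\textbf{Step 2 (indicator part).} The remaining subspace $V=\overline{\mathrm{span}}\{\mathbf 1_{B_{L_i}}\}$ is invariant, and I only need an upper bound for the number of its eigenvalues below $\lambda$.

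\begin{itemize}
\item \emph{Dirichlet case.} Maximality of $B_{L_i}$ means its parent ball meets $\Omega^c$ in a set of measure comparable to $p^{nL_i}$. Hence $\langle D^\alpha\mathbf 1_{B_{L_i}},\mathbf 1_{B_{L_i}}\rangle\ge c\,p^{-\alpha L_i}|B_{L_i}|$, and the quadratic form is diagonally dominant in a way that lets min-max bound the count of eigenvalues $\le\lambda$ in $V$ by $\#\{i:p^{-\alpha L_i}\le C\lambda\}$.
\item \emph{Neumann case.} Even the trivial bound $\dim$ of the span of indicators of balls of radius $\ge p^{1-k}$ is enough, together with the bound for the tail of small balls used in Step 3.
\end{itemize}

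\textbf{Step 3 (Jordan measurability).} Let $\lambda\in[p^{\alpha k},p^{\alpha(k+1)})$, so $[\frac1\alpha\log_p\lambda]=k$. Combining Steps 1 and 2, $N_{\mathcal D}(\lambda)=p^{nk}\,|\Omega_k|+O(M_k)$. Here $\Omega_k$ is the union of maximal balls of radius $\ge p^{1-k}$ (up to a fixed shift), and $M_k$ is the number of maximal balls of radius $\ge cp^{-k}$. Every maximal ball of radius $p^{L}$ lies in the $p^{L+1}$-neighbourhood of $\partial\Omega$, since its parent meets $\Omega^c$. Therefore $\Omega\setminus\Omega_k$ lies in a $p^{1-k}$-neighbourhood of $\partial\Omega$, and $M_kp^{-nk}\lesssim|\{\text{$Cp^{-k}$-neighbourhood of }\partial\Omega\}|$. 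Both tend to $0$ because $\mathcal H^n(\partial\Omega)=0$ and $\Omega$ is bounded, by compactness and outer regularity. This gives $N_{\mathcal D}(\lambda)\sim|\Omega|p^{nk}$.

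For the Neumann operator the eigenvalues $p^{\alpha j}-\Gamma_i$ lie in $[p^{\alpha j}-\Gamma_{\max},p^{\alpha j}]$. Hence for $\lambda\notin E$ the set of eigenvalues $\le\lambda$ is the same family as in the Dirichlet count, with the same level cut-off $k$, and the same asymptotics follow. Finally, $|E\cap[0,R]|\le\Gamma_{\max}\cdot\#\{j:p^{\alpha j}\le R+\Gamma_{\max}\}=O(\log R)=o(R)$.

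\textbf{Main obstacle.} I expect the hardest step to be Step 2: controlling the possibly infinite-dimensional indicator subspace $V$, whose operator is not diagonal, by an $o(p^{nk})$ count. For the Dirichlet operator, I will first try a Schur test / Gershgorin argument on the matrix of $D^\alpha$ in the normalized basis $|B_{L_i}|^{-1/2}\mathbf 1_{B_{L_i}}$. The aim is to show that its restriction to balls of radius $<\delta\lambda^{-1/\alpha}$ is $\ge\lambda$. The point is that the diagonal entries dominate because $\Omega^c$ occupies a positive proportion of each parent ball.
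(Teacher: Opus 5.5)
Your Step 1 is correct and matches the paper's computation on the summands $L_0^2(B_{L_i}(\boldsymbol a_i))$. Step 2, which you rightly flag as the obstacle, does not work as planned.

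\textbf{The Dirichlet block.} Maximality only says that the parent ball meets $\Omega^c$, possibly in a null set. In the single-missing-digit domain of Section~5, for instance, $\Omega^c\cap\mathbb Z_p^n$ is null. Moreover, the diagonal entries $h_{ii}=\frac{1-p^{-n}}{1-p^{-\alpha-n}}p^{\alpha L_i}$ are full-space energies that do not see $\Omega^c$ at all, so there is no diagonal dominance to exploit. Worse, the statement you aim for is false for every fixed $\delta$.

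Here is a counterexample. Take $\alpha<n$ and $\Omega=\mathbb Z_p^n\setminus Z$, where $Z$ is a countable compact set built as follows. Choose disjoint balls $B^{(j)}$ of level $\ell_j$, and put into $Z$ one point in every level-$(\ell_j+r_j)$ sub-ball of $B^{(j)}$, with $\ell_j,r_j\to\infty$.
\begin{itemize}
\item Then $\partial\Omega=Z$ is null, and every maximal ball inside $B^{(j)}$ has level $>\ell_j+r_j$.
\item Fix $m$, take $r_j>m$, and set $k=\ell_j+r_j-m$. For each level-$k$ ball $B'\subseteq B^{(j)}$, consider $\boldsymbol 1_{B'}-\sum_{z\in Z\cap B'}\boldsymbol 1_{B_R(z)}$. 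This is a finite sum of indicators of maximal balls of level $>k+m$.
\item These sums converge to $\boldsymbol 1_{B'}$ in form norm as $R\to\infty$, because $\mathcal E[\boldsymbol 1_{B_R(z)}]=\frac{1-p^{-n}}{1-p^{-\alpha-n}}p^{(\alpha-n)R}\to0$.
\item The $p^{n(k-\ell_j)}$ limits have Fourier support in $\{\|\boldsymbol\xi\|_p\le p^k\}$, so $\mathcal E\le p^{\alpha k}\|\cdot\|^2$ on their span.
\item The same argument reaches $\boldsymbol 1_{B^{(j)}}$, whose Rayleigh quotient is $\frac{1-p^{-n}}{1-p^{-\alpha-n}}p^{\alpha(k+m-r_j)}$.
\end{itemize}
So the block of small balls (your radius $<\delta\lambda^{-1/\alpha}$) has many eigenvalues below $\lambda\in(p^{\alpha k},p^{\alpha(k+1)})$, and the lowest can be arbitrarily far below $\lambda$.

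\textbf{What is missing.} These low eigenvalues are harmless for Weyl's law, because every such $B'$ lies in $\mathcal B_k$. You need to count them rather than exclude them. This is what the paper's remainder estimate in Section~5 does. If $u\in V\cap H_0^{\alpha/2}(\Omega)$ and $P_Ku=0$, then $\mathcal E_{\Omega,\mathcal D}[u]\ge p^{\alpha(K+1)}\|u\|^2$. Hence every subspace on which the form is at most $\lambda<p^{\alpha(K+1)}$ injects into $P_KV$. This gives $\dim V_{\le K}\le N_{\boldsymbol H}(\lambda)\le\dim V_{\le K}+\#\mathcal B_K$, with $\#\mathcal B_K=p^{nK}|\mathcal U_K|$. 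Combined with your Step 1, this proves the Dirichlet law. Jordan measurability enters only through $|\mathcal U_K|\downarrow|\partial\Omega|=0$.

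\textbf{A false claim in Step 3.} You assert that a maximal ball of radius $p^L$ lies within $p^{L+1}$ of $\partial\Omega$. This fails $p$-adically: in the remark following condition $(A_K)$, the maximal ball $4\mathbb Z_2$ is at distance $1$ from $\partial\Omega=\{1\}$. Use $\mathcal U_K$ instead.

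\textbf{The Neumann block.} Your Neumann Step 2 has the same gap in sharper form. Counting indicators of large balls gives no upper bound on how many eigenvalues the Neumann block $\boldsymbol R$ has below $\lambda$. You also never show that $\boldsymbol R$ has discrete spectrum. The hypothesis $\Gamma_{\max}<\infty$ supplies both.
\begin{itemize}
\item Since $\mathcal E(\tilde u,\tilde u)=\mathcal E_{\Omega,\mathcal N}[u]+\int_\Omega\Gamma|u|^2$, the condition $P_k\tilde u=0$ forces $\mathcal E_{\Omega,\mathcal N}[u]\ge(p^{\alpha(k+1)}-\Gamma_{\max})\|u\|^2>\lambda\|u\|^2$. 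This holds for $\lambda\notin E$ with $[\frac1\alpha\log_p\lambda]=k$.
\item On $V_{\le k}$ one has $\mathcal E_{\Omega,\mathcal N}\le\mathcal E\le p^{\alpha k}\|\cdot\|^2$.
\item Together these give discreteness and $\dim V_{\le k}\le N_{\boldsymbol R}(\lambda)\le\dim V_{\le k}+\#\mathcal B_k$.
\end{itemize}

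\textbf{The paper's route.} The paper proves the Dirichlet law by bracketing $E_j\subseteq\Omega\subseteq F_j$ with compact open sets, on which the count is exact for large $\lambda$. It gets the Neumann law from $N_A(\lambda)\le N_{D_{\Omega,\mathcal N}^{\alpha}}(\lambda)\le N_A(\lambda+\Gamma_{\max})$, where $A$ is the zero-extension operator.
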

\begin{remark}
    Unlike Weyl's law in the Archimedean setting, the leading Weyl term is a step function. This is because the values of the $p$-adic norm are discrete.
\end{remark}

After establishing Weyl's law, we next estimate the remainder and consider the $p$-adic analogue of the Weyl--Berry conjecture. Define
\begin{align*}
    &\mathcal{B}_K=\{B_K(\boldsymbol{a})|\ B_K(\boldsymbol{a})\cap\Omega\neq\emptyset,\ B_K(\boldsymbol{a})\cap\Omega^c\neq\emptyset\};\\
    &\mathcal{U}_K=\{\boldsymbol{x}\in\mathbb{Q}_p^n|\ \operatorname{dist}(\boldsymbol{x},\Omega)\leq p^{-K},\ \operatorname{dist}(\boldsymbol{x},\Omega^c)\leq p^{-K}\}.
\end{align*}
Our second main result is as follows:

\begin{theorem}[Remainder estimate and Weyl-Berry conjecture for the Dirichlet operator]
    Let $\Omega\subseteq\mathbb{Q}_p^n$ be a bounded, open, Jordan measurable set. Then for $\lambda\in[p^{\alpha K},p^{\alpha(K+1)})$, we have
    \begin{align*}
        -|\mathcal{U}_K\cap\Omega^c|p^{nK}\leq|\Omega|p^{n[\frac{1}{\alpha}\log_p\lambda]}-N_{D_{\Omega,\mathcal{D}}^{\alpha}}(\lambda)\leq|\mathcal{U}_K\cap\Omega|p^{nK}.
    \end{align*}
    Furthermore, suppose that $\dim_{M}(\partial\Omega)=d$ and that $B_K(\boldsymbol{a})\in\mathcal{B}_K\Rightarrow B_K(\boldsymbol{a})\cap\partial\Omega\neq\emptyset$ holds for $K\geq L$. Define
    \begin{align*}
        C(\lambda)=\frac{|\Omega|p^{n[\frac{1}{\alpha}\log_p\lambda]}-N_{D_{\Omega,\mathcal{D}}^{\alpha}}(\lambda)}{p^{d[\frac{1}{\alpha}\log_p\lambda]}}.
    \end{align*}
    Then
    \begin{align*}
        -\overline{\mathcal{M}}_{\text{ex}}^{d}(\partial\Omega)\leq\varliminf_{\lambda\to+\infty}C(\lambda)
        \leq\varlimsup_{\lambda\to+\infty}C(\lambda)\leq\overline{\mathcal{M}}_{\text{in}}^{d}(\partial\Omega).
    \end{align*}
    The upper and lower bounds for $C(\lambda)$ are sharp for a family of domains $\Omega$.
\end{theorem}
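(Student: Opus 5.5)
The plan is to use the explicit spectral structure of the Dirichlet operator on a disjoint union of balls. This structure is what underlies the first theorem (Weyl's law).

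Write $\Omega=\bigsqcup_i B_{L_i}(\boldsymbol a_i)$ as the disjoint union of its maximal balls. On each maximal ball $B=B_{L}(\boldsymbol a)$, the Dirichlet operator acts on the $p$-adic wavelets supported in $B$. The wavelets supported on a sub-ball $B_{K+1}(\boldsymbol b)\subseteq B$ of radius $p^{-K-1}$ have eigenvalue $p^{\alpha(K+1)}+\Gamma$-type correction coming from the jump kernel. More precisely, the Kochubei--Vladimirov computation gives eigenvalues of the form $p^{\alpha(K+1)}$, possibly shifted by a bounded amount, with multiplicity $p^n-1$ for each ball of radius $p^{-K}$ inside $B$. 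In addition, the locally constant functions on the maximal balls contribute eigenvalues $\approx\Gamma_i+\text{const}$.

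For the Dirichlet problem, I would first check (from the earlier eigenfunction computation) that every eigenvalue below $p^{\alpha(K+1)}$ is accounted for by wavelets at scales $\le K$, together with the constant mode of each maximal ball. The count of such wavelets is $\sum_{i:L_i\le K}\bigl(p^{n(K-L_i)}-1\bigr)$, and the constant modes add $1$ for each $i$ with $\lambda$ above their eigenvalue. This gives, for $\lambda\in[p^{\alpha K},p^{\alpha(K+1)})$, the estimate
\begin{align*}
N_{D_{\Omega,\mathcal D}^\alpha}(\lambda)=p^{nK}\,\Bigl|\bigcup_{L_i\le K}B_{L_i}(\boldsymbol a_i)\Bigr|+O(\#\{i:L_i\le K\}),
\end{align*}
where the error is a count of maximal balls of radius $\ge p^{-K}$ that is controlled as follows. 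The set $\Omega\setminus\bigcup_{L_i\le K}B_{L_i}$ consists of points whose $p^{-K}$-ball meets $\Omega^c$, i.e.\ it is exactly $\mathcal U_K\cap\Omega$. Hence $|\Omega|p^{nK}-N\le|\mathcal U_K\cap\Omega|p^{nK}$. For the lower bound, the excess constant modes are at most one per maximal ball of radius $\ge p^{-K}$ touching the boundary. Each such ball has a neighbouring $p^{-K}$-ball in $\mathcal U_K\cap\Omega^c$ (the parent-level sibling meeting $\Omega^c$), injectively up to the counting. This yields $N-|\Omega|p^{nK}\le|\mathcal U_K\cap\Omega^c|p^{nK}$. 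Making this injective assignment precise is the step I expect to be the main obstacle. I would try assigning to each maximal ball $B_{L_i}$ the ball $B_{L_i-1}\supset B_{L_i}$, which meets $\Omega^c$ by maximality, and then a $p^{-K}$-ball inside $B_{L_i-1}\cap\Omega^c$ of measure $\ge p^{-nK}$.

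For the Weyl--Berry part, set $K=[\frac1\alpha\log_p\lambda]$, so that $C(\lambda)$ is sandwiched between $-|\mathcal U_K\cap\Omega^c|p^{(n-d)K}$ and $|\mathcal U_K\cap\Omega|p^{(n-d)K}$. Under the hypothesis $B\in\mathcal B_K\Rightarrow B\cap\partial\Omega\ne\emptyset$, the set $\mathcal U_K$ coincides, ultrametrically, with the $p^{-K}$-neighbourhood of $\partial\Omega$ (since balls are either nested or disjoint). Its inner and outer parts are therefore the sets defining the inner and exterior upper Minkowski contents at scale $p^{-K}$. Taking $\limsup$ and $\liminf$ along $K$ gives the stated inequalities.

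For sharpness, I would take $\Omega$ as a "$p$-adic fractal string": a union of balls $B_{k}(\boldsymbol a_k)$ with one ball of each level $k\ge1$ in a self-similar arrangement. For this $\Omega$ the Dirichlet count is computed exactly, and the relevant $p^{-K}$-neighbourhoods are explicit geometric sums, so that $C(\lambda)$ attains the Minkowski bounds along suitable subsequences.
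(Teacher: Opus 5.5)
\paragraph{There is a genuine gap in the remainder estimate, at the step you flagged.} Your treatment of the mean-zero part is fine, and it is in fact exact. For the Dirichlet operator the eigenvalues on $L_0^2(B_{L_i}(\boldsymbol a_i))$ are exactly $p^{\alpha(\gamma+L_i)}$, with no $\Gamma$-shift; that shift belongs to the Neumann problem. So for $\lambda\in[p^{\alpha K},p^{\alpha(K+1)})$ one gets $|\Omega|p^{nK}-N_{D_{\Omega,\mathcal D}^{\alpha}}(\lambda)=|\Omega_{>K}|p^{nK}+\dim V_{\le K}-N_{\boldsymbol H}(\lambda)$, where $\boldsymbol H$ acts on the space $V$ of functions constant on maximal balls.

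What fails is your picture of $\boldsymbol H$ as one ``constant mode'' per maximal ball, with eigenvalue near $\Gamma_i+\mathrm{const}$. The balls are coupled through the off-diagonal entries $-C_{n,\alpha}\sqrt{V_iV_j}/d_{ij}^{n+\alpha}$, and $V$ is in general infinite-dimensional. Already for a compact open set, a cluster of $c$ maximal balls of level $L$ inside one ball of level $L-1$ carries $c-1$ eigenvalues exactly equal to $p^{\alpha L}$, even though their diagonal entries $p^{\alpha L}\frac{1-p^{-n}}{1-p^{-\alpha-n}}$ lie below $p^{\alpha L}$.

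Your intermediate formula is also false. For $\Omega=B_{K+1}(\boldsymbol a)$ there is no ball with $L_i\le K$, yet $N_{D_{\Omega,\mathcal D}^{\alpha}}=1$ on $[p^{\alpha(K+1)}\frac{1-p^{-n}}{1-p^{-\alpha-n}},p^{\alpha(K+1)})$, which is a subinterval of the window. The excess comes from balls of level $>K$.

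The injective assignment cannot be repaired, for three reasons:
\begin{enumerate}
\item $\mathcal U_K$ is the union of the balls of $\mathcal B_K$, each of which meets $\Omega$, so $\mathcal U_K\cap\Omega^c$ contains no ball of level $K$ at all.
\item A single $B_K\in\mathcal B_K$ may contain infinitely many maximal balls.
\item $|\mathcal U_K\cap\Omega^c|$ can be zero, for example for the self-similar digit-set domains with $J_{\mathrm{ex}}=\emptyset$.
\end{enumerate}
The quantity $|\mathcal U_K\cap\Omega^c|p^{nK}=\#\mathcal B_K-|\Omega_{>K}|p^{nK}$ is a dimension deficit, not a number of balls to be matched.

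\paragraph{The missing idea is a min--max dimension count with the averaging projection $P_K$} (the Fourier cutoff to $\|\boldsymbol\xi\|_p\le p^K$).
\begin{itemize}
\item Upper bound: functions in $V_{\le K}$ have Fourier support in $\{\|\boldsymbol\xi\|_p\le p^K\}$, so their Rayleigh quotient is at most $p^{\alpha K}\le\lambda$ and $N_{\boldsymbol H}(\lambda)\ge\dim V_{\le K}$. Together with $\Omega_{>K}=\Omega\cap\mathcal U_K$ this gives the upper bound.
\item Lower bound: if $u\in V\cap H_0^{\alpha/2}(\Omega)$ and $P_Ku=0$, its quotient is at least $p^{\alpha(K+1)}>\lambda$. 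Hence every subspace on which the quotient is $\le\lambda$ meets $(P_KV)^\perp$ trivially, so $N_{\boldsymbol H}(\lambda)\le\dim P_KV=\dim V_{\le K}+\#\mathcal B_K$. Here one uses $P_KV_{>K}=\operatorname{span}\{\boldsymbol 1_{B_K}:B_K\in\mathcal B_K\}$ and $\#\mathcal B_K=p^{nK}|\mathcal U_K|$.
\end{itemize}
Once this is in place, your Weyl--Berry step is correct: under the hypothesis $\mathcal U_K=\partial\Omega_{p^{-K}}$, and you then divide by $p^{dK}$.

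\paragraph{For sharpness, an exact evaluation of the count is not available,} because $\boldsymbol H$ is an infinite coupled operator. The same $P_K$ mechanism is what works, on the digit-set domains:
\begin{itemize}
\item Lower bound: the test space $V_{\le K}\oplus U_K$ satisfies $\|P_Kw\|^2\ge\frac{\#J_{\mathrm{in}}}{p^n}\|w\|^2$. This forces $N_{\boldsymbol H}\ge\dim V_{\le K}+p^{dK}$ at some $\lambda_K^+<p^{\alpha(K+1)}$.
\item Upper bound: on $V_{>K}$ one has $\|P_Ku\|^2\le\frac{\#J_{\mathrm{in}}}{\#J}\|u\|^2$. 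Under the extra condition $\frac{\#J_{\mathrm{in}}}{\#J}<1-p^{-\alpha}$ this gives $N_{\boldsymbol H}(p^{\alpha K})\le\dim V_{\le K}$.
\end{itemize}
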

\begin{remark}
    Since the upper and lower bounds for $C(\lambda)$ are sharp for a family of domains $\Omega$ in every dimension $n$, the classical Weyl--Berry conjecture fails in the $p$-adic setting. This is another difference from the Archimedean setting.
\end{remark}

We next consider the $p$-adic version of Pólya's conjecture. Define orthogonal projection $P_K$ by
\begin{align*}
    \widehat{P_Ku}(\boldsymbol{\xi})=\widehat{u}(\boldsymbol{\xi})\boldsymbol{1}_{p^{-K}\mathbb{Z}_p^n}(\boldsymbol{\xi}).
\end{align*}
Let $\Delta_j=P_j-P_{j-1}$. Write $\Omega=\displaystyle\bigsqcup_{i=1}^{\infty}B_{L_i}(\boldsymbol{a}_i)$ as a disjoint union of maximal balls $\{B_{L_i}(\boldsymbol{a}_i)\}_{i=1}^{\infty}$, and denote
\begin{align*}
    &V=\overline{\operatorname{span}\{\boldsymbol{1}_{B_{L_i}(\boldsymbol{a}_i)}\}_{i=1}^{\infty}}^{L^2(\Omega)};\\
    &V_{=i}=\operatorname{span}\{\boldsymbol{1}_{B_{L_j}(\boldsymbol{a}_j)}|\ L_j=i\};\\
    &n_i=\dim V_{=i};\\
    &V_{\leq K}=\bigoplus_{i\leq K} V_{=i};\\
    &V_{>K}=\overline{\bigoplus_{i\geq K+1} V_{=i}}^{L^2(\Omega)}=V\ominus V_{\leq K};\\
    &B_{\leq K}=\{B_{L_i}(\boldsymbol{a}_i)|\ \boldsymbol{1}_{B_{L_i}(\boldsymbol{a}_i)}\in V_{\leq K}\};\\
    &B_{>K}=\{B_{L_i}(\boldsymbol{a}_i)|\ \boldsymbol{1}_{B_{L_i}(\boldsymbol{a}_i)}\in V_{>K}\};\\
    &\Omega_{\leq K}=\bigcup_{L_i\leq K}B_{L_i}(\boldsymbol{a}_i);\\
    &\Omega_{>K}=\bigcup_{L_i\geq K+1}B_{L_i}(\boldsymbol{a}_i).
\end{align*}
For each $f\in P_KV$, define the quadratic form
\begin{align*}
    \mathcal{W}_K[f]=\sum_{j\leq K}(\lambda_j-\lambda_{K+1})\|\Delta_jf\|_{L^2(\mathbb{Q}_p^n)}^2,
\end{align*}
where $\lambda_j=p^{\alpha j}$. With respect to the normalized basis, the matrix representing the quadratic form $\mathcal{W}_K$ can be written as
\begin{align*}
    \boldsymbol{W}_K=\begin{pmatrix}
        \boldsymbol{W}_{00} & \boldsymbol{W}_{01} \\
        \boldsymbol{W}_{10} & \boldsymbol{W}_{11}
    \end{pmatrix}.
\end{align*}
If $\sum\limits_{i\leq K}n_i=0$, we set $\boldsymbol{W}_{00},\boldsymbol{W}_{01},\boldsymbol{W}_{10}=\emptyset$. Let
\begin{align*}
    \boldsymbol{\kappa}_K
    =\operatorname{diag}(\kappa_{B_K}\mid B_K\in\mathcal{B}_K).
\end{align*}
Here $\kappa_{B_K}=\lim\limits_{r\to+\infty}\Lambda_{K,r}(B_K)$,
where $\Lambda_{K,r}(B_K)$ is obtained from the following finite inner
approximation. Starting from $B_K$, split its sub-ball $B_j$ into the $p^n$ sub-balls $B_{j+1}\subseteq B_j$, stopping at level $K+r$. Set
\begin{equation*}
\Lambda_{K,r}(B_j)=
\begin{cases}
0,& B_j\subseteq\Omega,\ K<j\leq K+r;\\
+\infty,& B_j\cap\Omega=\emptyset,\ K\leq j\leq K+r\text{ or }B_{j}\in\mathcal{B}_{j},\ j=K+r;\\
\frac{p^n}{\sum\limits_{B_{j+1}\subseteq B_j}\frac{1}{w_j+\Lambda_{K,r}(B_{j+1})}}-w_j,& B_j\in\mathcal{B}_j,\ K\leq j< K+r,
\end{cases}
\end{equation*}
where $w_j=\lambda_{j+1}-\lambda_{K+1}$. The reciprocal conventions at
$0$ and $+\infty$ are specified in Section~6. The sequence
$\Lambda_{K,r}(B_K)$ is nonincreasing and converges to the strict local
coefficient $\kappa_{B_K}$. Let
\begin{equation*}
    \boldsymbol{R}_K=
    \begin{cases}
        \boldsymbol{W}_{11}+\boldsymbol{\kappa}_K,
        &\sum\limits_{i\leq K}n_i=0,\\[1mm]
        \boldsymbol{W}_{11}+\boldsymbol{\kappa}_K-\boldsymbol{W}_{10}\boldsymbol{W}_{00}^{-1}\boldsymbol{W}_{01},
        &\sum\limits_{i\leq K}n_i>0.
    \end{cases}
\end{equation*}
Our third main result is the following characterization:

\begin{theorem}[Pólya's conjecture for the Dirichlet operator]
    Let $\Omega\subseteq\mathbb{Q}_p^n$ be a bounded, open, Jordan measurable set. Then Pólya's conjecture holds if and only if
    \begin{align*}
        n_{-}(\boldsymbol{R}_K)\leq\left[|\Omega_{>K}|p^{nK}\right],\ \forall K\in\mathbb{Z}.
    \end{align*}
    In particular, Pólya's conjecture fails for compact open domains.
    Suppose more generally that there exists $K_0\in\mathbb Z$ such that,
    for every $K\geq K_0$ and every $B_K\in\mathcal{B}_K$, there is a
    maximal ball $B_{K+1}\subseteq B_K\cap\Omega$. Then Pólya's conjecture
    holds if and only if
    \begin{align*}
    &\#\mathcal{B}_K=|\Omega_{>K}|p^{nK},&&K\geq K_0,\\
    &n_-(\boldsymbol R_K)\leq
    \left[|\Omega_{>K}|p^{nK}\right],&&K<K_0.
    \end{align*}
    The strictly positive Dirichlet spectral bottom makes the second
    condition automatic for all sufficiently negative $K$. Hence only
    finitely many coarse scales require a separate verification.
\end{theorem}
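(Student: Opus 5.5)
The plan is to reduce Pólya's inequality $N_{D_{\Omega,\mathcal{D}}^{\alpha}}(\lambda)\leq|\Omega|p^{n[\frac{1}{\alpha}\log_p\lambda]}$ to a family of scale-by-scale statements, one for each $K\in\mathbb{Z}$. Both sides are step functions whose jumps sit at $\lambda=\lambda_{K+1}=p^{\alpha(K+1)}$. It therefore suffices to check, for each $K$, that the number of Dirichlet eigenvalues strictly below $\lambda_{K+1}$ is at most $|\Omega|p^{nK}$.

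The first step is to decompose $L^2(\Omega)$. Every $L^2$ function on a maximal ball $B_{L_i}(\boldsymbol a_i)$ with mean zero is an eigenfunction of the Dirichlet operator; this uses the Kozyrev-type wavelet description already established for Weyl's law. On the wavelets supported in that ball at level $j$, the eigenvalue is $\lambda_j$ plus the constant $\Gamma_i$-type boundary term, which is at least $\lambda_j$. Hence the orthogonal complement of $V$ splits off explicitly. Counting it gives eigenvalues below $\lambda_{K+1}$ only from wavelets at levels $j\leq K$ inside balls with $L_i\leq K$. Their number is exactly $|\Omega_{\leq K}|p^{nK}$ minus the number $\sum_{i\leq K}n_i$ of constants, up to the correction from $\Gamma_i>0$, which I need to track.

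The remaining count comes from the operator restricted to $V$. By min--max, the number of eigenvalues below $\lambda_{K+1}$ equals $n_-$ of the quadratic form $q[f]-\lambda_{K+1}\|f\|^2$ on $V$. I would write $q[f]=\sum_j\lambda_j\|\Delta_j f\|^2$ on $\mathbb{Q}_p^n$, with $f$ extended by zero. Splitting $f=P_Kf+(I-P_K)f$, the low-frequency part contributes $\mathcal{W}_K$. The high-frequency part, minimized given the averages of $f$ on the balls $B_K\in\mathcal{B}_K$, should produce exactly the diagonal term $\kappa_{B_K}$. The recursion for $\Lambda_{K,r}$ is precisely the Schur-complement (harmonic-mean) minimization over the $p^n$ children at each level, with weights $w_j=\lambda_{j+1}-\lambda_{K+1}$.

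Minimizing out the $V_{\leq K}$ coordinates, which are exactly the coarse directions where $\boldsymbol W_{00}$ is invertible, then gives $\boldsymbol R_K$. Consequently the negative index is $n_-(\boldsymbol W_{00})+n_-(\boldsymbol R_K)$, and $n_-(\boldsymbol W_{00})=\sum_{i\leq K}n_i$ since $\boldsymbol W_{00}$ is negative definite. Combining this with the previous step, the count below $\lambda_{K+1}$ equals $|\Omega_{\leq K}|p^{nK}+n_-(\boldsymbol R_K)$. Pólya at scale $K$ is then $n_-(\boldsymbol R_K)\leq|\Omega_{>K}|p^{nK}$, equivalently its integer part. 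This is the claimed characterization.

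For the corollaries I proceed as follows.
\begin{itemize}
\item \emph{Compact open $\Omega$.} For large $K$ we have $\mathcal{B}_K=\emptyset$ and $\Omega_{>K}=\emptyset$. For coarse $K$ I would exhibit a scale where $\boldsymbol R_K$ has too many negative directions, for instance the $K$ just below the smallest $L_i$, where $|\Omega_{>K}|p^{nK}<1$ but the constant function on $\Omega$ is a negative direction.
\item \emph{Hypothesis with $K_0$.} For $K\geq K_0$, each boundary ball contains a full child in $\Omega$, which forces $\kappa_{B_K}$ to vanish or be negative, so that $n_-(\boldsymbol R_K)=\#\mathcal{B}_K$. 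Combined with the trivial inequality $\#\mathcal{B}_K\geq|\Omega_{>K}|p^{nK}$, this yields equality as the condition.
\item \emph{Sufficiently negative $K$.} Here $\lambda_{K+1}$ lies below the bottom of the Dirichlet spectrum, so $n_-=0$.
\end{itemize}

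The main obstacle is the middle step: justifying rigorously that eliminating the infinitely many high-frequency levels yields exactly $\kappa_{B_K}$, i.e.\ the monotone limit of the finite Schur complements. This needs a convergence argument for quadratic forms, namely monotone convergence of forms with truncation at level $K+r$. It also needs a check that the minimization decouples across distinct boundary balls, because the $\Delta_j$ for $j>K$ act locally within each $B_K$.
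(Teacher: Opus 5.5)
Your treatment of the main equivalence follows the paper's route. You compress $\mathcal Q_K=\mathcal H-\lambda_{K+1}\|\cdot\|^2$ onto $P_KV$, obtain $\kappa_{B_K}$ from the harmonic-mean recursion with weights $w_j$, and remove the negative definite block $\boldsymbol W_{00}$ by a Schur complement and Sylvester's law. The $K_0$ part and the very negative scales are also handled as in the paper. When you write out the index transfer, you need two facts. First, $P_K$ is injective on $\mathcal Q_K$-negative subspaces, because $\mathcal Q_K\ge0$ on $\ker P_K$. Second, the lifts must be built from $\varepsilon$-near minimizers with a uniform negativity margin, since the infimum defining $\kappa_{B_K}$ need not be attained. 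Note also that $\kappa_{B_K}\ge0$ always, because $w_j\ge0$, so it cannot ``be negative''. Beyond these points, there are two genuine errors.

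\textbf{First error: the mean-zero part.} For the Dirichlet realization there is no $\Gamma_i$ shift. The wavelets $\Psi^{(i)}_{\gamma,\boldsymbol a,\boldsymbol j}$ supported in $B_{L_i}(\boldsymbol a_i)\subseteq\Omega$ are eigenfunctions of the full-space $D^\alpha$. Hence their zero extensions satisfy $\mathcal E_{\Omega,\mathcal D}[\Psi]=p^{\alpha(\gamma+L_i)}\|\Psi\|^2$ exactly. The interaction with $\Omega^c$ is already contained in this value; it is the \emph{Neumann} eigenvalue that equals $p^{\alpha(\gamma+L_i)}-\Gamma_i$. As you wrote it, with eigenvalues $\lambda_j+\Gamma_i$ and the correction left untracked, wavelets with $j\le K$ could be pushed to or past $\lambda_{K+1}$. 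The mean-zero count would then fall below $|\Omega_{\le K}|p^{nK}-\sum_{i\le K}n_i$. Your identity $N^-(\lambda_{K+1})=|\Omega_{\le K}|p^{nK}+n_-(\boldsymbol R_K)$ would then give only the implication from the criterion to Pólya's inequality, not the converse. With the unshifted eigenvalues the correction is zero and the identity is exact; this is what the paper uses through its exact formula for the mean-zero counting function.

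\textbf{Second error: the compact open case.} Your argument at the scale $K=\min_iL_i-1$ does not work in general. At that scale $\Omega_{>K}=\Omega$, and $|\Omega|p^{nK}\ge1$ as soon as $\Omega$ has at least $p^n$ maximal balls of the coarsest level. Nor is $\boldsymbol 1_\Omega$ reliably $\mathcal Q_K$-negative. Indeed $\mathcal E_{\Omega,\mathcal D}[\boldsymbol 1_\Omega]=\int_\Omega\Gamma$, and an isolated maximal ball of fine level $L$ contributes an amount comparable to $p^{L(\alpha-n)}$. This is unbounded in $L$ when $\alpha>n$, while $\lambda_{K+1}|\Omega|$ stays bounded.

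Use instead the finest scale $K=L-1$ with $L=\max_iL_i$, as the paper does. Every $B_K\in\mathcal B_K$ meets a maximal ball whose level exceeds $K$, since otherwise $B_K\subseteq\Omega$. That level therefore equals $L$, so the ball lies inside $B_K$. Moreover $\mathcal B_K\neq\emptyset$, because the parent of a level-$L$ maximal ball belongs to it. Your fine-scale argument then gives $\kappa_{B_K}=0$ and $n_-(\boldsymbol R_K)=\#\mathcal B_K=:q$. Suppose the $j$-th ball of $\mathcal B_K$ contains $c_j$ maximal balls. Then $c_j\le p^n-1$, so $|\Omega_{>K}|p^{nK}=p^{-n}\sum_{j=1}^qc_j<q$, and the $K$-th inequality fails. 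This matches the paper's direct computation $N(\lambda^-)-|\Omega|p^{n(L-1)}=\sum_{j}(1-c_j/p^n)>0$ for $\lambda^-$ just below $p^{\alpha L}$.
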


\section{Preliminaries}
\subsection{\texorpdfstring{The $p$-adic field}{The p-adic field}}
It is well known that the completion of $\mathbb{Q}$ under the Archimedean absolute value $|\cdot|_{\infty}$ is $\mathbb{R}$. In 1916, Ostrowski showed that every non-trivial absolute value $|\cdot|$ on $\mathbb{Q}$ is equivalent to the Archimedean absolute value $|\cdot|_{\infty}$ or the non-Archimedean absolute value $|\cdot|_{p}$ for some prime $p$ defined by
\begin{align*}
    \begin{cases}
        |x|_{\infty}=\max\{x,-x\};\\
        |x|_{p}=p^{-v},
    \end{cases}
\end{align*}
where $p$ is a prime number and $x=p^{v}\frac{a}{b}$ with $p\nmid ab$ \cite{MR1555153}. Hence we have the normalized condition
\begin{align*}
    \prod_{\substack{p\text{ prime}\\p\leq\infty}}|x|_p=1.
\end{align*}
The completion of $\mathbb{Q}$ under $|\cdot|_{p}$ is the $p$-adic field $\mathbb{Q}_p$.

We define $v_p(x)=-\log_p|x|_{p}$ and $v_p(0)=+\infty$, so that $|x|_{p}=p^{-v_p(x)}$. For each $x\in\mathbb{Q}_p$, we have the $p$-adic expansion
\begin{align*}
    x=\sum_{k=v_p(x)}^{+\infty}a_kp^k,
\end{align*}
where $a_{v_p(x)}\in\mathbb{F}_p^{\times}$ and $a_{k}\in\mathbb{F}_p$ for $k>v_p(x)$.

The ring of $p$-adic integers $\mathbb{Z}_p$ is defined by
\begin{align*}
    \mathbb{Z}_p:=\{x\in\mathbb{Q}_p|\ v_p(x)\geq0\}=\{x\in\mathbb{Q}_p|\ |x|_p\leq1\},
\end{align*}
which can be viewed as the unit ball in $p$-adic field. The group of units of $\mathbb{Z}_p$ is defined by
\begin{align*}
    \mathbb{Z}_p^*:=\{x\in\mathbb{Q}_p|\ |x|_p=1\}.
\end{align*}

We equip $\mathbb{Q}_p^n$ with the norm
\begin{align*}
    \|\boldsymbol{x}\|_p=\max_{1\leq i\leq n}\{|x_i|_p\},\ \boldsymbol{x}=(x_1,\dots,x_n)\in\mathbb{Q}_p^n,
\end{align*}
so that $v_p(\boldsymbol{x})=\min\limits_{1\leq i\leq n}\{v_p(x_i)\}$. It is straightforward to verify that $\mathbb{Q}_p^n$ is complete under the norm $\|\cdot\|_p$.

\begin{definition}
    The additive character $\chi_{\xi}^p$ on $\mathbb{Q}_p$ is a continuous group homomorphism from $\mathbb{Q}_p$ to $S^1$ defined by
    \begin{align*}
        \chi_{\xi}^p(x)=e^{-2\pi i\{x\xi\}_p},
    \end{align*}
    where $x,\xi\in\mathbb{Q}_p$ since the Pontryagin dual of $\mathbb{Q}_p$ is naturally isomorphic to itself. Here $\{\cdot\}_p$ denotes the $p$-adic fractional part defined by
    \begin{align*}
        \{x\}_p=\begin{cases}
            0,&v_p(x)\geq0;\\
            \displaystyle\sum_{k=v_p(x)}^{-1}a_kp^k,&v_p(x)<0.
        \end{cases}
    \end{align*}
\end{definition}
\begin{remark}
Note that the additive character on $\mathbb{R}$ is $\chi_{\xi}^{\infty}(x)=e^{2\pi ix\xi}$, so for the additive character, we also have the normalized condition
    \begin{align*}
    \prod_{\substack{p\text{ prime}\\p\leq\infty}}\chi_{\xi}^{p}(x)=1.
\end{align*}
For convenience, we define $\chi_p(x):=\overline{\chi_{1}^{p}}(x)=e^{2\pi i\{x\}_p}$.
\end{remark}

\subsection{Basic function spaces and Fourier transform}
We say a function $f:\mathbb{Q}_p^n\to\mathbb{C}$ is locally constant if, for every $\boldsymbol{x}\in\mathbb{Q}_p^n$, there exists a neighborhood $\boldsymbol{x}+p^l\mathbb{Z}_p^n$ for some integer $l$ depending on $\boldsymbol{x}$ such that $f(\boldsymbol{y})=f(\boldsymbol{z})$ for all $\boldsymbol{y},\boldsymbol{z}\in\boldsymbol{x}+p^l\mathbb{Z}_p^n$. Therefore, we introduce the Bruhat-Schwartz space:
\begin{align*}
     \mathcal{S}(\mathbb{Q}_p^n):=\{f|\ f\text{ is locally constant with compact support}\},
\end{align*}
which can be seen as the non-Archimedean analogue of $C_c^{\infty}(\mathbb{R}^n)$ in the Archimedean setting. Therefore, for each $f\in\mathcal{S}(\mathbb{Q}_p^n)$, we can define the Fourier transform as
\begin{align*}
    \hat{f}(\boldsymbol{\xi}):=\int_{\mathbb{Q}_p^n}f(\boldsymbol{x})\overline{\chi_{\boldsymbol{\xi}}(\boldsymbol{x})}d\boldsymbol{x}=\int_{\mathbb{Q}_p^n}f(\boldsymbol{x})\chi_{p}(\boldsymbol{\xi}\cdot\boldsymbol{x})d\boldsymbol{x},
\end{align*}
where $d\boldsymbol{x}=dx_1\cdots dx_n$ and $dx_i$ is the Haar measure on $\mathbb{Q}_p$ normalized by $|\mathbb{Z}_p|=1$. One can check that the Fourier transform is an automorphism of $\mathcal{S}(\mathbb{Q}_p^n)$.

For later use, we introduce the following spaces:
\begin{definition}
    The Lizorkin space of the second kind in $p$-adic field is defined by
    \begin{align*}
     \mathcal{S}_{2}(\mathbb{Q}_p^n):=\{f\in\mathcal{S}(\mathbb{Q}_p^n)|\ \hat{f}(\boldsymbol{0})=0\}=\{f\in\mathcal{S}(\mathbb{Q}_p^n)|\ \int_{\mathbb{Q}_p^n}f(\boldsymbol{x})d\boldsymbol{x}=0\},
\end{align*}
which is dense in $L^2(\mathbb{Q}_p^n)$.
\end{definition}

\begin{remark}
In Archimedean field, the Lizorkin space of the second kind is defined by
\begin{align*}
    \mathcal{S}_{2}(\mathbb{R}^n):=\{f\in\mathcal{S}(\mathbb{R}^n)|\
    \partial^{\boldsymbol{\alpha}}\hat{f}(\boldsymbol{0})=0,\ \forall\boldsymbol{\alpha}\in\mathbb{N}^n\}=\left\{f\in\mathcal{S}(\mathbb{R}^n)|\int_{\mathbb{R}^n}\boldsymbol{x}^{\boldsymbol{\alpha}}f(\boldsymbol{x})d\boldsymbol{x}=0,\ \forall\boldsymbol{\alpha}\in\mathbb{N}^n\right\}.
\end{align*}
Unlike the Archimedean setting, we do not need to impose these additional moment conditions, because $f$ is locally constant.
\end{remark}

The fractional Sobolev space is defined by
\begin{align*}
    H^s(\mathbb{Q}_p^n):=\{f|\ \|f\|_{H^s(\mathbb{Q}_p^n)}^2=\int_{\mathbb{Q}_p^n}\max\{1,\|\boldsymbol{\xi}\|_p\}^{2s}|\hat{f}(\boldsymbol{\xi})|^2d\boldsymbol{\xi}<+\infty\}.
\end{align*}
This norm is analogous to its Archimedean counterpart.

For a domain $\Omega\subseteq\mathbb{Q}_p^n$, we define $\mathcal{S}(\Omega)$ and $\mathcal{S}_{2}(\Omega)$ by
\begin{align*}
     \mathcal{S}(\Omega):=\{f\in\mathcal{S}(\mathbb{Q}_p^n)|\ \operatorname{supp}f\subseteq\Omega\},
\end{align*}
and
\begin{align*}
    \mathcal{S}_{2}(\Omega):=\{f\in\mathcal{S}_{2}(\mathbb{Q}_p^n)|\ \operatorname{supp}f\subseteq\Omega\}.
\end{align*}
We also set
\begin{align*}
    \mathcal{S}(\mathbb{Q}_p^n)|_\Omega:=\{f|_\Omega|\ f\in\mathcal{S}(\mathbb{Q}_p^n)\}.
\end{align*}
Define the mean-zero subspace of $L^p(\Omega)$ by
\begin{align*}
    L_0^p(\Omega):=\{f\in L^p(\Omega)|\ \int_{\Omega}f(\boldsymbol{x})d\boldsymbol{x}=0\}.
\end{align*}
For a domain $\Omega\subseteq\mathbb{Q}_p^n$ and $s\geq0$, define
\begin{align*}
    H_0^s(\Omega):=\overline{\mathcal{S}(\Omega)}^{H^s(\mathbb{Q}_p^n)}.
\end{align*}
We also retain the zero-extension space used for comparison:
\begin{align*}
    \widetilde{H}_0^s(\Omega)
    :=\{u\in H^s(\mathbb Q_p^n):u=0\text{ a.e. on }\Omega^c\}.
\end{align*}
We identify functions in these full-space zero-extension spaces with their
restrictions to $\Omega$ when no confusion can arise. For $s>0$, the intrinsic
fractional Sobolev space is
\begin{align*}
    H^s(\Omega):=\left\{f\in L^2(\Omega):
    \iint_{\Omega\times\Omega}
    \frac{|f(\boldsymbol{x})-f(\boldsymbol{y})|^2}
    {\|\boldsymbol{x}-\boldsymbol{y}\|_p^{n+2s}}
    d\boldsymbol{x}d\boldsymbol{y}<+\infty\right\},
\end{align*}
and $H^0(\Omega)=L^2(\Omega)$.

\begin{prop}
For every open set $\Omega\subseteq\mathbb{Q}_p^n$ and $s\geq0$, every
element of $H_0^s(\Omega)$ vanishes almost everywhere on $\Omega^c$, and its
restriction belongs to $H^s(\Omega)$. Equivalently,
\begin{align*}
    H_0^s(\Omega)\subseteq\widetilde H_0^s(\Omega).
\end{align*}
If $\Omega$ is compact and open, then
\begin{align*}
    H_0^s(\Omega)=\widetilde H_0^s(\Omega),
\end{align*}
and zero extension identifies these spaces with $H^s(\Omega)$, with
equivalent norms.
\end{prop}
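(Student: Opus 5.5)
The plan has three parts. First I prove the general inclusion $H_0^s(\Omega)\subseteq\widetilde H_0^s(\Omega)$ by a closure argument. Then I show that the restriction of such a function has finite Gagliardo seminorm on $\Omega$, via a Fourier representation of that seminorm. Finally, for compact open $\Omega$, I get the reverse inclusion and the norm equivalence from locally constant approximation on a single fixed ball.

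\emph{General inclusion.} Let $u\in H_0^s(\Omega)$ and take $f_k\in\mathcal{S}(\Omega)$ with $f_k\to u$ in $H^s(\mathbb{Q}_p^n)$. Since $\max\{1,\|\boldsymbol\xi\|_p\}^{2s}\geq1$ and Plancherel holds for our normalization, we have $\|f_k-u\|_{L^2}\leq\|f_k-u\|_{H^s}\to0$. Every $f_k$ vanishes on $\Omega^c$. Passing to an a.e.\ convergent subsequence, $u=0$ a.e.\ on $\Omega^c$, so $u\in\widetilde H_0^s(\Omega)$.

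\emph{The restriction lies in $H^s(\Omega)$.} I use the standard $p$-adic identity
\begin{align*}
\iint_{\mathbb{Q}_p^n\times\mathbb{Q}_p^n}\frac{|f(\boldsymbol x)-f(\boldsymbol y)|^2}{\|\boldsymbol x-\boldsymbol y\|_p^{n+2s}}d\boldsymbol x d\boldsymbol y=c_{p,n,s}\int_{\mathbb{Q}_p^n}\|\boldsymbol\xi\|_p^{2s}|\hat f(\boldsymbol\xi)|^2d\boldsymbol\xi ,
\end{align*}
valid for $f\in\mathcal{S}(\mathbb{Q}_p^n)$. Its proof is short. Substitute $\boldsymbol y=\boldsymbol x+\boldsymbol h$ and apply Plancherel in $\boldsymbol x$. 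This leaves the integral $\int|1-\chi_p(\boldsymbol\xi\cdot\boldsymbol h)|^2\|\boldsymbol h\|_p^{-n-2s}d\boldsymbol h$. By scaling and rotation invariance of $\|\cdot\|_p$ under $GL_n(\mathbb{Z}_p)$, that integral equals $c\|\boldsymbol\xi\|_p^{2s}$, with $c$ finite for $s>0$ because the integrand vanishes for $\|\boldsymbol h\|_p\leq\|\boldsymbol\xi\|_p^{-1}$.

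By density (Fatou on the left side, since $f_k\to u$ a.e.\ along a subsequence), the identity gives a bound by $C\|u\|_{H^s}^2$ for every $u\in H^s(\mathbb{Q}_p^n)$. Restricting the double integral to $\Omega\times\Omega$ only decreases it, so $u|_\Omega\in H^s(\Omega)$ with $\|u|_\Omega\|_{H^s(\Omega)}\lesssim\|u\|_{H^s}$. The case $s=0$ is trivial.

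\emph{Compact open $\Omega$: reverse inclusion.} A compact open $\Omega$ is a finite disjoint union of balls, so $\Omega\subseteq B:=p^{-M}\mathbb{Z}_p^n$ for some $M$. Let $u\in\widetilde H_0^s(\Omega)$ and set $u_k:=P_ku$ for $k\geq M'$, where $M'$ is large. On the space side, $P_k$ is convolution with $p^{nk}\boldsymbol 1_{p^{k}\mathbb{Z}_p^n}$, i.e.\ averaging over balls of radius $p^{-k}$. Because $\Omega$ is a finite union of balls, for $k$ large every ball of radius $p^{-k}$ lies either inside $\Omega$ or inside $\Omega^c$. Hence $u_k$ is supported in $\Omega$, is locally constant, and is compactly supported, so $u_k\in\mathcal{S}(\Omega)$. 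The Fourier multiplier $\boldsymbol 1_{p^{-k}\mathbb{Z}_p^n}\uparrow1$, so dominated convergence gives $u_k\to u$ in $H^s$. Therefore $u\in H_0^s(\Omega)$.

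\emph{Norm equivalence with $H^s(\Omega)$.} The bound $\|u|_\Omega\|_{H^s(\Omega)}\lesssim\|u\|_{H^s}$ was shown above. For the reverse bound, take $f\in H^s(\Omega)$ and write its zero extension as $\tilde f$. Then
\begin{align*}
\|\tilde f\|_{H^s}^2\asymp\|f\|_{L^2}^2+\iint_{\Omega\times\Omega}\frac{|f(\boldsymbol x)-f(\boldsymbol y)|^2}{\|\boldsymbol x-\boldsymbol y\|_p^{n+2s}}d\boldsymbol x d\boldsymbol y+2\int_\Omega|f(\boldsymbol x)|^2\Gamma(\boldsymbol x)d\boldsymbol x ,
\end{align*}
with $\Gamma(\boldsymbol x)=\int_{\Omega^c}\|\boldsymbol x-\boldsymbol y\|_p^{-n-2s}d\boldsymbol y$. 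Here the equivalence $\max\{1,\|\boldsymbol\xi\|_p\}^{2s}\asymp1+\|\boldsymbol\xi\|_p^{2s}$ and the identity above are used. The main obstacle is bounding this boundary term. For compact open $\Omega$ every point of $\Omega$ lies at distance at least $p^{-N}>0$ from $\Omega^c$, because $\Omega$ is a finite union of balls and distinct balls are separated. Hence $\Gamma\leq\int_{\|\boldsymbol h\|_p\geq p^{-N}}\|\boldsymbol h\|_p^{-n-2s}d\boldsymbol h<\infty$ uniformly on $\Omega$ (for $s>0$). This gives $\|\tilde f\|_{H^s}\lesssim\|f\|_{H^s(\Omega)}$. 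So zero extension is a bounded bijection $H^s(\Omega)\to\widetilde H_0^s(\Omega)$, and together with the previous part this identifies $H_0^s(\Omega)=\widetilde H_0^s(\Omega)$ with $H^s(\Omega)$ with equivalent norms.
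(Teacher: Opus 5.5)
Your proposal is correct and follows essentially the paper's route: the $L^2$ limit for the vanishing, restriction of the full-space Gagliardo integral to $\Omega\times\Omega$, averaging by $p^{nk}\boldsymbol 1_{p^k\mathbb Z_p^n}$ (the multiplier $\boldsymbol 1_{p^{-k}\mathbb Z_p^n}$) with dominated convergence for the reverse inclusion, and the positive distance from $\Omega$ to $\Omega^c$ to bound the cross term $2\int_\Omega|f|^2\Gamma$. One small fix: the last step needs $\int\|\boldsymbol\xi\|_p^{2s}|\hat g|^2\lesssim$ (Gagliardo seminorm of $g$) for $g=\tilde f$, which is not yet known to lie in $H^s$, whereas your density/Fatou extension only gives the opposite direction; so state the Gagliardo--Fourier identity for every $g\in L^2(\mathbb Q_p^n)$ as an identity in $[0,+\infty]$, which your substitution-plus-Plancherel computation already yields directly by Tonelli.
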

\begin{proof}
The vanishing assertion follows by taking the $L^2$ limit of functions in
$\mathcal S(\Omega)$, and membership of the restriction in $H^s(\Omega)$
follows by restricting the full-space Gagliardo integral to
$\Omega\times\Omega$. Suppose that $\Omega$ is compact
and open. It is a finite union of balls of a common level $L$. For $K\geq L$,
convolution with $p^{nK}\boldsymbol 1_{p^K\mathbb Z_p^n}$ preserves the
zero-extension condition, produces a Bruhat--Schwartz function supported in
$\Omega$, and is the Fourier multiplier
    $\boldsymbol 1_{\{\|\boldsymbol\xi\|_p\leq p^K\}}$. Dominated convergence
    therefore gives convergence in $H^s(\mathbb Q_p^n)$ and proves that every
    $H^s(\mathbb Q_p^n)$ function vanishing almost everywhere on $\Omega^c$
    belongs to $H_0^s(\Omega)$.

Finally, the distance between a compact open set and its complement is
positive. If $f\in H^s(\Omega)$, the full-space Gagliardo seminorm of its zero
extension equals the intrinsic seminorm plus
\begin{align*}
2\int_\Omega |f(\boldsymbol{x})|^2
\left(\int_{\Omega^c}
\frac{d\boldsymbol{y}}
{\|\boldsymbol{x}-\boldsymbol{y}\|_p^{n+2s}}\right)d\boldsymbol{x}.
\end{align*}
The inner integral is uniformly bounded on $\Omega$. The equivalence between
the Fourier and Gagliardo norms proves the remaining assertion. The case
$s=0$ is immediate.
\end{proof}

\begin{remark}
For a general open set, the inclusion
$H_0^s(\Omega)\subseteq\widetilde H_0^s(\Omega)$ may be strict. Therefore, the
strict closure of $\mathcal S(\Omega)$ and the almost-everywhere
zero-extension condition must not be identified without an additional
regularity argument. Throughout this paper, the Dirichlet form domain is the
strict space $H_0^{\alpha/2}(\Omega)$.
\end{remark}
The topology of $\mathbb{Q}_p^n$ and Hölder's inequality yield the following density results:
\begin{prop}
    If $|\Omega|<\infty$, $\mathcal{S}_{2}(\Omega)$ is dense in $L_0^2(\Omega)$, which is a closed subspace of $L^2(\Omega)$; and $\mathcal{S}_{2}(\Omega)$ is dense in $H_0^s(\Omega)\cap L_0^2(\Omega)$. If $|\Omega|=\infty$, $\mathcal{S}_{2}(\Omega)$ is dense in $L^2(\Omega)$; and $\overline{\mathcal{S}_{2}(\Omega)}^{H^s(\mathbb{Q}_p^n)}\subseteq H_0^s(\Omega)$.\\
    The space $\mathcal{S}(\mathbb{Q}_p^n)|_\Omega$ is dense in
    $L^2(\Omega)$ regardless of $|\Omega|$.
\end{prop}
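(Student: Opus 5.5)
The proposition has four density claims. I will prove each by approximating first in $L^2$ (or $H^s$) by Bruhat--Schwartz functions supported in $\Omega$, and then correcting the mean with a small locally constant function. The basic tool is the one used in the preceding proposition. For $f\in L^2(\Omega)$, truncate to a compact open set $C\subseteq\Omega$: since $\Omega$ is open, it is a countable disjoint union of balls, and finite unions of these exhaust it. Then convolve with $p^{nK}\boldsymbol 1_{p^K\mathbb Z_p^n}$ for $K$ large. This produces elements of $\mathcal S(\Omega)$, so $\mathcal S(\Omega)$ is dense in $L^2(\Omega)$. In particular $\mathcal S(\mathbb Q_p^n)|_\Omega\supseteq\mathcal S(\Omega)$ is dense in $L^2(\Omega)$ for any $\Omega$, which is the last claim.

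\emph{Case $|\Omega|<\infty$.} The functional $f\mapsto\int_\Omega f$ is bounded on $L^2(\Omega)$ by Hölder's inequality, since $|\int_\Omega f|\le|\Omega|^{1/2}\|f\|_2$. Its kernel $L_0^2(\Omega)$ is therefore closed. Given $f\in L_0^2(\Omega)$, choose $g\in\mathcal S(\Omega)$ with $\|f-g\|_2<\varepsilon$, so that $|\int g|\le|\Omega|^{1/2}\varepsilon$. Fix one ball $B\subseteq\Omega$ and set $\tilde g=g-(\int g)|B|^{-1}\boldsymbol 1_B\in\mathcal S_2(\Omega)$. The correction has $L^2$ norm $|\int g|\,|B|^{-1/2}=O(\varepsilon)$.

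For $H_0^s(\Omega)\cap L_0^2(\Omega)$ the same correction works in the $H^s$ norm, because $\boldsymbol 1_B\in H^s$ for every $s$; its Fourier transform is a scaled indicator of a compact ball. Take $f$ in this space and $g\in\mathcal S(\Omega)$ with $g\to f$ in $H^s$. Then $\int g\to\int f=0$, by $L^2$ convergence and finite measure, and $\tilde g\to f$ in $H^s$.

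\emph{Case $|\Omega|=\infty$.} The mean is no longer continuous on $L^2$, so the trick is to spread the mean over a large set. Take $g\in\mathcal S(\Omega)$ close to $f$ and put $m=\int g$. Because $|\Omega|=\infty$ and $\Omega$ is a countable union of balls, there are compact open sets $C_N\subseteq\Omega$ disjoint from $\operatorname{supp}g$ with $|C_N|\to\infty$. Set $\tilde g=g-m|C_N|^{-1}\boldsymbol 1_{C_N}\in\mathcal S_2(\Omega)$. Then $\|\tilde g-g\|_2=|m|\,|C_N|^{-1/2}\to0$, which gives density in $L^2(\Omega)$.

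The inclusion $\overline{\mathcal S_2(\Omega)}^{H^s}\subseteq H_0^s(\Omega)$ is trivial, because $\mathcal S_2(\Omega)\subseteq\mathcal S(\Omega)$.

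\emph{Main obstacle.} The delicate step is the $H^s$ control of the correction in the finite-measure case, i.e.\ checking that $\boldsymbol 1_B\in H^s(\mathbb Q_p^n)$ with a fixed norm. This follows because $\widehat{\boldsymbol 1_B}$ is a character times a scaled indicator of a compact ball, so $\max\{1,\|\boldsymbol\xi\|_p\}^{2s}$ is bounded on its support. The only other point needing care is that the truncation-plus-convolution approximation yields $H^s$ convergence for elements of $H_0^s(\Omega)$. This holds by the definition of $H_0^s(\Omega)$ as the $H^s$-closure of $\mathcal S(\Omega)$, so no extra argument is needed there.
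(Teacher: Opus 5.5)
Your proposal is correct and follows essentially the same route as the paper: a fixed mean-correcting bump (your $|B|^{-1}\boldsymbol 1_B$ plays the role of the paper's $\varphi$) when $|\Omega|<\infty$, in both the $L^2$ and $H^s$ norms; spreading the mean over compact open sets of large measure when $|\Omega|=\infty$; the trivial inclusion; and compact exhaustion plus local averaging for density of Bruhat--Schwartz restrictions. One small wording point: in your closing remark, the $H^s$ approximants come directly from the definition of $H_0^s(\Omega)$, not from the truncation-plus-convolution scheme, whose $H^s$ convergence you have not shown and do not need.
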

\begin{proof}
Assume first that $|\Omega|<+\infty$. The mean-value functional is
continuous on $L^2(\Omega)$, so $L_0^2(\Omega)$ is closed. Choose
$\varphi\in\mathcal S(\Omega)$ with $\int_\Omega\varphi=1$. If
$f\in H_0^s(\Omega)\cap L_0^2(\Omega)$ and
$f_j\in\mathcal S(\Omega)$ converges to $f$ in $H^s$, then
\begin{align*}
f_j-\left(\int_\Omega f_j\right)\varphi\in\mathcal S_2(\Omega)
\end{align*}
and this sequence still converges to $f$. The same correction, starting
from an $L^2$ approximation, proves density in $L_0^2(\Omega)$.

If $|\Omega|=+\infty$, approximate $f\in L^2(\Omega)$ by
$g\in\mathcal S(\Omega)$. The open set
$\Omega\setminus\operatorname{supp}g$ still has infinite measure. By a
compact-open exhaustion, it contains finite compact open unions $A$,
disjoint from $\operatorname{supp}g$, of arbitrarily large measure. For such
an $A$,
\begin{align*}
g-\left(\int_\Omega g\right)|A|^{-1}\boldsymbol 1_A
\in\mathcal S_2(\Omega),
\end{align*}
and the $L^2$ norm of the correction tends to zero as $|A|\to\infty$.
The remaining inclusion follows from
$\mathcal S_2(\Omega)\subseteq\mathcal S(\Omega)$. Finally, restrictions of
Bruhat--Schwartz functions are dense in $L^2(\Omega)$ by compact exhaustion
and local averaging.
\end{proof}

\subsection{The Vladimirov-Taibleson operator}
\begin{definition}
    The Vladimirov-Taibleson operator $D^\alpha:\mathcal{S}_{2}(\mathbb{Q}_p^n)\to\mathcal{S}_{2}(\mathbb{Q}_p^n)$ is a pseudodifferential operator defined by
    \begin{align*}
        \widehat{D^\alpha f}(\boldsymbol{\xi})=\|\boldsymbol{\xi}\|_p^{\alpha}\hat{f}(\boldsymbol{\xi}),
    \end{align*}
    where $\alpha>0$, which can be extended to an operator from $H^{\alpha}(\mathbb{Q}_p^n)$ to $L^2(\mathbb{Q}_p^n)$. Its associated closed quadratic form is
    \begin{align*}
        \mathcal{E}(u,v)=\int_{\mathbb{Q}_p^n}\|\boldsymbol{\xi}\|_p^{\alpha}\hat{u}(\boldsymbol{\xi})\overline{\hat{v}}(\boldsymbol{\xi})d\boldsymbol{\xi}
    \end{align*}
    with $\mathcal{D}(\mathcal{E})=H^{\frac{\alpha}{2}}(\mathbb{Q}_p^n)$.
\end{definition}

\begin{remark}
    The Vladimirov-Taibleson operator can be viewed as the non-Archimedean counterpart of the fractional Laplacian $(-\Delta)^{\frac{\alpha}{2}}$ on $\mathbb{R}^n$, whose Fourier symbol is $\|\boldsymbol{\xi}\|^\alpha$.
\end{remark}

\begin{remark}
    By Fourier inversion, there is an equivalent definition of the Vladimirov-Taibleson operator by singular integral:
    \begin{align*}
        D^\alpha f(\boldsymbol{x})=\frac{1-p^{\alpha}}{1-p^{-\alpha-n}}\int_{\mathbb{Q}_p^n}\frac{f(\boldsymbol{y})-f(\boldsymbol{x})}{\|\boldsymbol{y}-\boldsymbol{x}\|_p^{\alpha+n}}d\boldsymbol{y}.
    \end{align*}
    Then its associated closed quadratic form can also be written as
    \begin{align*}
        \mathcal{E}(u,v)=\frac{1}{2}\frac{p^{\alpha}-1}{1-p^{-\alpha-n}}\int_{\mathbb{Q}_p^n\times\mathbb{Q}_p^n}\frac{(u(\boldsymbol{x})-u(\boldsymbol{y}))\overline{(v(\boldsymbol{x})-v(\boldsymbol{y}))}}{\|\boldsymbol{x}-\boldsymbol{y}\|_p^{\alpha+n}}d\boldsymbol{x}d\boldsymbol{y}.
    \end{align*}
\end{remark}

To investigate Weyl's law and Pólya's conjecture, the first step is to define the Dirichlet and Neumann Vladimirov-Taibleson operators on $\Omega\subseteq\mathbb{Q}_p^n$.

\begin{definition}
\label{def:Dirichlet_Vladimirov-Taibleson_operator}
    For $f\in\mathcal S(\Omega)$, let $\tilde f$ be its zero extension to
    $\mathbb Q_p^n$ and set
    \begin{align*}
        D_{\Omega,\mathcal{D}}^{\alpha}f(\boldsymbol{x})=(D^{\alpha}\tilde{f})|_{\Omega}(\boldsymbol{x}),
    \end{align*}
    for $\boldsymbol{x}\in\Omega$. Equivalently,
    \begin{align*}
        D_{\Omega,\mathcal{D}}^{\alpha}f(\boldsymbol{x})&=\frac{1-p^{\alpha}}{1-p^{-\alpha-n}}\int_{\mathbb{Q}_p^n}\frac{\tilde{f}(\boldsymbol{y})-\tilde{f}(\boldsymbol{x})}{\|\boldsymbol{y}-\boldsymbol{x}\|_p^{\alpha+n}}d\boldsymbol{y}\\
        &=\frac{1-p^{\alpha}}{1-p^{-\alpha-n}}\int_{\Omega}\frac{f(\boldsymbol{y})-f(\boldsymbol{x})}{\|\boldsymbol{y}-\boldsymbol{x}\|_p^{\alpha+n}}d\boldsymbol{y}-\frac{1-p^{\alpha}}{1-p^{-\alpha-n}}\left(\int_{\Omega^c}\frac{1}{\|\boldsymbol{y}-\boldsymbol{x}\|_p^{\alpha+n}}d\boldsymbol{y}\right)f(\boldsymbol{x}).
    \end{align*}
    The Dirichlet Vladimirov--Taibleson operator in $L^2(\Omega)$ is the
    unique nonnegative self-adjoint operator associated, by the first
    representation theorem, with the closed form
    \begin{align*}
        \mathcal{E}_{\Omega,\mathcal{D}}(u,v)=\mathcal{E}(\tilde{u},\tilde{v}),
        \qquad
        \mathcal{D}(\mathcal{E}_{\Omega,\mathcal{D}})=H_0^{\frac{\alpha}{2}}(\Omega).
    \end{align*}
    Thus $u\in\mathcal D(D_{\Omega,\mathcal D}^{\alpha})$ precisely when
    $u\in H_0^{\alpha/2}(\Omega)$ and there exists $f\in L^2(\Omega)$ such
    that
    \begin{align*}
        \mathcal E_{\Omega,\mathcal D}(u,v)
        =(f,v)_{L^2(\Omega)},
        \qquad v\in H_0^{\alpha/2}(\Omega),
    \end{align*}
    in which case $D_{\Omega,\mathcal D}^{\alpha}u=f$. This operator is the
    Friedrichs extension of the zero-extension operator initially defined on
    $\mathcal S(\Omega)$. In particular,
    \begin{align*}
        H_0^{\alpha}(\Omega)
        \subseteq\mathcal D(D_{\Omega,\mathcal D}^{\alpha}),
    \end{align*}
    and the inclusion need not be an equality for a general open set.
\end{definition}

\begin{prop}
For every $u\in H_0^\alpha(\Omega)$,
\begin{align*}
D_{\Omega,\mathcal D}^{\alpha}u
=(D^\alpha\tilde u)|_\Omega.
\end{align*}
\end{prop}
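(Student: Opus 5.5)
Since $u\in H_0^\alpha(\Omega)$, the zero extension $\tilde u$ (which is just $u$ viewed in $H^\alpha(\mathbb{Q}_p^n)$, by the first Proposition of this section, since it vanishes a.e. on $\Omega^c$) lies in $H^\alpha(\mathbb{Q}_p^n)$. Hence $D^\alpha\tilde u\in L^2(\mathbb{Q}_p^n)$, and $f:=(D^\alpha\tilde u)|_\Omega\in L^2(\Omega)$. By the characterization of the domain in Definition~\ref{def:Dirichlet_Vladimirov-Taibleson_operator}, it suffices to check two things. First, $u\in H_0^{\alpha/2}(\Omega)$. Second,
\begin{align*}
\mathcal E_{\Omega,\mathcal D}(u,v)=(f,v)_{L^2(\Omega)}\qquad\text{for all } v\in H_0^{\alpha/2}(\Omega).
\end{align*}
The first is immediate from $\max\{1,\|\boldsymbol\xi\|_p\}^{\alpha}\geq\max\{1,\|\boldsymbol\xi\|_p\}^{\alpha/2}$. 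Indeed, $H^\alpha$ convergence of a sequence in $\mathcal S(\Omega)$ implies $H^{\alpha/2}$ convergence, so $H_0^\alpha(\Omega)\subseteq H_0^{\alpha/2}(\Omega)$.

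For the identity, I would compute on the Fourier side. For $v\in H_0^{\alpha/2}(\Omega)$ with zero extension $\tilde v\in H^{\alpha/2}(\mathbb{Q}_p^n)$, Plancherel gives
\begin{align*}
\mathcal E(\tilde u,\tilde v)=\int\|\boldsymbol\xi\|_p^{\alpha}\hat{\tilde u}\,\overline{\hat{\tilde v}}\,d\boldsymbol\xi=(D^\alpha\tilde u,\tilde v)_{L^2(\mathbb{Q}_p^n)}.
\end{align*}
The integral converges absolutely: write $\|\boldsymbol\xi\|_p^\alpha=\|\boldsymbol\xi\|_p^{\alpha}\cdot 1$, apply Cauchy--Schwarz with $\|\boldsymbol\xi\|_p^\alpha|\hat{\tilde u}|\in L^2$ (since $\tilde u\in H^\alpha$) and $\hat{\tilde v}\in L^2$. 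Because $\tilde v=0$ a.e. on $\Omega^c$, the right-hand side reduces to $\int_\Omega (D^\alpha\tilde u)\bar v\,d\boldsymbol x=(f,v)_{L^2(\Omega)}$, which is the required identity. The first representation theorem then yields $u\in\mathcal D(D^\alpha_{\Omega,\mathcal D})$ and $D^\alpha_{\Omega,\mathcal D}u=f$.

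The only delicate point is making sure that the Fourier-side formula for $D^\alpha$ on $H^\alpha(\mathbb{Q}_p^n)$ is the one meant by ``$D^\alpha\tilde u$''. The Definition extends $D^\alpha$ from $\mathcal S_2$ to $H^\alpha$ by continuity, and I would note that this extension is exactly the multiplier $\|\boldsymbol\xi\|_p^\alpha\hat{\tilde u}$. To see this, approximate $\tilde u$ in $H^\alpha$ by $u_j\in\mathcal S(\Omega)$. The multiplier is continuous $H^\alpha\to L^2$, so $D^\alpha u_j\to D^\alpha\tilde u$ in $L^2$. For $u_j$ itself, $D^\alpha u_j$ is computed by the symbol even though $\hat u_j(\boldsymbol 0)$ may be nonzero, since $\|\boldsymbol\xi\|_p^\alpha\hat u_j\in L^2$. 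Alternatively, one can verify the identity first for $u_j\in\mathcal S(\Omega)$, where it is the Friedrichs-extension statement already noted in the Definition, and then pass to the limit. Both sides are continuous in $u$ for the $H^\alpha$ topology: the left side through $|\mathcal E(\tilde u,\tilde v)|\leq\|\tilde u\|_{H^\alpha}\|\tilde v\|_{L^2}$, and the right side through restriction. Closedness of the operator then finishes the argument. I expect no real obstacle beyond this bookkeeping.
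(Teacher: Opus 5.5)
Your proposal is correct and is essentially the paper's proof: Plancherel gives $\mathcal E_{\Omega,\mathcal D}(u,v)=(D^\alpha\tilde u,\tilde v)_{L^2(\mathbb Q_p^n)}=((D^\alpha\tilde u)|_\Omega,v)_{L^2(\Omega)}$ for every $v\in H_0^{\alpha/2}(\Omega)$, and the weak characterization in the Definition then concludes. The paper leaves your extra bookkeeping implicit, but it is accurate: the inclusion $H_0^\alpha(\Omega)\subseteq H_0^{\alpha/2}(\Omega)$, absolute convergence of the integral, and the identification of the $H^\alpha$ extension of $D^\alpha$ with the multiplier $\|\boldsymbol\xi\|_p^\alpha\hat{\tilde u}$. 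For that last point, the cleanest justification is the density of $\mathcal S_2(\mathbb Q_p^n)$ in $H^\alpha(\mathbb Q_p^n)$, rather than your somewhat circular appeal to how $D^\alpha$ acts on $\mathcal S(\Omega)$.
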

\begin{proof}
If $u\in H_0^\alpha(\Omega)$, then
$D^\alpha\tilde u\in L^2(\mathbb Q_p^n)$. For every
$v\in H_0^{\alpha/2}(\Omega)$, Plancherel's theorem gives
\begin{align*}
\mathcal E_{\Omega,\mathcal D}(u,v)
=(D^\alpha\tilde u,\tilde v)_{L^2(\mathbb Q_p^n)}
=((D^\alpha\tilde u)|_\Omega,v)_{L^2(\Omega)}.
\end{align*}
The weak characterization in Definition
\ref{def:Dirichlet_Vladimirov-Taibleson_operator} proves the assertion.
\end{proof}

\begin{definition}
\label{def:Neumann_Vladimirov-Taibleson_operator}
    The Neumann form is defined by
    \begin{align*}
        \mathcal{E}_{\Omega,\mathcal{N}}(u,v)=\frac{1}{2}\frac{p^{\alpha}-1}{1-p^{-\alpha-n}}\int_{\Omega\times\Omega}\frac{(u(\boldsymbol{x})-u(\boldsymbol{y}))\overline{(v(\boldsymbol{x})-v(\boldsymbol{y}))}}{\|\boldsymbol{x}-\boldsymbol{y}\|_p^{\alpha+n}}d\boldsymbol{x}d\boldsymbol{y}
    \end{align*}
    with $\mathcal{D}(\mathcal{E}_{\Omega,\mathcal{N}})=H^{\frac{\alpha}{2}}(\Omega)$.
    The Neumann Vladimirov--Taibleson operator
    $D_{\Omega,\mathcal N}^{\alpha}$ is the unique nonnegative
    self-adjoint operator associated with this form. For test functions, its
    integral expression is
    \begin{align*}
        D_{\Omega,\mathcal{N}}^{\alpha}f(\boldsymbol{x})
        =\frac{1-p^{\alpha}}{1-p^{-\alpha-n}}\int_{\Omega}
        \frac{f(\boldsymbol{y})-f(\boldsymbol{x})}
        {\|\boldsymbol{y}-\boldsymbol{x}\|_p^{\alpha+n}}d\boldsymbol{y}.
    \end{align*}
\end{definition}

\begin{prop}
Both forms are densely defined, closed, nonnegative, and symmetric. For every
$u\in H_0^{\alpha/2}(\Omega)$,
\begin{align}
\label{eq:Dirichlet_Neumann_form_identity}
\mathcal E_{\Omega,\mathcal D}[u]
=\mathcal E_{\Omega,\mathcal N}[u]
+\frac{p^\alpha-1}{1-p^{-\alpha-n}}
\int_\Omega |u(\boldsymbol{x})|^2
\left(\int_{\Omega^c}
\frac{d\boldsymbol{y}}
{\|\boldsymbol{x}-\boldsymbol{y}\|_p^{n+\alpha}}\right)d\boldsymbol{x}.
\end{align}
In particular,
$H_0^{\alpha/2}(\Omega)\subseteq H^{\alpha/2}(\Omega)$ and
$\mathcal E_{\Omega,\mathcal N}[u]\leq
\mathcal E_{\Omega,\mathcal D}[u]$ on the Dirichlet form domain.
\end{prop}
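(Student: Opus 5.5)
The statement to prove is the last proposition. It has two parts. First, both forms are densely defined, closed, nonnegative and symmetric. Second, the identity \eqref{eq:Dirichlet_Neumann_form_identity} holds on $H_0^{\alpha/2}(\Omega)$. I will prove the identity first and use it for the rest.

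\textbf{Step 1: the identity.} Take $u\in H_0^{\alpha/2}(\Omega)$. By the first proposition of this section, its zero extension $\tilde u$ vanishes a.e.\ on $\Omega^c$, and $u|_\Omega\in H^{\alpha/2}(\Omega)$. Write $\mathcal E(\tilde u,\tilde u)$ as the Gagliardo double integral from the singular-integral remark, and split $\mathbb Q_p^n\times\mathbb Q_p^n$ into four pieces: $\Omega\times\Omega$, $\Omega\times\Omega^c$, $\Omega^c\times\Omega$ and $\Omega^c\times\Omega^c$.
\begin{itemize}
\item The $\Omega\times\Omega$ piece is exactly $\mathcal E_{\Omega,\mathcal N}[u]$.
\item The $\Omega^c\times\Omega^c$ piece vanishes, since $\tilde u=0$ there.
\item The two mixed pieces are equal by symmetry. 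Each has integrand $|u(\boldsymbol x)|^2/\|\boldsymbol x-\boldsymbol y\|_p^{n+\alpha}$.
\end{itemize}
The mixed pieces therefore contribute $2\cdot\frac12\frac{p^\alpha-1}{1-p^{-\alpha-n}}\int_\Omega|u|^2\int_{\Omega^c}\|\boldsymbol x-\boldsymbol y\|_p^{-n-\alpha}\,d\boldsymbol y\,d\boldsymbol x$, which is the stated boundary term. All integrands are nonnegative, so Tonelli justifies the splitting with no integrability assumptions. This also shows that the Neumann seminorm and the killing term are each finite. Hence $H_0^{\alpha/2}(\Omega)\subseteq H^{\alpha/2}(\Omega)$, and $\mathcal E_{\Omega,\mathcal N}\le\mathcal E_{\Omega,\mathcal D}$.

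For the identity $\mathcal E(\tilde u,\tilde u)=$ Gagliardo integral on all of $H^{\alpha/2}(\mathbb Q_p^n)$, I take the singular-integral remark as given. It follows from the Fourier computation $\int(1-\chi_p(\boldsymbol\xi\cdot \boldsymbol h))\|\boldsymbol h\|_p^{-n-\alpha}d\boldsymbol h=c\|\boldsymbol\xi\|_p^\alpha$ together with Plancherel.

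\textbf{Step 2: properties of the Dirichlet form.} Symmetry and nonnegativity are immediate from the Fourier expression. It is densely defined because $\mathcal S(\Omega)\subseteq H_0^{\alpha/2}(\Omega)$ and $\mathcal S(\Omega)$ is dense in $L^2(\Omega)$ (by the density proposition, via compact-open exhaustion). For closedness, the form norm $\mathcal E_{\Omega,\mathcal D}[u]+\|u\|^2_{L^2}$ is equivalent to $\|\tilde u\|_{H^{\alpha/2}}^2$, because $\max\{1,\|\boldsymbol\xi\|_p\}^\alpha\le 1+\|\boldsymbol\xi\|_p^\alpha\le 2\max\{1,\|\boldsymbol\xi\|_p\}^\alpha$. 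The domain $H_0^{\alpha/2}(\Omega)$ is a closed subspace of $H^{\alpha/2}(\mathbb Q_p^n)$ by definition, so it is complete in the form norm.

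\textbf{Step 3: properties of the Neumann form.} Symmetry and nonnegativity are again clear. It is densely defined because the domain contains $\mathcal S(\Omega)$, by Step 1. For closedness, take a sequence $u_k$ that is Cauchy in the norm $\mathcal E_{\Omega,\mathcal N}[\cdot]+\|\cdot\|_{L^2(\Omega)}^2$. Then $u_k\to u$ in $L^2(\Omega)$, and along a subsequence almost everywhere. The functions $F_k(\boldsymbol x,\boldsymbol y)=(u_k(\boldsymbol x)-u_k(\boldsymbol y))\|\boldsymbol x-\boldsymbol y\|_p^{-(n+\alpha)/2}$ are Cauchy in $L^2(\Omega\times\Omega)$, so they converge to some $F$. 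Along a further subsequence the convergence is a.e., which identifies $F=(u(\boldsymbol x)-u(\boldsymbol y))\|\boldsymbol x-\boldsymbol y\|_p^{-(n+\alpha)/2}$. Fatou's lemma then gives $u\in H^{\alpha/2}(\Omega)$ and $\mathcal E_{\Omega,\mathcal N}[u_k-u]\to0$.

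The main point needing care is Step 1: one must check that the zero-extension structure of $H_0^{\alpha/2}(\Omega)$ (from the earlier proposition) is what kills the $\Omega^c\times\Omega^c$ term, and that Tonelli applies without assuming the boundary term is finite beforehand. Both are routine once the integrands are seen to be nonnegative.
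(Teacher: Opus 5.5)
Your proof is correct. For closedness it coincides with the paper's argument: the Dirichlet form norm is equivalent to the $H^{\alpha/2}(\mathbb Q_p^n)$ norm on the closed subspace $H_0^{\alpha/2}(\Omega)$, and the Neumann form is closed by completeness of the intrinsic Gagliardo norm, which you check in detail through $L^2(\Omega\times\Omega)$.

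The difference is in how you obtain the identity. The paper splits the double integral only for $u\in\mathcal S(\Omega)$, where everything is finite and the singular-integral formula for $\mathcal E$ is elementary. It then approximates a general $u\in H_0^{\alpha/2}(\Omega)$ by $u_j\in\mathcal S(\Omega)$: applying the identity to $u_j-u_k$ shows that both the Neumann parts and the boundary terms are Cauchy, and one passes to the limit.

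You split directly on the whole domain, using only that $\tilde u$ vanishes a.e.\ on $\Omega^c$ together with Tonelli. The price is that you need the Gagliardo representation of $\mathcal E$ on all of $H^{\alpha/2}(\mathbb Q_p^n)$. Your Plancherel--Tonelli sketch supplies this, using $|1-\chi_p(\boldsymbol\xi\cdot\boldsymbol h)|^2=2(1-\Re\chi_p(\boldsymbol\xi\cdot\boldsymbol h))$ and the symmetry $\boldsymbol h\mapsto-\boldsymbol h$. In fact it gives the representation for every $u\in L^2(\mathbb Q_p^n)$, as an identity in $[0,+\infty]$.

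Your route buys two things. First, it bypasses the limit step. In the paper that step tacitly requires identifying the limits of the Neumann part and of the weighted boundary term with the corresponding quantities for $u$, via closedness of the Neumann form, completeness of the weighted $L^2$ space, and an a.e.\ convergent subsequence. Second, it proves more: the identity holds for every $u\in\widetilde H_0^{\alpha/2}(\Omega)$, indeed for every $L^2$ function vanishing a.e.\ off $\Omega$. That is exactly the extended-valued splitting the paper later invokes in the proof of the Neumann Weyl law. The paper's route, in exchange, needs the singular-integral formula only on Bruhat--Schwartz functions.
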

\begin{proof}
Closedness of the Dirichlet form follows directly from its definition as the
closure of $\mathcal S(\Omega)$ in the full-space form norm. Closedness of the
Neumann form follows from completeness of the intrinsic Gagliardo norm. For
$u\in\mathcal S(\Omega)$, split the full-space double integral into
$\Omega\times\Omega$, the two cross terms, and
$\Omega^c\times\Omega^c$. This gives
\eqref{eq:Dirichlet_Neumann_form_identity}. If
$u_j\in\mathcal S(\Omega)$ converges to
$u\in H_0^{\alpha/2}(\Omega)$ in the Dirichlet form norm, then
$\mathcal E_{\Omega,\mathcal N}[u_j-u_k]\leq
\mathcal E_{\Omega,\mathcal D}[u_j-u_k]$. Applying the identity to
$u_j-u_k$ also shows that the boundary terms are Cauchy. Passing to the
limit proves the formula on the full Dirichlet form domain.
\end{proof}

\begin{remark}
The additional positive term in
\eqref{eq:Dirichlet_Neumann_form_identity} is the interaction between
$\Omega$ and $\Omega^c$ created by strict zero extension. It records the
nonlocal Dirichlet boundary condition and is absent from the Neumann form,
which contains only interactions inside $\Omega$.
\end{remark}

\begin{prop}
Let $\Omega$ be bounded and choose a ball $B_L(\boldsymbol a)$ containing
$\Omega$. Then
\begin{align}
\label{eq:Dirichlet_Poincare}
\mathcal E_{\Omega,\mathcal D}[u]
\geq p^{\alpha L}\frac{1-p^{-n}}{1-p^{-\alpha-n}}
\|u\|_{L^2(\Omega)}^2,
\qquad u\in H_0^{\alpha/2}(\Omega).
\end{align}
Moreover, the embedding
$H_0^{\alpha/2}(\Omega)\hookrightarrow L^2(\Omega)$ is compact.
Consequently, $D_{\Omega,\mathcal D}^{\alpha}$ has compact resolvent and a
strictly positive first eigenvalue.
\end{prop}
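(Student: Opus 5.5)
The plan is to reduce to test functions, compare $\Omega$ with the enclosing ball $B:=B_L(\boldsymbol a)=\boldsymbol a+p^L\mathbb Z_p^n$, and use the Dirichlet--Neumann identity \eqref{eq:Dirichlet_Neumann_form_identity} with $B$ in place of $\Omega$. For $u\in\mathcal S(\Omega)$ the zero extension $\tilde u$ lies in $\mathcal S(B)$, and $\mathcal E_{\Omega,\mathcal D}[u]=\mathcal E(\tilde u,\tilde u)=\mathcal E_{B,\mathcal D}[\tilde u]$. Applying \eqref{eq:Dirichlet_Neumann_form_identity} on $B$ and discarding the nonnegative Neumann part $\mathcal E_{B,\mathcal N}[\tilde u]$ gives
\begin{align*}
\mathcal E_{\Omega,\mathcal D}[u]\geq\frac{p^\alpha-1}{1-p^{-\alpha-n}}\int_B|\tilde u(\boldsymbol x)|^2\left(\int_{B^c}\frac{d\boldsymbol y}{\|\boldsymbol x-\boldsymbol y\|_p^{n+\alpha}}\right)d\boldsymbol x .
\end{align*}

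Next I compute the inner integral explicitly. The ultrametric inequality shows that for $\boldsymbol x\in B$ and $\boldsymbol y\notin B$ we have $\|\boldsymbol x-\boldsymbol y\|_p=\|\boldsymbol y-\boldsymbol a\|_p$. Hence the integral does not depend on $\boldsymbol x$. The complement $B^c$ splits into the spheres $\|\boldsymbol y-\boldsymbol a\|_p=p^{-L+k}$ for $k\geq1$, each of measure $(1-p^{-n})p^{(k-L)n}$. Summing the geometric series gives
\begin{align*}
\int_{B^c}\frac{d\boldsymbol y}{\|\boldsymbol x-\boldsymbol y\|_p^{n+\alpha}}=(1-p^{-n})p^{\alpha L}\sum_{k\geq1}p^{-\alpha k}=\frac{(1-p^{-n})p^{\alpha L}}{p^\alpha-1}.
\end{align*}
Multiplying by $\frac{p^\alpha-1}{1-p^{-\alpha-n}}$ yields exactly the constant in \eqref{eq:Dirichlet_Poincare}, so the inequality holds on $\mathcal S(\Omega)$. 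Both sides are continuous in the $H^{\alpha/2}$ norm, and $H_0^{\alpha/2}(\Omega)$ is the closure of $\mathcal S(\Omega)$ by definition. The inequality therefore extends to all of $H_0^{\alpha/2}(\Omega)$.

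For compactness I would use the projections $P_K$ with $K\geq L$. Every $u\in H_0^{\alpha/2}(\Omega)$ vanishes off $B$, and $P_Ku=u*p^{nK}\boldsymbol 1_{p^K\mathbb Z_p^n}$. Since $p^K\mathbb Z_p^n\subseteq p^L\mathbb Z_p^n$, this convolution is supported in $B$ and is constant on the finitely many balls of level $K$ inside $B$. So $P_K$ maps the unit ball of the form domain into a fixed finite-dimensional space, with bounded image. Moreover,
\begin{align*}
\|u-P_Ku\|_{L^2}^2=\int_{\|\boldsymbol\xi\|_p>p^K}|\hat u|^2\,d\boldsymbol\xi\leq p^{-\alpha(K+1)}\mathcal E[\tilde u].
\end{align*}
Thus a bounded set in $H_0^{\alpha/2}(\Omega)$ lies within $O(p^{-\alpha K/2})$ of a bounded subset of a finite-dimensional space, and is therefore totally bounded in $L^2(\Omega)$. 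This is the compact embedding.

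Compactness of the form domain in $L^2(\Omega)$ gives compact resolvent for the associated operator (standard via the first representation theorem). By the min--max principle, the first eigenvalue is at least the Poincaré constant, which is strictly positive. I expect no step to be a serious obstacle. The only points needing care are the ultrametric identity $\|\boldsymbol x-\boldsymbol y\|_p=\|\boldsymbol y-\boldsymbol a\|_p$ and checking that $P_K$ preserves the support in $B$ when $K\geq L$.
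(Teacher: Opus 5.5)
Your proposal is correct and follows essentially the same route as the paper. Dropping $\mathcal E_{B,\mathcal N}[\tilde u]$ is exactly the paper's step of retaining only the interaction with $B_L(\boldsymbol a)^c$, and your shell sum and constant agree with it. The compactness argument via the finite-rank approximations $P_K$ and the bound $\|(I-P_K)u\|_2^2\leq p^{-\alpha(K+1)}\mathcal E_{\Omega,\mathcal D}[u]$ is the same as the paper's.
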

\begin{proof}
For $\boldsymbol{x}\in\Omega\subseteq B_L(\boldsymbol a)$, retain in the
full-space energy only the interaction with
$B_L(\boldsymbol a)^c\subseteq\Omega^c$. The $p$-adic spherical-shell
decomposition gives
\begin{align*}
\int_{B_L(\boldsymbol a)^c}
\frac{d\boldsymbol y}{\|\boldsymbol{x}-\boldsymbol y\|_p^{n+\alpha}}
=(1-p^{-n})\frac{p^{\alpha(L-1)}}{1-p^{-\alpha}},
\end{align*}
and \eqref{eq:Dirichlet_Poincare} follows.

For compactness, regard $u$ as its zero extension and let the Fourier
multiplier $P_K$ be given by
\begin{align*}
\widehat{P_Ku}(\boldsymbol\xi)
=\boldsymbol 1_{\{\|\boldsymbol\xi\|_p\leq p^K\}}
\widehat u(\boldsymbol\xi).
\end{align*}
Since the values of the $p$-adic norm are discrete,
\begin{align*}
\|(I-P_K)u\|_2^2
\leq p^{-\alpha(K+1)}\mathcal E_{\Omega,\mathcal D}[u].
\end{align*}
For all sufficiently large $K$, the restrictions to $\Omega$ of the
functions $P_Ku$ lie in a fixed finite-dimensional space of functions that
are constant on balls of level $K$ inside $B_L(\boldsymbol a)$. These
finite-rank maps approximate the embedding uniformly on the form unit ball,
which proves compactness.
\end{proof}

\begin{prop}[Domain monotonicity]
If $\Omega_1\subseteq\Omega_2$ are bounded open sets, then
\begin{align*}
\lambda_j(D_{\Omega_2,\mathcal D}^{\alpha})
\leq\lambda_j(D_{\Omega_1,\mathcal D}^{\alpha}),
\qquad
N_{D_{\Omega_1,\mathcal D}^{\alpha}}(\lambda)
\leq N_{D_{\Omega_2,\mathcal D}^{\alpha}}(\lambda).
\end{align*}
\end{prop}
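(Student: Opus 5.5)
The plan is the standard min-max argument, built on the fact that the Dirichlet form domains are nested. The proof has three steps.

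First, we show that $H_0^{\alpha/2}(\Omega_1)\subseteq H_0^{\alpha/2}(\Omega_2)$ and that the two forms agree on the smaller space. Since $\Omega_1\subseteq\Omega_2$, every $f\in\mathcal S(\Omega_1)$ is locally constant, compactly supported, and has support in $\Omega_2$, so $\mathcal S(\Omega_1)\subseteq\mathcal S(\Omega_2)$. Taking closures in the same ambient norm of $H^{\alpha/2}(\mathbb Q_p^n)$ gives the inclusion of the strict spaces. Both Dirichlet forms are defined by $\mathcal E_{\Omega_i,\mathcal D}(u,v)=\mathcal E(\tilde u,\tilde v)$, and the zero extension of $u$ from $\Omega_1$ coincides with its zero extension from $\Omega_2$. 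Hence $\mathcal E_{\Omega_1,\mathcal D}[u]=\mathcal E_{\Omega_2,\mathcal D}[u]$ and $\|u\|_{L^2(\Omega_1)}=\|u\|_{L^2(\Omega_2)}$ for every $u$ in the smaller domain. Under zero extension, $L^2(\Omega_1)$ becomes a closed subspace of $L^2(\Omega_2)$, and the form of $\Omega_1$ is the restriction of the form of $\Omega_2$.

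Second, we apply the Courant–Fischer min-max principle. By the preceding proposition, both operators are nonnegative, self-adjoint, and have compact resolvent, because each $\Omega_i$ is bounded. Their spectra are therefore discrete, and
\begin{align*}
\lambda_j(D_{\Omega_i,\mathcal D}^{\alpha})=\min_{\substack{F\subseteq H_0^{\alpha/2}(\Omega_i)\\ \dim F=j}}\ \max_{u\in F\setminus\{0\}}\frac{\mathcal E_{\Omega_i,\mathcal D}[u]}{\|u\|_{L^2(\Omega_i)}^2}.
\end{align*}
Any $j$-dimensional $F\subseteq H_0^{\alpha/2}(\Omega_1)$ is also admissible for $\Omega_2$, and the Rayleigh quotients on $F$ are identical. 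Minimizing over the larger family of subspaces gives $\lambda_j(D_{\Omega_2,\mathcal D}^{\alpha})\leq\lambda_j(D_{\Omega_1,\mathcal D}^{\alpha})$.

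Third, the counting-function inequality follows directly. Since $N(\lambda)=\#\{j:\lambda_j\leq\lambda\}$ (or with strict inequality, depending on the convention, which does not matter here), every $j$ counted for $\Omega_1$ satisfies $\lambda_j(\Omega_2)\leq\lambda_j(\Omega_1)\leq\lambda$. So every index counted for $\Omega_1$ is also counted for $\Omega_2$, and $N_{D_{\Omega_1,\mathcal D}^{\alpha}}(\lambda)\leq N_{D_{\Omega_2,\mathcal D}^{\alpha}}(\lambda)$.

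The only delicate point is that the forms must be compared on the strict spaces $H_0^{\alpha/2}$ rather than on the zero-extension spaces $\widetilde H_0^{\alpha/2}$. This is exactly where the paper's definition of the form domain as the closure of $\mathcal S(\Omega)$ in $H^{\alpha/2}(\mathbb Q_p^n)$ is used, and with that definition the inclusion is immediate. Everything else is routine.
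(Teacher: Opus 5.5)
Your proof is correct and follows the paper's argument: the inclusion $\mathcal S(\Omega_1)\subseteq\mathcal S(\Omega_2)$ passes to closures in $H^{\alpha/2}(\mathbb Q_p^n)$, the two forms coincide on $H_0^{\alpha/2}(\Omega_1)$, and the min--max principle gives both inequalities. Your added remarks on zero extension, compact resolvent, and deriving the counting inequality from the eigenvalue inequality only spell out details the paper leaves implicit.
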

\begin{proof}
The inclusion $\mathcal S(\Omega_1)\subseteq\mathcal S(\Omega_2)$ gives
$H_0^{\alpha/2}(\Omega_1)\subseteq H_0^{\alpha/2}(\Omega_2)$ after taking
closures. The two forms agree on the smaller domain, so the min--max
principle proves the assertion.
\end{proof}

There is an alternative way of regarding the Vladimirov-Taibleson operator on $\Omega$. When $\Omega$ has a group structure, it carries a natural Fourier transform, and the operator can be defined directly through its Fourier symbol. For instance, take $\Omega=\mathbb{Z}_p^n$, an additive group with additive character $\chi_{\boldsymbol{\xi}}^p(\boldsymbol{x})$. Its Pontryagin dual is $(\mathbb{Q}_p/\mathbb{Z}_p)^n$. We define
\begin{align*}
    \widehat{D_{\mathbb{Z}_p^n}^{\alpha}f}(\boldsymbol{\xi}):=\|\boldsymbol{\xi}\|_p^{\alpha}\hat{f}(\boldsymbol{\xi}),
\end{align*}
where $\boldsymbol{x}\in\mathbb{Z}_p^n$ and $\boldsymbol{\xi}\in(\mathbb{Q}_p/\mathbb{Z}_p)^n$. The norm in $(\mathbb{Q}_p/\mathbb{Z}_p)^n$ is defined by
\begin{align*}
    \|\boldsymbol{\xi}\|_p:=\inf_{\boldsymbol{a}\in\boldsymbol{\xi}+\mathbb{Z}_p^n}\|\boldsymbol{a}\|_p.
\end{align*}
Accordingly, we may choose $\{(\{x_1\}_p,\dots,\{x_n\}_p)|\ \boldsymbol{x}=(x_1,\dots,x_n)\in\mathbb{Q}_p^n\}$ as the fundamental domain. Fourier inversion then gives
\begin{align*}
    D_{\mathbb{Z}_p^n}^{\alpha}f(\boldsymbol{x})=\frac{1-p^{\alpha}}{1-p^{-\alpha-n}}\int_{\mathbb{Z}_p^n}\frac{f(\boldsymbol{y})-f(\boldsymbol{x})}{\|\boldsymbol{y}-\boldsymbol{x}\|_p^{\alpha+n}}d\boldsymbol{y}+\frac{1-p^{-n}}{1-p^{-\alpha-n}}f(\boldsymbol{x})-\frac{1-p^{-n}}{1-p^{-\alpha-n}}\int_{\mathbb{Z}_p^n}f(\boldsymbol{y})d\boldsymbol{y}.
\end{align*}
A direct calculation yields
\begin{align*}
        D_{\mathbb{Z}_p^n,\mathcal{D}}^{\alpha}f(\boldsymbol{x})
        &=\frac{1-p^{\alpha}}{1-p^{-\alpha-n}}\int_{\mathbb{Z}_p^n}\frac{f(\boldsymbol{y})-f(\boldsymbol{x})}{\|\boldsymbol{y}-\boldsymbol{x}\|_p^{\alpha+n}}d\boldsymbol{y}-\frac{1-p^{\alpha}}{1-p^{-\alpha-n}}\left(\int_{\mathbb{Q}_p^n\backslash\mathbb{Z}_p^n}\frac{1}{\|\boldsymbol{y}\|_p^{\alpha+n}}d\boldsymbol{y}\right)f(\boldsymbol{x})\\
        &=\frac{1-p^{\alpha}}{1-p^{-\alpha-n}}\int_{\mathbb{Z}_p^n}\frac{f(\boldsymbol{y})-f(\boldsymbol{x})}{\|\boldsymbol{y}-\boldsymbol{x}\|_p^{\alpha+n}}d\boldsymbol{y}+\frac{1-p^{-n}}{1-p^{-\alpha-n}}f(\boldsymbol{x}),
    \end{align*}
and
\begin{align*}
        D_{\mathbb{Z}_p^n,\mathcal{N}}^{\alpha}f(\boldsymbol{x})=\frac{1-p^{\alpha}}{1-p^{-\alpha-n}}\int_{\mathbb{Z}_p^n}\frac{f(\boldsymbol{y})-f(\boldsymbol{x})}{\|\boldsymbol{y}-\boldsymbol{x}\|_p^{\alpha+n}}d\boldsymbol{y}.
    \end{align*}
Thus, on $\mathcal S_2(\mathbb Z_p^n)$,
$D_{\mathbb Z_p^n}^{\alpha}$ agrees with
$D_{\mathbb Z_p^n,\mathcal D}^{\alpha}$. This identity motivates the
mean-zero restriction used below.

\section{Eigenvalues on compact open domains}
Every compact open domain $\Omega\subseteq\mathbb{Q}_p^n$ is a finite disjoint union of maximal balls, i.e. $\Omega=\displaystyle\bigsqcup_{i=1}^{m}B_{L_i}(\boldsymbol{a}_i)$ for some disjoint maximal balls $\{B_{L_i}(\boldsymbol{a}_i)\}_{i=1}^{m}$. Here $B_{L_i}(\boldsymbol{a}_i)=\boldsymbol{a}_i+p^{L_i}\mathbb{Z}_p^n$ with $\|\boldsymbol{a}_i-\boldsymbol{a}_j\|_p>\max\{p^{-L_i},p^{-L_j}\}$ (i.e. $v_p(\boldsymbol{a}_i-\boldsymbol{a}_j)<\min\{L_i,L_j\}$) for all $i\neq j$. Consequently,
\begin{align*}
    L^2(\Omega)=\operatorname{span}\{\boldsymbol{1}_{B_{L_1}(\boldsymbol{a}_1)},\dots,\boldsymbol{1}_{B_{L_m}(\boldsymbol{a}_m)}\}\oplus\bigoplus_{i=1}^{m}L_0^2(B_{L_i}(\boldsymbol{a}_i)).
\end{align*}
It therefore suffices to study the eigenvalue problem on each summand.
\subsection{Dirichlet eigenvalues}
\begin{theorem}
\label{thm:eig_D_L02}
    The eigenvalues of the restriction of
    $D_{\mathbb{Z}_p^n,\mathcal{D}}^{\alpha}$ to
    $L_0^2(\mathbb{Z}_p^n)$ are
    \begin{align*}
        \lambda_{\gamma}=p^{\gamma\alpha},\ \gamma\in\mathbb{N}^*
    \end{align*}
    with multiplicity $m_\gamma=p^{n\gamma}-p^{n(\gamma-1)}$ \cite{MR1092528}. The corresponding eigenfunctions are
    \begin{align*}
        \Psi_{\gamma,\boldsymbol{a},\boldsymbol{j}}(\boldsymbol{x})=p^{\frac{n(\gamma-1)}{2}}\chi_p(p^{-\gamma}\boldsymbol{j}\cdot(\boldsymbol{x}-\boldsymbol{a}))\boldsymbol{1}_{\boldsymbol{a}+p^{\gamma-1}\mathbb{Z}_p^n}(\boldsymbol{x}),
    \end{align*}
    where $\boldsymbol{a}\in\mathbb{Z}_p^n/p^{\gamma-1}\mathbb{Z}_p^n$ and $\boldsymbol{j}\in\mathbb{F}_p^n\backslash\{\boldsymbol{0}\}$ with $(\Psi_{\gamma,\boldsymbol{a},\boldsymbol{j}},\Psi_{\gamma',\boldsymbol{a}',\boldsymbol{j}'})=\delta_{\gamma\gamma'}\delta_{\boldsymbol{a}\boldsymbol{a}'}\delta_{\boldsymbol{j}\boldsymbol{j}'}$.
\end{theorem}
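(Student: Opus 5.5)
I will verify directly that each $\Psi_{\gamma,\boldsymbol a,\boldsymbol j}$ is an eigenfunction with eigenvalue $p^{\gamma\alpha}$, that the family is orthonormal, and that it is complete in $L_0^2(\mathbb Z_p^n)$. By the identity at the end of Section 2, on $\mathcal S_2(\mathbb Z_p^n)$ the operator $D^{\alpha}_{\mathbb Z_p^n,\mathcal D}$ agrees with the full-space operator applied to the zero extension, restricted to $\mathbb Z_p^n$. Each $\Psi_{\gamma,\boldsymbol a,\boldsymbol j}$ is locally constant, supported in $\boldsymbol a+p^{\gamma-1}\mathbb Z_p^n\subseteq\mathbb Z_p^n$, and has mean zero because $\boldsymbol j\neq\boldsymbol 0$ makes the character nontrivial on that ball. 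Hence $\Psi_{\gamma,\boldsymbol a,\boldsymbol j}\in\mathcal S_2(\mathbb Z_p^n)\subseteq H_0^{\alpha}(\mathbb Z_p^n)$, and the Proposition identifying $D_{\Omega,\mathcal D}^\alpha u=(D^\alpha\tilde u)|_\Omega$ on $H_0^\alpha(\Omega)$ lets me compute with the full-space symbol.

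For the eigenvalue, I translate by $\boldsymbol a$ and compute $\widehat{\Psi}$. The Fourier transform of $\chi_p(p^{-\gamma}\boldsymbol j\cdot\boldsymbol x)\boldsymbol 1_{p^{\gamma-1}\mathbb Z_p^n}(\boldsymbol x)$ is a multiple of $\boldsymbol 1_{-p^{-\gamma}\boldsymbol j+p^{1-\gamma}\mathbb Z_p^n}(\boldsymbol\xi)$, up to the sign convention. Since $\boldsymbol j\in\mathbb F_p^n\setminus\{\boldsymbol 0\}$ has a unit coordinate, every $\boldsymbol\xi$ in this coset satisfies $\|\boldsymbol\xi\|_p=p^{\gamma}$ by the ultrametric inequality. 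Therefore $\|\boldsymbol\xi\|_p^\alpha\widehat\Psi=p^{\gamma\alpha}\widehat\Psi$, and $D^\alpha\Psi=p^{\gamma\alpha}\Psi$ on all of $\mathbb Q_p^n$, in particular on $\mathbb Z_p^n$. I expect this Fourier computation, which only needs the standard integral of a character over a ball, to be the main technical step, though it is routine.

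Orthonormality splits into three cases.
\begin{itemize}
\item If $\gamma=\gamma'$ and $\boldsymbol a\neq\boldsymbol a'$, the supports are disjoint.
\item If $\gamma=\gamma'$ and $\boldsymbol a=\boldsymbol a'$, the inner product is $p^{n(\gamma-1)}\int_{p^{\gamma-1}\mathbb Z_p^n}\chi_p(p^{-\gamma}(\boldsymbol j-\boldsymbol j')\cdot\boldsymbol x)\,d\boldsymbol x=\delta_{\boldsymbol j\boldsymbol j'}$. This uses $\boldsymbol j\not\equiv\boldsymbol j'\bmod p$ when $\boldsymbol j\neq\boldsymbol j'$, and the normalization follows from $|p^{\gamma-1}\mathbb Z_p^n|=p^{-n(\gamma-1)}$.
\item If $\gamma\neq\gamma'$, the functions are eigenfunctions of the symmetric operator with distinct eigenvalues, so they are orthogonal. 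Alternatively, their Fourier supports lie on different spheres.
\end{itemize}

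For completeness, I use a dimension count. Let $W_\Gamma$ be the space of functions on $\mathbb Z_p^n$ that are constant on cosets of $p^{\Gamma}\mathbb Z_p^n$, so $\dim W_\Gamma=p^{n\Gamma}$. Its mean-zero part has dimension $p^{n\Gamma}-1$. The functions $\Psi_{\gamma,\boldsymbol a,\boldsymbol j}$ with $\gamma\le\Gamma$ all lie in this mean-zero part, because $\chi_p(p^{-\gamma}\boldsymbol j\cdot\boldsymbol x)$ is constant on $p^{\gamma}\mathbb Z_p^n$-cosets. There are $\sum_{\gamma=1}^{\Gamma}p^{n(\gamma-1)}(p^n-1)=p^{n\Gamma}-1$ of them, and they are orthonormal, so they form a basis. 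Since $\bigcup_\Gamma W_\Gamma=\mathcal S(\mathbb Z_p^n)$ is dense in $L^2$, the mean-zero parts are dense in $L_0^2(\mathbb Z_p^n)$ by the density Proposition. Thus the family is an orthonormal eigenbasis of the restricted operator.

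It follows that the spectrum consists exactly of the values $p^{\gamma\alpha}$ with $\gamma\in\mathbb N^*$. The multiplicity of $p^{\gamma\alpha}$ is the number of pairs $(\boldsymbol a,\boldsymbol j)$, namely $p^{n(\gamma-1)}(p^n-1)=p^{n\gamma}-p^{n(\gamma-1)}$.
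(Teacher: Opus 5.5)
Your proof is correct, and it takes a genuinely different route from the paper. The paper gives no proof of this theorem: it imports the result from Vladimirov \cite{MR1092528}, and the only supplementary computation is the extra eigenvalue of $\boldsymbol 1_{\mathbb Z_p^n}$ via the singular integral.

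Your argument is a self-contained verification inside the paper's own framework. What this buys:
\begin{itemize}
\item The Fourier computation replaces any evaluation of the singular integral. Since $\widehat{\Psi}_{\gamma,\boldsymbol a,\boldsymbol j}$ is supported in $-p^{-\gamma}\boldsymbol j+p^{1-\gamma}\mathbb Z_p^n$, it lies on the sphere $\|\boldsymbol\xi\|_p=p^{\gamma}$.
\item Membership $\Psi\in\mathcal S_2(\mathbb Z_p^n)\subseteq H_0^{\alpha}(\mathbb Z_p^n)$, together with the proposition $D^\alpha_{\Omega,\mathcal D}u=(D^\alpha\tilde u)|_\Omega$, makes the eigen-equation hold for the Friedrichs realization the paper actually defines. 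Nothing has to be matched against the conventions of the cited source (dimension $n$ with the max norm, normalization, choice of self-adjoint realization).
\item The dimension count in $W_\Gamma\cap L_0^2$ certifies that there are no other eigenvalues and that the multiplicities are $p^{n\gamma}-p^{n(\gamma-1)}$. The paper's later counting formulas depend on exactly this.
\item As a bonus, the level-$\gamma$ functions form an orthonormal basis of $W_\gamma\ominus W_{\gamma-1}=\Delta_\gamma L^2(\mathbb Z_p^n)$. This is the Littlewood--Paley structure exploited in Sections 5--6.
\end{itemize}

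Two small points. First, the identification of $D^\alpha_{\mathbb Z_p^n,\mathcal D}$ with $(D^\alpha\tilde f)|_{\mathbb Z_p^n}$ comes from the definition and that proposition. It does not come from the identity at the end of Section 2, which concerns the group operator $D^\alpha_{\mathbb Z_p^n}$.

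Second, your last step tacitly uses that a self-adjoint operator with an orthonormal eigenbasis inside its domain equals the diagonal operator in that basis. This holds because finite spans are a core for the diagonal operator and a self-adjoint operator has no proper self-adjoint extension.
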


\begin{remark}
    The eigenvalues and the eigenfunctions for $D^\alpha$ in $L^2(\mathbb{Q}_p^n)$ are the same as those in Theorem \ref{thm:eig_D_L02} \cite{MR1918846}. Here, however, $\gamma\in\mathbb{Z}$ and $\boldsymbol{a}\in\mathbb{Q}_p^n/p^{\gamma-1}\mathbb{Z}_p^n$. Thus $\sigma(D^\alpha)=\{0\}\cup\{p^{\gamma\alpha}\}_{\gamma\in\mathbb{Z}}$ is the essential spectrum, and every nonzero spectral value has infinite multiplicity.
\end{remark}

Since
\begin{align*}
    D_{\mathbb{Z}_p^n,\mathcal{D}}^{\alpha}\boldsymbol{1}_{\mathbb{Z}_p^n}(\boldsymbol{x})&=-\frac{1-p^{\alpha}}{1-p^{-\alpha-n}}\left(\int_{\mathbb{Q}_p^n\backslash\mathbb{Z}_p^n}\frac{1}{\|\boldsymbol{y}\|_p^{\alpha+n}}d\boldsymbol{y}\right)\boldsymbol{1}_{\mathbb{Z}_p^n}(\boldsymbol{x})\\
    &=\frac{1-p^{-n}}{1-p^{-\alpha-n}}\boldsymbol{1}_{\mathbb{Z}_p^n}(\boldsymbol{x}),
\end{align*}
so the full self-adjoint Dirichlet operator has the additional eigenvalue
$\frac{1-p^{-n}}{1-p^{-\alpha-n}}$ with eigenfunction
$\boldsymbol{1}_{\mathbb{Z}_p^n}$.

Let $N_{D_{\mathbb{Z}_p^n,\mathcal{D}}^{\alpha}}^{0}(\lambda)$ denote the
eigenvalue counting function of the restriction of the self-adjoint operator
$D_{\mathbb{Z}_p^n,\mathcal{D}}^{\alpha}$ to the reducing subspace
$L_0^2(\mathbb{Z}_p^n)$. Then
\begin{align*}
    \lambda_{\gamma}=p^{\gamma\alpha}\leq\lambda\Rightarrow\gamma\leq\frac{1}{\alpha}\log_p\lambda.
\end{align*}
Therefore,
\begin{align*}
    N_{D_{\mathbb{Z}_p^n,\mathcal{D}}^{\alpha}}^{0}(\lambda)=\sum_{j=1}^{[\frac{1}{\alpha}\log_p\lambda]}m_j=p^{n[\frac{1}{\alpha}\log_p\lambda]}-1
\end{align*}
for $\lambda\geq1$.
Then we have
\begin{align*}
    N_{D_{\mathbb{Z}_p^n,\mathcal{D}}^{\alpha}}^{0}(\lambda)=\begin{cases}
        0,&0\leq\lambda<1,\\
        p^{n[\frac{1}{\alpha}\log_p\lambda]}-1,&\lambda\geq1.
    \end{cases}
\end{align*}
Consequently,
\begin{align*}
    N_{D_{\mathbb{Z}_p^n,\mathcal{D}}^{\alpha}}(\lambda)=\begin{cases}
        0,&0\leq\lambda<\frac{1-p^{-n}}{1-p^{-\alpha-n}},\\
        1,&\frac{1-p^{-n}}{1-p^{-\alpha-n}}\leq\lambda<1,\\
        p^{n[\frac{1}{\alpha}\log_p\lambda]},&\lambda\geq1.
    \end{cases}
\end{align*}

For an arbitrary ball $B_L(\boldsymbol{a})=\boldsymbol{a}+p^{L}\mathbb{Z}_p^n$, let $\boldsymbol{x}=\boldsymbol{a}+p^{L}\boldsymbol{u}$, where $\boldsymbol{x}\in B_L(\boldsymbol{a})$ and $\boldsymbol{u}\in \mathbb{Z}_p^n$, and let $\varphi(\boldsymbol{u})$ be a function on $\mathbb{Z}_p^n$. Set
\begin{align*}
    \psi(\boldsymbol{x})=\varphi\left(\frac{\boldsymbol{x}-\boldsymbol{a}}{p^{L}}\right).
\end{align*}
Then
\begin{align*}
    D_{B_L(\boldsymbol{a}),\mathcal{D}}^{\alpha}\psi(\boldsymbol{x})&=\frac{1-p^{\alpha}}{1-p^{-\alpha-n}}\int_{\mathbb{Q}_p^n}\frac{\widetilde{\psi}(\boldsymbol{y})-\widetilde{\psi}(\boldsymbol{x})}{\|\boldsymbol{y}-\boldsymbol{x}\|_p^{\alpha+n}}d\boldsymbol{y}\\
    &=\frac{1-p^{\alpha}}{1-p^{-\alpha-n}}\int_{\mathbb{Q}_p^n}\frac{\widetilde{\varphi}(\boldsymbol{v})-\widetilde{\varphi}(\boldsymbol{u})}{p^{-L(n+\alpha)}\|\boldsymbol{v}-\boldsymbol{u}\|_p^{\alpha+n}}p^{-nL}d\boldsymbol{v}\\
    &=p^{\alpha L}\frac{1-p^{\alpha}}{1-p^{-\alpha-n}}\int_{\mathbb{Q}_p^n}\frac{\widetilde{\varphi}(\boldsymbol{v})-\widetilde{\varphi}(\boldsymbol{u})}{\|\boldsymbol{v}-\boldsymbol{u}\|_p^{\alpha+n}}d\boldsymbol{v}\\
    &=p^{\alpha L}D_{\mathbb{Z}_p^n,\mathcal{D}}^{\alpha}\varphi(\boldsymbol{u}).
\end{align*}
Similarly, we have
\begin{align}
\label{eq:N0_D_Ball_lambda}
    N_{D_{B_L(\boldsymbol{a}),\mathcal{D}}^{\alpha}}^{0}(\lambda)=\begin{cases}
        0,&0\leq\lambda<p^{\alpha L},\\
        p^{n[\frac{1}{\alpha}\log_p\lambda]-nL}-1=|B_L(\boldsymbol{a})|p^{n[\frac{1}{\alpha}\log_p\lambda]}-1,&\lambda\geq p^{\alpha L},
    \end{cases}
\end{align}
and
\begin{align}
\label{eq:N_D_Ball_lambda}
    N_{D_{B_L(\boldsymbol{a}),\mathcal{D}}^{\alpha}}(\lambda)=\begin{cases}
        0,&0\leq\lambda<p^{\alpha L}\frac{1-p^{-n}}{1-p^{-\alpha-n}},\\
        1,&p^{\alpha L}\frac{1-p^{-n}}{1-p^{-\alpha-n}}\leq\lambda<p^{\alpha L},\\
        |B_L(\boldsymbol{a})|p^{n[\frac{1}{\alpha}\log_p\lambda]},&\lambda\geq p^{\alpha L}.
    \end{cases}
\end{align}

For $\Omega=\displaystyle\bigsqcup_{i=1}^{m}B_{L_i}(\boldsymbol{a}_i)$, first consider the subspace $\displaystyle\bigoplus_{i=1}^{m}L_0^2(B_{L_i}(\boldsymbol{a}_i))$. The eigenvalues of the restriction to this subspace are $\lambda_{\gamma,i}=p^{\alpha(\gamma+L_i)}$ with the same multiplicity $m_{\gamma,i}=p^{n\gamma}-p^{n(\gamma-1)}$ as in Theorem \ref{thm:eig_D_L02} since the eigenfunctions are orthogonal to the constant functions on the balls.

On the subspace $\operatorname{span}\{\boldsymbol{1}_{B_{L_1}(\boldsymbol{a}_1)},\dots,\boldsymbol{1}_{B_{L_m}(\boldsymbol{a}_m)}\}$ part, we compute
\begin{equation}
\label{eq:D_Omiga_alpha_1}
\begin{aligned}
    &D_{\Omega,\mathcal{D}}^{\alpha}\boldsymbol{1}_{B_{L_i}(\boldsymbol{a}_i)}(\boldsymbol{x})\\
    &=\frac{p^{\alpha}-1}{1-p^{-\alpha-n}}\int_{\mathbb{Q}_p^n}\frac{\boldsymbol{1}_{B_{L_i}(\boldsymbol{a}_i)}(\boldsymbol{x})-\boldsymbol{1}_{B_{L_i}(\boldsymbol{a}_i)}(\boldsymbol{y})}{\|\boldsymbol{y}-\boldsymbol{x}\|_p^{\alpha+n}}d\boldsymbol{y}\\
    &=\frac{p^{\alpha}-1}{1-p^{-\alpha-n}}\sum_{j=1}^{m}\int_{B_{L_j}(\boldsymbol{a}_j)}\frac{\boldsymbol{1}_{B_{L_i}(\boldsymbol{a}_i)}(\boldsymbol{x})-\delta_{ij}}{\|\boldsymbol{y}-\boldsymbol{x}\|_p^{\alpha+n}}d\boldsymbol{y}+\frac{p^{\alpha}-1}{1-p^{-\alpha-n}}\left(\int_{\Omega^c}\frac{1}{\|\boldsymbol{y}-\boldsymbol{x}\|_p^{\alpha+n}}d\boldsymbol{y}\right)\boldsymbol{1}_{B_{L_i}(\boldsymbol{a}_i)}(\boldsymbol{x}).
\end{aligned}
\end{equation}
\begin{enumerate}
    \item[]\textbf{Case 1.} $\boldsymbol{x}\in B_{L_i}(\boldsymbol{a}_i)$: then \eqref{eq:D_Omiga_alpha_1} becomes
    \begin{align*}
        &\frac{p^{\alpha}-1}{1-p^{-\alpha-n}}\int_{B_{L_i}^c(\boldsymbol{a}_i)}\frac{1}{\|\boldsymbol{y}-\boldsymbol{x}\|_p^{\alpha+n}}d\boldsymbol{y}\\
        =&\frac{p^{\alpha}-1}{1-p^{-\alpha-n}}\sum_{\substack{j=1\\j\neq i}}^{m}\int_{B_{L_j}(\boldsymbol{a}_j)}\frac{1}{\|\boldsymbol{y}-\boldsymbol{x}\|_p^{\alpha+n}}d\boldsymbol{y}+\frac{p^{\alpha}-1}{1-p^{-\alpha-n}}\int_{\Omega^c}\frac{1}{\|\boldsymbol{y}-\boldsymbol{x}\|_p^{\alpha+n}}d\boldsymbol{y}\\
        =&C_{n,\alpha}\sum_{\substack{j=1\\j\neq i}}^{m}\frac{V_j}{d_{ij}^{\alpha+n}}+\Gamma_i,
    \end{align*}
    where $C_{n,\alpha}=\frac{p^{\alpha}-1}{1-p^{-\alpha-n}}$, $d_{ij}=\operatorname{dist}(B_{L_i}(\boldsymbol{a}_i),B_{L_j}(\boldsymbol{a}_j))=\|\boldsymbol{a}_i-\boldsymbol{a}_j\|_p$, $V_i=|B_{L_i}(\boldsymbol{a}_i)|=p^{-nL_i}$ and $\Gamma_i=C_{n,\alpha}\int_{\Omega^c}\frac{1}{\|\boldsymbol{y}-\boldsymbol{x}\|_p^{\alpha+n}}d\boldsymbol{y}$.
    Note that \eqref{eq:D_Omiga_alpha_1} can also be written as
    \begin{align*}
        &\frac{p^{\alpha}-1}{1-p^{-\alpha-n}}\int_{B_{L_i}^c(\boldsymbol{a}_i)}\frac{1}{\|\boldsymbol{y}-\boldsymbol{x}\|_p^{\alpha+n}}d\boldsymbol{y}\\
        =&\frac{p^{\alpha}-1}{1-p^{-\alpha-n}}\int_{\mathbb{Q}_p^n\backslash p^{L_i}\mathbb{Z}_p^n}\frac{1}{\|\boldsymbol{z}\|^{\alpha+n}}d\boldsymbol{z}\\
        =&\frac{p^{\alpha}-1}{1-p^{-\alpha-n}}p^{\alpha L_i}\frac{1-p^{-n}}{p^{\alpha}-1}\\
        =&p^{\alpha L_i}\frac{1-p^{-n}}{1-p^{-\alpha-n}}.
    \end{align*}
    \item[]\textbf{Case 2.} $\boldsymbol{x}\in B_{L_j}(\boldsymbol{a}_j)$, $j\neq i$: then \eqref{eq:D_Omiga_alpha_1} becomes
    \begin{align*}
        -\frac{p^{\alpha}-1}{1-p^{-\alpha-n}}\int_{B_{L_i}(\boldsymbol{a}_i)}\frac{1}{\|\boldsymbol{y}-\boldsymbol{x}\|_p^{\alpha+n}}d\boldsymbol{y}=-C_{n,\alpha}\frac{V_i}{d_{ij}^{\alpha+n}}.
    \end{align*}
\end{enumerate}
Passing through the normalized basis
\begin{align*}
    \boldsymbol{e}_i(\boldsymbol{x})=\frac{\boldsymbol{1}_{B_{L_i}(\boldsymbol{a}_i)}(\boldsymbol{x})}{\sqrt{V_i}}
\end{align*}
to ensure that $\|\boldsymbol{e}_i(\boldsymbol{x})\|_{L^2(\Omega)}=1$, the representation matrix of $D_{\Omega,\mathcal{D}}^{\alpha}$ is a symmetric matrix $\boldsymbol{H}$ defined by
\begin{align*}
    h_{ij}=\begin{cases}
        C_{n,\alpha}\displaystyle\sum_{\substack{k=1\\k\neq i}}^{m}\frac{V_k}{d_{ik}^{\alpha+n}}+\Gamma_i=p^{\alpha L_i}\frac{1-p^{-n}}{1-p^{-\alpha-n}},&i=j,\\
        -C_{n,\alpha}\frac{\sqrt{V_iV_j}}{d_{ij}^{\alpha+n}},&i\neq j,
    \end{cases}
\end{align*}
satisfying
\begin{align*}
D_{\Omega,\mathcal{D}}^{\alpha}\boldsymbol{e}_i(\boldsymbol{x})=\sum_{j=1}^{m}h_{ji}\boldsymbol{e}_j(\boldsymbol{x}).
\end{align*}
There exists an orthogonal matrix $\boldsymbol{Q}$ that diagonalizes
$\boldsymbol{H}$. The constant-on-each-ball subspace and the spaces
$L_0^2(B_{L_i}(\boldsymbol a_i))$ reduce the Dirichlet form. Hence its
spectrum, as a set, is
\begin{align*}
    \sigma(\boldsymbol{H})\cup
    \{\lambda_{\gamma,i}:\gamma\in\mathbb N^*,\ 1\leq i\leq m\}.
\end{align*}
If the same numerical value occurs in different summands, the corresponding
multiplicities are added.
In summary, we have
\begin{theorem}
\label{thm:eig_Dirichlet_compact}
    The eigenvalues of the self-adjoint Dirichlet operator
    $D_{\Omega,\mathcal{D}}^{\alpha}$ are
    \begin{align*}
        \sigma(\boldsymbol{H})\cup\left\{\lambda_{\gamma,i}=p^{\alpha(\gamma+L_i)}\right\}_{\substack{\gamma\in\mathbb{N}^*\\1\leq i\leq m}}.
    \end{align*}
    Each value $\lambda_{\gamma,i}$ has multiplicity
    $m_{\gamma,i}=p^{n\gamma}-p^{n(\gamma-1)}$ in its mean-zero summand.
    If the same numerical value occurs in different summands, all
    multiplicities are added. The corresponding normalized eigenfunctions are
    \begin{align*}
        \Psi_{i}(\boldsymbol{x})=\sum_{j=1}^{m}q_{ji}\boldsymbol{e}_j(\boldsymbol{x}),
    \end{align*}
    and
    \begin{align*}
        \Psi_{\gamma,\boldsymbol{a},\boldsymbol{j}}^{(i)}(\boldsymbol{x})
        =p^{\frac n2(\gamma+L_i-1)}
        \chi_p\!\left(p^{-(\gamma+L_i)}\boldsymbol j\cdot
        (\boldsymbol x-\boldsymbol a_i-p^{L_i}\boldsymbol a)\right)
        \boldsymbol 1_{\boldsymbol a_i+p^{L_i}\boldsymbol a
        +p^{\gamma+L_i-1}\mathbb Z_p^n}(\boldsymbol x).
    \end{align*}
    Here $\boldsymbol{a}\in\mathbb{Z}_p^n/p^{\gamma-1}\mathbb{Z}_p^n$ and $\boldsymbol{j}\in\mathbb{F}_p^n\backslash\{\boldsymbol{0}\}$.
\end{theorem}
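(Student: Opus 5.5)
The plan is to turn the orthogonal decomposition
\begin{align*}
L^2(\Omega)=\operatorname{span}\{\boldsymbol{1}_{B_{L_1}(\boldsymbol a_1)},\dots,\boldsymbol{1}_{B_{L_m}(\boldsymbol a_m)}\}\oplus\bigoplus_{i=1}^m L_0^2(B_{L_i}(\boldsymbol a_i))
\end{align*}
into a reduction of the closed form $\mathcal E_{\Omega,\mathcal D}$, and then read off the spectrum summand by summand. Since $\Omega$ is compact open, the first proposition of Section 2 gives $H_0^{\alpha/2}(\Omega)=\widetilde H_0^{\alpha/2}(\Omega)$. Consequently the orthogonal projections onto the summands preserve the form domain: each projection replaces $u$ by its ball averages or by $u$ minus its ball averages, and both are again in $H^{\alpha/2}$ and vanish off $\Omega$.

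Reduction then amounts to showing $\mathcal E_{\Omega,\mathcal D}(u,v)=0$ whenever $u$ and $v$ lie in different summands. I would verify this with the singular-integral form of $\mathcal E$ on test functions and extend by density.

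\textbf{Cross terms.} Take $v\in\mathcal S_2(B_{L_i}(\boldsymbol a_i))$ and any $u$ that is either constant on each ball or lies in $L_0^2(B_{L_j}(\boldsymbol a_j))$ with $j\neq i$. Split the double integral into $\Omega\times\Omega$, $\Omega\times\Omega^c$ and $\Omega^c\times\Omega^c$.
\begin{enumerate}
\item[(i)] On $B_{L_i}\times B_{L_i}$, the function $u$ is constant and contributes nothing.
\item[(ii)] For $\boldsymbol x\in B_{L_i}$ and $\boldsymbol y\notin B_{L_i}$, ultrametricity makes $\|\boldsymbol x-\boldsymbol y\|_p=\|\boldsymbol a_i-\boldsymbol y\|_p$ independent of $\boldsymbol x$. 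So $\int_{B_{L_i}}v(\boldsymbol x)\,d\boldsymbol x=0$ kills every term linear in $v(\boldsymbol x)$. The same argument applies with the roles of the balls exchanged when $u$ is mean-zero on $B_{L_j}$.
\item[(iii)] The $\Omega^c$ interaction contributes $\Gamma$-weighted terms $\int\Gamma(\boldsymbol x)u\bar v$. Here $\Gamma(\boldsymbol x)$ is constant on $B_{L_i}$ by the same ultrametric fact, so these terms also vanish.
\end{enumerate}
Hence all summands reduce $\mathcal E_{\Omega,\mathcal D}$, and therefore reduce the self-adjoint operator $D^\alpha_{\Omega,\mathcal D}$.

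\textbf{Mean-zero summands.} On $L_0^2(B_{L_i}(\boldsymbol a_i))$, the form of $D^\alpha_{\Omega,\mathcal D}$ agrees with that of $D^\alpha_{B_{L_i},\mathcal D}$. The only difference is the extra $\Omega\setminus B_{L_i}$ interaction, and it is a constant times $|u|^2$ integrated on $B_{L_i}$, which cancels against the $\Gamma$-difference. Equivalently, the total interaction of $\boldsymbol x\in B_{L_i}$ with $B_{L_i}^c$ equals $p^{\alpha L_i}\frac{1-p^{-n}}{1-p^{-\alpha-n}}$ regardless of how $B_{L_i}^c$ is split between $\Omega$ and $\Omega^c$, by the Case 1 computation; the cross-ball pieces vanish against mean-zero functions. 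Then the scaling identity $D^\alpha_{B_L(\boldsymbol a),\mathcal D}\psi=p^{\alpha L}D^\alpha_{\mathbb Z_p^n,\mathcal D}\varphi$ combined with Theorem \ref{thm:eig_D_L02} gives eigenvalues $p^{\alpha(\gamma+L_i)}$ with multiplicity $p^{n\gamma}-p^{n(\gamma-1)}$. The listed functions $\Psi^{(i)}_{\gamma,\boldsymbol a,\boldsymbol j}$ are the transported eigenfunctions. Completeness follows because they form an orthonormal basis of $L_0^2(\mathbb Z_p^n)$ after rescaling; I take this from Vladimirov's result.

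\textbf{Constant-on-balls summand.} This summand is $m$-dimensional. Using the computation of $D^\alpha_{\Omega,\mathcal D}\boldsymbol 1_{B_{L_i}}$ in Cases 1 and 2, the restricted operator is represented by $\boldsymbol H$ in the orthonormal basis $\boldsymbol e_i$. Since $\boldsymbol H$ is real symmetric, an orthogonal $\boldsymbol Q$ diagonalizes it, and $\Psi_i=\sum_j q_{ji}\boldsymbol e_j$ are the eigenfunctions.

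\textbf{Conclusion.} Combining the summands gives the spectrum as stated. Multiplicities add when the same value occurs in different reducing subspaces, because the eigenspaces of an orthogonal direct sum are the direct sums of the eigenspaces. The main obstacle is the rigorous reduction step, namely that the projections preserve $H_0^{\alpha/2}(\Omega)$ and that the cross terms vanish on the whole form domain rather than only on test functions. I would handle it via the compact-open identification $H_0^{\alpha/2}=\widetilde H_0^{\alpha/2}$ and continuity of $\mathcal E$ in the form norm.
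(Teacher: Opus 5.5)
Your proposal is correct and follows essentially the paper's route: the form is reduced along the decomposition into the constant-on-balls space and the spaces $L_0^2(B_{L_i}(\boldsymbol a_i))$, Vladimirov's spectrum is transported by scaling, and $\boldsymbol H$ is diagonalized on the constants. The paper's rigorous reduction (in the bounded-open-set theorem) uses $D^\alpha w_i=0$ off $B_{L_i}(\boldsymbol a_i)$ for mean-zero test functions $w_i$, which is the operator form of your ultrametric cross-term computation; also, your $\Gamma$-cancellation in the mean-zero step is unnecessary, since $\mathcal E_{\Omega,\mathcal D}(u,v)=\mathcal E(\tilde u,\tilde v)$ does not depend on $\Omega$ for $u,v$ supported in $B_{L_i}(\boldsymbol a_i)$.
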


Similarly, let $N_{D_{\Omega,\mathcal{D}}^{\alpha}}^{0}(\lambda)$ denote the
eigenvalue counting function of the restriction of the self-adjoint operator
$D_{\Omega,\mathcal{D}}^{\alpha}$ to the reducing subspace
$\displaystyle\bigoplus_{i=1}^{m}L_0^2(B_{L_i}(\boldsymbol{a}_i))$.
For $\lambda\geq \displaystyle\max_{1\leq i\leq m}\left\{p^{\alpha L_i}\right\}$ we have
\begin{equation}
\label{eq:N0_D_Omiga_lambda}
\begin{aligned}
    N_{D_{\Omega,\mathcal{D}}^{\alpha}}^{0}(\lambda)&=\sum_{i=1}^{m}N_{D_{B_{L_i}(\boldsymbol{a}_i),\mathcal{D}}^{\alpha}}^{0}(\lambda)\\
    &=\sum_{i=1}^{m}\left(|B_{L_i}(\boldsymbol{a}_i)|p^{n[\frac{1}{\alpha}\log_p\lambda]}-1\right)\\
    &=\left(\sum_{i=1}^{m}|B_{L_i}(\boldsymbol{a}_i)|\right)p^{n[\frac{1}{\alpha}\log_p\lambda]}-m\\
    &=|\Omega|p^{n[\frac{1}{\alpha}\log_p\lambda]}-m.
\end{aligned}
\end{equation}
Let $L=\max\limits_{1\leq i\leq m}L_i$. The Fourier support of every
$u\in\operatorname{span}\{\boldsymbol e_1,\ldots,\boldsymbol e_m\}$ is
contained in $\{\|\boldsymbol\xi\|_p\leq p^L\}$. Therefore
$0<\boldsymbol H\leq p^{\alpha L}I$. Hence, if
$\lambda\geq p^{\alpha L}$, then
\begin{align}
\label{eq:N_Dirichlet_compact}
    N_{D_{\Omega,\mathcal{D}}^{\alpha}}(\lambda)=|\Omega|p^{n[\frac{1}{\alpha}\log_p\lambda]}.
\end{align}

For completeness, the entrywise Gershgorin estimate from the finite-ball
description gives the supplementary geometric bounds
\begin{align*}
    \inf\sigma(\boldsymbol H)
    &\geq\min_{1\leq i\leq m}
    \left\{
    C_{n,\alpha}\sum_{\substack{j=1\\j\neq i}}^m
    \frac{V_j-\sqrt{V_iV_j}}{d_{ij}^{n+\alpha}}+\Gamma_i
    \right\},\\
    \sup\sigma(\boldsymbol H)
    &\leq\max_{1\leq i\leq m}
    \left\{
    C_{n,\alpha}\sum_{\substack{j=1\\j\neq i}}^m
    \frac{V_j+\sqrt{V_iV_j}}{d_{ij}^{n+\alpha}}+\Gamma_i
    \right\}.
\end{align*}
These bounds retain the explicit dependence on the radii and pairwise ball
distances. The uniform operator estimate
$0<\boldsymbol H\leq p^{\alpha L}I$ used above is sharper for the counting
threshold needed later.

\begin{remark}
The symmetric representation by $\boldsymbol H$ also makes the lower
Gershgorin estimate transparent. Indeed, pairing the terms indexed by
$i$ and $j$ gives
\begin{align*}
    \sum_{i=1}^{m}\sum_{\substack{j=1\\j\neq i}}^{m}
    \frac{V_j-\sqrt{V_iV_j}}{d_{ij}^{n+\alpha}}
    =\sum_{1\leq i<j\leq m}
    \frac{\left(\sqrt{V_i}-\sqrt{V_j}\right)^2}
    {d_{ij}^{n+\alpha}}
    \geq0.
\end{align*}
This is the additional geometric information that is obscured if one uses
the corresponding non-symmetric matrix in the unnormalized basis.
\end{remark}

\subsection{Neumann eigenvalues}
Let $M_\Gamma$ denote multiplication by the function that equals
$\Gamma_i$ on $B_{L_i}(\boldsymbol a_i)$. Since $\Omega$ is compact and
open, the Dirichlet and Neumann forms have the same form domain, and the
operator identity
\begin{align*}
    D_{\Omega,\mathcal{N}}^{\alpha}
    =D_{\Omega,\mathcal{D}}^{\alpha}-M_\Gamma
\end{align*}
gives the eigenvalues
\begin{align*}
    \mu_{\gamma,i}=\lambda_{\gamma,i}-\Gamma_i.
\end{align*}
Then the representation matrix of $D_{\Omega,\mathcal{N}}^{\alpha}$ with respect to the basis $\boldsymbol{e}_i(\boldsymbol{x})=\frac{\boldsymbol{1}_{B_{L_i}(\boldsymbol{a}_i)}(\boldsymbol{x})}{\sqrt{V_i}}$ is a symmetric matrix $\boldsymbol{R}$ defined by
\begin{align*}
    r_{ij}=\begin{cases}
        C_{n,\alpha}\displaystyle\sum_{\substack{k=1\\k\neq i}}^{m}\frac{V_k}{d_{ik}^{\alpha+n}},&i=j,\\
        -C_{n,\alpha}\frac{\sqrt{V_iV_j}}{d_{ij}^{\alpha+n}},&i\neq j.
    \end{cases}
\end{align*}
There exists an orthogonal matrix $\boldsymbol{W}$ that diagonalizes $\boldsymbol{R}$. Similarly, we have
\begin{theorem}
\label{thm:eig_Neumann_compact}
    The eigenvalues of the self-adjoint Neumann operator
    $D_{\Omega,\mathcal{N}}^{\alpha}$ are
    \begin{align*}
        \sigma(\boldsymbol{R})\cup\{\mu_{\gamma,i}=p^{\alpha(\gamma+L_i)}-\Gamma_i\}_{\substack{\gamma\in\mathbb{N}^*\\1\leq i\leq m}}.
    \end{align*}
    Each value $\mu_{\gamma,i}$ has multiplicity
    $m_{\gamma,i}=p^{n\gamma}-p^{n(\gamma-1)}$ in its mean-zero summand;
    coincident values from different summands have their multiplicities
    added. The corresponding normalized eigenfunctions are
    \begin{align*}
        \Phi_{i}(\boldsymbol{x})=\sum_{j=1}^{m}w_{ji}\boldsymbol{e}_j(\boldsymbol{x}),
    \end{align*}
    and
    \begin{align*}
        \Phi_{\gamma,\boldsymbol{a},\boldsymbol{j}}^{(i)}(\boldsymbol{x})
        =\Psi_{\gamma,\boldsymbol{a},\boldsymbol{j}}^{(i)}(\boldsymbol{x}).
    \end{align*}
    Here $\boldsymbol{a}\in\mathbb{Z}_p^n/p^{\gamma-1}\mathbb{Z}_p^n$ and $\boldsymbol{j}\in\mathbb{F}_p^n\backslash\{\boldsymbol{0}\}$.
\end{theorem}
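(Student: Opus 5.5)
The proof mirrors the Dirichlet case in Theorem~\ref{thm:eig_Dirichlet_compact}, using the identity $D_{\Omega,\mathcal N}^{\alpha}=D_{\Omega,\mathcal D}^{\alpha}-M_\Gamma$ and the orthogonal decomposition
\begin{align*}
L^2(\Omega)=\operatorname{span}\{\boldsymbol e_1,\dots,\boldsymbol e_m\}\oplus\bigoplus_{i=1}^m L_0^2(B_{L_i}(\boldsymbol a_i)).
\end{align*}
We first justify the operator identity. Since $\Omega$ is compact and open, the earlier proposition gives $H_0^{\alpha/2}(\Omega)=H^{\alpha/2}(\Omega)$ with equivalent norms. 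Identity \eqref{eq:Dirichlet_Neumann_form_identity} then reads $\mathcal E_{\Omega,\mathcal D}[u]=\mathcal E_{\Omega,\mathcal N}[u]+(M_\Gamma u,u)$. Here $M_\Gamma$ is bounded, because $\operatorname{dist}(\Omega,\Omega^c)>0$ makes each $\Gamma_i$ finite. The inner integral $\int_{\Omega^c}\|\boldsymbol x-\boldsymbol y\|_p^{-n-\alpha}d\boldsymbol y$ is constant on each maximal ball $B_{L_i}(\boldsymbol a_i)$: for $\boldsymbol x,\boldsymbol x'$ in the same maximal ball and $\boldsymbol y\in\Omega^c$, we have $\|\boldsymbol y-\boldsymbol a_i\|_p>p^{-L_i}$, so $\|\boldsymbol x-\boldsymbol y\|_p=\|\boldsymbol x'-\boldsymbol y\|_p$ by the ultrametric inequality. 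This is what makes $M_\Gamma$ multiplication by a function that is constant on balls. By uniqueness in the first representation theorem, the two operators differ exactly by the bounded operator $M_\Gamma$.

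Next we show that each summand reduces both operators. $M_\Gamma$ commutes with multiplication by $\boldsymbol 1_{B_{L_i}(\boldsymbol a_i)}$. It maps $L_0^2(B_{L_i}(\boldsymbol a_i))$ into itself, since it acts there as the scalar $\Gamma_i$. It maps the span of the $\boldsymbol e_j$ into itself, since $M_\Gamma\boldsymbol e_j=\Gamma_j\boldsymbol e_j$. These subspaces already reduce $D_{\Omega,\mathcal D}^{\alpha}$ by the discussion before Theorem~\ref{thm:eig_Dirichlet_compact}, so they also reduce $D_{\Omega,\mathcal N}^{\alpha}$.

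On each mean-zero summand, the Dirichlet eigenfunctions $\Psi^{(i)}_{\gamma,\boldsymbol a,\boldsymbol j}$ are supported in $B_{L_i}(\boldsymbol a_i)$. Hence $D_{\Omega,\mathcal N}^{\alpha}\Psi^{(i)}=(\lambda_{\gamma,i}-\Gamma_i)\Psi^{(i)}$. These eigenfunctions form an orthonormal basis of that summand, so its spectrum is exactly $\{\mu_{\gamma,i}\}$ with multiplicities $m_{\gamma,i}$. On the span of the $\boldsymbol e_j$, the matrix is $\boldsymbol H-\operatorname{diag}(\Gamma_i)$. By the formula for $h_{ij}$, this removes $\Gamma_i$ from the diagonal and leaves the off-diagonal entries unchanged, which gives exactly $\boldsymbol R$. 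As a cross-check, the same matrix comes from computing $D_{\Omega,\mathcal N}^{\alpha}\boldsymbol 1_{B_{L_i}}$ directly with the integral formula, following Cases~1--2 of \eqref{eq:D_Omiga_alpha_1} with the $\Omega^c$ term dropped. Diagonalizing $\boldsymbol R$ by the orthogonal $\boldsymbol W$ gives the eigenfunctions $\Phi_i$.

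The only delicate point is the first step: confirming that the Neumann form domain coincides with the Dirichlet one, so that the operator difference is a genuine identity and not merely an identity of forms on a smaller domain. This follows from the proposition identifying $H_0^s(\Omega)$ with $H^s(\Omega)$ for compact open $\Omega$. The remaining steps are bookkeeping. Coinciding numerical values across summands add their multiplicities, exactly as in the Dirichlet case.
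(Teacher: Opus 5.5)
Your proposal is correct and follows the paper's own route. The paper likewise uses that, for compact open $\Omega$, the Dirichlet and Neumann forms share the domain $H^{\alpha/2}(\Omega)$, so that $D_{\Omega,\mathcal N}^{\alpha}=D_{\Omega,\mathcal D}^{\alpha}-M_\Gamma$; this shifts each mean-zero eigenvalue by $\Gamma_i$ and turns $\boldsymbol H$ into $\boldsymbol R$ on the span of the $\boldsymbol e_j$, and your extra checks (constancy of $\Gamma$ on each maximal ball via the ultrametric inequality, boundedness of $M_\Gamma$, and invariance of each summand under $M_\Gamma$) just make explicit what the paper leaves implicit.
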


When $m=1$, $\Gamma=p^{\alpha L}\frac{1-p^{-n}}{1-p^{-\alpha-n}}$, so that
\begin{align*}
    N_{D_{B_L(\boldsymbol{a}),\mathcal{N}}^{\alpha}}(\lambda)&=N_{D_{B_L(\boldsymbol{a}),\mathcal{D}}^{\alpha}}\left(\lambda+p^{\alpha L}\frac{1-p^{-n}}{1-p^{-\alpha-n}}\right)\\
    &=\begin{cases}
        1,&0\leq\lambda<p^{\alpha L}-p^{\alpha L}\frac{1-p^{-n}}{1-p^{-\alpha-n}},\\
        |B_L(\boldsymbol{a})|p^{n\left[\frac{1}{\alpha}\log_p\left(\lambda+p^{\alpha L}\frac{1-p^{-n}}{1-p^{-\alpha-n}}\right)\right]},&\lambda\geq p^{\alpha L}-p^{\alpha L}\frac{1-p^{-n}}{1-p^{-\alpha-n}}.
    \end{cases}
\end{align*}

The form comparison on the constant-on-each-ball subspace gives
$0\leq\boldsymbol R\leq\boldsymbol H\leq p^{\alpha L}I$. Thus, for
$\lambda\geq p^{\alpha L}$,
\begin{align}
\label{eq:N_Neumann_compact}
    N_{D_{\Omega,\mathcal{N}}^{\alpha}}(\lambda)=\sum_{i=1}^{m}|B_{L_i}(\boldsymbol{a}_i)|p^{n[\frac{1}{\alpha}\log_p(\lambda+\Gamma_i)]}.
\end{align}
Equation \eqref{eq:N_Neumann_compact} yields the bounds
\begin{align}
\label{eq:rough_estimate_Neumann_counting}
    |\Omega|p^{n[\frac{1}{\alpha}\log_p(\lambda+\Gamma_{\min})]}\leq N_{D_{\Omega,\mathcal{N}}^{\alpha}}(\lambda)\leq|\Omega|p^{n[\frac{1}{\alpha}\log_p(\lambda+\Gamma_{\max})]},
\end{align}
where $\Gamma_{\min}=\displaystyle\min_{1\leq i\leq m}\Gamma_{i}$ and $\Gamma_{\max}=\displaystyle\max_{1\leq i\leq m}\Gamma_{i}$.

\begin{remark}
The entrywise Gershgorin estimate for $\boldsymbol R$ gives the additional
geometric bounds
\begin{align*}
    \inf\sigma(\boldsymbol R)
    &\geq\min_{1\leq i\leq m}
    C_{n,\alpha}\sum_{\substack{j=1\\j\neq i}}^{m}
    \frac{V_j-\sqrt{V_iV_j}}{d_{ij}^{n+\alpha}},\\
    \sup\sigma(\boldsymbol R)
    &\leq\max_{1\leq i\leq m}
    C_{n,\alpha}\sum_{\substack{j=1\\j\neq i}}^{m}
    \frac{V_j+\sqrt{V_iV_j}}{d_{ij}^{n+\alpha}}.
\end{align*}
Consequently, the exact counting formula
\eqref{eq:N_Neumann_compact} is already valid whenever
\begin{align*}
    \lambda\geq\max_{1\leq i\leq m}
    \left\{
    C_{n,\alpha}\sum_{\substack{j=1\\j\neq i}}^{m}
    \frac{V_j+\sqrt{V_iV_j}}{d_{ij}^{n+\alpha}},
    \ p^{\alpha L_i}-\Gamma_i
    \right\}.
\end{align*}
This is a geometry-dependent alternative to the uniform sufficient
threshold $\lambda\geq p^{\alpha L}$; it may be sharper for a particular
configuration of maximal balls.
\end{remark}

\subsection{Extension to bounded open domains}

The finite maximal-ball decomposition was treated above. We now consider the
remaining case, in which the bounded open set has countably many maximal
balls.

\begin{theorem}[Orthogonal decomposition on a bounded open set]
Let $\Omega\subseteq\mathbb Q_p^n$ be bounded and open, with maximal-ball
decomposition
\begin{align*}
    \Omega=\bigsqcup_{i=1}^{\infty}B_{L_i}(\boldsymbol a_i),
\end{align*}
and set
\begin{align*}
V=\overline{\operatorname{span}
\{\boldsymbol 1_{B_{L_i}(\boldsymbol a_i)}:i\geq1\}}^{L^2(\Omega)}.
\end{align*}
Then
\begin{align}
\label{eq:bounded_open_L2_decomposition}
L^2(\Omega)=V\oplus
\bigoplus_{i=1}^{\infty}L_0^2(B_{L_i}(\boldsymbol a_i)).
\end{align}
Let $\boldsymbol H$ be the nonnegative self-adjoint operator in $V$
associated with the restriction of $\mathcal E_{\Omega,\mathcal D}$ to
\begin{align*}
    V\cap H_0^{\alpha/2}(\Omega).
\end{align*}
Then
\begin{align}
\label{eq:bounded_open_Dirichlet_direct_sum}
D_{\Omega,\mathcal D}^{\alpha}
=\boldsymbol H\oplus\bigoplus_{i=1}^{\infty}
\left(D_{B_{L_i}(\boldsymbol a_i),\mathcal D}^{\alpha}
\big|_{L_0^2(B_{L_i}(\boldsymbol a_i))}\right),
\end{align}
and
\begin{align}
\label{eq:bounded_open_Dirichlet_spectrum}
\sigma(D_{\Omega,\mathcal D}^{\alpha})
=\sigma(\boldsymbol H)\cup
\{p^{\alpha(\gamma+L_i)}:i\geq1,\ \gamma\in\mathbb N^*\}.
\end{align}
Both $D_{\Omega,\mathcal D}^{\alpha}$ and $\boldsymbol H$ have compact
resolvent, and
\begin{align}
\label{eq:bounded_open_Dirichlet_counting}
N_{D_{\Omega,\mathcal D}^{\alpha}}(\lambda)
=N_{\boldsymbol H}(\lambda)
+\sum_{i=1}^{\infty}
N_{D_{B_{L_i}(\boldsymbol a_i),\mathcal D}^{\alpha}}^0(\lambda),
\end{align}
where the sum is finite for every fixed $\lambda$.

For $\boldsymbol x\in B_{L_i}(\boldsymbol a_i)$, let $\Gamma_i$ have the
same meaning as above, and let $\boldsymbol R$ be the nonnegative
self-adjoint operator in $V$ associated with the restriction of
$\mathcal E_{\Omega,\mathcal N}$ to
$V\cap H^{\alpha/2}(\Omega)$. Then
\begin{align}
\label{eq:bounded_open_Neumann_direct_sum}
D_{\Omega,\mathcal N}^{\alpha}
=\boldsymbol R\oplus\bigoplus_{i=1}^{\infty}
\left(\left(D_{B_{L_i}(\boldsymbol a_i),\mathcal D}^{\alpha}
-\Gamma_iI\right)\big|_{L_0^2(B_{L_i}(\boldsymbol a_i))}\right),
\end{align}
and
\begin{align}
\label{eq:bounded_open_Neumann_spectrum}
\sigma(D_{\Omega,\mathcal N}^{\alpha})
=\sigma(\boldsymbol R)\cup
\{p^{\alpha(\gamma+L_i)}-\Gamma_i:
i\geq1,\ \gamma\in\mathbb N^*\}.
\end{align}
The second summand in \eqref{eq:bounded_open_Neumann_direct_sum} has compact
resolvent. Consequently,
\begin{align*}
\sigma_{\mathrm{ess}}(D_{\Omega,\mathcal N}^{\alpha})
=\sigma_{\mathrm{ess}}(\boldsymbol R).
\end{align*}
\end{theorem}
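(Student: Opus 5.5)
We first establish the orthogonal decomposition \eqref{eq:bounded_open_L2_decomposition}. Any $f\in L^2(\Omega)$ can be written as $f=\sum_i f\boldsymbol 1_{B_{L_i}(\boldsymbol a_i)}$ in $L^2$, since the maximal balls are disjoint and cover $\Omega$. On each ball we split $f\boldsymbol 1_{B_{L_i}}$ into its average times $\boldsymbol 1_{B_{L_i}}$ and a mean-zero remainder. The averages give an $\ell^2$-weighted series converging in $V$, by Bessel's inequality for the orthogonal system $\{\boldsymbol e_i\}$. The remainders lie in $\bigoplus_i L_0^2(B_{L_i})$. Orthogonality of $V$ to each $L_0^2(B_{L_i})$ holds because a mean-zero function supported in one ball integrates to zero against every ball indicator.

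Next we show that each summand reduces the Dirichlet form. The key computation is for $u\in L_0^2(B_{L_i})$ in the form domain and any $v$ that is constant on $B_{L_i}$ (in particular $v\in V$, or $v$ supported away from $B_{L_i}$). In the Gagliardo double integral, the cross term $\iint (u(x)-u(y))\overline{(v(x)-v(y))}\|x-y\|^{-n-\alpha}$ vanishes. To see this, split into $x,y\in B_{L_i}$, where $v(x)-v(y)=0$, and $x\in B_{L_i}$, $y\notin B_{L_i}$. In the latter region, for fixed $y$ outside the ball the kernel $\|x-y\|_p$ is constant in $x\in B_{L_i}$ by the ultrametric property, so $\int_{B_{L_i}}u(x)\,dx=0$ kills the term. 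The same argument with two different balls $B_{L_i},B_{L_j}$ gives $\mathcal E(u_i,u_j)=0$.

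We also need that the orthogonal projections onto the summands map $H_0^{\alpha/2}(\Omega)$ into itself. This is the step I expect to be the main obstacle, since $\Omega$ is only open and $H_0^{\alpha/2}$ is the strict closure. I would argue on the dense set $\mathcal S(\Omega)$: each $\varphi\in\mathcal S(\Omega)$ has compact support, which meets only finitely many maximal balls and is a finite union of balls contained in them. Hence its ball-averages and mean-zero parts are again in $\mathcal S(\Omega)$. The vanishing of cross terms gives $\mathcal E[\varphi]=\mathcal E[P_V\varphi]+\sum_i\mathcal E[\varphi_i]$, so the projections are contractions in form norm and extend by continuity. The reduction theory for closed forms then yields \eqref{eq:bounded_open_Dirichlet_direct_sum}. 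The scaling computation preceding \eqref{eq:N0_D_Ball_lambda} identifies each mean-zero piece, the multiplication term being constant on the ball as computed in Case 1. This gives \eqref{eq:bounded_open_Dirichlet_spectrum} and \eqref{eq:bounded_open_Dirichlet_counting}.

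Compact resolvent for $D_{\Omega,\mathcal D}^\alpha$ is the earlier Poincaré/compactness proposition, and it passes to the reducing part $\boldsymbol H$. Finiteness of the sum holds because $p^{\alpha(1+L_i)}\le\lambda$ only for finitely many $i$: boundedness of $\Omega$ forces $L_i\to\infty$, with finitely many balls at each level.

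For Neumann, the same cross-term cancellation holds in the intrinsic form over $\Omega\times\Omega$, and the same $\mathcal S$-based argument with $H^{\alpha/2}(\Omega)$ gives the direct sum. On $L_0^2(B_{L_i})$ we use \eqref{eq:Dirichlet_Neumann_form_identity}. Functions supported in a single ball are in the Dirichlet domain, since a ball is compact open, and the boundary weight $\int_{\Omega^c}\|x-y\|^{-n-\alpha}dy$ is constant, equal to $\Gamma_i/C_{n,\alpha}$, on $B_{L_i}$ by ultrametricity. Hence the Neumann piece is $D^\alpha_{B_{L_i},\mathcal D}-\Gamma_i I$ restricted to mean-zero functions, which gives \eqref{eq:bounded_open_Neumann_direct_sum} and \eqref{eq:bounded_open_Neumann_spectrum}. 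Compactness of the resolvent of the second summand follows from its eigenvalues $p^{\alpha(\gamma+L_i)}-\Gamma_i$ having finite multiplicity and tending to infinity. This holds since $\Gamma_i\le p^{\alpha L_i}\frac{1-p^{-n}}{1-p^{-\alpha-n}}<p^{\alpha(1+L_i)}$ and $L_i\to\infty$. Therefore the essential spectrum comes only from $\boldsymbol R$.
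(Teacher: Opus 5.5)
Your Dirichlet argument matches the paper's. You split each maximal ball into its mean and mean-zero part, kill every cross term using the constancy of $\|\boldsymbol x-\boldsymbol y\|_p$ for $\boldsymbol x$ in a ball and $\boldsymbol y$ outside it, and check on $\mathcal S(\Omega)$ that the pieces remain in $\mathcal S(\Omega)$. The resulting form-norm contractions then extend to $H_0^{\alpha/2}(\Omega)$, and the representation theorem for orthogonal sums finishes. One small addition: the finiteness of $\#\{(i,\gamma):p^{\alpha(\gamma+L_i)}\le\lambda\}$ is also what shows that the union in the spectral formula needs no closure, and you should say so explicitly.

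The gap is in the Neumann step, where you say that ``the same $\mathcal S$-based argument with $H^{\alpha/2}(\Omega)$ gives the direct sum.'' Extension by continuity from $\mathcal S(\Omega)$ only reaches the closure of $\mathcal S(\Omega)$ in the intrinsic norm, and in general this is a proper subspace of $H^{\alpha/2}(\Omega)$.

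For example, take $\Omega=\mathbb Z_p^n\setminus\{\boldsymbol 0\}$ with $\alpha>n$. Since $\{\boldsymbol 0\}$ is null, $H^{\alpha/2}(\Omega)=H^{\alpha/2}(\mathbb Z_p^n)$. By the compact-open proposition, its norm is equivalent to the $H^{\alpha/2}(\mathbb Q_p^n)$ norm of the zero extension. Because $\int\max\{1,\|\boldsymbol\xi\|_p\}^{-\alpha}d\boldsymbol\xi<\infty$ for $\alpha>n$, evaluation at $\boldsymbol 0$ is continuous. Every element of $\mathcal S(\Omega)$ vanishes at $\boldsymbol 0$, so $\boldsymbol 1_\Omega$ is not in the closure. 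Hence your argument does not show that the averaging and mean-zero projections map $H^{\alpha/2}(\Omega)$ into itself. As written, it decomposes the operator with the smaller form domain, not $D^\alpha_{\Omega,\mathcal N}$.

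The paper avoids any core here: it checks $\mathcal E_{\Omega,\mathcal N}(w_i,v)=0$ and $\mathcal E_{\Omega,\mathcal N}(w_i,w_j)=0$ directly for arbitrary functions of the relevant type. You can repair your proof the same way.

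\begin{itemize}
\item Write $B_i=B_{L_i}(\boldsymbol a_i)$ and take $u\in H^{\alpha/2}(\Omega)$. The piece $w_i=(u-\bar u_{B_i})\boldsymbol 1_{B_i}$ restricts to an element of $H^{\alpha/2}(B_i)$. By the compact-open proposition it lies in $H_0^{\alpha/2}(B_i)\subseteq H^{\alpha/2}(\Omega)$.
\item Since $u-w_i$ is constant on $B_i$, your cross-term computation gives $\mathcal E_{\Omega,\mathcal N}(u,w_i)=\mathcal E_{\Omega,\mathcal N}[w_i]$. Therefore $\mathcal E_{\Omega,\mathcal N}[u-\sum_{i\le N}w_i]=\mathcal E_{\Omega,\mathcal N}[u]-\sum_{i\le N}\mathcal E_{\Omega,\mathcal N}[w_i]$.
\item Closedness of the form then puts the averaged function in the domain, with the orthogonal energy splitting.
\end{itemize}

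Alternatively, apply Cauchy--Schwarz on each product $B_i\times B_j$, where the kernel is constant; this shows directly that averaging does not increase the Neumann energy. With either fix in place, your identification of the mean-zero pieces with $D^\alpha_{B_i,\mathcal D}-\Gamma_iI$ and your compact-resolvent argument go through as written.
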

\begin{proof}
The $L^2$ decomposition follows by separating, on each maximal ball, the
mean and the mean-zero part. For $u\in\mathcal S(\Omega)$ only finitely many
of these parts occur. If
$w_i\in\mathcal S_2(B_{L_i}(\boldsymbol a_i))$ and
$\boldsymbol x\notin B_{L_i}(\boldsymbol a_i)$, then the distance from
$\boldsymbol x$ to $\boldsymbol y\in B_{L_i}(\boldsymbol a_i)$ is constant.
Therefore
\begin{align*}
D^\alpha w_i(\boldsymbol x)
=-C_{n,\alpha}\int_{B_{L_i}(\boldsymbol a_i)}
\frac{w_i(\boldsymbol y)}
{\|\boldsymbol x-\boldsymbol y\|_p^{n+\alpha}}d\boldsymbol y=0.
\end{align*}
Together with $\int_{\mathbb Q_p^n}D^\alpha w_i=0$, this also shows that
$w_i$ is form-orthogonal to every function that is constant on each maximal
ball.
Thus distinct mean-zero parts are mutually form-orthogonal and are
form-orthogonal to the constant-on-each-maximal-ball part. The corresponding
orthogonal projections are contractions in the Dirichlet form norm.
Passing to the closure of $\mathcal S(\Omega)$ gives a form-orthogonal
decomposition of $H_0^{\alpha/2}(\Omega)$. It also shows that finite linear
combinations of the maximal-ball indicators form a core of
$V\cap H_0^{\alpha/2}(\Omega)$. The representation theorem for orthogonal
sums of closed forms proves \eqref{eq:bounded_open_Dirichlet_direct_sum}.

The compact-resolvent assertion was proved in Section 2. For every fixed
$\lambda$, only finitely many pairs $(i,\gamma)$ satisfy
$p^{\alpha(\gamma+L_i)}\leq\lambda$: a fixed bounded ball contains only
finitely many pairwise disjoint balls whose levels are bounded above.
This proves \eqref{eq:bounded_open_Dirichlet_spectrum} without an additional
closure and gives \eqref{eq:bounded_open_Dirichlet_counting}.

For the Neumann form, let $w_i$ have mean zero on
$B_{L_i}(\boldsymbol a_i)$ and vanish on the other maximal balls, and let
$v$ be constant on every maximal ball. On
$B_{L_i}(\boldsymbol a_i)\times B_{L_i}(\boldsymbol a_i)$ the difference of
$v$ vanishes. On the product of two distinct maximal balls, both the kernel
and the difference of $v$ are constant, while
\begin{align*}
    \int_{B_{L_i}(\boldsymbol a_i)}w_i(\boldsymbol x)
    d\boldsymbol x=0.
\end{align*}
It follows directly that
\begin{align*}
    \mathcal E_{\Omega,\mathcal N}(w_i,v)=0,
    \qquad
    \mathcal E_{\Omega,\mathcal N}(w_i,w_j)=0
    \quad(i\neq j).
\end{align*}
Replacing a function on each maximal ball by its average therefore preserves
the mean component and removes only mutually orthogonal, nonnegative
mean-zero contributions. Hence the averaging projection is a contraction in
the intrinsic form norm. Passing to the form closure proves the Neumann
form-orthogonal decomposition. On the $i$-th mean-zero part, the boundary
term is multiplication by the constant $\Gamma_i$, which proves
\eqref{eq:bounded_open_Neumann_direct_sum}. Moreover,
\begin{align*}
0\leq\Gamma_i
\leq p^{\alpha L_i}\frac{1-p^{-n}}{1-p^{-\alpha-n}}.
\end{align*}
It follows that the shifted mean-zero eigenvalues tend to infinity locally
uniformly in $i$, so the second summand has compact resolvent and its
spectral values have no finite accumulation point. This proves
\eqref{eq:bounded_open_Neumann_spectrum} and the assertion concerning the
essential spectrum. The set $\sigma(\boldsymbol R)$ may contain continuous
spectrum; no diagonalization of $\boldsymbol R$ by an infinite orthogonal
matrix is asserted.
\end{proof}

\begin{lemma}
For every ball $B\subseteq\mathbb Q_p^n$, multiplication by
$\boldsymbol 1_B$ is bounded on $H^s(\mathbb Q_p^n)$ and maps
$H_0^s(\Omega)$ into itself. In particular, if
$u\in V\cap H_0^s(\Omega)$, then the part of $u$ supported in any union of
maximal balls contained in $B$ also belongs to $V\cap H_0^s(\Omega)$.
\end{lemma}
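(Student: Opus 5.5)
Write $B=B_K(\boldsymbol b)=\boldsymbol b+p^K\mathbb Z_p^n$. The plan is to show that multiplication by $\boldsymbol 1_B$ is bounded on $H^s(\mathbb Q_p^n)$ by checking how it acts on the Fourier side. Translation does not change $H^s$ norms, since $|\hat f|$ is translation invariant. So we may take $\boldsymbol b=\boldsymbol 0$. The Fourier transform of $\boldsymbol 1_{p^K\mathbb Z_p^n}$ is $p^{-nK}\boldsymbol 1_{p^{-K}\mathbb Z_p^n}$. Hence
\begin{align*}
\widehat{\boldsymbol 1_Bu}=p^{-nK}\boldsymbol 1_{p^{-K}\mathbb Z_p^n}*\hat u ,
\end{align*}
that is, $\widehat{\boldsymbol 1_Bu}(\boldsymbol\xi)$ is the average of $\hat u$ over the ball $\boldsymbol\xi+p^{-K}\mathbb Z_p^n$.

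The weight $\max\{1,\|\boldsymbol\xi\|_p\}^{2s}$ should be handled by ultrametric comparison. For $\|\boldsymbol\eta\|_p\le p^{K}$, either $\|\boldsymbol\xi\|_p>p^{K}$, in which case $\|\boldsymbol\xi-\boldsymbol\eta\|_p=\|\boldsymbol\xi\|_p$ and the two weights coincide, or both norms are at most $p^K$. In the second case both weights are at most $\max\{1,p^K\}^{2s}$ and at least $1$. So the weight ratio is at most $c_K=\max\{1,p^{K}\}^{2s}$, uniformly over each averaging ball. Apply Jensen (Cauchy--Schwarz) to the average, $|\text{avg}\,\hat u|^2\le\text{avg}\,|\hat u|^2$, and then use Fubini; the averaging operator preserves integrals. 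This gives
\begin{align*}
\|\boldsymbol 1_Bu\|_{H^s}^2\le c_K\|u\|_{H^s}^2 .
\end{align*}
I expect this to be the only step needing care, and the ultrametric equality of norms outside the ball makes it routine.

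Next, to show that $\boldsymbol 1_B$ maps $H_0^s(\Omega)$ into itself, take $u\in H_0^s(\Omega)$ and $f_j\in\mathcal S(\Omega)$ with $f_j\to u$ in $H^s$. Since $B$ is compact open, each $\boldsymbol 1_Bf_j$ is locally constant with compact support in $\Omega$, so it lies in $\mathcal S(\Omega)$. Boundedness then gives $\boldsymbol 1_Bf_j\to\boldsymbol 1_Bu$ in $H^s$, so $\boldsymbol 1_Bu\in H_0^s(\Omega)$.

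For the final assertion, let $u\in V\cap H_0^s(\Omega)$. Two maximal balls are either nested or disjoint, and the same holds for a maximal ball and $B$. So every maximal ball $B_{L_i}(\boldsymbol a_i)$ either lies in $B$, is disjoint from $B$, or strictly contains $B$. If some maximal ball contains $B$, then the union of maximal balls inside $B$ is empty and there is nothing to prove. Otherwise $B\cap\Omega$ is the union of the maximal balls contained in $B$, and $\boldsymbol 1_Bu$ is exactly the part of $u$ on that union. This part lies in $H_0^s(\Omega)$ by the previous step. It also lies in $V$, because multiplication by $\boldsymbol 1_B$ is an $L^2$-contraction and sends each $\boldsymbol 1_{B_{L_i}(\boldsymbol a_i)}$ to itself or to $0$; hence it preserves the closed span $V$.

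For a union over only a subfamily of the maximal balls inside $B$, we would write the relevant part as $\boldsymbol 1_{B'}u$ for a smaller ball $B'$, or as a finite sum of such terms. If the subfamily is infinite, we would take an $L^2$ limit: in $V$ the pieces on different maximal balls are form-orthogonal, so the partial sums are dominated in the form norm and converge in $H_0^s$ as well.
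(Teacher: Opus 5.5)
Up to its last paragraph, your proof is the paper's argument:
\begin{itemize}
\item $\widehat{\boldsymbol 1_B}$ is compactly supported, so $\widehat{\boldsymbol 1_Bu}$ is a local average of $\widehat u$.
\item The ultrametric comparison of the weights at $\boldsymbol\xi$ and $\boldsymbol\xi-\boldsymbol\eta$ gives the $H^s$ bound. Your Jensen--Fubini step plays the role of the paper's Young's inequality.
\item $\boldsymbol 1_Bf_j\in\mathcal S(\Omega)$ gives invariance of $H_0^s(\Omega)$.
\item The nested-or-disjoint trichotomy identifies the part of $u$ on the maximal balls inside $B$ with $\boldsymbol 1_Bu$ (or with $0$), and this part stays in $V$.
\end{itemize}
That is complete. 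It is also all the paper proves and later uses, namely multiplication by $\boldsymbol 1_{B_K}$ for $B_K\in\mathcal B_K$.

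Your final paragraph, however, must be deleted, for two reasons.

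\emph{The key claim is false.} Indicators of distinct maximal balls are not form-orthogonal. The paper's form-orthogonality concerns the mean-zero parts on maximal balls. On $V$ the Dirichlet form has off-diagonal values $\mathcal E(\boldsymbol e_i,\boldsymbol e_j)=-C_{n,\alpha}\sqrt{V_iV_j}/d_{ij}^{n+\alpha}\neq0$.

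\emph{The conclusion itself fails for infinite subfamilies.} Take $\Omega=\mathbb Z_p^n\setminus\{\boldsymbol 0\}$ with $p^n\geq3$ and $s=n/2$ (so $\alpha=n$).
\begin{itemize}
\item The maximal balls are the $p^n-1$ balls of level $k+1$ in each sphere $S_k=\{\|\boldsymbol x\|_p=p^{-k}\}$, $k\geq0$.
\item Consider radial cutoffs equal to $1$ on $S_k$ for $k\leq K$ and decreasing linearly in $k$ to $0$ at $k=2K$. They lie in $\mathcal S(\Omega)$ and converge to $\boldsymbol 1_{\mathbb Z_p^n}$ in $H^{n/2}$, since the energy of the difference is $O(1/K)$. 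Hence $u=\boldsymbol 1_\Omega\in V\cap H_0^{n/2}(\Omega)$.
\item Let $A$ contain exactly one maximal ball $M_k\subseteq S_k$ for each $k$. For $\boldsymbol x\in M_k$ and $\boldsymbol y\in S_k\setminus M_k$ one has $\|\boldsymbol x-\boldsymbol y\|_p=p^{-k}$. These pairs and their mirror images contribute $2(p^n-2)p^{-2n}$ to the Gagliardo energy of $\boldsymbol 1_Au=\boldsymbol 1_A$, for every $k$.
\item Hence $\boldsymbol 1_Au\notin H^{n/2}(\mathbb Q_p^n)$, although $A$ is a union of maximal balls contained in $B=\mathbb Z_p^n$.
\end{itemize}
Here each $\mathcal E[\boldsymbol 1_{M_i}]$ equals $(1-p^{-n})/(1-p^{-2n})$. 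So $\sum_i\mathcal E[\boldsymbol 1_{M_i}]=\infty$ while $\mathcal E[u]<\infty$. The partial sums you invoke are not dominated; only the negative cross terms keep $u$ in the form domain.

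The ``in particular'' is therefore valid only for the union of \emph{all} maximal balls in $B$. It also holds for finite subfamilies, where applying the first part to each maximal ball (itself a ball) suffices.
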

\begin{proof}
The indicator of a ball is a Bruhat--Schwartz function. Its Fourier
transform is compactly supported. Since
\begin{align*}
\widehat{\boldsymbol 1_Bu}
=\widehat{\boldsymbol 1_B}*\widehat u
\end{align*}
and, on the support of $\widehat{\boldsymbol 1_B}$,
$\max\{1,\|\boldsymbol\xi\|_p\}^s$ is bounded by a fixed multiple of
$\max\{1,\|\boldsymbol\xi-\boldsymbol\eta\|_p\}^s$, Young's inequality
gives the required $H^s$ bound. If
$u_j\in\mathcal S(\Omega)$ converges to $u$ in $H^s$, then
$\boldsymbol 1_Bu_j\in\mathcal S(\Omega)$ and converges to
$\boldsymbol 1_Bu$. The last assertion follows from the maximal-ball
decomposition.
\end{proof}

\section{Weyl's law for the Vladimirov-Taibleson operator}
\subsection{Leading asymptotic term}
Given a bounded open set $\Omega\subseteq\mathbb{Q}_p^n$, we study Weyl's law for the Vladimirov-Taibleson operator. Our goal is to follow the bracketing arguments in Weyl's original paper \cite{MR1511670}, namely, approximating $\Omega$ from the external and internal by cubes. Note that the topology on the product space $\mathbb{Q}_p^n$ is induced by the max norm, so $p$-adic balls play the role of cubes in these approximations. We compare the operator on $\Omega$ with the operators on the external and internal approximation domains whose eigenvalue counting functions are explicit, and then apply a squeezing argument to prove Weyl's law. To solve this problem, we introduce the Min-Max Principle to handle the monotonicity of the eigenvalue counting functions:

\begin{lemma}[Courant-Fischer-Weyl min-max principle]
\label{lem:Min-Max_Principle}
    Let $A$ be a self-adjoint operator that is bounded below and has purely discrete spectrum, and let $\mathcal{E}_A(u,u)$ be its associated closed quadratic form. Its $j$-th eigenvalue can be characterized by the Rayleigh–Ritz formula:
    \begin{align*}
    \lambda_j(A) = \min_{\substack{L \subseteq \mathcal{D}(\mathcal{E}_A) \\ \dim L = j}} \max_{\substack{u \in L \\ \|u\| = 1}} \mathcal{E}_A(u, u).
    \end{align*}
\end{lemma}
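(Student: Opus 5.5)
We prove the Courant--Fischer--Weyl characterization in the standard way, using the spectral theorem for operators with compact resolvent. Since $A$ is self-adjoint, bounded below, and has purely discrete spectrum, there is an orthonormal basis $\{\phi_k\}_{k\geq1}$ of the Hilbert space consisting of eigenfunctions, $A\phi_k=\lambda_k\phi_k$, with $\lambda_1\leq\lambda_2\leq\cdots$ listed with multiplicity and $\lambda_k\to+\infty$. The form domain is
\begin{align*}
    \mathcal{D}(\mathcal{E}_A)=\Big\{u=\sum_k c_k\phi_k:\ \sum_k(\lambda_k-c)\,|c_k|^2<\infty\Big\},
\end{align*}
where $c$ is any lower bound of $A$. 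On this domain we have $\mathcal{E}_A(u,u)=\sum_k\lambda_k|c_k|^2$. This formula follows from the first representation theorem, since $\mathcal{D}(\mathcal{E}_A)=\mathcal{D}((A-c)^{1/2})$. Everything below is computed from this diagonal expression.

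\textbf{Upper bound.} Take the trial space $L_j=\operatorname{span}\{\phi_1,\dots,\phi_j\}\subseteq\mathcal{D}(A)\subseteq\mathcal{D}(\mathcal{E}_A)$. For $u\in L_j$ with $\|u\|=1$ we get $\mathcal{E}_A(u,u)=\sum_{k\leq j}\lambda_k|c_k|^2\leq\lambda_j$, with equality at $u=\phi_j$. Hence the minimum over $j$-dimensional subspaces is at most $\lambda_j$, and it is attained by $L_j$. This justifies writing $\min$ rather than $\inf$.

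\textbf{Lower bound.} Let $L\subseteq\mathcal{D}(\mathcal{E}_A)$ be any subspace with $\dim L=j$. The map $L\to\mathbb{C}^{j-1}$ given by $u\mapsto\big((u,\phi_1),\dots,(u,\phi_{j-1})\big)$ has a nontrivial kernel, by dimension counting. So there is a unit vector $u\in L$ orthogonal to $\phi_1,\dots,\phi_{j-1}$. For this $u$,
\begin{align*}
    \mathcal{E}_A(u,u)=\sum_{k\geq j}\lambda_k|c_k|^2\geq\lambda_j\sum_{k\geq j}|c_k|^2=\lambda_j.
\end{align*}
Also, $L$ is finite-dimensional, so its unit sphere is compact. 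The form restricted to $L$ is a Hermitian quadratic form on a finite-dimensional space and hence continuous there. Therefore the maximum over the unit sphere of $L$ is attained, and it is at least $\lambda_j$. Combining the two bounds gives the formula.

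\textbf{Main obstacle.} The only delicate point is the interchange between the abstract form domain and the eigenfunction expansion. We must check that $\mathcal{E}_A(u,u)=\sum_k\lambda_k|c_k|^2$ for every $u$ in the form domain, not just for $u\in\mathcal{D}(A)$. I would handle this through the identification $\mathcal{E}_A(u,u)=\|(A-c)^{1/2}u\|^2+c\|u\|^2$ and the spectral representation of $(A-c)^{1/2}$ in the eigenbasis. The identification holds because the form is closed and $A$ is its associated operator.

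Once this is in place, the min--max formula applies to all the Dirichlet and Neumann operators constructed earlier. The Dirichlet operator has compact resolvent by the Poincaré-type proposition above. For the Neumann operator, one uses it on the region below the essential spectrum of $\boldsymbol R$, as needed later. With the min--max formula, form-domain inclusions yield the monotonicity of the counting functions used in the bracketing argument.
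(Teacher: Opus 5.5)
Your argument is correct. It is the standard eigenbasis proof of the Courant--Fischer--Weyl principle: you diagonalize the form on $\mathcal{D}((A-c)^{1/2})$, use $\operatorname{span}\{\phi_1,\dots,\phi_j\}$ for the upper bound, and use a vector orthogonal to $\phi_1,\dots,\phi_{j-1}$ for the lower bound. The paper gives no proof of its own and simply quotes the lemma as this classical fact.

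One point in your closing remark is inaccurate. The lemma as stated requires purely discrete spectrum, and the paper does not apply it to the Neumann operator ``below the essential spectrum of $\boldsymbol R$''. Instead, under $\Gamma_{\max}<\infty$ the paper shows that the Neumann operator has compact resolvent, as a bounded perturbation of the zero-extension operator, so the lemma applies to it directly.
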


\begin{corollary}[Spectral monotonicity]
\label{cor:Spectral_monotonicity}
    Suppose $\mathcal{D}(\mathcal{E}_B)\subseteq\mathcal{D}(\mathcal{E}_A)$ and $\mathcal{E}_A(u,u)\leq\mathcal{E}_B(u,u)$ for every $u\in\mathcal{D}(\mathcal{E}_B)$. Then the corresponding eigenvalue counting functions satisfy the reverse inequality:
    \begin{align*}
        N_A(\lambda)\geq N_B(\lambda).
    \end{align*}
\end{corollary}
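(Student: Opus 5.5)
The statement to prove is Corollary~\ref{cor:Spectral_monotonicity}. The plan is to derive it directly from the min--max characterization in Lemma~\ref{lem:Min-Max_Principle}. I assume, as the paper implicitly does, that both $A$ and $B$ are self-adjoint, bounded below, with purely discrete spectrum. Their counting functions are then
\begin{align*}
N_A(\lambda)=\#\{j:\lambda_j(A)\leq\lambda\},\qquad N_B(\lambda)=\#\{j:\lambda_j(B)\leq\lambda\},
\end{align*}
with eigenvalues repeated according to multiplicity and listed in nondecreasing order. The reverse inequality for counting functions will follow once we show $\lambda_j(A)\leq\lambda_j(B)$ for every $j$.

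\textbf{Step 1: eigenvalue comparison.} Fix $j$. Any $j$-dimensional subspace $L\subseteq\mathcal D(\mathcal E_B)$ is also admissible for $A$, because $\mathcal D(\mathcal E_B)\subseteq\mathcal D(\mathcal E_A)$. For each unit vector $u\in L$ we have $\mathcal E_A(u,u)\leq\mathcal E_B(u,u)$, so
\begin{align*}
\max_{u\in L,\|u\|=1}\mathcal E_A(u,u)\leq\max_{u\in L,\|u\|=1}\mathcal E_B(u,u).
\end{align*}
The minimum over all $j$-dimensional subspaces of $\mathcal D(\mathcal E_A)$ is taken over a larger family than the one for $B$. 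Hence
\begin{align*}
\lambda_j(A)\leq\min_{\substack{L\subseteq\mathcal D(\mathcal E_B)\\ \dim L=j}}\max_{u\in L,\|u\|=1}\mathcal E_A(u,u)\leq\lambda_j(B).
\end{align*}

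\textbf{Step 2: counting.} Suppose $\lambda_j(B)\leq\lambda$. Then $\lambda_j(A)\leq\lambda_j(B)\leq\lambda$, so every index counted by $N_B(\lambda)$ is also counted by $N_A(\lambda)$. This gives $N_A(\lambda)\geq N_B(\lambda)$.

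\textbf{Finite-dimensional cases.} If $\mathcal D(\mathcal E_B)$ is finite dimensional, so that $B$ has only finitely many eigenvalues, the argument applies to the indices $j$ that exist for $B$, and the conclusion is unchanged. If instead the Hilbert space for $A$ is finite dimensional, then $\mathcal D(\mathcal E_B)$ is finite dimensional too, since it sits inside $\mathcal D(\mathcal E_A)$, and $B$ has no more eigenvalues than $A$.

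\textbf{Main obstacle.} The only point needing care is that the two operators act in a common Hilbert space, so that the normalization $\|u\|=1$ means the same thing for both. In the paper's applications this holds after extending functions by zero into $L^2$ of the larger domain. If the spaces differ, I would use the isometric zero-extension embedding and apply the same argument to the embedded subspaces.
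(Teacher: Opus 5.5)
Your proof is correct and follows the route the paper intends. The paper gives no separate proof of this corollary and treats it as an immediate consequence of Lemma~\ref{lem:Min-Max_Principle}: every $j$-dimensional subspace of $\mathcal D(\mathcal E_B)$ is admissible for $A$ with no larger Rayleigh quotients, which gives $\lambda_j(A)\leq\lambda_j(B)$ and hence $N_A(\lambda)\geq N_B(\lambda)$. Your remark about identifying the Hilbert spaces through the isometric zero extension is also what the paper uses implicitly when it applies the comparison to $E_j\subseteq\Omega\subseteq F_j$.
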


We now prove Weyl's law for the Vladimirov-Taibleson operator, beginning with the Dirichlet operator $D_{\Omega,\mathcal{D}}^{\alpha}$.

\begin{theorem}
\label{thm:Weyl_law_Dirichlet_operator}
    Let $\Omega\subseteq\mathbb{Q}_p^n$ be a bounded, open, Jordan measurable set (i.e. $\mathcal{H}^n(\partial\Omega)=0$). Then $N_{D_{\Omega,\mathcal{D}}^{\alpha}}(\lambda)\sim|\Omega|p^{n[\frac{1}{\alpha}\log_p\lambda]}$ as $\lambda\to+\infty$.
\end{theorem}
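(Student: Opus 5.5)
We squeeze $\Omega$ between compact open approximations built from balls of a fixed level $K$ and use domain monotonicity (the proposition on domain monotonicity, a consequence of Lemma~\ref{lem:Min-Max_Principle} and Corollary~\ref{cor:Spectral_monotonicity}). For $K\in\mathbb Z$ define
\begin{align*}
\Omega_K^-=\bigcup\{B_K(\boldsymbol a):B_K(\boldsymbol a)\subseteq\Omega\},\qquad
\Omega_K^+=\bigcup\{B_K(\boldsymbol a):B_K(\boldsymbol a)\cap\Omega\neq\emptyset\}.
\end{align*}
Since $\Omega$ is bounded, both are finite unions of level-$K$ balls, hence compact open, and $\Omega_K^-\subseteq\Omega\subseteq\Omega_K^+$. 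Domain monotonicity then gives
\begin{align*}
N_{D_{\Omega_K^-,\mathcal D}^{\alpha}}(\lambda)\leq N_{D_{\Omega,\mathcal D}^{\alpha}}(\lambda)\leq N_{D_{\Omega_K^+,\mathcal D}^{\alpha}}(\lambda).
\end{align*}
The inclusion $H_0^{\alpha/2}(\Omega_1)\subseteq H_0^{\alpha/2}(\Omega_2)$ with equal forms holds for arbitrary open sets, so no regularity of $\Omega$ is needed at this step.

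For the compact open sets $\Omega_K^\pm$ we use the exact formula \eqref{eq:N_Dirichlet_compact}. Every maximal ball of $\Omega_K^\pm$ has level at most $K$, since it is a union of level-$K$ balls. Hence for $\lambda\geq p^{\alpha K}$,
\begin{align*}
N_{D_{\Omega_K^\pm,\mathcal D}^{\alpha}}(\lambda)=|\Omega_K^\pm|\,p^{n[\frac1\alpha\log_p\lambda]}.
\end{align*}
Dividing by $|\Omega|p^{n[\frac1\alpha\log_p\lambda]}$, which is positive because $\Omega$ is nonempty open, gives
\begin{align*}
\frac{|\Omega_K^-|}{|\Omega|}\leq\varliminf_{\lambda\to\infty}\frac{N_{D_{\Omega,\mathcal D}^{\alpha}}(\lambda)}{|\Omega|p^{n[\frac1\alpha\log_p\lambda]}}\leq\varlimsup_{\lambda\to\infty}\frac{N_{D_{\Omega,\mathcal D}^{\alpha}}(\lambda)}{|\Omega|p^{n[\frac1\alpha\log_p\lambda]}}\leq\frac{|\Omega_K^+|}{|\Omega|}.
\end{align*}

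The remaining step, and the only place Jordan measurability enters, is to show $|\Omega_K^\pm|\to|\Omega|$ as $K\to\infty$. For the inner sets, $\Omega_K^-$ increases to $\Omega$: each point of the open set $\Omega$ lies in some small ball contained in $\Omega$. Monotone convergence therefore gives $|\Omega_K^-|\uparrow|\Omega|$.

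For the outer sets, $\Omega_K^+$ decreases to $\bigcap_K\Omega_K^+$. A point $\boldsymbol x$ in every $\Omega_K^+$ satisfies $\operatorname{dist}(\boldsymbol x,\Omega)\le p^{-K}$ for all $K$, so $\boldsymbol x\in\overline\Omega=\Omega\cup\partial\Omega$. Since $|\partial\Omega|=0$ and $|\Omega_K^+|<\infty$, continuity from above yields $|\Omega_K^+|\downarrow|\Omega|$.

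Letting $K\to\infty$ in the displayed bounds proves the limit equals $1$. One small care point is that the compact-case formula requires $\lambda\geq p^{\alpha K}$ with $K$ fixed; this is harmless because $K$ is fixed before $\lambda\to\infty$.
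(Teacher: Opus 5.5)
Your proof is correct and follows essentially the same bracketing argument as the paper: domain monotonicity between compact open inner and outer approximations, the exact count \eqref{eq:N_Dirichlet_compact} for compact open sets, and Jordan measurability to get $|\overline{\Omega}|=|\Omega|$ in the outer limit. The only difference is the choice of inner sets. The paper uses the finite unions $E_k$ of the first $k$ maximal balls, while you use $\Omega_K^-$, which coincides with $\Omega_{\leq K}$. Your outer set $\Omega_K^+$ is exactly the paper's $F_K$, and indexing both sets by the level $K$ gives you the explicit threshold $\lambda\geq p^{\alpha K}$ where the paper only asserts that some $\Lambda_j$ exists.
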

\begin{proof}
Since $\Omega$ is open, it is a countable disjoint union of maximal balls, i.e. $\Omega=\displaystyle\bigsqcup_{i=1}^{\infty}B_{L_i}(\boldsymbol{a}_i)$ for some disjoint maximal balls $\{B_{L_i}(\boldsymbol{a}_i)\}_{i=1}^{\infty}$. Let
\begin{align*}
    E_k=\bigsqcup_{i=1}^{k}B_{L_i}(\boldsymbol{a}_i).
\end{align*}
Then we have $E_1\subseteq E_2\subseteq\dots\subseteq E_k\subseteq\dots\subseteq\Omega$. Let
\begin{align*}
    F_k=\{\boldsymbol{x}\in\mathbb{Q}_p^n:\
    \operatorname{dist}(\boldsymbol{x},\Omega)\leq p^{-k}\}.
\end{align*}
Then $F_1\supseteq F_2\supseteq\cdots\supseteq\Omega$, and each $F_j$
is compact and open because $\Omega$ is bounded. The inclusions of test
function spaces, followed by closure in $H^{\alpha/2}(\mathbb Q_p^n)$,
give
\begin{align*}
    H_0^{\alpha/2}(E_j)
    \subseteq H_0^{\alpha/2}(\Omega)
    \subseteq H_0^{\alpha/2}(F_j).
\end{align*}
Therefore, by Lemma \ref{lem:Min-Max_Principle}, we have
\begin{align*}
    N_{D_{E_j,\mathcal{D}}^{\alpha}}(\lambda)\leq N_{D_{\Omega,\mathcal{D}}^{\alpha}}(\lambda)\leq N_{D_{F_j,\mathcal{D}}^{\alpha}}(\lambda).
\end{align*}
For each fixed $j$, both $E_j$ and $F_j$ are compact. Hence by \eqref{eq:N_Dirichlet_compact}, there exists $\Lambda_j$ such that for every $\lambda\geq\Lambda_j$, we have $N_{D_{E_j,\mathcal{D}}^{\alpha}}(\lambda)=|E_j|p^{n[\frac{1}{\alpha}\log_p\lambda]}$ and $N_{D_{F_j,\mathcal{D}}^{\alpha}}(\lambda)=|F_j|p^{n[\frac{1}{\alpha}\log_p\lambda]}$. Consequently,
\begin{align*}
    \frac{|E_j|}{|\Omega|}\leq\varliminf_{\lambda\to+\infty}\frac{N_{D_{\Omega,\mathcal{D}}^{\alpha}}(\lambda)}{|\Omega|p^{n[\frac{1}{\alpha}\log_p\lambda]}}\leq\varlimsup_{\lambda\to+\infty}\frac{N_{D_{\Omega,\mathcal{D}}^{\alpha}}(\lambda)}{|\Omega|p^{n[\frac{1}{\alpha}\log_p\lambda]}}\leq\frac{|F_j|}{|\Omega|}.
\end{align*}
Letting $j\to+\infty$, we have $\frac{|E_j|}{|\Omega|}\to1$ and $\frac{|F_j|}{|\Omega|}\to\frac{|\overline{\Omega}|}{|\Omega|}=1$ since $\Omega$ is Jordan measurable. In summary, we have
\begin{align*}
    \lim_{\lambda\to+\infty}\frac{N_{D_{\Omega,\mathcal{D}}^{\alpha}}(\lambda)}{|\Omega|p^{n[\frac{1}{\alpha}\log_p\lambda]}}=1.
\end{align*}
\end{proof}
\begin{remark}
    If $\Omega$ is compact, then Weyl's law for the Dirichlet operator becomes an equality whenever $\lambda$ exceeds a constant depending only on $\Omega$.
\end{remark}

\begin{remark}
    The Jordan measurability is essential since fat Cantor sets occur naturally over $p$-adic fields. For instance, if we take a dense subset $\{\boldsymbol{x}_i\}_{i=1}^{\infty}\subseteq\mathbb{Z}_p^n$ (e.g. $\mathbb{Z}^n$) and let $\Omega=\displaystyle\bigcup_{i=1}^{\infty}(\boldsymbol{x}_i+p^{i+k}\mathbb{Z}_p^n)$ for a fixed $k\in\mathbb{N}^*$. Then $\Omega$ is open, $\overline{\Omega}=\mathbb{Z}_p^n$ and
    \begin{align*}
        |\Omega|\leq\sum_{i=1}^{\infty}p^{-n(i+k)}=\frac{p^{-nk}}{p^n-1}<1=|\mathbb{Z}_p^n|,
    \end{align*}
    so that $\mathbb{Z}_p^n\backslash\Omega$ is a fat Cantor set and any approximation of $\Omega$ from the exterior converges to $\mathbb{Z}_p^n$, rather than to $\Omega$.
\end{remark}

\begin{remark}
    Using the heat trace method, one can only obtain $N_{D_{\Omega,\mathcal{D}}^{\alpha}}(\lambda)=O(\lambda^{\frac{n}{\alpha}})$ \cite{MR3708861,MR3793137}. However, difficulties arise in the proof of the reverse direction, since the spectral zeta function has infinitely many poles $s_k=\frac{n}{\alpha}+i\frac{2\pi k}{\alpha\ln p}$ for $k\in\mathbb{Z}$ at $\Re(s)=\frac{n}{\alpha}$, which prevent us from using the classical Ikehara Tauberian theorem to obtain the asymptotic behavior of $N_{D_{\Omega,\mathcal{D}}^{\alpha}}(\lambda)$. In fact,
    \begin{align*}
        |\Omega|p^{n[\frac{1}{\alpha}\log_p\lambda]}=|\Omega|\lambda^{\frac{n}{\alpha}}p^{-n\{\frac{1}{\alpha}\log_p\lambda\}}.
    \end{align*}
    Unlike its Euclidean counterpart, the leading term of the Weyl asymptotics contains an oscillatory factor $p^{-n\{\frac{1}{\alpha}\log_p\lambda\}}$. This explains why the heat trace method does not directly yield the asymptotic behavior of the eigenvalue counting function. Consequently, the conjectured asymptotic formulas in \cite{MR3708861,MR3793137} do not hold.
\end{remark}

In order to handle the Neumann operator $D_{\Omega,\mathcal{N}}^{\alpha}$, we first introduce Weyl's Inequality for Bounded Perturbations:
\begin{lemma}[Weyl's Inequality for Bounded Perturbations]
\label{lem:Weyl_bounded_perturbation}
    Let $A$ and $B$ be self-adjoint operators on a Hilbert space with discrete spectra, and suppose they differ by a bounded operator $R$, i.e. $B=A+R$ with $\|R\|\leq\delta$. If the eigenvalues of both operators are listed in non-decreasing order and repeated according to multiplicity, then
    \begin{align*}
        |\lambda_j(A)-\lambda_j(B)|\leq\delta, \ \forall j.
    \end{align*}
    Equivalently, for the eigenvalue counting function
    \begin{align*}
        N(\lambda)=\#\{j|\ \lambda_j\leq\lambda\},
    \end{align*}
    the following sharp two-sided estimate holds:
    \begin{align*}
        N_A(\lambda-\delta)\leq N_B(\lambda)\leq N_A(\lambda+\delta).
    \end{align*}
\end{lemma}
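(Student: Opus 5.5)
The plan is to deduce everything from the min--max principle (Lemma \ref{lem:Min-Max_Principle}), comparing the eigenvalues of $A$ and $B=A+R$ through their quadratic forms. Since $R$ is bounded and self-adjoint with $\|R\|\leq\delta$, we have $-\delta\|u\|^2\leq (Ru,u)\leq\delta\|u\|^2$ for every $u$. Because $R$ is bounded, $\mathcal D(\mathcal E_B)=\mathcal D(\mathcal E_A)$, and
\begin{align*}
\mathcal E_A(u,u)-\delta\|u\|^2\leq\mathcal E_B(u,u)\leq\mathcal E_A(u,u)+\delta\|u\|^2
\end{align*}
on this common form domain.

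Given this two-sided form inequality, we fix $j$ and a $j$-dimensional subspace $L$ of the common form domain. Taking the maximum over unit vectors $u\in L$ in the inequality gives
\begin{align*}
\max_{u\in L,\|u\|=1}\mathcal E_B(u,u)\leq\max_{u\in L,\|u\|=1}\mathcal E_A(u,u)+\delta .
\end{align*}
Minimizing over $L$ then yields $\lambda_j(B)\leq\lambda_j(A)+\delta$. The symmetric argument, with $A=B+(-R)$, gives $\lambda_j(A)\leq\lambda_j(B)+\delta$, so $|\lambda_j(A)-\lambda_j(B)|\leq\delta$. Here both operators are assumed bounded below with discrete spectrum, so the lemma applies to each; the semiboundedness of $B$ follows from that of $A$ and the boundedness of $R$.

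The counting-function statement follows directly from the eigenvalue bound. If $\lambda_j(A)\leq\lambda-\delta$, then $\lambda_j(B)\leq\lambda_j(A)+\delta\leq\lambda$, so $N_A(\lambda-\delta)\leq N_B(\lambda)$. If $\lambda_j(B)\leq\lambda$, then $\lambda_j(A)\leq\lambda+\delta$, so $N_B(\lambda)\leq N_A(\lambda+\delta)$. Both steps use that eigenvalues are listed in nondecreasing order with multiplicity, which makes $N(\lambda)$ equal to the largest $j$ with $\lambda_j\leq\lambda$.

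The only real subtlety is a domain point: $B=A+R$ has to be interpreted as a form sum, with the form domains coinciding. For bounded $R$ this is automatic, since the operator domains also agree and the form of $B$ is $\mathcal E_A+(R\cdot,\cdot)$. Sharpness is witnessed by $R=\delta I$, where every eigenvalue shifts by exactly $\delta$ and the counting inequalities become equalities.
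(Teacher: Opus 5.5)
Your argument is correct. The paper gives no proof of this lemma; it quotes it as a standard consequence of the min--max principle (Lemma~\ref{lem:Min-Max_Principle}). The two-sided form comparison $\mathcal E_A-\delta\|u\|^2\le\mathcal E_B\le\mathcal E_A+\delta\|u\|^2$ on the common form domain, followed by the counting-function translation, is exactly the standard proof being taken for granted.

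Your sharpness remark is slightly off. With $R=\delta I$ only the left inequality becomes an equality, $N_B(\lambda)=N_A(\lambda-\delta)$. The right inequality is attained by $R=-\delta I$, so the two bounds are attained by different perturbations, not both at once.

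Your form argument actually gives more than the symmetric statement. If $c_1\|u\|^2\le (Ru,u)\le c_2\|u\|^2$, then $\lambda_j(A)+c_1\le\lambda_j(B)\le\lambda_j(A)+c_2$. This one-sided version is what the paper really needs for $D^{\alpha}_{\Omega,\mathcal N}=A-M_\Gamma$ with $0\le M_\Gamma\le\Gamma_{\max}$. There it gives $N_A(\lambda)\le N_{D^{\alpha}_{\Omega,\mathcal N}}(\lambda)\le N_A(\lambda+\Gamma_{\max})$ directly. The symmetric lemma as stated yields the same bound only after re-centering the perturbation by $\Gamma_{\max}/2$.
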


\begin{theorem}
Let $\Omega\subseteq\mathbb{Q}_p^n$ be bounded, open, and Jordan
measurable. Suppose
\begin{align*}
    \Gamma_{\max}:=\sup_{i\geq1}\Gamma_i<+\infty,
\end{align*}
and define
\begin{align*}
    E=\bigcup_{j=1}^{\infty}
    [p^{\alpha j}-\Gamma_{\max},p^{\alpha j}).
\end{align*}
Then
\begin{align*}
\lim_{\substack{\lambda\to+\infty\\ \lambda\notin E}}
\frac{N_{D_{\Omega,\mathcal N}^{\alpha}}(\lambda)}
{|\Omega|p^{n[\frac1\alpha\log_p\lambda]}}=1,
\qquad
\lim_{R\to\infty}\frac{|E\cap[0,R]|}{R}=0.
\end{align*}
Thus the Neumann operator satisfies Weyl's law outside an exceptional set of
zero Lebesgue asymptotic density.
\end{theorem}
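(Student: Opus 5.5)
The plan is to compare $D_{\Omega,\mathcal N}^{\alpha}$ with a Dirichlet-type operator through a \emph{bounded} perturbation, which is exactly what the hypothesis $\Gamma_{\max}<+\infty$ makes available. I will bound the counting function below by the Dirichlet one and above by a Dirichlet count at the shifted level $\lambda+\Gamma_{\max}$. The Weyl term $p^{n[\frac1\alpha\log_p\lambda]}$ does not see the shift unless $\lambda\in E$.

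\emph{Lower bound.} By the Proposition containing \eqref{eq:Dirichlet_Neumann_form_identity}, $H_0^{\alpha/2}(\Omega)\subseteq H^{\alpha/2}(\Omega)$ and $\mathcal E_{\Omega,\mathcal N}\leq\mathcal E_{\Omega,\mathcal D}$ on the Dirichlet form domain. Corollary \ref{cor:Spectral_monotonicity} gives $N_{D_{\Omega,\mathcal N}^{\alpha}}(\lambda)\geq N_{D_{\Omega,\mathcal D}^{\alpha}}(\lambda)$. Theorem \ref{thm:Weyl_law_Dirichlet_operator} then yields $\varliminf N_{D_{\Omega,\mathcal N}^{\alpha}}(\lambda)/(|\Omega|p^{n[\frac1\alpha\log_p\lambda]})\geq1$, with no exceptional set needed.

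\emph{Upper bound.} Let $\Gamma(\boldsymbol x)$ denote the function equal to $\Gamma_i$ on $B_{L_i}(\boldsymbol a_i)$. For $u\in H^{\alpha/2}(\Omega)$, the full-space Gagliardo energy of the zero extension $\tilde u$ equals $\mathcal E_{\Omega,\mathcal N}[u]+\int_\Omega\Gamma|u|^2$, exactly as in the proof of \eqref{eq:Dirichlet_Neumann_form_identity}. Since $\Gamma\leq\Gamma_{\max}$, this is finite. Hence $H^{\alpha/2}(\Omega)$ coincides, via zero extension, with $\widetilde H_0^{\alpha/2}(\Omega)$.

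Let $\widetilde D$ be the self-adjoint operator of the form $\mathcal E(\tilde u,\tilde u)$ on $\widetilde H_0^{\alpha/2}(\Omega)$. Then $D_{\Omega,\mathcal N}^{\alpha}=\widetilde D-M_\Gamma$ with $\|M_\Gamma\|\leq\Gamma_{\max}$. Lemma \ref{lem:Weyl_bounded_perturbation}, or directly the form inequality $\mathcal E_{\Omega,\mathcal N}\geq\mathcal E(\tilde\cdot,\tilde\cdot)-\Gamma_{\max}\|\cdot\|^2$ on a common domain together with the min--max principle, gives
\begin{align*}
N_{D_{\Omega,\mathcal N}^{\alpha}}(\lambda)\leq N_{\widetilde D}(\lambda+\Gamma_{\max}).
\end{align*}
This requires $\widetilde D$ to have discrete spectrum. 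That follows because $\widetilde H_0^{\alpha/2}(\Omega)\subseteq H_0^{\alpha/2}(F_j)$: each $F_j$ is compact open, so by the first Proposition of Section 2 the space $H_0^{\alpha/2}(F_j)$ equals $\widetilde H_0^{\alpha/2}(F_j)$. Compactness of the embedding then follows from the Poincaré/compactness Proposition.

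The same inclusions $H_0^{\alpha/2}(E_j)\subseteq\widetilde H_0^{\alpha/2}(\Omega)\subseteq H_0^{\alpha/2}(F_j)$ let the bracketing argument of Theorem \ref{thm:Weyl_law_Dirichlet_operator} go through verbatim for $\widetilde D$. Jordan measurability then gives $N_{\widetilde D}(\mu)\sim|\Omega|p^{n[\frac1\alpha\log_p\mu]}$.

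Now take $\lambda\notin E$ and $\lambda\geq p^{\alpha}$, and write $\lambda\in[p^{\alpha K},p^{\alpha(K+1)})$. Then $\lambda<p^{\alpha(K+1)}-\Gamma_{\max}$, so $\lambda+\Gamma_{\max}$ lies in the same band and $[\frac1\alpha\log_p(\lambda+\Gamma_{\max})]=[\frac1\alpha\log_p\lambda]$. Therefore
\begin{align*}
\varlimsup_{\lambda\notin E}\frac{N_{D_{\Omega,\mathcal N}^{\alpha}}(\lambda)}{|\Omega|p^{n[\frac1\alpha\log_p\lambda]}}
\leq\varlimsup\frac{N_{\widetilde D}(\lambda+\Gamma_{\max})}{|\Omega|p^{n[\frac1\alpha\log_p(\lambda+\Gamma_{\max})]}}=1.
\end{align*}

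\emph{Density of $E$.} The set $E\cap[0,R]$ is covered by at most $\frac1\alpha\log_pR+1$ intervals, each of length at most $\Gamma_{\max}$. Hence $|E\cap[0,R]|/R\leq\Gamma_{\max}(\frac1\alpha\log_pR+1)/R\to0$.

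The step I expect to be the main obstacle is the identification of form domains in the upper bound. Two things must be checked. First, that the Neumann form domain really equals the zero-extension space $\widetilde H_0^{\alpha/2}(\Omega)$, which by the Remark after the first Proposition may be strictly larger than $H_0^{\alpha/2}(\Omega)$. Second, that the bracketing and compactness arguments are run for $\widetilde D$ rather than for $D_{\Omega,\mathcal D}^{\alpha}$. The key points are that the Gagliardo splitting identity holds for every $u\in H^{\alpha/2}(\Omega)$ once the boundary integral is finite, and that the exterior approximants $F_j$ are compact open, so their strict and zero-extension spaces agree.
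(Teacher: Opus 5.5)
Your proposal is correct and follows essentially the same route as the paper. You identify $H^{\alpha/2}(\Omega)$ with the zero-extension space via the Gagliardo splitting, run the inner/outer compact-open bracketing for the auxiliary zero-extension operator, and compare it with the Neumann operator through the bounded perturbation $M_\Gamma$, using that the integer part $[\frac1\alpha\log_p\lambda]$ is unchanged by the shift $\Gamma_{\max}$ off $E$. The only cosmetic differences are that you take the lower bound from the strict Dirichlet count (which the paper mentions as an alternative to $N_A(\lambda)$), and you obtain compact resolvent from the inclusion into $H_0^{\alpha/2}(F_j)$ rather than from the Fourier cutoff argument.
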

\begin{proof}
Let $u\in H^{\alpha/2}(\Omega)$ and extend it by zero to $\Omega^c$.
Splitting the full Gagliardo integral into $\Omega\times\Omega$ and the two
cross terms gives the following identity of extended nonnegative integrals:
\begin{align*}
\mathcal E(\widetilde u,\widetilde u)
=\mathcal E_{\Omega,\mathcal N}[u]
+\int_\Omega\Gamma(\boldsymbol x)|u(\boldsymbol x)|^2d\boldsymbol x
\leq\mathcal E_{\Omega,\mathcal N}[u]
+\Gamma_{\max}\|u\|_2^2.
\end{align*}
The right-hand side is finite, so the zero extension belongs to
$H^{\alpha/2}(\mathbb Q_p^n)$. Hence
zero extension identifies $H^{\alpha/2}(\Omega)$ with the class of functions
in $H^{\alpha/2}(\mathbb Q_p^n)$ that vanish almost everywhere on $\Omega^c$.
For this proof only, let $A$ be the self-adjoint operator
associated with the zero-extension form
$u\mapsto\mathcal E(\widetilde u,\widetilde u)$ on this space. The Fourier
cutoff argument in Section 2 shows that $A$ has compact resolvent. Moreover,
the same inner and outer compact-open approximations used in Theorem
\ref{thm:Weyl_law_Dirichlet_operator}, now using the inclusions defined by
almost-everywhere support, give
\begin{align}
\label{eq:auxiliary_Weyl_law}
N_A(\lambda)\sim
|\Omega|p^{n[\frac1\alpha\log_p\lambda]}.
\end{align}
The Neumann form and the form of $A$ have the same domain and differ by the
bounded multiplication operator $M_\Gamma$. Therefore the Neumann operator
also has compact resolvent, and Lemma
\ref{lem:Weyl_bounded_perturbation} gives
\begin{align*}
N_A(\lambda)\leq
N_{D_{\Omega,\mathcal N}^{\alpha}}(\lambda)
\leq N_A(\lambda+\Gamma_{\max}).
\end{align*}
The lower bound may alternatively be strengthened by the strict Dirichlet
count because
$H_0^{\alpha/2}(\Omega)\subseteq H^{\alpha/2}(\Omega)$ and
$\mathcal E_{\Omega,\mathcal N}\leq
\mathcal E_{\Omega,\mathcal D}$ on the smaller domain.

If $\lambda\notin E$, then
\begin{align*}
p^{n[\frac1\alpha\log_p(\lambda+\Gamma_{\max})]}
=p^{n[\frac1\alpha\log_p\lambda]}.
\end{align*}
Combining the preceding bounds with \eqref{eq:auxiliary_Weyl_law} proves the
asserted limit. Finally, the number of intervals in $E$ meeting $[0,R]$ is
$O(\log R)$, and each has length at most $\Gamma_{\max}$. Thus
$|E\cap[0,R]|/R\to0$.
\end{proof}

\begin{remark}
    The classical power-law Weyl asymptotic fails even when $\Omega$ is
    compact. Indeed, as $\lambda$ crosses a threshold $p^{\alpha j}$,
    $[\alpha^{-1}\log_p\lambda]$ changes by one. Equivalently, the
    oscillatory factor $p^{-n\{\alpha^{-1}\log_p\lambda\}}$ jumps by a factor of $p^n$.
\end{remark}

\subsection{Why Haran's Theorem does not apply}
In this subsection, we explain why Haran’s theorem cannot be used to derive Weyl’s law for the Vladimirov-Taibleson operator considered here \cite{MR1252936}. This distinction explains why the result proved above is not an immediate consequence of Haran’s phase-space theorem.

Let $\mathbb{Q}_p^n$ be the $n$-dimensional $p$-adic physical space. Its
corresponding phase space is $\mathbb{Q}_p^n\times\mathbb{Q}_p^n$, endowed
with coordinates $\boldsymbol{z}=(\boldsymbol{x},\boldsymbol{\xi})$.

\begin{definition}[Metric covering]
    A metric covering is a family
    $g=(g_{\boldsymbol{z}})_{\boldsymbol{z}\in
    \mathbb Q_p^n\times\mathbb Q_p^n}$ of metrics on phase-space increments.
    It is associated with a decomposition
    \(\mathbb Q_p^n\times\mathbb Q_p^n
    =\bigsqcup\limits_j(\boldsymbol z_j+B_j)\), where each \(B_j\) is a
    lattice. Let $g_{\boldsymbol{z}}^\sigma$ denote its symplectic dual and
    define
    $h_g(\boldsymbol{z})=\sup_{\boldsymbol w\neq\boldsymbol0}
    \frac{g_{\boldsymbol z}(\boldsymbol w)}
    {g_{\boldsymbol z}^\sigma(\boldsymbol w)}$. A metric covering that is
    certain and temperate is called global if there exist $C$, $\delta>0$
    such that
    $h_g(\boldsymbol{z})\leq
    C\max\{1,\|\boldsymbol{z}\|_p\}^{-\delta}$.
\end{definition}

\begin{definition}[Symbol Class $S(m,g)$]
\label{def:symbol_class}
Let $m:\mathbb Q_p^n\times\mathbb Q_p^n\to\mathbb{R}_{>0}$ be a
\emph{$g$-locally constant weight function}, meaning that $m$ is constant on
each cell $\boldsymbol{z}_j+B_j$ of the covering. The \emph{symbol class}
$S(m,g)$ is defined as the Fréchet space of all functions
$f:\mathbb Q_p^n\times\mathbb Q_p^n\to\mathbb{C}$ satisfying the following
two conditions:
\begin{enumerate}
\item \textbf{Amplitude estimate:}
There exists a constant $C > 0$ such that for all
$\boldsymbol{z}\in\mathbb Q_p^n\times\mathbb Q_p^n$,
\[
|f(\boldsymbol{z})| \le C m(\boldsymbol{z}).
\]

\item \textbf{Smoothness estimate:}
For every $s>0$, there exists a constant $C_s>0$ such that for all
$\boldsymbol{z},\boldsymbol{w}\in
\mathbb Q_p^n\times\mathbb Q_p^n$ with
$g_{\boldsymbol{z}}(\boldsymbol{w})\leq1$,
\[
|f(\boldsymbol{z}) - f(\boldsymbol{z}+\boldsymbol{w})|
\leq C_s m(\boldsymbol{z})g_{\boldsymbol{z}}(\boldsymbol{w})^s.
\]
\end{enumerate}

Equivalently, the topology on $S(m,g)$ is defined by the family of seminorms
\[
\| f \|_{B_{\infty,\infty}^{s}(m,g)}
:= \sup_{\boldsymbol{z}\in\mathbb Q_p^n\times\mathbb Q_p^n}
\frac{|f(\boldsymbol{z})|}{m(\boldsymbol{z})}
+ \sup_{\substack{\boldsymbol{z},\boldsymbol{w}\in
\mathbb Q_p^n\times\mathbb Q_p^n\\
g_{\boldsymbol{z}}(\boldsymbol{w})\leq1}}
\frac{|f(\boldsymbol{z})-f(\boldsymbol{z}+\boldsymbol{w})|}
{m(\boldsymbol{z})g_{\boldsymbol{z}}(\boldsymbol{w})^s},
\qquad s>0,
\]
so that
\[
S(m,g)=\Bigl\{f:\mathbb Q_p^n\times\mathbb Q_p^n\to\mathbb C
\;\Big|\;
\|f\|_{B_{\infty,\infty}^{s}(m,g)}<\infty
\ \text{for all }s>0\Bigr\}.
\]
\end{definition}

\begin{theorem}[Weyl Asymptotics for $p$-adic pseudodifferential operators \cite{MR1252936}]
\label{thm:Weyl_Asymptotics_Pseudodifferential_Operator}

Let $g$ be a global, certain, and temperate metric covering of
$\mathbb Q_p^n\times\mathbb Q_p^n$, and let $m$ be a temperate weight satisfying
\begin{align*}
    m(\boldsymbol{z})\geq C\max\{1,\|\boldsymbol{z}\|_p\}^{\alpha}
\end{align*}
for some $C$, $\alpha>0$.

Consider a pseudodifferential operator $\hat{\rho}(f)$ with a positive elliptic symbol $f(\boldsymbol{z})\in S(m, g)$, meaning that there exists a constant $C > 0$ such that $f(\boldsymbol{z}) \ge C m(\boldsymbol{z})$ outside a compact subset of $\mathbb Q_p^n\times\mathbb Q_p^n$. Then the pseudodifferential operator $\hat{\rho}(f)$ admits a unique self-adjoint extension $\mathcal{A}$. Under these assumptions, the spectrum is purely discrete and consists of eigenvalues $\{\lambda_j\}$ satisfying $\lambda_j \to +\infty$. In the strict-counting convention used in Haran's theorem, set $N_{\mathcal{A}}^-(\lambda) = \#\{\lambda_j < \lambda\}$. The following rough asymptotic upper bound holds for any $\epsilon > 0$ as $\lambda \to +\infty$:
\[
N_{\mathcal{A}}^-(\lambda) = O(\lambda^{2n/\alpha + \epsilon}).
\]
Furthermore, the eigenvalue counting function satisfies the phase-space volume asymptotic formula:
\[
N_{\mathcal{A}}^-(\lambda) = \int_{f(\boldsymbol{z}) < \lambda} d\boldsymbol{z} + O(\lambda^\epsilon), \quad \text{as } \lambda \to +\infty.
\]
\end{theorem}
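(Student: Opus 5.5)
The statement is Haran's theorem, which the paper cites from \cite{MR1252936}. I would reconstruct its proof from Haran's $p$-adic Weyl calculus rather than from the bracketing used in Theorem \ref{thm:Weyl_law_Dirichlet_operator}. The guiding idea is that a \emph{certain} metric covering makes each cell $\boldsymbol z_j+B_j$ a self-dual lattice coset, so $g_{\boldsymbol z}=g^\sigma_{\boldsymbol z}$ on it. In the $p$-adic setting the quantization of the indicator of such a cell is then an \emph{exact} orthogonal projection $P_j$. Its rank is the phase-space volume of the cell, which certainty normalizes to one. Concretely, $P_j$ projects onto a wave packet: a ball indicator times a character, as in the eigenfunctions of Theorem \ref{thm:eig_D_L02}. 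The projections $P_j$ are mutually orthogonal and sum to the identity. This exactness, with no Gaussian tails, is what I expect to make the $p$-adic argument cleaner than its Archimedean counterpart.

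\textbf{Step 1: self-adjointness and discreteness.} I would first establish the composition law in $S(m,g)$: for $a\in S(m_1,g)$ and $b\in S(m_2,g)$, $\hat\rho(a)\hat\rho(b)=\hat\rho(ab)+\hat\rho(r)$ with $r\in S(m_1m_2h_g^{s},g)$ for every $s$. This should follow from the smoothness seminorms, because the twisted product is an exact finite average over cells. Ellipticity lets me choose $q\in S(m^{-1},g)$ with $q=f^{-1}$ off a compact set. Then $\hat\rho(q)\hat\rho(f)=I+\hat\rho(r)$ with $r\in S(h_g^s,g)$. Since $h_g\le C\max\{1,\|\boldsymbol z\|_p\}^{-\delta}$ and $m^{-1}\to0$, the remainder $\hat\rho(r)$ is compact and $\hat\rho(q)$ is compact. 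This gives semiboundedness and essential self-adjointness on $\mathcal S(\mathbb Q_p^n)$ for the real symbol $f$, together with compact resolvent for the closure $\mathcal A$.

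\textbf{Step 2: comparison with a diagonal model.} Let $f_0=\sum_j f(\boldsymbol z_j)\boldsymbol 1_{\boldsymbol z_j+B_j}$. Then $\hat\rho(f_0)=\sum_j f(\boldsymbol z_j)P_j$, and its counting function equals exactly $\#\{j: f(\boldsymbol z_j)<\lambda\}$. By the smoothness estimate with $g_{\boldsymbol z}(\boldsymbol w)\le1$ on a cell, $|f-f_0|\le C_s m\,h_g^s$. Feeding this into the calculus, $\hat\rho(f-f_0)$ is bounded relative to $\hat\rho(m)^{1-\delta'}$, a relative perturbation of lower order. Lemma \ref{lem:Min-Max_Principle} then yields the bracketing
\[
N_{f_0}\bigl(\lambda-C\lambda^{1-\delta'}\bigr)-O(1)\le N^-_{\mathcal A}(\lambda)\le N_{f_0}\bigl(\lambda+C\lambda^{1-\delta'}\bigr)+O(1).
\]

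\textbf{Step 3: counting.} Because $f_0$ is constant on unit-volume cells, $N_{f_0}(\mu)=|\{f_0<\mu\}|$. The rough bound follows from $\{f<\lambda\}\subseteq\{m<C\lambda\}\subseteq\{\|\boldsymbol z\|_p\le C\lambda^{1/\alpha}\}$, a ball in $2n$ dimensions of volume $O(\lambda^{2n/\alpha})$; the $\epsilon$ absorbs the temperate constants. For the sharp formula I must show that the shell $\{\lambda-C\lambda^{1-\delta'}\le f<\lambda+C\lambda^{1-\delta'}\}$, together with the discrepancy between $f$ and $f_0$, has volume $O(\lambda^\epsilon)$.

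This shell estimate is the main obstacle. My first attempt would use temperance and the global bound to refine the covering at large $\|\boldsymbol z\|_p$, with cells on which $f$ is \emph{exactly} constant up to $O(m h_g^s)$ for arbitrarily large $s$. I would then iterate the parametrix to replace $\delta'$ by any $N\delta$. Finally I would use local constancy to argue that the level sets of $f_0$ jump in cell units, so the shell contains only $O(\lambda^\epsilon)$ cells. If this fails in general, I expect the $O(\lambda^\epsilon)$ error genuinely to need the strict-counting convention and the discreteness of $p$-adic values of $f$ near the level $\lambda$. This is consistent with the present paper's observation that such leading terms are step functions.
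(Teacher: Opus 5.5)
The paper does not prove this statement. It quotes it from Haran \cite{MR1252936} and uses it only to explain why it does not apply to $D^{\alpha}_{\Omega,\mathcal D}$. Judged on its own, your proposal has a genuine gap exactly where you locate the obstacle, and the route you propose cannot close it.

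\textbf{The remainder cannot come from a shifted bracket.} At best, Step~2 yields a bracket $N_{f_0}(\lambda-\eta)-O(1)\le N^-_{\mathcal A}(\lambda)\le N_{f_0}(\lambda+\eta)+O(1)$ with some $\eta>0$. Its width is at least the jump of $\mu\mapsto|\{f_0<\mu\}|$ across $[\lambda-\eta,\lambda+\eta]$. For $p$-adic symbols these jumps are polynomially large and do not shrink as $\eta\to0$. For instance, $f=\max\{1,\|\boldsymbol x\|_p\}^{\alpha}+\max\{1,\|\boldsymbol\xi\|_p\}^{\alpha}$ is a locally constant elliptic symbol for $m=\max\{1,\|\boldsymbol z\|_p\}^{\alpha}$. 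Its level set $\{f=p^{\alpha K}+1\}$ contains $\{\|\boldsymbol x\|_p\le1,\ \|\boldsymbol\xi\|_p=p^K\}$, which has measure $(1-p^{-n})p^{nK}\asymp\lambda^{n/\alpha}$. Hence your claim that the shell contains only $O(\lambda^\epsilon)$ cells is false, and iterating the parametrix to shrink $\eta$ does not help. An argument of this type gives Weyl's law only outside an exceptional set of $\lambda$, which is precisely the form of the paper's Neumann result with the set $E$.

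A norm or relative bound on $\hat\rho(f-f_0)$, however small, cannot decide on which side of $\lambda$ the eigenvalues of a degenerate level fall. The missing ingredient is exactness. In the example, $\hat\rho(f)$ is the Fourier multiplier $\max\{1,\|\boldsymbol\xi\|_p\}^{\alpha}$ plus multiplication by $\max\{1,\|\boldsymbol x\|_p\}^{\alpha}$. Consider a wavelet $\Psi_{\gamma,\boldsymbol a,\boldsymbol j}$ on $\mathbb Q_p^n$ with $\gamma\ge1$ (Theorem~\ref{thm:eig_D_L02} and the remark after it). It is supported in a ball of radius at most $1$, on which $\max\{1,\|\boldsymbol x\|_p\}$ is constant. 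Hence it is an exact eigenfunction with eigenvalue $p^{\alpha\gamma}+\max\{1,\|\boldsymbol a\|_p\}^{\alpha}$. On functions constant on cosets of $\mathbb Z_p^n$ the operator is diagonal. A direct count then gives $N^-_{\mathcal A}(\lambda)=|\{f<\lambda\}|$ for every $\lambda$. A proof for general $f\in S(m,g)$ has to produce this kind of exact invariance of wave-packet subspaces, with the inexact part confined to a subspace contributing $O(\lambda^\epsilon)$ eigenvalues below $\lambda$. A norm-perturbative comparison with $f_0$ cannot supply this.

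\textbf{The cell picture in Step~2 is inconsistent.} If every cell were self-dual, then $g_{\boldsymbol z}=g^\sigma_{\boldsymbol z}$ and $h_g\equiv1$, contradicting globality, $h_g(\boldsymbol z)\le C\max\{1,\|\boldsymbol z\|_p\}^{-\delta}$. So the cells $\boldsymbol z_j+B_j$ must contain their symplectic annihilators $B_j^\sigma$ and have volume tending to infinity. The quantized cell indicator then has rank equal to that volume, so your identity $N_{f_0}(\mu)=|\{f_0<\mu\}|$ survives.

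However, the bound $|f-f_0|\le C_s m h_g^s$ does not follow for $f_0$ constant on $g$-cells. The value $g_{\boldsymbol z}(\boldsymbol w)=1$ is attained inside a cell, so the smoothness estimate controls the oscillation of $f$ on a cell only by $C m(\boldsymbol z)$, and $f$ may oscillate that much. You would have to subdivide into cosets of unit-volume self-dual lattices $L$ with $B_j^\sigma\subseteq L\subseteq B_j$, on which $g_{\boldsymbol z}$ is bounded by a positive power of $h_g(\boldsymbol z)$. Once Step~2 is repaired in this way, its upper half would suffice for the rough bound $O(\lambda^{2n/\alpha+\epsilon})$. It still would not give the $O(\lambda^\epsilon)$ remainder, which needs the exactness described above.
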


Theorem \ref{thm:Weyl_Asymptotics_Pseudodifferential_Operator} provides Weyl asymptotics for a broad class of $p$-adic pseudodifferential operators on $\mathbb{Q}_p^n$. The usual infinite-well heuristic would attach to the Dirichlet problem the expression $D^{\alpha}+V_{\Omega}(\boldsymbol{x})$, where
\begin{align*}
    V_{\Omega}(\boldsymbol{x})=\begin{cases}
        0,&\boldsymbol{x}\in\Omega,\\
        +\infty,&\boldsymbol{x}\in\Omega^c
    \end{cases}
\end{align*}
is the infinite potential well. For a general irregular domain this is only
a formal mnemonic: the Friedrichs realization used here has form domain
$H_0^{\alpha/2}(\Omega)$, fixed by closure of $\mathcal S(\Omega)$, whereas
an almost-everywhere support condition alone may give a larger space. The
corresponding formal expression is
\begin{align*}
\varphi(\boldsymbol{x},\boldsymbol{\xi})=\|\boldsymbol{\xi}\|_p^{\alpha}+V_{\Omega}(\boldsymbol{x}).
\end{align*}
However, this is an extended-valued expression rather than a symbol in
\(S(m,g)\), and hence it does not satisfy the hypotheses of Theorem
\ref{thm:Weyl_Asymptotics_Pseudodifferential_Operator}. A purely formal
phase-space calculation gives only
\begin{align*}
\operatorname{vol}\{(\boldsymbol x,\boldsymbol\xi):
\boldsymbol x\in\Omega,\ \|\boldsymbol\xi\|_p^\alpha\leq\lambda\}
=|\Omega|p^{n[\frac1\alpha\log_p\lambda]}.
\end{align*}
Because the symbol hypotheses fail, this calculation by itself gives no
asymptotic formula, and in particular no $O(\lambda^\epsilon)$ remainder, for $N_{D_{\Omega,\mathcal D}^{\alpha}}(\lambda)$.

\section{The Weyl-Berry conjecture}
\subsection{Remainder estimate}
\begin{definition}[$s$-dimensional inner Minkowski content]
    Let $\Omega\subseteq\mathbb{Q}_p^n$ be an open set with its topological boundary $\partial\Omega$. The inner $\varepsilon$-parallel neighborhood of $\partial\Omega$ is defined as
    \begin{align*}
        \partial\Omega_{\varepsilon,\text{in}}=\left\{\boldsymbol{x}\in\Omega|\ \inf_{\boldsymbol{y}\in\partial\Omega}\|\boldsymbol{x}-\boldsymbol{y}\|_p\leq\varepsilon\right\}.
    \end{align*}
    The upper and lower s-dimensional inner Minkowski content of $\partial\Omega$ are defined respectively by
    \begin{align*}
        \overline{\mathcal{M}}_{\text{in}}^{s}(\partial\Omega)=\varlimsup_{\varepsilon\to0}\frac{\mathcal{H}^n(\partial\Omega_{\varepsilon,\text{in}})}{p^{[\log_p\varepsilon](n-s)}};\\
        \underline{\mathcal{M}}_{\text{in}}^{s}(\partial\Omega)=\varliminf_{\varepsilon\to0}\frac{\mathcal{H}^n(\partial\Omega_{\varepsilon,\text{in}})}{p^{[\log_p\varepsilon](n-s)}}.
    \end{align*}
    If $\overline{\mathcal{M}}_{\text{in}}^{s}(\partial\Omega)=\underline{\mathcal{M}}_{\text{in}}^{s}(\partial\Omega)$, the common value is called the $s$-dimensional inner Minkowski content of $\partial\Omega$, denoted by $\mathcal{M}_{\text{in}}^{s}(\partial\Omega)$.
\end{definition}
\begin{remark}
    The $s$-dimensional Minkowski content $\mathcal{M}^{s}(\partial\Omega)$ is defined by
    \begin{align*}
        \mathcal{M}^{s}(\partial\Omega)=\lim_{\varepsilon\to0}\frac{\mathcal{H}^n(\partial\Omega_{\varepsilon})}{p^{[\log_p\varepsilon](n-s)}},
    \end{align*}
    where
    \begin{align*}
        \partial\Omega_{\varepsilon}=\left\{\boldsymbol{x}\in\mathbb{Q}_p^n|\ \inf_{\boldsymbol{y}\in\partial\Omega}\|\boldsymbol{x}-\boldsymbol{y}\|_p\leq\varepsilon\right\}.
    \end{align*}
    The exterior content $\mathcal{M}_{\text{ex}}^{s}(\partial\Omega)$ is defined analogously, with $\Omega$ replaced by $\Omega^c$ in the parallel neighborhood.
    When the following limit exists, the Minkowski dimension of
    $\partial\Omega$ is defined by
    \begin{align*}
        \dim_M(\partial\Omega)
        =n-\lim_{K\to\infty}
        \frac{\log_p|\partial\Omega_{p^{-K}}|}{-K}.
    \end{align*}
    Without assuming that the limit exists, replacing it by the lower
    limit gives the upper Minkowski dimension, whereas replacing it by
    the upper limit gives the lower Minkowski dimension. When these two
    quantities agree, their common value is denoted by
    $\dim_M(\partial\Omega)$.
\end{remark}

Write $\Omega=\displaystyle\bigsqcup_{i=1}^{\infty}B_{L_i}(\boldsymbol{a}_i)$ as the disjoint union of its maximal balls $\{B_{L_i}(\boldsymbol{a}_i)\}_{i=1}^{\infty}$, and define
\begin{align*}
    &V=\overline{\operatorname{span}\{\boldsymbol{1}_{B_{L_i}(\boldsymbol{a}_i)}\}_{i=1}^{\infty}}^{L^2(\Omega)};\\
    &V_{=i}=\operatorname{span}\{\boldsymbol{1}_{B_{L_j}(\boldsymbol{a}_j)}|\ L_j=i\};\\
    &n_i=\dim V_{=i};\\
    &V_{\leq K}=\bigoplus_{i\leq K} V_{=i};\\
    &V_{>K}=\overline{\bigoplus_{i\geq K+1} V_{=i}}^{L^2(\Omega)}=V\ominus V_{\leq K};\\
    &B_{\leq K}=\{B_{L_i}(\boldsymbol{a}_i)|\ \boldsymbol{1}_{B_{L_i}(\boldsymbol{a}_i)}\in V_{\leq K}\};\\
    &B_{>K}=\{B_{L_i}(\boldsymbol{a}_i)|\ \boldsymbol{1}_{B_{L_i}(\boldsymbol{a}_i)}\in V_{>K}\};\\
    &\Omega_{\leq K}=\bigcup_{L_i\leq K}B_{L_i}(\boldsymbol{a}_i);\\
    &\Omega_{>K}=\bigcup_{L_i\geq K+1}B_{L_i}(\boldsymbol{a}_i).
\end{align*}
With this notation, $V=V_{\leq K}\oplus V_{>K}$ and $\Omega=\Omega_{\leq K}\sqcup\Omega_{>K}$.

\begin{lemma}
\label{lem:dist_maximalball_partialOmega}
    For each maximal ball $B_{L_i}(\boldsymbol{a}_i)$, we have $\operatorname{dist}(B_{L_i}(\boldsymbol{a}_i),\Omega^c)=p^{-L_i+1}$.
\end{lemma}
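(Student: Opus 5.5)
The plan is to prove the two inequalities $\operatorname{dist}(B_{L_i}(\boldsymbol a_i),\Omega^c)\geq p^{-L_i+1}$ and $\leq p^{-L_i+1}$ separately. Both rest on the ultrametric structure: balls are either nested or disjoint. For a point $\boldsymbol y\notin B_L(\boldsymbol a)$, the value $\|\boldsymbol x-\boldsymbol y\|_p$ is independent of $\boldsymbol x\in B_L(\boldsymbol a)$, and it is at least $p^{-L+1}$. Here we use that the norm takes only the values $p^{k}$: if $\|\boldsymbol x-\boldsymbol y\|_p\leq p^{-L}$ then $\boldsymbol y\in B_L(\boldsymbol a)$.

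For the lower bound, take $\boldsymbol y\in\Omega^c$. Since $B_{L_i}(\boldsymbol a_i)\subseteq\Omega$, we have $\boldsymbol y\notin B_{L_i}(\boldsymbol a_i)$. By the remark above, $\|\boldsymbol x-\boldsymbol y\|_p\geq p^{-L_i+1}$ for every $\boldsymbol x\in B_{L_i}(\boldsymbol a_i)$. Taking the infimum gives $\operatorname{dist}\geq p^{-L_i+1}$.

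For the upper bound, we use maximality. The parent ball $B_{L_i-1}(\boldsymbol a_i)=\boldsymbol a_i+p^{L_i-1}\mathbb Z_p^n$ strictly contains $B_{L_i}(\boldsymbol a_i)$. It cannot be contained in $\Omega$: if it were, it would be a ball inside $\Omega$ containing the maximal ball $B_{L_i}(\boldsymbol a_i)$, which contradicts maximality. Here maximality in the decomposition means exactly that no strictly larger ball containing it lies in $\Omega$, and every larger ball containing it contains the parent. Hence there is a point $\boldsymbol y\in B_{L_i-1}(\boldsymbol a_i)\cap\Omega^c$. Then $\|\boldsymbol a_i-\boldsymbol y\|_p\leq p^{-L_i+1}$, so $\operatorname{dist}\leq p^{-L_i+1}$, and in fact the infimum is attained.

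The only point needing care is the upper bound. There we must justify that the parent ball is not contained in $\Omega$. This should follow directly from the definition of the maximal-ball decomposition, which takes maximal elements among the balls contained in $\Omega$; such elements exist because $\Omega$ is bounded, so levels are bounded below. We also need $\Omega^c\neq\emptyset$, which holds since $\Omega$ is bounded.
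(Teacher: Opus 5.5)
Your proof is correct and takes the same route as the paper. The lower bound comes from $\boldsymbol y\notin B_{L_i}(\boldsymbol a_i)$ together with the discreteness of the norm values, and the upper bound from a point of $\Omega^c$ in the parent ball $B_{L_i-1}(\boldsymbol a_i)$, which maximality supplies (the paper also checks that this point lies at distance exactly $p^{-L_i+1}$ from every point of the ball, which is not needed).
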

\begin{proof}
    Because $B_{L_i}(\boldsymbol{a}_i)\subseteq\Omega$, every $\boldsymbol{x}\in B_{L_i}(\boldsymbol{a}_i)$ and $\boldsymbol{y}\in\Omega^c$ satisfy
    \begin{align*}
        \|\boldsymbol{x}-\boldsymbol{y}\|_p>p^{-L_i}.
    \end{align*}
    Since the $p$-adic norm takes values in powers of $p$, this implies
    \begin{align*}
        \|\boldsymbol{x}-\boldsymbol{y}\|_p\geq p^{-L_i+1}.
    \end{align*}
    Consequently,
    \begin{align*}
        \operatorname{dist}(B_{L_i}(\boldsymbol{a}_i),\Omega^c)=\inf_{\substack{\boldsymbol{x}\in B_{L_i}(\boldsymbol{a}_i)\\\boldsymbol{y}\in\Omega^c}}\|\boldsymbol{x}-\boldsymbol{y}\|_p\geq p^{-L_i+1}.
    \end{align*}
    By maximality of $B_{L_i}(\boldsymbol{a}_i)\in\Omega$, the larger ball satisfies $B_{L_i-1}(\boldsymbol{a}_i)\cap\Omega^c\neq\emptyset$. Take $\boldsymbol{z}\in B_{L_i-1}(\boldsymbol{a}_i)\cap\Omega^c$. Then
    \begin{align*}
        \|\boldsymbol{x}-\boldsymbol{z}\|_p\leq\max\{\|\boldsymbol{x}-\boldsymbol{a}_i\|_p,\|\boldsymbol{a}_i-\boldsymbol{z}\|_p\}\leq\max\{p^{-L_i},p^{-L_i+1}\}=p^{-L_i+1}.
    \end{align*}
    Suppose, for contradiction, that there exists
    $\boldsymbol{x}\in B_{L_i}(\boldsymbol{a}_i)$ such that
    $\|\boldsymbol{x}-\boldsymbol{z}\|_p\leq p^{-L_i}$. Then
    \begin{align*}
        \|\boldsymbol{a}_i-\boldsymbol{z}\|_p\leq\max\{\|\boldsymbol{a}_i-\boldsymbol{x}\|_p,\|\boldsymbol{x}-\boldsymbol{z}\|_p\}\leq\max\{p^{-L_i},p^{-L_i}\}=p^{-L_i},
    \end{align*}
    which means $\boldsymbol{z}\in B_{L_i}(\boldsymbol{a}_i)\subseteq\Omega$. This contradicts $\boldsymbol{z}\in\Omega^c$. Therefore, for every $\boldsymbol{x}\in B_{L_i}(\boldsymbol{a}_i)$, one has
    \begin{align*}
         \|\boldsymbol{x}-\boldsymbol{z}\|_p=p^{-L_i+1},
    \end{align*}
    Consequently,
    \begin{align*}
        \operatorname{dist}(B_{L_i}(\boldsymbol{a}_i),\Omega^c)\leq\operatorname{dist}(B_{L_i}(\boldsymbol{a}_i),\boldsymbol{z})=p^{-L_i+1}.
    \end{align*}
    Combining the two inequalities yields $\operatorname{dist}(B_{L_i}(\boldsymbol{a}_i),\Omega^c)=p^{-L_i+1}$.
\end{proof}

Define orthogonal projection $P_K$ by
\begin{align}
\label{eq:P_Ku_Fourier}
    \widehat{P_Ku}(\boldsymbol{\xi})=\widehat{u}(\boldsymbol{\xi})\boldsymbol{1}_{p^{-K}\mathbb{Z}_p^n}(\boldsymbol{\xi}).
\end{align}
Equivalently, Fourier inversion gives
\begin{align}
\label{eq:P_Ku}
    P_Ku(\boldsymbol{x})=p^{nK}\int_{B_K(\boldsymbol{x})}u(\boldsymbol{y})d\boldsymbol{y},
\end{align}
because
\begin{align*}
    \check{\boldsymbol{1}}_{p^{-K}\mathbb{Z}_p^n}(\boldsymbol{x})=p^{nK}\boldsymbol{1}_{p^{K}\mathbb{Z}_p^n}(\boldsymbol{x}).
\end{align*}
Denote $\mathcal{B}_K$ and $\mathcal{U}_K$ by
\begin{align*}
    &\mathcal{B}_K=\{B_K(\boldsymbol{a}_i)| B_{L_i}(\boldsymbol{a}_i)\subseteq B_K(\boldsymbol{a}_i),\ B_{L_i}(\boldsymbol{a}_i)\in B_{>K}\};\\
    &\mathcal{U}_K=\bigcup\{B_K(\boldsymbol{a}_i)|\ B_K(\boldsymbol{a}_i)\in\mathcal{B}_K\}.
\end{align*}

\begin{lemma}
\label{lem:P_K_prpoerty>K}
    One has
    \begin{align*}
    P_KV_{>K}=\operatorname{span}
    \{\boldsymbol 1_{B_K}:B_K\in\mathcal{B}_K\}.
    \end{align*}
\end{lemma}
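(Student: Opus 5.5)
We prove the two inclusions separately. In both directions the only tool is the averaging formula \eqref{eq:P_Ku}: $P_Ku$ is constant on each ball of level $K$, and its value on $B_K(\boldsymbol x)$ is $p^{nK}\int_{B_K(\boldsymbol x)}u$. Here $u$ is regarded, via zero extension, as a function on $\mathbb Q_p^n$.

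\emph{Image of a single generator.} Take a maximal ball $B_{L_i}(\boldsymbol a_i)\in B_{>K}$, so $L_i\geq K+1$. Since $L_i>K$, the ball $B_{L_i}(\boldsymbol a_i)$ is contained in exactly one ball of level $K$, namely $B_K(\boldsymbol a_i)$. For any $\boldsymbol x$, the ball $B_K(\boldsymbol x)$ either equals $B_K(\boldsymbol a_i)$ or is disjoint from it. Hence
\begin{align*}
P_K\boldsymbol 1_{B_{L_i}(\boldsymbol a_i)}=p^{nK}|B_{L_i}(\boldsymbol a_i)|\,\boldsymbol 1_{B_K(\boldsymbol a_i)}=p^{n(K-L_i)}\boldsymbol 1_{B_K(\boldsymbol a_i)}.
\end{align*}
This is a nonzero multiple of $\boldsymbol 1_{B_K}$ with $B_K=B_K(\boldsymbol a_i)\in\mathcal B_K$. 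By linearity, $P_K$ maps $\operatorname{span}\{\boldsymbol 1_{B_{L_i}(\boldsymbol a_i)}:L_i\geq K+1\}$ into $\operatorname{span}\{\boldsymbol 1_{B_K}:B_K\in\mathcal B_K\}$. Conversely, every $B_K\in\mathcal B_K$ is by definition of the form $B_K(\boldsymbol a_i)$ with $B_{L_i}(\boldsymbol a_i)\in B_{>K}$. Therefore $\boldsymbol 1_{B_K}=p^{n(L_i-K)}P_K\boldsymbol 1_{B_{L_i}(\boldsymbol a_i)}\in P_KV_{>K}$, which gives the reverse inclusion for the span.

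\emph{Passing to the closure.} $V_{>K}$ is the $L^2$-closure of the finite spans, and $P_K$ is an orthogonal projection, hence bounded. Therefore $P_KV_{>K}$ is contained in the closure of $\operatorname{span}\{\boldsymbol 1_{B_K}:B_K\in\mathcal B_K\}$. We claim $\mathcal B_K$ is finite. All maximal balls lie in one bounded ball $B_L(\boldsymbol a)\supseteq\Omega$ (take $L\leq K$ after enlarging if necessary). That ball contains only $p^{n(K-L)}$ balls of level $K$. So the span is finite-dimensional, hence closed, and $P_KV_{>K}\subseteq\operatorname{span}\{\boldsymbol 1_{B_K}\}$. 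Combined with the previous paragraph, this gives equality.

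The one point that needs care is this closure step: the image of a closed subspace under a projection need not be closed in general. Here the problem disappears because the target is finite-dimensional, by boundedness of $\Omega$. I would also note that distinct elements of $\mathcal B_K$ are disjoint balls, so the indicators are orthogonal. This makes the span an honest $\#\mathcal B_K$-dimensional space, which is presumably what is used later in the paper.
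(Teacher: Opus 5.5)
Your proposal is correct and follows the paper's proof. It uses the same computation $P_K\boldsymbol 1_{B_{L_i}(\boldsymbol a_i)}=p^{n(K-L_i)}\boldsymbol 1_{B_K(\boldsymbol a_i)}$ from the averaging formula, and the same finiteness of $\mathcal B_K$ (from boundedness of $\Omega$) to conclude that the span is closed. Your justification of the closure step, via continuity of $P_K$ together with finite-dimensionality of the target, is slightly more explicit than the paper's one-line remark.
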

\begin{proof}
    It suffices to consider a basis function $\boldsymbol{1}_{B_{L_i}(\boldsymbol{a}_i)}(\boldsymbol{x})\in V_{>K}$. Since $L_i\geq K+1$, \eqref{eq:P_Ku} gives
    \begin{align}
    \label{eq:P_K_1}
        P_K\boldsymbol{1}_{B_{L_i}(\boldsymbol{a}_i)}(\boldsymbol{x})=p^{nK}\int_{B_K(\boldsymbol{x})}\boldsymbol{1}_{B_{L_i}(\boldsymbol{a}_i)}(\boldsymbol{y})d\boldsymbol{y}.
    \end{align}
    \begin{enumerate}
        \item[]\textbf{Case 1.} If $B_K(\boldsymbol{x})\cap B_{L_i}(\boldsymbol{a}_i)=\emptyset$, then the integral in \eqref{eq:P_K_1} vanishes.
        \item[]\textbf{Case 2.} If $B_K(\boldsymbol{x})\cap B_{L_i}(\boldsymbol{a}_i)\neq\emptyset$, then $B_{L_i}(\boldsymbol{a}_i)\subseteq B_K(\boldsymbol{x})$. Therefore, the integral in \eqref{eq:P_K_1} equals $p^{nK}|B_{L_i}(\boldsymbol{a}_i)|=p^{n(K-L_i)}$.
    \end{enumerate}
    Thus
    \begin{align*}
        P_K\boldsymbol{1}_{B_{L_i}(\boldsymbol{a}_i)}(\boldsymbol{x})=p^{n(K-L_i)}\boldsymbol{1}_{B_K(\boldsymbol{a}_i)}(\boldsymbol{x}).
    \end{align*}
    For fixed $K$, the boundedness of $\Omega$ implies that only finitely many
    balls of level $K$ occur in $\mathcal{B}_K$. Hence the displayed span is
    closed, and taking the span proves the assertion. The basis indicators
    have pairwise disjoint supports, and the union of those supports is
    precisely $\mathcal{U}_K$.
\end{proof}

\begin{lemma}
\label{lem:P_K_prpoerty<=K}
    $P_K|_{V_{\leq K}}=\operatorname{id}_{V_{\leq K}}$ and $\mathcal{U}_K\cap\Omega_{\leq K}=\emptyset$.
\end{lemma}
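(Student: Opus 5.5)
We verify the two assertions directly from the averaging formula \eqref{eq:P_Ku} and the ultrametric structure of balls.

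\textbf{The identity on $V_{\leq K}$.} Since $P_K$ is bounded and linear, and $V_{\leq K}$ is the finite direct sum of the spaces $V_{=i}$ with $i\leq K$, it suffices to check $P_K\boldsymbol 1_{B_{L_i}(\boldsymbol a_i)}=\boldsymbol 1_{B_{L_i}(\boldsymbol a_i)}$ whenever $L_i\leq K$. Fix $\boldsymbol x\in\mathbb Q_p^n$. By the ultrametric inequality, the ball $B_K(\boldsymbol x)$ of level $K\geq L_i$ is either contained in $B_{L_i}(\boldsymbol a_i)$ or disjoint from it.
\begin{itemize}
\item In the first case $\boldsymbol x\in B_{L_i}(\boldsymbol a_i)$, and \eqref{eq:P_Ku} gives $p^{nK}|B_K(\boldsymbol x)|=1$.
\item In the second case $\boldsymbol x\notin B_{L_i}(\boldsymbol a_i)$, and the integral vanishes.
\end{itemize}
Hence $P_K\boldsymbol 1_{B_{L_i}(\boldsymbol a_i)}=\boldsymbol 1_{B_{L_i}(\boldsymbol a_i)}$. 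Alternatively, on the Fourier side, $\widehat{\boldsymbol 1}_{B_{L_i}(\boldsymbol a_i)}$ is supported in $\{\|\boldsymbol\xi\|_p\leq p^{L_i}\}\subseteq p^{-K}\mathbb Z_p^n$, so the multiplier in \eqref{eq:P_Ku_Fourier} acts trivially.

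\textbf{Disjointness $\mathcal U_K\cap\Omega_{\leq K}=\emptyset$.} By the definition of $\mathcal U_K$, it suffices to show that a ball $B_K(\boldsymbol a_i)\in\mathcal B_K$ meets no maximal ball $B_{L_j}(\boldsymbol a_j)$ with $L_j\leq K$. Such a $B_K(\boldsymbol a_i)$ contains some maximal ball $B_{L_i}(\boldsymbol a_i)$ with $L_i\geq K+1$. Suppose $B_K(\boldsymbol a_i)\cap B_{L_j}(\boldsymbol a_j)\neq\emptyset$ with $L_j\leq K$. Since two $p$-adic balls that meet are nested, and $L_j\leq K$, we get $B_K(\boldsymbol a_i)\subseteq B_{L_j}(\boldsymbol a_j)$. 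Then $B_{L_i}(\boldsymbol a_i)\subseteq B_{L_j}(\boldsymbol a_j)$, and the inclusion is strict because $L_i>L_j$. This contradicts either the maximality of $B_{L_i}(\boldsymbol a_i)$ or the pairwise disjointness of the maximal-ball decomposition, since both are maximal balls of $\Omega$.

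\textbf{Main point of care.} The only nontrivial step is the nesting dichotomy for $p$-adic balls combined with the maximality and disjointness of the decomposition; the rest is routine. We should also note that $B_K(\boldsymbol a_i)$ is well defined, since it does not depend on the chosen centre in $B_{L_i}(\boldsymbol a_i)$.
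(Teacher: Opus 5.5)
Your proof is correct and essentially follows the paper's route: the identity on $V_{\leq K}$ rests on the same Fourier-support observation (your averaging computation is an equivalent direct check), and your disjointness argument uses the same ball-nesting fact. The only difference is in the second part: the paper shows that $P_Ku$ vanishes on $\Omega_{\leq K}$ for $u\in V_{>K}$ and then invokes $P_K\boldsymbol{1}_{B_{L_i}(\boldsymbol{a}_i)}=p^{n(K-L_i)}\boldsymbol{1}_{B_K(\boldsymbol{a}_i)}$ from the preceding lemma, whereas you argue directly from the maximality and disjointness of the maximal balls, which is more transparent.
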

\begin{proof}
It suffices to verify the assertion for each basis function $\boldsymbol{1}_{B_{L_i}(\boldsymbol{a}_i)}(\boldsymbol{x})$. Since
\begin{align*}
    \hat{\boldsymbol{1}}_{B_{L_i}(\boldsymbol{a}_i)}(\boldsymbol{\xi})=\chi_p(\boldsymbol{\xi}\cdot\boldsymbol{a}_i)p^{-nL_i}\boldsymbol{1}_{B_{-L_i}(\boldsymbol{0})}(\boldsymbol{\xi}).
\end{align*}
Here every $u\in V_{\leq K}$ satisfies $\operatorname{supp}\hat{u}\subseteq\{\boldsymbol{\xi}\in\mathbb{Q}_p^n|\ \|\boldsymbol{\xi}\|_p\leq p^K\}$. Then by \eqref{eq:P_Ku_Fourier}, for $u\in V_{\leq K}$, we have
\begin{align*}
    \widehat{P_Ku}(\boldsymbol{\xi})=\widehat{u}(\boldsymbol{\xi})\boldsymbol{1}_{p^{-K}\mathbb{Z}_p^n}(\boldsymbol{\xi})=\widehat{u}(\boldsymbol{\xi}),
\end{align*}
thus
\begin{align*}
    P_Ku(\boldsymbol{x})=u(\boldsymbol{x}),
\end{align*}
hence $P_K|_{V_{\leq K}}=\operatorname{id}_{V_{\leq K}}$.

Now let $u\in V_{>K}$. Then
\begin{align*}
   u|_{\Omega_{\leq K}}=0.
\end{align*}
Thus, for every $\boldsymbol{x}\in\Omega_{\leq K}$, there exists a maximal ball $B_{L_j}(\boldsymbol{a}_j)$ containing $\boldsymbol{x}$ with $L_j\leq K$. Then \eqref{eq:P_Ku} gives
\begin{align*}
    B_K(\boldsymbol{x})\subseteq B_{L_j}(\boldsymbol{a}_j)\subseteq\Omega_{\leq K}.
\end{align*}
Because $u|_{\Omega_{\leq K}}=0$, formula \eqref{eq:P_Ku} gives
$P_Ku|_{\Omega_{\leq K}}=0$. The preceding lemma and the definition of
$\mathcal{U}_K$ therefore imply
$\mathcal{U}_K\cap\Omega_{\leq K}=\emptyset$.
\end{proof}

\begin{lemma}
    \begin{align*}
        &\mathcal{B}_K=\{B_K(\boldsymbol{a})|\ B_K(\boldsymbol{a})\cap\Omega\neq\emptyset,\ B_K(\boldsymbol{a})\cap\Omega^c\neq\emptyset\};\\
        &\mathcal{U}_K=\{\boldsymbol{x}\in\mathbb{Q}_p^n|\ \operatorname{dist}(\boldsymbol{x},\Omega)\leq p^{-K},\ \operatorname{dist}(\boldsymbol{x},\Omega^c)\leq p^{-K}\}.
    \end{align*}
    Moreover, $\partial\Omega_{p^{-K}}\subseteq\mathcal{U}_K$ and $\displaystyle\bigcap_{K}\mathcal{U}_K=\partial\Omega$.
\end{lemma}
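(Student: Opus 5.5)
We prove the four assertions in order, using only the ultrametric structure of balls together with Lemma \ref{lem:dist_maximalball_partialOmega} and the definition of $\mathcal{B}_K$ via maximal balls of level $>K$.

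\textbf{Step 1: the two descriptions of $\mathcal{B}_K$ agree.} First let $B_K(\boldsymbol{a}_i)\in\mathcal{B}_K$, that is, $B_K(\boldsymbol{a}_i)\supseteq B_{L_i}(\boldsymbol{a}_i)$ with $L_i\geq K+1$. Then $B_K(\boldsymbol{a}_i)$ meets $\Omega$. If it were contained in $\Omega$, it would lie inside some maximal ball. By the ultrametric nesting, that maximal ball must contain $B_{L_i}(\boldsymbol{a}_i)$ and have level $\leq K<L_i$, which contradicts the maximality of $B_{L_i}(\boldsymbol{a}_i)$. Hence $B_K(\boldsymbol{a}_i)$ also meets $\Omega^c$. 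Conversely, let $B_K(\boldsymbol{a})$ meet both $\Omega$ and $\Omega^c$, and pick $\boldsymbol{x}\in B_K(\boldsymbol{a})\cap\Omega$ with maximal ball $B_{L_j}(\boldsymbol{a}_j)\ni\boldsymbol{x}$. Two balls are either nested or disjoint, so either $B_{L_j}\subseteq B_K(\boldsymbol{a})$ or $B_K(\boldsymbol{a})\subseteq B_{L_j}$. The second option would force $B_K(\boldsymbol{a})\subseteq\Omega$, which is impossible. Hence $L_j\geq K+1$, or $L_j=K$ with $B_{L_j}=B_K(\boldsymbol{a})\subseteq\Omega$, which is again excluded. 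Therefore $B_K(\boldsymbol{a})=B_K(\boldsymbol{a}_j)\in\mathcal{B}_K$.

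\textbf{Step 2: the description of $\mathcal{U}_K$.} Since $\operatorname{dist}(\boldsymbol{x},S)\leq p^{-K}$ holds iff $B_K(\boldsymbol{x})\cap S\neq\emptyset$ (values of the norm are discrete and balls are closed), the set $\{\operatorname{dist}(\boldsymbol{x},\Omega)\leq p^{-K},\ \operatorname{dist}(\boldsymbol{x},\Omega^c)\leq p^{-K}\}$ is exactly the union of the level-$K$ balls meeting both $\Omega$ and $\Omega^c$. By Step 1 this union is $\mathcal{U}_K$. The one point to watch is that the infimum defining the distance to a non-closed set such as $\Omega$ is attained up to the discreteness of norm values. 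This holds because an infimum $\leq p^{-K}$ over a set of values in $p^{\mathbb{Z}}\cup\{0\}$ gives some point at distance $\leq p^{-K}$; the case of value $0$ also works, since balls are open.

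\textbf{Step 3: the inclusion $\partial\Omega_{p^{-K}}\subseteq\mathcal{U}_K$.} If $\|\boldsymbol{x}-\boldsymbol{y}\|_p\leq p^{-K}$ for some $\boldsymbol{y}\in\partial\Omega$, then $B_K(\boldsymbol{x})=B_K(\boldsymbol{y})$. This ball is a neighborhood of a boundary point, so it meets both $\Omega$ and $\Omega^c$, and Step 2 applies.

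\textbf{Step 4: $\bigcap_K\mathcal{U}_K=\partial\Omega$.} The inclusion $\supseteq$ follows from Step 3 by taking $\boldsymbol{x}=\boldsymbol{y}$. For $\subseteq$, let $\boldsymbol{x}$ lie in every $\mathcal{U}_K$. Then every ball $B_K(\boldsymbol{x})$ meets $\Omega$ and $\Omega^c$, so $\boldsymbol{x}\in\overline{\Omega}\cap\overline{\Omega^c}=\partial\Omega$.

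The only delicate step is the maximality argument in Step 1, which excludes the equality case $L_j=K$; everything else is routine ultrametric bookkeeping.
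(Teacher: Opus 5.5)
Your proof is correct and follows essentially the same route as the paper: the nested-or-disjoint property of balls and the maximality of $B_{L_i}(\boldsymbol{a}_i)$ give both inclusions for $\mathcal{B}_K$, and the $\mathcal{U}_K$ and boundary assertions then follow by taking unions. Your Steps 2--4 spell out what the paper leaves to ``the definition of $\partial\Omega$''. These are the equivalence $\operatorname{dist}(\boldsymbol{x},S)\leq p^{-K}\iff B_K(\boldsymbol{x})\cap S\neq\emptyset$, which holds by discreteness of the norm values, and the fact that the balls $B_K(\boldsymbol{x})$ form a neighborhood base at $\boldsymbol{x}$.
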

\begin{proof}
    Let $B_K(\boldsymbol{a})\in\mathcal{B}_K$. By definition, there exists a maximal ball $B_{L}(\boldsymbol{a})\in B_{>K}$ such that $B_{L}(\boldsymbol{a})\subseteq B_K(\boldsymbol{a})$. Consequently, $B_K(\boldsymbol{a})\cap\Omega\neq\emptyset$. If $B_K(\boldsymbol{a})\subseteq\Omega$, then the maximal ball $B_{L}(\boldsymbol{a})$ contained in it would not be maximal in $\Omega$, a contradiction. Hence $B_K(\boldsymbol{a})\cap\Omega^c\neq\emptyset$.

    Conversely, let $B_K(\boldsymbol{a})\in\{B_K(\boldsymbol{a})|\ B_K(\boldsymbol{a})\cap\Omega\neq\emptyset,\ B_K(\boldsymbol{a})\cap\Omega^c\neq\emptyset\}$, and choose $\boldsymbol{x}\in B_K(\boldsymbol{a})\cap\Omega$. Let $B_L(\boldsymbol{b})$ be the maximal ball containing $\boldsymbol{x}$. If $L\leq K$, then $B_K(\boldsymbol{a})=B_K(\boldsymbol{x})\subseteq B_L(\boldsymbol{x})=B_L(\boldsymbol{b})\subseteq\Omega$. This contradicts $B_K(\boldsymbol{a})\cap\Omega^c\neq\emptyset$. Therefore $L\geq K+1$, so $B_L(\boldsymbol{b})\subseteq B_K(\boldsymbol{a})$ and $B_K(\boldsymbol{a})\in\mathcal{B}_K$.

    This proves the stated characterization of \(\mathcal{B}_K\). Taking the union of these balls gives the characterization of \(\mathcal{U}_K\). Finally, the definition of \(\partial\Omega\) yields $\partial\Omega_{p^{-K}}\subseteq\mathcal{U}_K$ and $\displaystyle\bigcap_{K}\mathcal{U}_K=\partial\Omega$.
\end{proof}

\begin{corollary}
    $\Omega_{>K}=\Omega\cap\mathcal{U}_K=\{\boldsymbol{x}\in\Omega|\ \operatorname{dist}(\boldsymbol{x},\Omega^c)\leq p^{-K}\}$. Moreover, $\partial\Omega_{p^{-K},\text{in}}\subseteq\Omega_{>K}$.
\end{corollary}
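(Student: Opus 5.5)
The plan is to prove the two descriptions of $\Omega_{>K}$ by showing two inclusions each, and to read off the inner boundary inclusion from them.

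\emph{Step 1: $\Omega_{>K}=\Omega\cap\mathcal U_K$.} Let $\boldsymbol x\in\Omega$ and let $B_{L_i}(\boldsymbol a_i)$ be the unique maximal ball containing it.
\begin{itemize}
\item[] If $L_i\ge K+1$, then $B_{L_i}(\boldsymbol a_i)\in B_{>K}$. Its level-$K$ ancestor $B_K(\boldsymbol a_i)$ belongs to $\mathcal B_K$ by definition, so $\boldsymbol x\in\mathcal U_K$.
\item[] Conversely, if $\boldsymbol x\in\mathcal U_K$ but $L_i\le K$, then $\boldsymbol x\in\Omega_{\le K}$. This contradicts $\mathcal U_K\cap\Omega_{\le K}=\emptyset$ from Lemma \ref{lem:P_K_prpoerty<=K}.
\end{itemize}
Hence $\Omega\cap\mathcal U_K=\Omega_{>K}$.

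\emph{Step 2: $\Omega_{>K}=\{\boldsymbol x\in\Omega:\operatorname{dist}(\boldsymbol x,\Omega^c)\le p^{-K}\}$.} This follows from Lemma \ref{lem:dist_maximalball_partialOmega}. The proof of that lemma shows more than the stated distance of balls: for every $\boldsymbol x\in B_{L_i}(\boldsymbol a_i)$ one has $\operatorname{dist}(\boldsymbol x,\Omega^c)=p^{-L_i+1}$. The lower bound holds pointwise, and the point $\boldsymbol z\in B_{L_i-1}(\boldsymbol a_i)\cap\Omega^c$ attains it. So $\operatorname{dist}(\boldsymbol x,\Omega^c)\le p^{-K}$ iff $-L_i+1\le -K$ iff $L_i\ge K+1$, which is membership in $\Omega_{>K}$. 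Alternatively, one can intersect the metric characterization of $\mathcal U_K$ in the preceding lemma with $\Omega$, where $\operatorname{dist}(\boldsymbol x,\Omega)=0$.

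\emph{Step 3: $\partial\Omega_{p^{-K},\mathrm{in}}\subseteq\Omega_{>K}$.} Take $\boldsymbol x\in\Omega$ and $\boldsymbol y\in\partial\Omega$ with $\|\boldsymbol x-\boldsymbol y\|_p\le p^{-K}$; the infimum is attained or approximated, and since norm values are discrete, an approximating point within $p^{-K}$ exists.
\begin{itemize}
\item[] Because $\Omega$ is open, $\partial\Omega\subseteq\Omega^c$, so $\operatorname{dist}(\boldsymbol x,\Omega^c)\le p^{-K}$ and Step 2 gives $\boldsymbol x\in\Omega_{>K}$.
\item[] Alternatively, use the preceding lemma's $\partial\Omega_{p^{-K}}\subseteq\mathcal U_K$ and intersect with $\Omega$ via Step 1.
\end{itemize}

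The only delicate point is the pointwise version of Lemma \ref{lem:dist_maximalball_partialOmega}, namely that every point of the ball, not just some point, lies at distance exactly $p^{-L_i+1}$ from $\Omega^c$. This is already implicit in that lemma's proof, which shows the relevant point $\boldsymbol z$ is at distance exactly $p^{-L_i+1}$ from every $\boldsymbol x\in B_{L_i}(\boldsymbol a_i)$. The other subtlety is the discreteness needed to pass from an infimum to an inequality on actual distances. Everything else is bookkeeping with the maximal-ball decomposition.
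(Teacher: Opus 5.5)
Your proof is correct and follows the paper's implicit route. The paper states this corollary without proof, reading it off from the metric characterization of $\mathcal U_K$, the inclusion $\partial\Omega_{p^{-K}}\subseteq\mathcal U_K$, and $\mathcal U_K\cap\Omega_{\le K}=\emptyset$; these are exactly your Step 1 and your alternative arguments in Steps 2 and 3. Your primary argument in Step 2, the pointwise form of Lemma \ref{lem:dist_maximalball_partialOmega}, is a valid variant that also yields the exact value $\operatorname{dist}(\boldsymbol x,\Omega^c)=p^{-L_i+1}$ for $\boldsymbol x\in B_{L_i}(\boldsymbol a_i)$.
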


\begin{definition}
We say the condition \eqref{eq:condition_AK} holds if
    \begin{align}
    \label{eq:condition_AK}
    \tag{$A_K$}
        B_K(\boldsymbol{a})\in\mathcal{B}_K\Rightarrow B_K(\boldsymbol{a})\cap\partial\Omega\neq\emptyset.
    \end{align}
\end{definition}
\begin{remark}
    In the Archimedean setting this condition is automatic because every connected ball meeting both \(\Omega\) and \(\Omega^c\) meets \(\partial\Omega\); in the non-Archimedean setting it can fail since each ball in $\mathbb{Q}_p^n$ is totally disconnected. For example, let $\Omega=4\mathbb{Z}_2\sqcup((1+2\mathbb{Z}_2)\setminus\{1\})$. Then $\partial\Omega=\{1\}$. Consider the ball $2\mathbb{Z}_2=4\mathbb{Z}_2\sqcup(2+4\mathbb{Z}_2)$: we have $4\mathbb{Z}_2\subseteq\Omega$ and $2+4\mathbb{Z}_2\subseteq\Omega^c$, but $2\mathbb{Z}_2\cap\{1\}=\emptyset$.
\end{remark}

\begin{lemma}
\label{lem:equiv_condition_AK}
    The following are equivalent:
    \begin{enumerate}
        \item The condition \eqref{eq:condition_AK} holds.
        \item $B_K(\boldsymbol{a})\in\mathcal{B}_K\Leftrightarrow B_K(\boldsymbol{a})\cap\partial\Omega\neq\emptyset$.
        \item $\mathcal{U}_K=\partial\Omega_{p^{-K}}$.
        \item $\Omega_{>K}=\partial\Omega_{p^{-K},\text{in}}$.
    \end{enumerate}
\end{lemma}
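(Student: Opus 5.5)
We prove the cycle of implications (1)$\Rightarrow$(2)$\Rightarrow$(3)$\Rightarrow$(4)$\Rightarrow$(1), using the characterization of $\mathcal{B}_K$ and $\mathcal{U}_K$ from the preceding lemma and the corollary $\Omega_{>K}=\Omega\cap\mathcal{U}_K$. Two elementary $p$-adic facts do most of the work. First, two balls of level $K$ are either equal or disjoint. Second, $\operatorname{dist}(\boldsymbol{x},A)\leq p^{-K}$ holds exactly when $B_K(\boldsymbol{x})\cap A\neq\emptyset$; here we also need the infimum to be attained. This is true because $\|\cdot\|_p$ takes discrete values, so the distance lies in $\{p^{m}\}\cup\{0\}$. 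If the distance is $0$ and $A$ is closed, the point lies in $A$. For $\partial\Omega$, which is closed, this gives $\partial\Omega_{p^{-K}}=\bigcup\{B_K(\boldsymbol{a}):B_K(\boldsymbol{a})\cap\partial\Omega\neq\emptyset\}$.

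\textbf{(1)$\Rightarrow$(2).} The forward direction is the hypothesis. For the converse, let $B_K(\boldsymbol{a})$ meet $\partial\Omega$ at $\boldsymbol{y}$. Since $B_K(\boldsymbol{a})$ is an open neighbourhood of $\boldsymbol{y}\in\overline{\Omega}\cap\overline{\Omega^c}$, it meets $\Omega$. Since $\Omega$ is open, $\partial\Omega\subseteq\Omega^c$, so $\boldsymbol{y}$ itself lies in $B_K(\boldsymbol{a})\cap\Omega^c$. Hence $B_K(\boldsymbol{a})\in\mathcal{B}_K$. Note that this converse needs no hypothesis, so (2) is really (1) together with an automatic reverse inclusion.

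\textbf{(2)$\Rightarrow$(3).} Take unions over level-$K$ balls. $\mathcal{U}_K$ is the union of the balls in $\mathcal{B}_K$, and $\partial\Omega_{p^{-K}}$ is the union of the level-$K$ balls meeting $\partial\Omega$. By (2) these families coincide, so $\mathcal{U}_K=\partial\Omega_{p^{-K}}$.

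\textbf{(3)$\Rightarrow$(4).} Intersect (3) with $\Omega$. The corollary gives $\Omega_{>K}=\Omega\cap\mathcal{U}_K$, and by definition $\partial\Omega_{p^{-K},\text{in}}=\Omega\cap\partial\Omega_{p^{-K}}$.

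\textbf{(4)$\Rightarrow$(1).} This is the only step requiring care, because (4) only describes the part inside $\Omega$. Let $B_K(\boldsymbol{a})\in\mathcal{B}_K$. By the characterization of $\mathcal{B}_K$, some $\boldsymbol{x}\in B_K(\boldsymbol{a})\cap\Omega$ exists, and $\operatorname{dist}(\boldsymbol{x},\Omega^c)\leq p^{-K}$ because $B_K(\boldsymbol{x})=B_K(\boldsymbol{a})$ meets $\Omega^c$. So $\boldsymbol{x}\in\Omega_{>K}$, and by (4) $\boldsymbol{x}\in\partial\Omega_{p^{-K},\text{in}}$. Using the attained-distance fact, some $\boldsymbol{y}\in\partial\Omega$ satisfies $\|\boldsymbol{x}-\boldsymbol{y}\|_p\leq p^{-K}$. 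Then $\boldsymbol{y}\in B_K(\boldsymbol{x})=B_K(\boldsymbol{a})$, which proves (1).

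The main obstacle is justifying that the infimum defining distance to $\partial\Omega$ is attained. We handle this through the discreteness of the norm values together with the closedness of $\partial\Omega$. The case $\partial\Omega=\emptyset$ needs a separate remark: a bounded $\Omega$ with empty boundary is clopen, so $\mathcal{B}_K$ is empty for large $K$ and all four statements are consistent.
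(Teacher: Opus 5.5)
Your proof is correct and follows the paper's argument essentially step for step: the same cycle (1)$\Rightarrow$(2)$\Rightarrow$(3)$\Rightarrow$(4)$\Rightarrow$(1), the same neighbourhood argument for the reverse inclusion, and the same union-of-balls and intersection-with-$\Omega$ steps. The only difference is minor and is in the attainment of $\operatorname{dist}(\boldsymbol{x},\partial\Omega)$. You justify it by the discreteness of the norm values together with closedness of $\partial\Omega$, whereas the paper invokes compactness of $\partial\Omega$; you also prove the ball-union description of $\partial\Omega_{p^{-K}}$ used in (2)$\Rightarrow$(3), which the paper asserts directly.
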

\begin{proof}
    \begin{enumerate}
        \item[] $1\Rightarrow2$: It remains to prove the reverse implication. Suppose that $B_K(\boldsymbol{a})\cap\partial\Omega\neq\emptyset$, and choose $\boldsymbol{z}\in B_K(\boldsymbol{a})\cap\partial\Omega$. Then $B_K(\boldsymbol{a})$ is an open neighborhood of $\boldsymbol{z}\in\partial\Omega$. Consequently, we have $B_K(\boldsymbol{a})\cap\Omega\neq\emptyset$ and $
            B_K(\boldsymbol{a})\cap\Omega^c\neq\emptyset$, which means $B_K(\boldsymbol{a})\in\mathcal{B}_K$.
        \item[] $2\Rightarrow3$: If $\boldsymbol{x}\in\partial\Omega_{p^{-K}}$, then $B_K(\boldsymbol{x})\cap\partial\Omega\neq\emptyset$. Conversely, suppose that $B_K(\boldsymbol{x})\cap\partial\Omega\neq\emptyset$. Choose $\boldsymbol{y}\in B_K(\boldsymbol{x})\cap\partial\Omega$. Then $\|\boldsymbol{x}-\boldsymbol{y}\|_p\leq p^{-K}$. Hence, $\operatorname{dist}(\boldsymbol{x},\partial\Omega)\leq\|\boldsymbol{x}-\boldsymbol{y}\|_p\leq p^{-K}$, which means $\boldsymbol{x}\in\partial\Omega_{p^{-K}}$. Therefore,
        \begin{align*}
            \partial\Omega_{p^{-K}}=\bigcup_{B_K(\boldsymbol{a})\cap\partial\Omega\neq\emptyset}B_K(\boldsymbol{a})=\bigcup_{B_K(\boldsymbol{a})\in\mathcal{B}_K}B_K(\boldsymbol{a})=\mathcal{U}_K.
        \end{align*}
        \item[] $3\Rightarrow4$: Since $\Omega_{>K}=\Omega\cap\mathcal{U}_K$ and $\partial\Omega_{p^{-K},\text{in}}=\Omega\cap\partial\Omega_{p^{-K}}$, it follows that $\Omega_{>K}=\partial\Omega_{p^{-K},\text{in}}$.
        \item[] $4\Rightarrow1$: For $B_K(\boldsymbol{a})\in\mathcal{B}_K$, take $\boldsymbol{x}\in B_K(\boldsymbol{a})\cap\Omega\subseteq\Omega\cap\mathcal{U}_K=\Omega_{>K}=\partial\Omega_{p^{-K},\text{in}}$. Since $\partial\Omega$ is compact, there exists $\boldsymbol{y}\in\partial\Omega$ such that $\|\boldsymbol{x}-\boldsymbol{y}\|_p\leq p^{-K}$. Hence $\boldsymbol{y}\in B_K(\boldsymbol{x})=B_K(\boldsymbol{a})$, which means $B_K(\boldsymbol{a})\cap\partial\Omega\neq\emptyset$.
    \end{enumerate}
\end{proof}

\begin{theorem}
\label{thm:remainder_estimate}
    Let $\Omega\subseteq\mathbb{Q}_p^n$ be a bounded, open, Jordan measurable set. For $\lambda\in[p^{\alpha K},p^{\alpha(K+1)})$, we have the remainder estimate
    \begin{align*}
        -|\mathcal{U}_K\cap\Omega^c|p^{nK}\leq|\Omega|p^{n[\frac{1}{\alpha}\log_p\lambda]}-N_{D_{\Omega,\mathcal{D}}^{\alpha}}(\lambda)\leq|\mathcal{U}_K\cap\Omega|p^{nK}.
    \end{align*}
\end{theorem}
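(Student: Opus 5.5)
The plan is to prove the two inequalities separately by Dirichlet bracketing at the single scale $K$. We compare $\Omega$ with an inner compact open set and an outer compact open set, both built from balls of level $\leq K$. For such sets the exact counting formula \eqref{eq:N_Dirichlet_compact} is already valid for every $\lambda\geq p^{\alpha K}$.

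Fix $\lambda\in[p^{\alpha K},p^{\alpha(K+1)})$, so that $[\frac1\alpha\log_p\lambda]=K$. By the corollary $\Omega_{>K}=\Omega\cap\mathcal U_K$, we have
\begin{align*}
|\Omega|-|\mathcal U_K\cap\Omega|=|\Omega_{\leq K}|,\qquad |\Omega|+|\mathcal U_K\cap\Omega^c|=|\Omega\cup\mathcal U_K|.
\end{align*}
The claimed estimate is therefore equivalent to
\begin{align*}
|\Omega_{\leq K}|p^{nK}\leq N_{D_{\Omega,\mathcal D}^{\alpha}}(\lambda)\leq|\Omega_{\leq K}\cup\mathcal U_K|p^{nK}.
\end{align*}
Here we used $\Omega\cup\mathcal U_K=\Omega_{\leq K}\cup\mathcal U_K$, which holds because $\Omega_{>K}\subseteq\mathcal U_K$.

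\textbf{Lower bound.} Since $\Omega$ is bounded, it contains only finitely many pairwise disjoint balls of level $\leq K$. Hence $\Omega_{\leq K}$ is a finite union of maximal balls of $\Omega$, each of level $\leq K$, and so it is compact and open. Its own maximal-ball decomposition uses balls that are at least as large, so every level $L_i'$ in it satisfies $L_i'\leq K$. Consequently $\lambda\geq p^{\alpha K}\geq p^{\alpha\max L_i'}$, and \eqref{eq:N_Dirichlet_compact} gives $N_{D_{\Omega_{\leq K},\mathcal D}^\alpha}(\lambda)=|\Omega_{\leq K}|p^{nK}$. Domain monotonicity for $\Omega_{\leq K}\subseteq\Omega$ then yields the lower bound.

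\textbf{Upper bound.} Let $F=\Omega_{\leq K}\cup\mathcal U_K$. By Lemma~\ref{lem:P_K_prpoerty>K}, the set $\mathcal U_K$ is a finite union of level-$K$ balls, so $F$ is a finite union of balls of level $\leq K$. Thus $F$ is compact and open, contains $\Omega$, and all of its maximal balls have level $\leq K$. Applying \eqref{eq:N_Dirichlet_compact} to $F$ and then domain monotonicity for $\Omega\subseteq F$ gives $N_{D_{\Omega,\mathcal D}^\alpha}(\lambda)\leq|F|p^{nK}$. Since $\mathcal U_K\cap\Omega_{\leq K}=\emptyset$ by Lemma~\ref{lem:P_K_prpoerty<=K}, this is exactly the upper bound.

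The only delicate point is the threshold in the exact formula \eqref{eq:N_Dirichlet_compact}. It requires $\lambda\geq p^{\alpha L}$, where $L$ is the largest level among the maximal balls of the compact set in question. This is why both comparison sets must be unions of balls of level $\leq K$: that property holds for $\Omega_{\leq K}$ and for $F$, and it is what makes the bracketing exact, rather than merely asymptotic, on the whole interval $[p^{\alpha K},p^{\alpha(K+1)})$.
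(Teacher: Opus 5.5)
Your proof is correct, and it takes a different route from the paper, arriving at exactly the same endpoints.

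\textbf{The paper's route.} The paper splits $N_{D_{\Omega,\mathcal D}^{\alpha}}=N^{0}_{D_{\Omega,\mathcal D}^{\alpha}}+N_{\boldsymbol H}$. It computes the remainder exactly as $|\Omega_{>K}|p^{nK}+\dim V_{\leq K}-N_{\boldsymbol H}(\lambda)$ in \eqref{eq:remainder_all}. It then uses min--max to trap $N_{\boldsymbol H}(\lambda)$ between $\dim V_{\leq K}$ and $\dim P_KV=\dim V_{\leq K}+\#\mathcal B_K$. The upper bound rests on the Fourier projection $P_K$: a subspace on which the Rayleigh quotient is at most $\lambda$ cannot meet $(P_KV)^{\perp}$.

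\textbf{Your route.} You bracket $\Omega$ between two compact open sets chosen at the matching scale.
\begin{itemize}
\item $\Omega_{\leq K}$ is precisely the union of the level-$K$ balls contained in $\Omega$.
\item $F=\Omega_{\leq K}\cup\mathcal U_K$ is precisely the union of the level-$K$ balls meeting $\Omega$, i.e.\ the set $F_K$ from the proof of Theorem~\ref{thm:Weyl_law_Dirichlet_operator}.
\end{itemize}
The key observation is that every maximal ball of these two sets has level at most $K$. Hence the exact count \eqref{eq:N_Dirichlet_compact} already holds on all of $[p^{\alpha K},p^{\alpha(K+1)})$.

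\textbf{Comparison.} Your argument is the paper's Weyl-law bracketing made quantitative by choosing the approximants scale by scale. It is shorter and needs only domain monotonicity and the compact-open formula.

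What the paper's route buys is the exact identity expressing the remainder through $N_{\boldsymbol H}(\lambda)-\dim V_{\leq K}$, together with the $P_K$ min--max mechanism. These are what the later sections actually use. The attainment of the bounds for the self-similar example is proved by estimating $N_{\boldsymbol H}$ at $\lambda_K^{\pm}$. The Pólya characterization is phrased through $N_{\boldsymbol H}^{-}(\lambda_{K+1})$. Your bracketing gives the two-sided bound but not this finer structure.
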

\begin{proof}
    We use the decomposition $N_{D_{\Omega,\mathcal{D}}^{\alpha}}(\lambda)=N_{D_{\Omega,\mathcal{D}}^{\alpha}}^{0}(\lambda)+N_{\boldsymbol{H}}(\lambda)$, where
\begin{align*}
    N_{\boldsymbol{H}}(\lambda)=\#\{\lambda_i\in\sigma(\boldsymbol{H}):\ \lambda_i\leq\lambda\},
\end{align*}
where eigenvalues are counted with multiplicity.
For $\lambda\in[p^{\alpha K},p^{\alpha(K+1)})$ with $K\in\mathbb{Z}$, one has $[\frac{1}{\alpha}\log_p\lambda]=K$, and by \eqref{eq:N0_D_Ball_lambda} and \eqref{eq:N0_D_Omiga_lambda}, we have
\begin{equation}
\label{eq:remainder_all}
\begin{aligned}
    |\Omega|p^{n[\frac{1}{\alpha}\log_p\lambda]}-N_{D_{\Omega,\mathcal{D}}^{\alpha}}(\lambda)&=\sum_{i\in\mathbb{Z}}n_ip^{-ni}p^{nK}-\sum_{i\leq K}n_i(p^{-ni}p^{nK}-1)-N_{\boldsymbol{H}}(\lambda)\\
    &=\sum_{i\geq K+1}n_ip^{-ni}p^{nK}+\sum_{i\leq K}n_i-N_{\boldsymbol{H}}(\lambda)\\
    &=|\Omega_{>K}|p^{nK}+\sum_{i\leq K}n_i-N_{\boldsymbol{H}}(\lambda).
\end{aligned}
\end{equation}
Because \(\Omega\) is bounded, there exists \(i_0\in\mathbb Z\) such that every maximal ball occurring in the decomposition has level \(i\ge i_0\). Consequently, $\sum\limits_{i\leq K}n_i$ is a finite sum. We next estimate $N_{\boldsymbol{H}}(\lambda)$. For every $u\in V_{\leq K}$, $\operatorname{supp}\hat{u}\subseteq\{\boldsymbol{\xi}\in\mathbb{Q}_p^n|\ \|\boldsymbol{\xi}\|_p\leq p^K\}$. Hence
\begin{align*}
    \mathcal E_{\Omega,\mathcal D}[u]&=(D^{\alpha}\tilde{u},\tilde{u})\\
    &=\int_{\mathbb{Q}_p^n}\|\boldsymbol{\xi}\|_p^{\alpha}|\hat{u}(\boldsymbol{\xi})|^2d\boldsymbol{\xi}\\
    &=\int_{\|\boldsymbol{\xi}\|_p\leq p^K}\|\boldsymbol{\xi}\|_p^{\alpha}|\hat{u}(\boldsymbol{\xi})|^2d\boldsymbol{\xi}\\
    &\leq p^{\alpha K}\int_{\|\boldsymbol{\xi}\|_p\leq p^K}|\hat{u}(\boldsymbol{\xi})|^2d\boldsymbol{\xi}\\
    &=p^{\alpha K}(u,u)\\
    &\leq \lambda(u,u).
\end{align*}
For $j\leq\dim V_{\leq K}$, the min--max principle (Lemma \ref{lem:Min-Max_Principle}) gives
\begin{align*}
    \lambda_j(\boldsymbol{H})=\min_{\substack{S \subseteq V\cap H_0^{\alpha/2}(\Omega) \\ \dim S = j}}
    \max_{\substack{0\neq u \in S}}
    \frac{\mathcal E_{\Omega,\mathcal D}[u]}{(u,u)}\leq\min_{\substack{S \subseteq V_{\leq K} \\ \dim S = j}}
    \max_{\substack{0\neq u \in S}}\frac{\mathcal E_{\Omega,\mathcal D}[u]}{(u,u)}\leq\lambda.
\end{align*}
It follows that
\begin{align*}
    N_{\boldsymbol{H}}(\lambda)\geq \dim V_{\leq K}=\sum_{i\leq K}n_i.
\end{align*}
On the other hand, let $u\in V\cap H_0^{\alpha/2}(\Omega)$ satisfy
$P_Ku=0$. Then $\widehat{u}(\boldsymbol{\xi})=0$ on
$\{\boldsymbol{\xi}\in\mathbb{Q}_p^n:\ \|\boldsymbol{\xi}\|_p\leq p^K\}$, and
\begin{align*}
    \mathcal E_{\Omega,\mathcal D}[u]=\int_{\|\boldsymbol{\xi}\|_p>p^K}\|\boldsymbol{\xi}\|_p^{\alpha}|\hat{u}(\boldsymbol{\xi})|^2d\boldsymbol{\xi}\geq p^{\alpha(K+1)}(u,u)>\lambda(u,u).
\end{align*}
Now let $S\subseteq V\cap H_0^{\alpha/2}(\Omega)$ be a subspace such that
$\mathcal E_{\Omega,\mathcal D}[u]\leq\lambda(u,u)$ for every $u\in S$.
Suppose, for contradiction, that there exists a nonzero
$u\in S\cap(P_K V)^{\perp}$. Since $u\in(P_K V)^{\perp}$, we have
\begin{align*}
    0=(u,P_Ku)=(P_Ku,P_Ku)\Rightarrow P_Ku=0
    \Rightarrow\mathcal E_{\Omega,\mathcal D}[u]>\lambda(u,u).
\end{align*}
This contradicts the defining property of $S$, which means $S\cap(P_K V)^{\perp}=\{\boldsymbol{0}\}$. Therefore,
\begin{align*}
    \dim S\leq\dim P_K V=\dim P_KV_{\leq K}+\dim P_KV_{>K}.
\end{align*}
Lemmas \ref{lem:P_K_prpoerty>K} and \ref{lem:P_K_prpoerty<=K} imply
\begin{align*}
    \dim P_KV_{\leq K}=\dim V_{\leq K}=\sum_{i\leq K}n_i,
\end{align*}
and
\begin{align*}
    \dim P_KV_{>K}=\#\mathcal{B}_K=p^{nK}|\mathcal{U}_K|.
\end{align*}
Consequently,
\begin{align}
\label{eq:N_H_lambda_up_low_bound}
    \sum_{i\leq K}n_i\leq N_{\boldsymbol{H}}(\lambda)\leq \sum_{i\leq K}n_i+p^{nK}|\mathcal{U}_K|.
\end{align}
Combining \eqref{eq:remainder_all} and \eqref{eq:N_H_lambda_up_low_bound} yields
\begin{align*}
    |\Omega|p^{n[\frac{1}{\alpha}\log_p\lambda]}-N_{D_{\Omega,\mathcal{D}}^{\alpha}}(\lambda)\in&\left[|\Omega_{>K}|p^{nK}-|\mathcal{U}_K|p^{nK},|\Omega_{>K}|p^{nK}\right]\\
    &=\left[|\mathcal{U}_K\cap\Omega|p^{nK}-|\mathcal{U}_K|p^{nK},|\mathcal{U}_K\cap\Omega|p^{nK}\right]\\
    &=\left[-|\mathcal{U}_K\cap\Omega^c|p^{nK},|\mathcal{U}_K\cap\Omega|p^{nK}\right].
\end{align*}
\end{proof}

\begin{theorem}
\label{thm:Weyl-Berry}
    Let $\Omega\subseteq\mathbb{Q}_p^n$ be a bounded, open, Jordan measurable set with $\dim_{M}(\partial\Omega)=d$. Define
    \begin{align*}
        C(\lambda)=\frac{|\Omega|p^{n[\frac{1}{\alpha}\log_p\lambda]}-N_{D_{\Omega,\mathcal{D}}^{\alpha}}(\lambda)}{p^{d[\frac{1}{\alpha}\log_p\lambda]}}.
    \end{align*}
    Assume the condition \eqref{eq:condition_AK} holds for every $K\geq L$.
    Then
    \begin{align*}
        -\overline{\mathcal{M}}_{\text{ex}}^{d}(\partial\Omega)\leq\varliminf_{\lambda\to+\infty}C(\lambda)
        \leq\varlimsup_{\lambda\to+\infty}C(\lambda)\leq\overline{\mathcal{M}}_{\text{in}}^{d}(\partial\Omega).
    \end{align*}
\end{theorem}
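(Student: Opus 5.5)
Combine the remainder estimate of Theorem \ref{thm:remainder_estimate} with the identification of $\mathcal U_K$ given by Lemma \ref{lem:equiv_condition_AK}. Fix $\lambda\in[p^{\alpha K},p^{\alpha(K+1)})$ with $K\geq L$, so that $[\frac1\alpha\log_p\lambda]=K$ and the denominator of $C(\lambda)$ is exactly $p^{dK}$. Theorem \ref{thm:remainder_estimate} gives
\begin{align*}
-|\mathcal U_K\cap\Omega^c|\,p^{(n-d)K}\leq C(\lambda)\leq|\mathcal U_K\cap\Omega|\,p^{(n-d)K}.
\end{align*}
Since \eqref{eq:condition_AK} holds for $K\geq L$, Lemma \ref{lem:equiv_condition_AK} yields $\mathcal U_K=\partial\Omega_{p^{-K}}$. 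Hence $\mathcal U_K\cap\Omega=\partial\Omega_{p^{-K},\text{in}}$, and $\mathcal U_K\cap\Omega^c$ is the exterior parallel set of $\partial\Omega$ at scale $p^{-K}$. Both sides of the bound therefore depend only on $K$, and $\lambda\to+\infty$ is equivalent to $K\to+\infty$.

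Next, match the normalisation with the Minkowski contents. For $\varepsilon=p^{-K}$ one has $[\log_p\varepsilon]=-K$, so $p^{[\log_p\varepsilon](n-d)}=p^{-(n-d)K}$. Consequently
\begin{align*}
|\partial\Omega_{p^{-K},\text{in}}|\,p^{(n-d)K}=\frac{\mathcal H^n(\partial\Omega_{\varepsilon,\text{in}})}{p^{[\log_p\varepsilon](n-d)}}\Big|_{\varepsilon=p^{-K}},
\end{align*}
and the same holds for the exterior parallel set. Taking $\limsup_{K\to\infty}$ of the upper bound gives $\varlimsup C(\lambda)\leq\overline{\mathcal M}^d_{\text{in}}(\partial\Omega)$, because a limsup along the subsequence $\varepsilon=p^{-K}$ is at most the full limsup over $\varepsilon\to0$. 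Taking $\liminf$ of the lower bound gives $\varliminf C(\lambda)\geq-\overline{\mathcal M}^d_{\text{ex}}(\partial\Omega)$, since $\liminf(-a_K)=-\limsup a_K$. The middle inequality between the lower and upper limits is trivial.

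The only delicate point is bookkeeping, not analysis. One must check that the exterior set $\mathcal U_K\cap\Omega^c$ coincides with the exterior parallel neighbourhood used to define $\mathcal M_{\text{ex}}$. The relevant set is $\{\boldsymbol x\in\Omega^c:\operatorname{dist}(\boldsymbol x,\partial\Omega)\leq p^{-K}\}$, which equals $\partial\Omega_{p^{-K}}\cap\Omega^c$ by the third item of Lemma \ref{lem:equiv_condition_AK}. This also includes $\partial\Omega$ itself, which has measure zero by Jordan measurability and so does not affect the contents.

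The hypothesis $\dim_M(\partial\Omega)=d$ is used only to make the normalisation by $p^{dK}$ meaningful, so that the contents are possibly finite and nonzero. The inequalities themselves hold formally, with the convention that a bound equal to $+\infty$ is vacuous. No further estimate on the spectrum is required beyond Theorem \ref{thm:remainder_estimate}.
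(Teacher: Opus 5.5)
Your proposal is correct and follows the paper's own proof essentially step for step. For $\lambda\in[p^{\alpha K},p^{\alpha(K+1)})$ with $K\geq L$, you use Lemma~\ref{lem:equiv_condition_AK} to turn the two sides of the remainder estimate in Theorem~\ref{thm:remainder_estimate} into $-|\partial\Omega_{p^{-K},\text{ex}}|p^{nK}$ and $|\partial\Omega_{p^{-K},\text{in}}|p^{nK}$, then divide by $p^{dK}$ and take upper and lower limits in $K$ to get the Minkowski-content bounds, exactly as the paper does.
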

\begin{proof}
Let $K\geq L$ and $\lambda\in[p^{\alpha K},p^{\alpha(K+1)})$ for $K\in\mathbb{Z}$. Lemma \ref{lem:equiv_condition_AK} and Theorem \ref{thm:remainder_estimate} give
\begin{align*}
    |\Omega|p^{n[\frac{1}{\alpha}\log_p\lambda]}-N_{D_{\Omega,\mathcal{D}}^{\alpha}}(\lambda)\in\left[-|\partial\Omega_{p^{-K},\text{ex}}|p^{nK},|\partial\Omega_{p^{-K},\text{in}}|p^{nK}\right].
\end{align*}
Dividing the upper bound by \(p^{Kd}\) and taking the upper limit gives
\begin{align*}
    \varlimsup_{\lambda\to+\infty}C(\lambda)=\varlimsup_{\lambda\to+\infty}
    \frac{|\Omega|p^{n[\frac{1}{\alpha}\log_p\lambda]}
    -N_{D_{\Omega,\mathcal{D}}^{\alpha}}(\lambda)}{p^{Kd}}\leq\varlimsup_{K\to+\infty}
    \frac{|\partial\Omega_{p^{-K},\text{in}}|}{p^{-K(n-d)}}
    =\overline{\mathcal{M}}_{\text{in}}^{d}(\partial\Omega).
\end{align*}
Similarly, the lower bound gives
\begin{align*}
    \varliminf_{\lambda\to+\infty}C(\lambda)=\varliminf_{\lambda\to+\infty}
    \frac{|\Omega|p^{n[\frac{1}{\alpha}\log_p\lambda]}
    -N_{D_{\Omega,\mathcal{D}}^{\alpha}}(\lambda)}{p^{Kd}}\geq-\varlimsup_{K\to+\infty}
    \frac{|\partial\Omega_{p^{-K},\text{ex}}|}{p^{-K(n-d)}}
    =-\overline{\mathcal{M}}_{\text{ex}}^{d}(\partial\Omega).
\end{align*}
\end{proof}

\begin{remark}
    If both $\mathcal{M}_{\text{ex}}^{d}(\partial\Omega)$ and
    $\mathcal{M}_{\text{in}}^{d}(\partial\Omega)$ exist, then
    \begin{align*}
        -\mathcal{M}_{\text{ex}}^{d}(\partial\Omega)\leq\varliminf_{\lambda\to\infty}C(\lambda)
        \leq\varlimsup_{\lambda\to\infty}C(\lambda)\leq\mathcal{M}_{\text{in}}^{d}(\partial\Omega).
    \end{align*}
\end{remark}

\subsection{Sharp bounds for the remainder}

We will show that the bounds for the normalized remainder $C(\lambda)$ in Theorem \ref{thm:Weyl-Berry} are sharp, namely, that there are domains $\Omega$ such that
\begin{align*}
    &\varlimsup_{\lambda\to\infty}C(\lambda)=\mathcal{M}_{\text{in}}^{d}(\partial\Omega);\\
    &\varliminf_{\lambda\to\infty}C(\lambda)=-\mathcal{M}_{\text{ex}}^{d}(\partial\Omega).
\end{align*}

Note that $\mathbb{Z}_p^n/p\mathbb{Z}_p^n\cong\mathbb{F}_p^n$. Choose a
nonempty proper subset $I\subsetneq\mathbb F_p^n$, set
\begin{align*}
    d=\log_p\#I\in[0,n),
    \qquad
    J=\mathbb F_p^n\backslash I,
\end{align*}
and write $J=J_{\text{in}}\sqcup J_{\text{ex}}$ with
$J_{\text{in}}\neq\emptyset$. Thus $\#I=p^d$ and
$\#J=p^n-p^d$. Define
\begin{align*}
    W_k^{\text{in}}=\{\boldsymbol{a}=(\boldsymbol{a}_0,\dots,\boldsymbol{a}_{k-1})|\ \boldsymbol{a}_0,\dots,\boldsymbol{a}_{k-2}\in I,\boldsymbol{a}_{k-1}\in J_{\text{in}}\}.
\end{align*}
Each $\boldsymbol{a}\in W_k^{\text{in}}$ determines a unique ball
\begin{align*}
    B_{k}(\boldsymbol{a})=\sum_{i=0}^{k-1}\boldsymbol{a}_{i}p^i+p^k\mathbb{Z}_p^n.
\end{align*}
Define
\begin{align*}
    \Omega=\bigcup_{k=1}^{\infty}\bigcup_{\boldsymbol{a}\in W_k^{\text{in}}}B_{k}(\boldsymbol{a}).
\end{align*}
\begin{lemma}
    The set $\Omega$ is bounded, open, and Jordan measurable, and it satisfies the condition \eqref{eq:condition_AK} for every $k\geq 1$.
\end{lemma}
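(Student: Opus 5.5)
The plan is to describe $\Omega$ entirely through the $p$-adic digits of its points and read off every property from that description. Every $\boldsymbol{x}\in\mathbb{Z}_p^n$ has a unique expansion $\boldsymbol{x}=\sum_{i\geq0}\boldsymbol{x}_ip^i$ with $\boldsymbol{x}_i\in\mathbb{F}_p^n$ (digits taken in a fixed set of representatives). By construction, $\boldsymbol{x}\in\Omega$ if and only if the first index $j$ with $\boldsymbol{x}_j\notin I$ exists and satisfies $\boldsymbol{x}_j\in J_{\text{in}}$. Accordingly, $\mathbb{Z}_p^n$ splits into three disjoint pieces:
\begin{align*}
    \mathbb{Z}_p^n=\Omega\sqcup\Omega_{\text{ex}}\sqcup C,
\end{align*}
where $\Omega_{\text{ex}}$ is the set of points whose first non-$I$ digit lies in $J_{\text{ex}}$, and $C=\{\boldsymbol{x}:\boldsymbol{x}_i\in I\ \forall i\}$ is the Cantor-type set. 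Boundedness is immediate because $\Omega\subseteq\mathbb{Z}_p^n$. Openness is also immediate, since $\Omega$ is a union of balls.

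Next I will identify the boundary: I claim $\partial\Omega=C$. The set $\mathbb{Q}_p^n\setminus\mathbb{Z}_p^n$ is open and disjoint from $\Omega$, so it contains no boundary point. If $\boldsymbol{x}\in\Omega_{\text{ex}}$ and its first non-$I$ digit occurs at index $j$, then the ball $\boldsymbol{x}+p^{j+1}\mathbb{Z}_p^n$ lies entirely in $\Omega_{\text{ex}}\subseteq\Omega^c$, so $\boldsymbol{x}$ is exterior. Hence $\partial\Omega\subseteq C$.

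For the reverse inclusion, take $\boldsymbol{x}\in C$. The set $C$ is disjoint from $\Omega$. Moreover, for each $k$, keep the digits $\boldsymbol{x}_0,\dots,\boldsymbol{x}_{k-1}\in I$ and replace the digit at index $k$ by an element of $J_{\text{in}}\neq\emptyset$. This produces a point of $\Omega$ in $B_k(\boldsymbol{x})$, so $\boldsymbol{x}\in\overline{\Omega}$ and therefore $C\subseteq\partial\Omega$.

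Jordan measurability then follows from a covering count. The set $C$ is covered by $(\#I)^k=p^{dk}$ balls of level $k$, so
\begin{align*}
    |C|\leq p^{dk}p^{-nk}=p^{-(n-d)k}\to0,
\end{align*}
because $d<n$ (as $I\subsetneq\mathbb{F}_p^n$). Thus $\mathcal{H}^n(\partial\Omega)=0$.

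Finally I will verify \eqref{eq:condition_AK} for $k\geq1$, using the characterization of $\mathcal{B}_k$ as the level-$k$ balls meeting both $\Omega$ and $\Omega^c$. Let $B_k(\boldsymbol{b})\in\mathcal{B}_k$. If $B_k(\boldsymbol{b})\not\subseteq\mathbb{Z}_p^n$, then, since $k\geq1$, the ball is disjoint from $\mathbb{Z}_p^n\supseteq\Omega$, which is impossible. So $B_k(\boldsymbol{b})\subseteq\mathbb{Z}_p^n$, and it is determined by the digits $\boldsymbol{b}_0,\dots,\boldsymbol{b}_{k-1}$. Suppose some $\boldsymbol{b}_j$ with $j\leq k-1$ lies outside $I$, and take the first such $j$. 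If $\boldsymbol{b}_j\in J_{\text{in}}$, the whole ball lies in $\Omega$; if $\boldsymbol{b}_j\in J_{\text{ex}}$, the whole ball lies in $\Omega^c$. Either case contradicts membership in $\mathcal{B}_k$. Hence all of $\boldsymbol{b}_0,\dots,\boldsymbol{b}_{k-1}$ lie in $I$. Extending these digits by digits from $I$ forever gives a point of $C=\partial\Omega$ inside $B_k(\boldsymbol{b})$, which is exactly condition \eqref{eq:condition_AK}.

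The only delicate point I expect is bookkeeping rather than a genuine obstacle: one must confirm that the $p$-adic digit-ball correspondence is exact, namely that $B_k(\boldsymbol{x})$ is precisely the set of points sharing the first $k$ digits with $\boldsymbol{x}$. One must also check that the level-$k$ balls outside $\mathbb{Z}_p^n$ are excluded when $k\geq1$. Both are direct from $B_k(\boldsymbol{a})=\boldsymbol{a}+p^k\mathbb{Z}_p^n$.
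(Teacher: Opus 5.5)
Your proof is correct and follows the same route as the paper. You describe $\Omega$ through its $p$-adic digits, identify $\partial\Omega$ with the set of points whose digits all lie in $I$, show that every ball in $\mathcal{B}_k$ ($k\geq1$) has its first $k$ digits in $I$, and bound the boundary's measure by $p^{dk}p^{-nk}\to0$; along the way you supply the justifications for the descriptions of $\partial\Omega$ and $\mathcal{B}_k$ that the paper only asserts.
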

\begin{proof}
    Since $\Omega$ is a union of open balls $B_{k}(\boldsymbol{a})$, it is open. Moreover $\Omega\subseteq\mathbb{Z}_p^n$, so it is bounded. Since
    \begin{align*}
        \mathcal{B}_k=\{B_k(\boldsymbol{a})|\ \boldsymbol{a}=(\boldsymbol{a}_0,\dots,\boldsymbol{a}_{k-1})\in I^k\},
    \end{align*}
    and
    \begin{align*}
        \partial\Omega=\{\boldsymbol{x}=\sum_{i=0}^{\infty}\boldsymbol{a}_{i}p^i|\ \boldsymbol{a}_{i}\in I\}.
    \end{align*}
    The preceding descriptions show directly that condition \eqref{eq:condition_AK} holds for every $k\geq 1$. Therefore,
    \begin{align*}
        |(\partial\Omega)_{p^{-k}}|=\#I^k|B_{k}(\boldsymbol{a})|=p^{k(d-n)}.
    \end{align*}
    Hence
    \begin{align*}
        |\partial\Omega|=\lim_{k\to\infty}|(\partial\Omega)_{p^{-k}}|=\lim_{k\to\infty}p^{k(d-n)}=0,
    \end{align*}
    which means $\Omega$ is Jordan measurable.
\end{proof}

\begin{lemma}
    $\mathcal{M}^{d}(\partial\Omega)=1$, $\mathcal{M}_{\text{in}}^{d}(\partial\Omega)=\frac{\# J_{\text{in}}}{\# J}$, and $\mathcal{M}_{\text{ex}}^{d}(\partial\Omega)=\frac{\# J_{\text{ex}}}{\# J}$.
\end{lemma}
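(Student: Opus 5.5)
The plan is to compute the three parallel neighborhoods exactly at every scale $\varepsilon=p^{-k}$, using the digit description of $\partial\Omega$ and of $\mathcal{B}_k$ from the preceding lemma. Since condition \eqref{eq:condition_AK} holds for every $k\geq1$, Lemma~\ref{lem:equiv_condition_AK} identifies $\partial\Omega_{p^{-k}}=\mathcal{U}_k$ with the union of the $p^{dk}$ balls $B_k(\boldsymbol a)$, $\boldsymbol a\in I^k$. Hence $|\partial\Omega_{p^{-k}}|=p^{k(d-n)}$, as already noted. For $\varepsilon\in[p^{-k},p^{-k+1})$ the neighborhood equals $\partial\Omega_{p^{-k}}$ because the norm is discrete, while $p^{[\log_p\varepsilon](n-d)}=p^{-k(n-d)}$. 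So the ratio is identically $1$ and $\mathcal{M}^d(\partial\Omega)=1$.

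For the inner content, the same lemma gives $\partial\Omega_{p^{-k},\text{in}}=\Omega_{>k}=\Omega\cap\mathcal{U}_k$. I will compute $|\Omega\cap B_k(\boldsymbol a)|$ for a fixed $\boldsymbol a\in I^k$. By self-similarity, $\Omega\cap B_k(\boldsymbol a)$ is the image of $\Omega$ under $\boldsymbol x\mapsto \sum_{i<k}\boldsymbol a_ip^i+p^k\boldsymbol x$. The maximal balls of $\Omega$ inside $B_k(\boldsymbol a)$ are exactly the $B_{k+m}(\boldsymbol a,\boldsymbol b)$ with $\boldsymbol b\in I^{m-1}\times J_{\text{in}}$, $m\geq1$. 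Summing their measures,
\begin{align*}
|\Omega\cap B_k(\boldsymbol a)|=p^{-nk}\sum_{m\geq1}p^{d(m-1)}\#J_{\text{in}}\,p^{-nm}
=p^{-nk}\frac{\#J_{\text{in}}\,p^{-n}}{1-p^{d-n}}=p^{-nk}\frac{\#J_{\text{in}}}{p^n-p^d}=p^{-nk}\frac{\#J_{\text{in}}}{\#J}.
\end{align*}
Multiplying by $p^{dk}$ balls gives $|\partial\Omega_{p^{-k},\text{in}}|=\frac{\#J_{\text{in}}}{\#J}p^{-k(n-d)}$. With the same step-function normalization as before, the ratio is constant and the limit exists.

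For the exterior content, I take the complement within $\mathcal{U}_k$. Here $\partial\Omega$ is a null set: its measure is $\lim p^{k(d-n)}=0$ when $d<n$. So $|\mathcal{U}_k\cap\Omega^c|=|\mathcal{U}_k|-|\mathcal{U}_k\cap\Omega|=p^{-k(n-d)}\bigl(1-\frac{\#J_{\text{in}}}{\#J}\bigr)=\frac{\#J_{\text{ex}}}{\#J}p^{-k(n-d)}$. This uses $\#J=\#J_{\text{in}}+\#J_{\text{ex}}$. Equivalently, the points of $B_k(\boldsymbol a)\setminus\Omega$ are the union of the balls $B_{k+m}(\boldsymbol a,\boldsymbol b)$ with $\boldsymbol b\in I^{m-1}\times J_{\text{ex}}$, together with the null set $\partial\Omega\cap B_k(\boldsymbol a)$, and the same geometric series applies.

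The main obstacle is bookkeeping rather than analysis. I must confirm that the exterior neighborhood, defined via $\Omega^c$ and distance to $\partial\Omega$, coincides with $\mathcal{U}_k\cap\Omega^c$. Points of $\Omega^c$ outside $\mathcal{U}_k$ lie in $\mathbb{Z}_p^n\setminus\Omega$ balls $B_k(\boldsymbol a',\boldsymbol c)$ whose first exterior digit avoids $I$. They are therefore at distance $>p^{-k}$ from $\partial\Omega$. Points outside $\mathbb{Z}_p^n$ are at distance $\geq p>p^{-k}$ from $\partial\Omega\subseteq\mathbb{Z}_p^n$ for $k\geq1$. I will also state the edge case explicitly: if $J_{\text{ex}}=\emptyset$ the exterior content is $0$, which matches the formula.
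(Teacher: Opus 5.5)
Your proposal is correct and follows the paper's proof. Like the paper, you identify the $p^{-k}$-neighborhoods with $\mathcal{U}_k$ and its intersections with $\Omega$ and $\Omega^c$, then sum the geometric series over the balls $B_{k+m}(\boldsymbol a,\boldsymbol b)$ with $\boldsymbol b\in I^{m-1}\times J_{\text{in}}$. The only differences are cosmetic: you treat general $\varepsilon\in[p^{-k},p^{-k+1})$ explicitly, and you obtain the exterior content by complementation inside $\mathcal{U}_k$ (which needs only additivity over $\Omega\sqcup\Omega^c$, not $|\partial\Omega|=0$), whereas the paper reruns the series with $J_{\text{ex}}$ in place of $J_{\text{in}}$.
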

\begin{proof}
    Because $|(\partial\Omega)_{p^{-K}}|=p^{K(d-n)}$,
    \begin{align*}
        \mathcal{M}^{d}(\partial\Omega)=\lim_{K\to\infty}\frac{|(\partial\Omega)_{p^{-K}}|}{p^{K(d-n)}}=1.
    \end{align*}

    Fix $\boldsymbol{a}\in I^K$, a point of $\Omega\cap B_K(\boldsymbol{a})$ has an initial block of digits in $I$, followed at some level by a digit in $J_{\text{in}}$. Therefore,
    \begin{equation}
    \label{eq:Omega_cap_B_Ka}
        \begin{aligned}
            |\Omega\cap B_K(\boldsymbol{a})|&=\sum_{l=1}^{\infty}\# J_{\text{in}}p^{d(l-1)}p^{-n(K+l)}\\
            &=\# J_{\text{in}}p^{-d-nK}\sum_{l=1}^{\infty}p^{(d-n)l}\\
            &=\# J_{\text{in}}\frac{p^{-nK}}{p^n-p^d}\\
            &=p^{-nK}\frac{\# J_{\text{in}}}{\# J}.
        \end{aligned}
    \end{equation}
    Hence
    \begin{align*}
        |\partial\Omega_{p^{-K},\text{in}}|=\# I^K|\Omega\cap B_K(\boldsymbol{a})|=p^{dK}p^{-nK}\frac{\# J_{\text{in}}}{\# J}=p^{K(d-n)}\frac{\# J_{\text{in}}}{\# J}.
    \end{align*}
    Dividing by \(p^{K(d-n)}\) and passing to the limit gives
    \begin{align*}
        \mathcal{M}_{\text{in}}^{d}(\partial\Omega)=\lim_{K\to\infty}\frac{|\partial\Omega_{p^{-K},\text{in}}|}{p^{K(d-n)}}=\frac{\# J_{\text{in}}}{\# J}.
    \end{align*}
    The same calculation, with \(J_{\mathrm{in}}\) replaced by \(J_{\mathrm{ex}}\), yields
    \begin{align*}
        \mathcal{M}_{\text{ex}}^{d}(\partial\Omega)=\frac{\# J_{\text{ex}}}{\# J}.
    \end{align*}
\end{proof}

For $\boldsymbol{a}\in I^K$, define $C_K(\boldsymbol{a})$ by
    \begin{align*}
        C_K(\boldsymbol{a})=\{B_{K+1}(\boldsymbol{b})|\ \boldsymbol{b}=(\boldsymbol{a},\boldsymbol{b}_K),\boldsymbol{b}_K\in J_{\text{in}}\},
    \end{align*}
and set
    \begin{align*}
        u_{K,\boldsymbol{a}}(\boldsymbol{x})=\sum_{B_{K+1}(\boldsymbol{b})\in C_K(\boldsymbol{a})}\frac{\boldsymbol{1}_{B_{K+1}(\boldsymbol{b})}(\boldsymbol{x})}{\sqrt{V_{K+1}}}=\sum_{B_{K+1}(\boldsymbol{b})\in C_K(\boldsymbol{a})}\boldsymbol{e}_{B_{K+1}(\boldsymbol{b})}(\boldsymbol{x}).
    \end{align*}
    Then $\|u_{K,\boldsymbol{a}}\|_{L^2(\Omega)}^2=\# J_{\text{in}}$. Finally, let
\begin{align*}
    U_K=\operatorname{span}\{u_{K,\boldsymbol{a}}(\boldsymbol{x})|\ \boldsymbol{a}\in I^K\}.
\end{align*}
Hence $\dim U_K=\# I^K=p^{Kd}$.

\begin{lemma}
\label{lem:P_Ku_and_u_in_L2}
    For every $u\in U_K$, we have $\|P_Ku\|_{L^2(\mathbb{Q}_p^n)}^2=\frac{\# J_{\text{in}}}{p^n}\|u\|_{L^2(\Omega)}^2$.
\end{lemma}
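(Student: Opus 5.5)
The identity will follow from computing $P_K$ on each generator $u_{K,\boldsymbol a}$ and then checking that different generators stay orthogonal after projection.

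\emph{Step 1: projection of a single generator.} Fix $\boldsymbol a\in I^K$. Each ball $B_{K+1}(\boldsymbol b)\in C_K(\boldsymbol a)$ lies inside $B_K(\boldsymbol a)$, since $\boldsymbol b$ extends $\boldsymbol a$. The computation in the proof of Lemma \ref{lem:P_K_prpoerty>K} therefore gives
\begin{align*}
P_K\boldsymbol 1_{B_{K+1}(\boldsymbol b)}=p^{-n}\boldsymbol 1_{B_K(\boldsymbol a)}.
\end{align*}
Multiplying by $V_{K+1}^{-1/2}=p^{n(K+1)/2}$ and summing over the $\#J_{\text{in}}$ balls in $C_K(\boldsymbol a)$, we obtain
\begin{align*}
P_Ku_{K,\boldsymbol a}=\#J_{\text{in}}\,p^{-n}p^{n(K+1)/2}\boldsymbol 1_{B_K(\boldsymbol a)}.
\end{align*}
Hence
\begin{align*}
\|P_Ku_{K,\boldsymbol a}\|^2=(\#J_{\text{in}})^2p^{-2n}p^{n(K+1)}p^{-nK}=\frac{(\#J_{\text{in}})^2}{p^n}.
\end{align*}
Since $\|u_{K,\boldsymbol a}\|^2=\#J_{\text{in}}$, this equals $\frac{\#J_{\text{in}}}{p^n}\|u_{K,\boldsymbol a}\|^2$, so the identity holds on each generator.

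\emph{Step 2: orthogonality and extension to $U_K$.} For distinct $\boldsymbol a,\boldsymbol a'\in I^K$, the balls $B_K(\boldsymbol a)$ and $B_K(\boldsymbol a')$ are disjoint. So the generators $u_{K,\boldsymbol a}$ have pairwise disjoint supports, and so do their projections, which are multiples of $\boldsymbol 1_{B_K(\boldsymbol a)}$. Write a general element as $u=\sum_{\boldsymbol a}c_{\boldsymbol a}u_{K,\boldsymbol a}$. Pythagoras applied on both sides gives
\begin{align*}
\|P_Ku\|^2=\sum|c_{\boldsymbol a}|^2\frac{(\#J_{\text{in}})^2}{p^n}=\frac{\#J_{\text{in}}}{p^n}\sum|c_{\boldsymbol a}|^2\#J_{\text{in}}=\frac{\#J_{\text{in}}}{p^n}\|u\|^2.
\end{align*}

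\emph{Remarks on the argument.} The only point needing care is that the balls $B_{K+1}(\boldsymbol b)$ with $\boldsymbol b_K\in J_{\text{in}}$ really are maximal balls of $\Omega$, which holds since their first $K$ digits lie in $I$. We do not use this fact, however: the averaging formula \eqref{eq:P_Ku} applies to any indicator of a level-$(K+1)$ ball. I do not anticipate a real obstacle; the argument is bookkeeping of normalizations.
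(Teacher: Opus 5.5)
Your proof is correct and follows the paper's route: you compute $P_Ku_{K,\boldsymbol a}$ via the averaging formula \eqref{eq:P_Ku} as $\#J_{\text{in}}\,p^{(nK-n)/2}\boldsymbol 1_{B_K(\boldsymbol a)}$ and compare norms on each generator. Your Step 2, which uses the disjoint supports of the generators and of their projections, makes explicit the orthogonality that the paper's reduction ``it suffices to verify the identity for each base'' leaves implicit.
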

\begin{proof}
    It suffices to verify the identity for each base $u_{K,\boldsymbol{a}}$. Formula \eqref{eq:P_Ku} gives
    \begin{align*}
        P_Ku_{K,\boldsymbol{a}}(\boldsymbol{x})=p^{nK}\int_{B_K(\boldsymbol{a})}u_{K,\boldsymbol{a}}(\boldsymbol{y})d\boldsymbol{y}\boldsymbol{1}_{B_K(\boldsymbol{a})}(\boldsymbol{x}).
\end{align*}
    Moreover,
    \begin{align*}
        \int_{B_{K}(\boldsymbol{a})}\boldsymbol{e}_{B_{K+1}(\boldsymbol{b})}(\boldsymbol{y})d\boldsymbol{y}=\int_{B_{K+1}(\boldsymbol{b})}\boldsymbol{e}_{B_{K+1}(\boldsymbol{b})}(\boldsymbol{y})d\boldsymbol{y}=\sqrt{V_{K+1}}=p^{\frac{-n(K+1)}{2}}.
    \end{align*}
    It follows that
    \begin{align*}
        P_Ku_{K,\boldsymbol{a}}(\boldsymbol{x})=\# J_{\text{in}}p^{nK}p^{\frac{-n(K+1)}{2}}\boldsymbol{1}_{B_K(\boldsymbol{a})}(\boldsymbol{x}).
    \end{align*}
    Consequently,
    \begin{align*}
        \|P_Ku_{K,\boldsymbol{a}}\|_{L^2(\mathbb{Q}_p^n)}^2&=\left(\# J_{\text{in}}p^{nK}p^{\frac{-n(K+1)}{2}}\right)^2V_K\\
        &=(\# J_{\text{in}})^2p^{2nK}p^{-n(K+1)}p^{-nK}\\
        &=(\# J_{\text{in}})^2p^{-n}\\
        &=\frac{\# J_{\text{in}}}{p^n}\|u_{K,\boldsymbol{a}}\|_{L^2(\Omega)}^2.
    \end{align*}
\end{proof}

\begin{lemma}
\label{lem:P_Kw_and_w_in_L2}
    For every $w\in V_{\leq K}\oplus U_K$, we have $P_{K+1}w=w$ and $\|P_Kw\|_{L^2(\mathbb{Q}_p^n)}^2\geq\frac{\# J_{\text{in}}}{p^n}\|w\|_{L^2(\Omega)}^2$.
\end{lemma}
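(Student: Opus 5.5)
The plan has two parts: the identity $P_{K+1}w=w$, which comes from Fourier support, and the inequality, which follows from orthogonality of the two summands under $P_K$.

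\textbf{The identity $P_{K+1}w=w$.} By linearity it suffices to check this on the generators.

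For $u\in V_{\leq K}$, Lemma \ref{lem:P_K_prpoerty<=K} gives $P_Ku=u$. Since $\widehat{u}$ is supported in $\{\|\boldsymbol\xi\|_p\leq p^K\}\subseteq\{\|\boldsymbol\xi\|_p\leq p^{K+1}\}$, we also get $P_{K+1}u=u$.

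For $u_{K,\boldsymbol a}$, each summand $\boldsymbol 1_{B_{K+1}(\boldsymbol b)}$ has Fourier transform supported in $\{\|\boldsymbol\xi\|_p\leq p^{K+1}\}$, by the formula for $\hat{\boldsymbol 1}_{B_{L}}$ used in Lemma \ref{lem:P_K_prpoerty<=K}. Equivalently, by \eqref{eq:P_Ku}, averaging over $B_{K+1}(\boldsymbol x)$ leaves indicators of level-$(K+1)$ balls unchanged. Hence $P_{K+1}u_{K,\boldsymbol a}=u_{K,\boldsymbol a}$.

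\textbf{The inequality.} Write $w=v+u$ with $v\in V_{\leq K}$ and $u\in U_K$. Then $P_Kw=v+P_Ku$.

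Each ball $B_{K+1}(\boldsymbol b)$ with $\boldsymbol b=(\boldsymbol a,\boldsymbol b_K)$, $\boldsymbol a\in I^K$, $\boldsymbol b_K\in J_{\text{in}}$, is a maximal ball of $\Omega$ of level $K+1$. So $U_K\subseteq V_{>K}$, and $u$ is orthogonal to $v$.

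From the proof of Lemma \ref{lem:P_Ku_and_u_in_L2}, $P_Ku$ is a combination of $\boldsymbol 1_{B_K(\boldsymbol a)}$ with $\boldsymbol a\in I^K$, and these balls lie in $\mathcal B_K$. Lemma \ref{lem:P_K_prpoerty<=K} gives $\mathcal U_K\cap\Omega_{\leq K}=\emptyset$. Since $v$ is supported in $\Omega_{\leq K}$, $v$ and $P_Ku$ have disjoint supports and are therefore orthogonal in $L^2(\mathbb Q_p^n)$. (Alternatively: $P_K$ is an orthogonal projection and $P_Kv=v$, so $(v,P_Ku)=(P_Kv,u)=(v,u)=0$.)

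Pythagoras and Lemma \ref{lem:P_Ku_and_u_in_L2} now give
\begin{align*}
\|P_Kw\|_2^2=\|v\|_2^2+\|P_Ku\|_2^2=\|v\|_2^2+\frac{\#J_{\text{in}}}{p^n}\|u\|_2^2\geq\frac{\#J_{\text{in}}}{p^n}\left(\|v\|_2^2+\|u\|_2^2\right)=\frac{\#J_{\text{in}}}{p^n}\|w\|_2^2 .
\end{align*}
The inequality step uses $\#J_{\text{in}}\leq p^n$, and the last equality uses $v\perp u$.

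\textbf{Main obstacle.} The only delicate point is the orthogonality of $v$ and $P_Ku$. This reduces to showing that the level-$K$ balls $B_K(\boldsymbol a)$, $\boldsymbol a\in I^K$, avoid $\Omega_{\leq K}$. For this I will use the explicit construction: points of $\Omega_{\leq K}$ have some digit in $J_{\text{in}}$ at a position $<K$, while points of $B_K(\boldsymbol a)$ have all of their first $K$ digits in $I$. The projection argument above is a backup.
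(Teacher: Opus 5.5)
Your proof is correct and follows the same route as the paper. The identity $P_{K+1}w=w$ comes from the Fourier support of $V_{\leq K}\oplus U_K\subseteq V_{\leq K+1}$, and the inequality comes from writing $w=v+u$ and applying Pythagoras together with Lemma \ref{lem:P_Ku_and_u_in_L2}; you also justify the orthogonality $P_Kv=v\perp P_Ku$ (via $\mathcal{U}_K\cap\Omega_{\leq K}=\emptyset$ or self-adjointness of $P_K$), which the paper uses without comment.
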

\begin{proof}
    Since $V_{\leq K}\oplus U_K\subseteq V_{\leq K+1}$, Lemma \ref{lem:P_K_prpoerty<=K} gives $P_{K+1}w=w$. Write $w=v+u$, where $v\in V_{\leq K}$ and $u\in U_K\subseteq V_{=K+1}$. Then $\operatorname{supp}u\cap\operatorname{supp}v=\emptyset$, hence
    \begin{align*}
        \|w\|_{L^2(\Omega)}^2=\|v\|_{L^2(\Omega)}^2+\|u\|_{L^2(\Omega)}^2.
    \end{align*}
    Moreover, Lemma \ref{lem:P_K_prpoerty<=K} and Lemma \ref{lem:P_Ku_and_u_in_L2} imply that
    \begin{align*}
        \|P_Kw\|_{L^2(\mathbb{Q}_p^n)}^2&=\|P_Kv+P_Ku\|_{L^2(\mathbb{Q}_p^n)}^2\\
        &=\|P_Kv\|_{L^2(\mathbb{Q}_p^n)}^2+\|P_Ku\|_{L^2(\mathbb{Q}_p^n)}^2\\
        &=\|v\|_{L^2(\Omega)}^2+\frac{\# J_{\text{in}}}{p^n}\|u\|_{L^2(\Omega)}^2\\
        &\geq\frac{\# J_{\text{in}}}{p^n}\|w\|_{L^2(\Omega)}^2.
    \end{align*}
\end{proof}

\begin{theorem}
    For the domain $\Omega$ constructed above, we have
    \begin{align*}
        \varliminf_{\lambda\to\infty}C(\lambda)=-\mathcal{M}_{\text{ex}}^{d}(\partial\Omega).
    \end{align*}
    Furthermore, if $\frac{\# J_{\text{in}}}{\# J}<1-p^{-\alpha}$, then we have
    \begin{align*}
        \varlimsup_{\lambda\to\infty}C(\lambda)=\mathcal{M}_{\text{in}}^{d}(\partial\Omega).
    \end{align*}
\end{theorem}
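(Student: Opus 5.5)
The plan is to use the exact identity \eqref{eq:remainder_all} from the proof of Theorem \ref{thm:remainder_estimate}. For $\lambda\in[p^{\alpha K},p^{\alpha(K+1)})$ it reads
\begin{align*}
|\Omega|p^{nK}-N_{D_{\Omega,\mathcal D}^{\alpha}}(\lambda)=|\Omega_{>K}|p^{nK}+\dim V_{\leq K}-N_{\boldsymbol H}(\lambda).
\end{align*}
For the constructed domain, condition \eqref{eq:condition_AK} and Lemma \ref{lem:equiv_condition_AK} give $\Omega_{>K}=\partial\Omega_{p^{-K},\text{in}}$. By \eqref{eq:Omega_cap_B_Ka} this yields $|\Omega_{>K}|p^{nK}=p^{Kd}\theta$, where $\theta:=\#J_{\text{in}}/\#J$. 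Moreover $\#\mathcal B_K=p^{Kd}$. Hence
\begin{align*}
C(\lambda)=\theta-\frac{N_{\boldsymbol H}(\lambda)-\dim V_{\leq K}}{p^{Kd}},
\end{align*}
and \eqref{eq:N_H_lambda_up_low_bound} confines the fraction to $[0,1]$. By Theorem \ref{thm:Weyl-Berry} it therefore suffices to find, for each large $K$, some $\lambda$ in the window where $N_{\boldsymbol H}$ reaches the upper bound (for the liminf), resp.\ the lower bound (for the limsup).

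\textbf{Liminf.} Test the form on $V_{\leq K}\oplus U_K$, which has dimension $\dim V_{\leq K}+p^{Kd}$. Take $w$ in this space. By Lemma \ref{lem:P_Kw_and_w_in_L2}, $\widehat w$ is supported in $\{\|\boldsymbol\xi\|_p\leq p^{K+1}\}$ and $\|P_Kw\|^2\geq \frac{\#J_{\text{in}}}{p^n}\|w\|^2$. Splitting the Fourier integral at $p^K$ gives
\begin{align*}
\mathcal E_{\Omega,\mathcal D}[w]\leq p^{\alpha K}\|P_Kw\|^2+p^{\alpha(K+1)}\|(I-P_K)w\|^2\leq p^{\alpha(K+1)}\Bigl(1-(1-p^{-\alpha})\tfrac{\#J_{\text{in}}}{p^n}\Bigr)\|w\|^2=:\mu_K\|w\|^2.
\end{align*}
Since $0<\#J_{\text{in}}/p^n<1$, we have $p^{\alpha K}\leq\mu_K<p^{\alpha(K+1)}$. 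The min--max principle then gives $N_{\boldsymbol H}(\lambda)\geq\dim V_{\leq K}+p^{Kd}$ for $\lambda\in[\mu_K,p^{\alpha(K+1)})$. Hence $C(\lambda)=\theta-1=-\#J_{\text{ex}}/\#J=-\mathcal M_{\text{ex}}^d(\partial\Omega)$ there, and combined with Theorem \ref{thm:Weyl-Berry} this proves the first claim.

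\textbf{Limsup.} Take $\lambda=p^{\alpha K}$ and show that $\mathcal E_{\Omega,\mathcal D}[u]>p^{\alpha K}\|u\|^2$ for every nonzero $u\in V_{>K}\cap H_0^{\alpha/2}(\Omega)$. This space has codimension $\dim V_{\leq K}$ in the form domain of $\boldsymbol H$, so min--max gives $N_{\boldsymbol H}(p^{\alpha K})\leq\dim V_{\leq K}$, and hence $C(p^{\alpha K})=\theta=\mathcal M_{\text{in}}^d(\partial\Omega)$. To prove the inequality, discard the nonnegative low-frequency energy and bound the high frequencies below by $p^{\alpha(K+1)}$:
\begin{align*}
\mathcal E_{\Omega,\mathcal D}[u]-p^{\alpha K}\|u\|^2\geq -p^{\alpha K}\|P_Ku\|^2+(p^{\alpha(K+1)}-p^{\alpha K})\|(I-P_K)u\|^2.
\end{align*}
By Lemma \ref{lem:P_K_prpoerty>K}, on each $B_K(\boldsymbol a)\in\mathcal B_K$ the function $P_Ku$ is the average of $u$. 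Since $u$ is supported in $\Omega\cap B_K(\boldsymbol a)$, Cauchy--Schwarz together with \eqref{eq:Omega_cap_B_Ka} gives $\|P_Ku\|^2\leq\theta\|u\|^2$, and thus $\|(I-P_K)u\|^2\geq(1-\theta)\|u\|^2$. The right-hand side above is then at least $p^{\alpha K}\bigl((p^\alpha-1)(1-\theta)-\theta\bigr)\|u\|^2$. This is strictly positive exactly when $\theta<1-p^{-\alpha}$, which is the stated hypothesis.

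The main obstacle is this last step. One must use the subspace $V_{>K}$, not an abstract codimension count, so that the averaging bound $\|P_Ku\|^2\leq\theta\|u\|^2$ is available ball by ball. One must also check that $V_{>K}\cap H_0^{\alpha/2}(\Omega)$ really has codimension $\dim V_{\leq K}$ in $V\cap H_0^{\alpha/2}(\Omega)$. This follows from the preceding Lemma on multiplication by $\boldsymbol 1_B$, since restricting to $\Omega_{\leq K}$ and to $\Omega_{>K}$ preserves the form domain.
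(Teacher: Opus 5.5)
Your proposal is correct and follows the paper's proof essentially step for step. For the liminf you use the same test space $V_{\leq K}\oplus U_K$ with threshold $p^{\alpha(K+1)}\bigl(1-\frac{\#J_{\text{in}}}{p^n}(1-p^{-\alpha})\bigr)$. For the limsup you use the same bound $\|P_Ku\|^2\leq\theta\|u\|^2$ on $V_{>K}$ at $\lambda=p^{\alpha K}$. The only cosmetic differences are that you get $|\Omega_{>K}|p^{nK}=p^{Kd}\theta$ from $\Omega_{>K}=\partial\Omega_{p^{-K},\text{in}}$ rather than by summing $n_ip^{-ni}$, and that you pin down $C(\lambda)$ exactly at the test points using both sides of the two-sided bound on $N_{\boldsymbol H}$.
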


\begin{proof}
First,
\begin{align*}
    n_i=\#W_i^{\text{in}}=\# I^{i-1}\# J_{\text{in}}=\# J_{\text{in}}p^{d(i-1)}.
\end{align*}

For $\lambda\in[p^{\alpha K},p^{\alpha(K+1)})$ with $K\in\mathbb{Z}$, the first term in \eqref{eq:remainder_all} is
\begin{align*}
    \sum_{i\geq K+1}n_ip^{-ni}p^{nK}&=\# J_{\text{in}}\sum_{i\geq K+1}p^{d(i-1)}p^{-ni}p^{nK}\\
    &=\# J_{\text{in}}p^{nK-d}\sum_{i\geq K+1}p^{(d-n)i}\\
    &=\# J_{\text{in}}\frac{p^{dK-n}}{1-p^{d-n}}\\
    &=p^{dK}\frac{\# J_{\text{in}}}{\# J}\\
    &=p^{dK}\mathcal{M}_{\text{in}}^{d}(\partial\Omega).
\end{align*}
Consequently
\begin{align*}
    C(\lambda)=\mathcal{M}_{\text{in}}^{d}(\partial\Omega)-\frac{N_{\boldsymbol{H}}(\lambda)-\dim V_{\leq K}}{p^{dK}}.
\end{align*}
Theorem \ref{thm:Weyl-Berry} now gives
\begin{equation}
\label{eq:C_lambda_bounds}
\begin{aligned}
    -\mathcal{M}_{\text{ex}}^{d}(\partial\Omega)\leq\varliminf_{\lambda\to\infty}C(\lambda)
    \leq\varlimsup_{\lambda\to\infty}C(\lambda)\leq\mathcal{M}_{\text{in}}^{d}(\partial\Omega).
\end{aligned}
\end{equation}
In view of the preceding identity for $C(\lambda)$, these inequalities are
equivalent to
\begin{align*}
    0\leq\varliminf_{\lambda\to\infty}
    \frac{N_{\boldsymbol H}(\lambda)-\dim V_{\leq K}}{p^{dK}}\leq\varlimsup_{\lambda\to\infty}
    \frac{N_{\boldsymbol H}(\lambda)-\dim V_{\leq K}}{p^{dK}}\leq1.
\end{align*}

\begin{enumerate}
    \item Attainment of the lower bound $-\mathcal{M}_{\text{ex}}^{d}(\partial\Omega)$:\\
    Let $w\in V_{\leq K}\oplus U_K\subseteq V_{\leq K+1}$. Since $\operatorname{supp}\hat{w}\subseteq\{\boldsymbol{\xi}\in\mathbb{Q}_p^n|\ \|\boldsymbol{\xi}\|_p\leq p^{K+1}\}$, Lemma \ref{lem:P_Kw_and_w_in_L2} yields
\begin{align*}
    \mathcal E_{\Omega,\mathcal D}[w]&=\int_{\|\boldsymbol{\xi}\|_p\leq p^{K+1}}\|\boldsymbol{\xi}\|_p^{\alpha}|\hat{w}(\boldsymbol{\xi})|^2d\boldsymbol{\xi}\\
    &=\int_{\|\boldsymbol{\xi}\|_p\leq p^{K}}\|\boldsymbol{\xi}\|_p^{\alpha}|\hat{w}(\boldsymbol{\xi})|^2d\boldsymbol{\xi}+\int_{p^K<\|\boldsymbol{\xi}\|_p\leq p^{K+1}}\|\boldsymbol{\xi}\|_p^{\alpha}|\hat{w}(\boldsymbol{\xi})|^2d\boldsymbol{\xi}\\
    &\leq p^{\alpha K}\|P_Kw\|_{L^2(\mathbb{Q}_p^n)}^2+p^{\alpha (K+1)}\|(I-P_K)w\|_{L^2(\mathbb{Q}_p^n)}^2\\
    &=p^{\alpha K}\|P_Kw\|_{L^2(\mathbb{Q}_p^n)}^2+p^{\alpha (K+1)}\left(\|w\|_{L^2(\Omega)}^2-\|P_Kw\|_{L^2(\mathbb{Q}_p^n)}^2\right)\\
    &=p^{\alpha (K+1)}\|w\|_{L^2(\Omega)}^2-\left(p^{\alpha (K+1)}-p^{\alpha K}\right)\|P_Kw\|_{L^2(\mathbb{Q}_p^n)}^2\\
    &\leq p^{\alpha (K+1)}\|w\|_{L^2(\Omega)}^2-\left(p^{\alpha (K+1)}-p^{\alpha K}\right)\frac{\# J_{\text{in}}}{p^n}\|w\|_{L^2(\Omega)}^2\\
    &=p^{\alpha (K+1)}\left(1-\frac{\# J_{\text{in}}}{p^n}(1-p^{-\alpha})\right)\|w\|_{L^2(\Omega)}^2.
\end{align*}
Observe that
\begin{align*}
    p^{-\alpha}<1-\frac{\# J_{\text{in}}}{p^n}(1-p^{-\alpha})<1.
\end{align*}
Define $\lambda_K^{+}=p^{\alpha (K+1)}\left(1-\frac{\# J_{\text{in}}}{p^n}(1-p^{-\alpha})\right)$. Then $\lambda_K^{+}\in(p^{\alpha K},p^{\alpha (K+1)})$ and the preceding estimate gives $\mathcal E_{\Omega,\mathcal D}[w]\leq\lambda_K^{+}(w,w)$. Lemma \ref{lem:Min-Max_Principle} therefore yields
\begin{align*}
    N_{\boldsymbol{H}}(\lambda_K^{+})\geq\dim V_{\leq K}+\dim U_K=\dim V_{\leq K}+p^{Kd}.
\end{align*}
Consequently,
\begin{align*}
    \varlimsup_{\lambda\to\infty}\frac{N_{\boldsymbol{H}}(\lambda)-\dim V_{\leq K}}{p^{dK}}\geq 1.
\end{align*}
Together with \eqref{eq:C_lambda_bounds}, this implies
\begin{align*}
    \varlimsup_{\lambda\to\infty}\frac{N_{\boldsymbol{H}}(\lambda)-\dim V_{\leq K}}{p^{dK}}=1.
\end{align*}
    \item Attainment of the upper bound $\mathcal{M}_{\text{in}}^{d}(\partial\Omega)$:\\
    For any $u\in V_{>K}\cap H_0^{\alpha/2}(\Omega)$, the Cauchy-Schwarz inequality and \eqref{eq:Omega_cap_B_Ka} give
    \begin{align*}
        \|P_Ku\|_{L^2(\mathbb{Q}_p^n)}^2&=\sum_{\boldsymbol{a}\in I^K}\frac{1}{|B_K(\boldsymbol{a})|}\left|\int_{B_K(\boldsymbol{a})\cap\Omega}u(\boldsymbol{y})d\boldsymbol{y}\right|^2\\
        &\leq\sum_{\boldsymbol{a}\in I^K}\frac{|B_K(\boldsymbol{a})\cap\Omega|}{|B_K(\boldsymbol{a})|}\|u\|_{L^2(B_K(\boldsymbol{a})\cap\Omega)}^2\\
        &=\frac{\# J_{\text{in}}}{\# J}\sum_{\boldsymbol{a}\in I^K}\|u\|_{L^2(B_K(\boldsymbol{a})\cap\Omega)}^2\\
        &<(1-p^{-\alpha})\|u\|_{L^2(\Omega)}^2.
    \end{align*}
    Consequently,
    \begin{align*}
        \|(I-P_K)u\|_{L^2(\mathbb{Q}_p^n)}^2> p^{-\alpha}\|u\|_{L^2(\Omega)}^2.
    \end{align*}
    Therefore,
    \begin{align*}
        \mathcal E_{\Omega,\mathcal D}[u]&=\int_{\|\boldsymbol{\xi}\|_p\leq p^{K}}\|\boldsymbol{\xi}\|_p^{\alpha}|(P_Ku)^{\hat{}}(\boldsymbol{\xi})|^2d\boldsymbol{\xi}+\int_{\|\boldsymbol{\xi}\|_p\geq p^{K+1}}\|\boldsymbol{\xi}\|_p^{\alpha}|((I-P_K)u)^{\hat{}}(\boldsymbol{\xi})|^2d\boldsymbol{\xi}\\
        &\geq\int_{\|\boldsymbol{\xi}\|_p\geq p^{K+1}}\|\boldsymbol{\xi}\|_p^{\alpha}|((I-P_K)u)^{\hat{}}(\boldsymbol{\xi})|^2d\boldsymbol{\xi}\\
        &\geq p^{\alpha(K+1)}\|(I-P_K)u\|_{L^2(\mathbb{Q}_p^n)}^2\\
        &>p^{\alpha K}\|u\|_{L^2(\Omega)}^2.
    \end{align*}
    Set $\lambda_K^{-}=p^{\alpha K}$. Then $\mathcal E_{\Omega,\mathcal D}[u]>\lambda_K^{-}(u,u)$, and Lemma \ref{lem:Min-Max_Principle} yields
    \begin{align*}
        N_{\boldsymbol{H}}(\lambda_K^{-})\leq\dim V_{\leq K}.
    \end{align*}
    Therefore,
    \begin{align*}
        \varliminf_{\lambda\to\infty}\frac{N_{\boldsymbol{H}}(\lambda)-\dim V_{\leq K}}{p^{dK}}\leq 0.
    \end{align*}
    Combining this estimate with \eqref{eq:C_lambda_bounds}, we obtain
    \begin{align*}
        \varliminf_{\lambda\to\infty}\frac{N_{\boldsymbol{H}}(\lambda)-\dim V_{\leq K}}{p^{dK}}=0.
    \end{align*}
\end{enumerate}
\end{proof}

\section{Pólya's conjecture for the Vladimirov-Taibleson operator}
In the $p$-adic setting, Pólya’s conjecture takes the form
\begin{align*}
    N_{D_{\Omega,\mathcal{D}}^{\alpha}}(\lambda)\leq|\Omega|p^{n[\frac{1}{\alpha}\log_p\lambda]}.
\end{align*}

\subsection{Pólya's conjecture for compact open domains}
\begin{lemma}
    Pólya’s conjecture fails for $\Omega=B_L(\boldsymbol{a})$.
\end{lemma}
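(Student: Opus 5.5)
We only need one value of $\lambda$ at which the inequality $N_{D_{\Omega,\mathcal D}^{\alpha}}(\lambda)\leq|\Omega|p^{n[\frac1\alpha\log_p\lambda]}$ fails. We take it from the explicit counting function \eqref{eq:N_D_Ball_lambda} for $\Omega=B_L(\boldsymbol a)$. By the scaling computation preceding \eqref{eq:N0_D_Ball_lambda}, the Dirichlet spectrum of $B_L(\boldsymbol a)$ is $p^{\alpha L}$ times that of $\mathbb Z_p^n$. It therefore suffices to work on the unit ball, or equivalently to track the factor $p^{\alpha L}$ throughout.

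The bottom Dirichlet eigenvalue comes from the constant eigenfunction $\boldsymbol 1_{B_L(\boldsymbol a)}$ and equals
\begin{align*}
    \lambda_0=p^{\alpha L}\frac{1-p^{-n}}{1-p^{-\alpha-n}}.
\end{align*}
Since $\alpha>0$, we have $1-p^{-n}<1-p^{-\alpha-n}$, so $\lambda_0<p^{\alpha L}$. Choose any $\lambda\in[\lambda_0,p^{\alpha L})$, for instance $\lambda=\lambda_0$. By \eqref{eq:N_D_Ball_lambda}, $N_{D_{B_L(\boldsymbol a),\mathcal D}^{\alpha}}(\lambda)=1$.

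On the other side, $\lambda<p^{\alpha L}$ gives $[\frac1\alpha\log_p\lambda]\leq L-1$. Hence the Pólya bound satisfies
\begin{align*}
    |B_L(\boldsymbol a)|p^{n[\frac1\alpha\log_p\lambda]}\leq p^{-nL}p^{n(L-1)}=p^{-n}<1.
\end{align*}
This gives $N(\lambda)=1>p^{-n}\geq|\Omega|p^{n[\frac1\alpha\log_p\lambda]}$, so the conjecture fails for the ball. In fact it fails on the whole interval $[\lambda_0,p^{\alpha L})$.

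The only point needing care is the strict inequality $\lambda_0<p^{\alpha L}$, which makes the interval nonempty; this is immediate from $\alpha>0$. I would also remark that the failure is caused by the constant mode lying strictly below the first level $p^{\alpha L}$, which is where the step function $p^{n[\frac1\alpha\log_p\lambda]}$ jumps.
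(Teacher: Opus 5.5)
Your proof is correct and follows the paper's argument. Both read off $N_{D_{B_L(\boldsymbol a),\mathcal D}^{\alpha}}(\lambda)=1$ on $[p^{\alpha L}\frac{1-p^{-n}}{1-p^{-\alpha-n}},p^{\alpha L})$ from \eqref{eq:N_D_Ball_lambda} and compare this with the Weyl term $p^{-n}$. The only difference is that the paper checks $\frac{1-p^{-n}}{1-p^{-\alpha-n}}>p^{-\alpha}$ to get exactly $[\frac1\alpha\log_p\lambda]=L-1$, whereas your upper bound $[\frac1\alpha\log_p\lambda]\leq L-1$ already suffices and makes that check unnecessary.
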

\begin{proof}
It follows from \eqref{eq:N_D_Ball_lambda} that, for $\lambda\in[p^{\alpha L}\frac{1-p^{-n}}{1-p^{-\alpha-n}},p^{\alpha L})$, we have
\begin{align*}
    N_{D_{B_L(\boldsymbol{a}),\mathcal{D}}^{\alpha}}(\lambda)=1>p^{-n}=|B_L(\boldsymbol{a})|p^{n[\frac{1}{\alpha}\log_p\lambda]}
\end{align*}
since
\begin{align*}
    \frac{1-p^{-n}}{1-p^{-\alpha-n}}-p^{-\alpha}
    =\frac{(p^\alpha-1)((p^n-1)p^\alpha-1)}
    {p^\alpha(p^{n+\alpha}-1)}>0.
\end{align*}
\end{proof}

\begin{theorem}
\label{thm:Polya_conjecture_not_hold_compact}
    When $\Omega$ is a compact open set, Pólya's conjecture does not hold.
\end{theorem}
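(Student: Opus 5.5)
The plan is to exhibit an explicit scale at which the Dirichlet count strictly exceeds the Weyl term, using the exact bookkeeping from the proof of Theorem~\ref{thm:remainder_estimate}. Write $\Omega=\bigsqcup_{i=1}^m B_{L_i}(\boldsymbol a_i)$ as a finite union of maximal balls and let $L=\max_i L_i$ be the finest level present. Above the scale $p^{\alpha L}$, \eqref{eq:N_Dirichlet_compact} gives equality, so any failure must occur below it. I will take $K=L-1$ and $\lambda\in[p^{\alpha(L-1)},p^{\alpha L})$ close to $p^{\alpha L}$, say $\lambda$ larger than every eigenvalue of $\boldsymbol H$ that lies strictly below $p^{\alpha L}$. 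This is possible because $\boldsymbol H$ is a finite matrix.

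By \eqref{eq:remainder_all} with $K=L-1$, we have $\Omega_{>K}=\bigcup_{L_i=L}B_{L_i}(\boldsymbol a_i)$, so $|\Omega_{>K}|p^{nK}=n_Lp^{-n}$, while $\sum_{i\leq K}n_i=m-n_L$. Hence
\begin{align*}
|\Omega|p^{n[\frac1\alpha\log_p\lambda]}-N_{D_{\Omega,\mathcal D}^{\alpha}}(\lambda)=n_Lp^{-n}+m-n_L-N_{\boldsymbol H}(\lambda).
\end{align*}
It therefore suffices to compute $N_{\boldsymbol H}(\lambda)$ exactly.

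On $V=\operatorname{span}\{\boldsymbol e_i\}$ every $u$ has $\hat u$ supported in $\{\|\boldsymbol\xi\|_p\leq p^L\}$. The computation in the proof of Theorem~\ref{thm:remainder_estimate} then gives
\begin{align*}
\mathcal E_{\Omega,\mathcal D}[u]\leq p^{\alpha(L-1)}\|P_{L-1}u\|^2+p^{\alpha L}\|(I-P_{L-1})u\|^2\leq p^{\alpha L}\|u\|^2,
\end{align*}
with equality exactly when $P_{L-1}u=0$. Thus $\boldsymbol H\leq p^{\alpha L}$. The eigenspace of $\boldsymbol H$ at $p^{\alpha L}$ is $\ker(P_{L-1}|_V)$, and on its orthogonal complement in $V$ the form is strictly below $p^{\alpha L}\|u\|^2$; by finite dimensionality this gives a uniform gap. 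Consequently $N_{\boldsymbol H}(\lambda)=m-\dim\ker(P_{L-1}|_V)$ for our choice of $\lambda$.

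To identify this kernel, use \eqref{eq:P_Ku}: $P_{L-1}u$ is the average of $u$ over level-$(L-1)$ balls.

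\begin{itemize}
\item Balls of level $\leq L-1$ are fixed by $P_{L-1}$ (Lemma~\ref{lem:P_K_prpoerty<=K}).
\item The level-$L$ maximal balls group into the balls $B_{L-1}\in\mathcal B_{L-1}$; say $B_{L-1}^{(j)}$ contains $k_j$ of them.
\item A function $u\in V$ lies in the kernel iff it vanishes on the coarser balls and, on each $B_{L-1}^{(j)}$, the values on the $k_j$ sub-balls sum to zero.
\end{itemize}

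Hence $\dim\ker=\sum_j(k_j-1)=n_L-\#\mathcal B_{L-1}$. Substituting, the remainder becomes
\begin{align*}
n_Lp^{-n}-\#\mathcal B_{L-1}=\sum_j\bigl(k_jp^{-n}-1\bigr).
\end{align*}

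The key geometric point is that each $B_{L-1}^{(j)}$ meets $\Omega^c$, by maximality (Lemma~\ref{lem:dist_maximalball_partialOmega}). So it cannot be completely filled by level-$L$ balls of $\Omega$, and $k_j\leq p^n-1$. Since $n_L\geq1$, there is at least one $j$, and the sum is strictly negative. That is, $N_{D_{\Omega,\mathcal D}^{\alpha}}(\lambda)>|\Omega|p^{n[\frac1\alpha\log_p\lambda]}$, and Pólya's conjecture fails.

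The step needing the most care is the exact identification of the top eigenspace of $\boldsymbol H$ as $\ker P_{L-1}$ together with the existence of a gap below $p^{\alpha L}$. This is where the finite-dimensionality of $V$ for compact open $\Omega$ is essential. Without it, one would have only the inequality $N_{\boldsymbol H}\leq m$, which is not enough.
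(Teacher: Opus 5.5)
Your proposal is correct and follows essentially the same route as the paper. You take $\lambda$ just below $p^{\alpha L}$ and identify the $p^{\alpha L}$-eigenspace of $\boldsymbol H$ as the functions whose integrals over every level-$(L-1)$ ball vanish (your $\ker P_{L-1}|_V$), so that $N_{\boldsymbol H}$ loses exactly $\sum_j(k_j-1)$. Maximality then gives $k_j\le p^n-1$, so the excess is $\sum_j(1-k_jp^{-n})>0$. The only cosmetic difference is that you read off the mean-zero contribution from \eqref{eq:remainder_all}, whereas the paper sums the multiplicities $p^{n\gamma}-p^{n(\gamma-1)}$ directly.
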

\begin{proof}
    Let $\Omega=\displaystyle\bigsqcup_{i=1}^{m}B_{L_i}(\boldsymbol{a}_i)$ be its decomposition into disjoint maximal balls. Let $L=\displaystyle\max_{1\leq i\leq m}\{L_i\}$. Partition these $m$ maximal balls into $m_{=}$ balls of radius $p^{-L}$ and $m_{<}$ balls of radius greater than $p^{-L}$. Thus $m=m_{=}+m_{<}$ and $m_{=}\geq 1$. Choose $\varepsilon>0$ smaller than $p^{\alpha L}-p^{\alpha(L-1)}$ and smaller than the distance from $p^{\alpha L}$ to every eigenvalue of $\boldsymbol H$ that is strictly below $p^{\alpha L}$, and set $\lambda^-=p^{\alpha L}-\varepsilon$. We compute $N_{D_{\Omega,\mathcal{D}}^{\alpha}}(\lambda^-)$ by separating its two contributions.
    \begin{enumerate}
        \item $N_{D_{\Omega,\mathcal{D}}^{\alpha}}^0(\lambda^-)$: Theorem \ref{thm:eig_Dirichlet_compact} gives
        \begin{align*}
            p^{\alpha(\gamma+L_i)}\leq \lambda^-\Rightarrow\gamma\leq L-1-L_i.
        \end{align*}
        Since $\gamma\in\mathbb{N}^*$, this is possible only when $L_i<L$. Therefore,
        \begin{align*}
        N_{D_{\Omega,\mathcal{D}}^{\alpha}}^0(\lambda^-)&=\sum_{L_i<L}\sum_{\gamma=1}^{L-1-L_i} (p^{n\gamma} - p^{n(\gamma-1)})\\
            &=\sum_{L_i<L}(p^{n(L-1-L_i)} - 1)\\
            &=\sum_{L_i<L}p^{n(L-1-L_i)}-m_{<}.
        \end{align*}
        \item $N_{\boldsymbol{H}}(\lambda^-)$: For every $u\in \operatorname{span}\{\boldsymbol{1}_{B_{L_1}(\boldsymbol{a}_1)},\dots,\boldsymbol{1}_{B_{L_m}(\boldsymbol{a}_m)}\}$, $\operatorname{supp}\hat{u}\subseteq\{\boldsymbol{\xi}\in\mathbb{Q}_p^n|\ \|\boldsymbol{\xi}\|_p\leq p^L\}$. Hence
\begin{align*}
    (\boldsymbol{H}u,u)&=(D^{\alpha}\tilde{u},\tilde{u})\\
    &=\int_{\mathbb{Q}_p^n}\|\boldsymbol{\xi}\|_p^{\alpha}|\hat{u}(\boldsymbol{\xi})|^2d\boldsymbol{\xi}\\
    &=\int_{\|\boldsymbol{\xi}\|_p\leq p^L}\|\boldsymbol{\xi}\|_p^{\alpha}|\hat{u}(\boldsymbol{\xi})|^2d\boldsymbol{\xi}\\
    &\leq p^{\alpha L}\int_{\|\boldsymbol{\xi}\|_p\leq p^L}|\hat{u}(\boldsymbol{\xi})|^2d\boldsymbol{\xi}\\
    &=p^{\alpha L}(u,u).
\end{align*}
Equality holds if and only if $\hat{u}(\boldsymbol{\xi})=0$ for all $\|\boldsymbol{\xi}\|_p\leq p^{L-1}$, equivalently, the integral of $u$ over every ball of level $L-1$ is zero. Let $q$ be the number of level-$L-1$ balls that contain at least one of the $m_{=}$ maximal balls of level $L$, and let the $j$-th such ball contain $c_j$ of them. Maximality gives $c_j\leq p^n-1$, and
\begin{align*}
    \sum_{j=1}^q c_j = m_{=}.
\end{align*}
In a cluster containing $c_j$ maximal balls of smallest radius, requiring the total integral to vanish yields $c_j-1$ linearly independent eigenfunctions with eigenvalue $p^{\alpha L}$. These eigenfunctions contribute a total multiplicity of $\displaystyle\sum_{j=1}^q (c_j - 1) = m_{=} - q$. Therefore,
\begin{align*}
    N_{\boldsymbol H}(\lambda^-) = m - (m_{=} - q) = m_{<} + q.
\end{align*}
\end{enumerate}
Combining the two contributions, we obtain
\begin{align*}
    N_{D_{\Omega,\mathcal{D}}^{\alpha}}(\lambda^-)&=N_{D_{\Omega,\mathcal{D}}^{\alpha}}^0(\lambda^-)+N_{\boldsymbol H}(\lambda^-)\\
    &=\sum_{L_i<L}p^{n(L-1-L_i)}-m_{<}+m_{<} + q\\
    &=\sum_{L_i<L}p^{n(L-1-L_i)}+q.
\end{align*}
Consequently,
\begin{align*}
    N_{D_{\Omega,\mathcal{D}}^{\alpha}}(\lambda^-)-|\Omega|p^{n[\frac{1}{\alpha}\log_p\lambda^-]}&=\sum_{L_i<L}p^{n(L-1-L_i)}+q-\left(\sum_{L_i=L}p^{-nL}+\sum_{L_i<L}p^{-nL_i}\right)p^{n(L-1)}\\
    &=q-m_{=}p^{-n}\\
    &=\sum_{j=1}^{q}\left(1-\frac{c_j}{p^n}\right)\\
    &>0.
\end{align*}
\end{proof}

\subsection{A geometric characterization of Pólya's conjecture}

By \eqref{eq:remainder_all}, for $\lambda\in[p^{\alpha K},p^{\alpha (K+1)})$, we have
\begin{align*}
    |\Omega|p^{n[\frac{1}{\alpha}\log_p\lambda]}-N_{D_{\Omega,\mathcal{D}}^{\alpha}}(\lambda)=|\Omega_{>K}|p^{nK}+\sum_{i\leq K}n_i-N_{\boldsymbol{H}}(\lambda).
\end{align*}
Because $N_{\boldsymbol{H}}(\lambda)$ is nondecreasing in $\lambda$, Pólya's conjecture holds if and only if
\begin{align}
\label{eq:condition_Polya_conjecture_holds}
    N_{\boldsymbol{H}}^{-}(\lambda_{K+1})\leq|\Omega_{>K}|p^{nK}+\sum_{i\leq K}n_i,\ \forall K\in\mathbb{Z}.
\end{align}
Here $\lambda_{K+1}=p^{\alpha (K+1)}$, and
$N_{\boldsymbol{H}}^{-}(\lambda)
=\#\{i:\ \lambda_i(\boldsymbol H)<\lambda\}$, where eigenvalues are
repeated according to multiplicity.

For notational convenience, we identify each
$u\in V\cap H_0^{\alpha/2}(\Omega)$ with its zero extension $\tilde u$.

\begin{lemma}
    Let $\Delta_j=P_j-P_{j-1}$. Then
    \begin{align*}
        \mathcal{E}_{\Omega,\mathcal{D}}(u,u)=\displaystyle\sum_{j\in\mathbb{Z}}p^{\alpha j}\|\Delta_ju\|_{L^2(\mathbb{Q}_p^n)}^2,
        \qquad u\in V\cap H_0^{\alpha/2}(\Omega).
    \end{align*}
\end{lemma}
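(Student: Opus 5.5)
The identity is Plancherel's theorem combined with a dyadic splitting of Fourier space by the values of the $p$-adic norm. For $u\in V\cap H_0^{\alpha/2}(\Omega)$ let $\tilde u$ be its zero extension, so that $\mathcal E_{\Omega,\mathcal D}(u,u)=\mathcal E(\tilde u,\tilde u)=\int_{\mathbb Q_p^n}\|\boldsymbol\xi\|_p^\alpha|\hat{\tilde u}(\boldsymbol\xi)|^2d\boldsymbol\xi$ by the definition of the Dirichlet form.

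By \eqref{eq:P_Ku_Fourier}, $\Delta_j=P_j-P_{j-1}$ is the Fourier multiplier by $\boldsymbol 1_{p^{-j}\mathbb Z_p^n}-\boldsymbol 1_{p^{-j+1}\mathbb Z_p^n}$. This is the indicator of the sphere $S_j=\{\|\boldsymbol\xi\|_p=p^j\}$, because the norm takes only values in $p^{\mathbb Z}\cup\{0\}$.

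The spheres $S_j$, $j\in\mathbb Z$, together with $\{\boldsymbol 0\}$, partition $\mathbb Q_p^n$. The point $\{\boldsymbol 0\}$ has Haar measure zero, and in any case the weight $\|\boldsymbol\xi\|_p^\alpha$ vanishes there. On $S_j$ the weight is the constant $p^{\alpha j}$. Hence, by monotone convergence (all terms are nonnegative),
\begin{align*}
\int_{\mathbb Q_p^n}\|\boldsymbol\xi\|_p^\alpha|\hat{\tilde u}|^2d\boldsymbol\xi=\sum_{j\in\mathbb Z}p^{\alpha j}\int_{S_j}|\hat{\tilde u}|^2d\boldsymbol\xi=\sum_{j\in\mathbb Z}p^{\alpha j}\|\widehat{\Delta_j\tilde u}\|_{L^2}^2 .
\end{align*}
Plancherel's theorem, with the self-dual Haar measure normalized by $|\mathbb Z_p|=1$, converts each $\|\widehat{\Delta_j\tilde u}\|_{L^2}$ into $\|\Delta_j\tilde u\|_{L^2(\mathbb Q_p^n)}$. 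Identifying $u$ with $\tilde u$ then gives the claimed formula.

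The only point that needs care is justifying Plancherel for general elements of the form domain rather than Bruhat–Schwartz functions. This is harmless: $\tilde u\in L^2(\mathbb Q_p^n)$, the Fourier transform is unitary on $L^2$, and each $\Delta_j$ is a bounded operator (a multiplier by an indicator function). So the identity holds termwise, and the sum converges to the finite form value because $u$ lies in $H^{\alpha/2}$. Note that the restriction to $V$ plays no role in the argument; the identity holds on all of $H_0^{\alpha/2}(\Omega)$.
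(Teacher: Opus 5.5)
Your proposal is correct and follows the paper's proof: by Plancherel, you split frequency space into the spheres $\{\|\boldsymbol\xi\|_p=p^j\}$, on which the symbol equals the constant $p^{\alpha j}$, and you identify $\Delta_j$ as the Fourier multiplier by the indicator of that sphere. Your extra justifications — monotone convergence, unitarity of the Fourier transform on $L^2$, and the observation that membership in $V$ is not needed — are sound refinements of the same computation.
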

\begin{proof}
    A direct computation gives
    \begin{align*}
        \mathcal{E}_{\Omega,\mathcal{D}}(u,u)&=\int_{\mathbb{Q}_p^n}\|\boldsymbol{\xi}\|_p^{\alpha}|\hat{u}(\boldsymbol{\xi})|^2d\boldsymbol{\xi}\\
        &=\sum_{j\in\mathbb{Z}}\int_{\|\boldsymbol{\xi}\|_p=p^j}\|\boldsymbol{\xi}\|_p^{\alpha}|\hat{u}(\boldsymbol{\xi})|^2d\boldsymbol{\xi}\\
        &=\sum_{j\in\mathbb{Z}}p^{\alpha j}\int_{\|\boldsymbol{\xi}\|_p=p^j}|\widehat{P_ju}(\boldsymbol{\xi})-\widehat{P_{j-1}u}(\boldsymbol{\xi})|^2d\boldsymbol{\xi}\\
        &=\sum_{j\in\mathbb{Z}}p^{\alpha j}\int_{\mathbb{Q}_p^n}|\widehat{\Delta_ju}(\boldsymbol{\xi})|^2d\boldsymbol{\xi}\\
        &=\sum_{j\in\mathbb{Z}}p^{\alpha j}\|\Delta_ju\|_{L^2(\mathbb{Q}_p^n)}^2.
    \end{align*}
\end{proof}

Define the quadratic form $\mathcal{H}$ by
\begin{align*}
    \mathcal{H}[u]=\mathcal{E}_{\Omega,\mathcal{D}}(u,u),
    \qquad
    \mathcal{D}(\mathcal{H})=V\cap H_0^{\frac{\alpha}{2}}(\Omega).
\end{align*}
Let $\boldsymbol{H}$ be the self-adjoint operator associated with
$\mathcal{H}$. By the compactness result in Section 2,
$\boldsymbol{H}$ has compact resolvent. For $K\in\mathbb{Z}$, set
\begin{align*}
    \mathcal{Q}_K[u]
    =\mathcal{H}[u]-\lambda_{K+1}\|u\|_{L^2(\Omega)}^2,
    \qquad
    \mathcal{D}(\mathcal{Q}_K)=\mathcal{D}(\mathcal{H}).
\end{align*}
The min--max principle gives
\begin{align}
\label{eq:ind_Q_K=NH_lambda_K+1}
    \operatorname{ind}_{-}(\mathcal{Q}_K)
    =N_{\boldsymbol{H}}^{-}(\lambda_{K+1}),
\end{align}
where
\begin{align*}
    \operatorname{ind}_{-}(\mathcal{Q}_K)
    =\sup\{\dim W:\ \mathcal{Q}_K[u]<0
    \text{ for every }0\neq u\in W\subseteq\mathcal{D}(\mathcal{Q}_K)\}.
\end{align*}

Fix $K\in\mathbb{Z}$ and $B_K\in\mathcal{B}_K$, and set
\begin{align*}
    w_j=\lambda_{j+1}-\lambda_{K+1},\qquad j\geq K.
\end{align*}
All the minimizations below may be carried out in the corresponding
real subspace. Complexification does not change the eigenvalues,
their multiplicities, or the negative inertia.
For every function in $\mathcal{D}(\mathcal{H})$ that vanishes almost
everywhere outside $B_K$, define
\begin{align*}
    \mathcal{E}_{B_K}[u]
    =\sum_{j\geq K}w_j
    \|\Delta_{j+1}u\|_{L^2(B_K)}^2.
\end{align*}
Let
\begin{align*}
    \mathcal{A}_{B_K}=\{u\in\mathcal{D}(\mathcal{H}):\
    u=0\text{ a.e. on }B_K^c,\
    \bar{u}_{B_K}=1\},
\end{align*}
where
\begin{align*}
    \bar{u}_{B_K}=\frac{1}{|B_K|}\int_{B_K}u(\boldsymbol{x})d\boldsymbol{x},
\end{align*}
and define
\begin{align*}
    \kappa_{B_K}
    =\frac{1}{|B_K|}\inf_{u\in\mathcal{A}_{B_K}}
    \mathcal{E}_{B_K}[u].
\end{align*}

\begin{lemma}
For every $B_K\in\mathcal{B}_K$, the set $\mathcal{A}_{B_K}$ is
nonempty and $0\leq\kappa_{B_K}<+\infty$.
\end{lemma}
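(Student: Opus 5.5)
The plan is to exhibit one explicit admissible function and check that its local energy is finite. Nonnegativity of $\kappa_{B_K}$ is immediate from $w_j\geq0$ for $j\geq K$.

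Since $B_K\in\mathcal{B}_K$, the characterization of $\mathcal{B}_K$ gives a maximal ball $B_{L}(\boldsymbol{a})\subseteq B_K$ with $L\geq K+1$. The candidate is
\begin{align*}
u=\frac{|B_K|}{|B_L(\boldsymbol a)|}\boldsymbol 1_{B_L(\boldsymbol a)}=p^{n(L-K)}\boldsymbol 1_{B_L(\boldsymbol a)}.
\end{align*}
This function is a Bruhat--Schwartz function supported in $\Omega$, so it lies in $\mathcal S(\Omega)\subseteq H_0^{\alpha/2}(\Omega)$. It is a multiple of a maximal-ball indicator, so it also lies in $V$. It vanishes off $B_K$, and its mean over $B_K$ is $1$. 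Hence $u\in\mathcal{A}_{B_K}$, and in particular $\mathcal{A}_{B_K}\neq\emptyset$.

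For finiteness, observe that $\widehat{u}$ is supported in $\{\|\boldsymbol\xi\|_p\leq p^{L}\}$, as computed in Lemma \ref{lem:P_K_prpoerty<=K}. Therefore $\Delta_{j+1}u=0$ whenever $j+1>L$, and the sum defining $\mathcal{E}_{B_K}[u]$ has only finitely many nonzero terms, namely $K\leq j\leq L-1$. Each surviving term is bounded by $w_j\|u\|_{L^2}^2<\infty$, because $\|\Delta_{j+1}u\|\leq\|u\|$. One point needs care: the norm in the definition is taken on $L^2(B_K)$, whereas $\Delta_{j+1}u$ may leak outside $B_K$. This does not matter for finiteness, since the $L^2(B_K)$ norm is at most the full $L^2(\mathbb{Q}_p^n)$ norm. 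In fact $\Delta_{j+1}u$ stays supported in $B_K$ for $j\geq K$, by the averaging formula \eqref{eq:P_Ku} applied at levels $\geq K$.

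It follows that $\kappa_{B_K}\leq|B_K|^{-1}\mathcal{E}_{B_K}[u]<+\infty$. The only potential obstacle is confirming that this $u$ is admissible in the strict Dirichlet domain. That is immediate here because the indicator is a test function, so no closure argument is needed.
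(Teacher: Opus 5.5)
Your proof is correct and matches the paper's argument: the paper also takes $u=\frac{|B_K|}{|M_i|}\boldsymbol{1}_{M_i}$ for a maximal ball $M_i\subseteq B_K$ and notes that $u\in\mathcal{S}(\Omega)\cap V$, that it vanishes outside $B_K$, and that its average over $B_K$ is one. Your justification of finiteness (only the terms $K\leq j\leq L-1$ survive, because $\widehat{u}$ is supported in $\{\|\boldsymbol\xi\|_p\leq p^L\}$) and of nonnegativity (via $w_j\geq0$) spells out what the paper compresses into the single remark that $u$ is a Bruhat--Schwartz function.
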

\begin{proof}
By the definition of $\mathcal{B}_K$, there is a maximal ball
$M_i=B_{L_i}(\boldsymbol{a}_i)\subseteq B_K$. Set
\begin{align*}
    u=\frac{|B_K|}{|M_i|}\boldsymbol{1}_{M_i}.
\end{align*}
Since $M_i$ is a compact open subset of $\Omega$, we have
$\boldsymbol{1}_{M_i}\in\mathcal{S}(\Omega)\cap V$. Thus
$u\in\mathcal{D}(\mathcal{H})$, it vanishes outside $B_K$, and
$\bar u_{B_K}=1$. Moreover, $u$ is a Bruhat--Schwartz function, so
$\mathcal{E}_{B_K}[u]<+\infty$.
\end{proof}

We now compute $\kappa_{B_K}$ by finite inner approximations. Fix
$r\in\mathbb{N}^*$. Starting from $B_K$, whenever a ball $B_j$
satisfies
\begin{align*}
    B_j\cap\Omega\neq\emptyset,
    \qquad B_j\cap\Omega^c\neq\emptyset,
    \qquad j<K+r,
\end{align*}
consider all the $p^n$ balls $B_{j+1}\subseteq B_j$. Stop considering
sub-balls as soon as either $B_j\subseteq\Omega$, or
$B_j\cap\Omega=\emptyset$, or $j=K+r$. Define
\begin{equation}
\label{eq:strict_Lambda_recursion}
\Lambda_{K,r}(B_j)=
\begin{cases}
0,& B_j\subseteq\Omega,\ K<j\leq K+r;\\
+\infty,& B_j\cap\Omega=\emptyset,\ K\leq j\leq K+r\text{ or }B_{j}\in\mathcal{B}_{j},\ j=K+r;\\
\frac{p^n}{\sum\limits_{B_{j+1}\subseteq B_j}\frac{1}{w_j+\Lambda_{K,r}(B_{j+1})}}-w_j,& B_j\in\mathcal{B}_j,\ K\leq j< K+r,
\end{cases}
\end{equation}
We use the conventions $\frac{1}{0}=+\infty$ and $\frac{1}{+\infty}=0$.

For $A\in[0,+\infty]$, the expression $Aa^2$ is interpreted as zero
when $A=+\infty$ and $a=0$, and as $+\infty$ when
$A=+\infty$ and $a\neq0$.

For every ball $B_j$ considered above, let
\begin{align*}
    \mathcal{A}_{B_j,r}^{a}
    =\Bigl\{u\in
    \operatorname{span}\{\boldsymbol{1}_{M_i}:\
    M_i=B_{L_i}(\boldsymbol{a}_i)\subseteq B_j,\ L_i\leq K+r\}:\
    \bar u_{B_j}=a\Bigr\},
\end{align*}
and set
\begin{align*}
    \mathcal{F}_{B_j,r}[u]
    =\sum_{l\geq j}w_l
    \|\Delta_{l+1}u\|_{L^2(B_j)}^2.
\end{align*}
If the displayed span is empty, it is understood to be $\{0\}$.

\begin{lemma}
Let $A_{B_{j+1}}\in[0,+\infty]$ be indexed by the $p^n$ balls
$B_{j+1}\subseteq B_j$. With the preceding conventions,
\begin{align}
\label{eq:extended_parallel_minimum}
    &\inf\left\{
    p^{-n}\sum_{B_{j+1}\subseteq B_j}
    A_{B_{j+1}}b_{B_{j+1}}^2
    \ \middle|\
    \sum_{B_{j+1}\subseteq B_j}b_{B_{j+1}}=p^na
    \right\}=
    p^n\left(\sum_{B_{j+1}\subseteq B_j}
    A_{B_{j+1}}^{-1}\right)^{-1}a^2.
\end{align}
\end{lemma}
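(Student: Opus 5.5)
The identity is a weighted-harmonic-mean (parallel sum) minimization under one linear constraint. I would first settle the case where every $A_{B_{j+1}}$ lies in $(0,+\infty)$, and then reduce the boundary cases $A=0$ and $A=+\infty$ to it using the stated conventions. Write $N=p^n$, index the sub-balls by $k=1,\dots,N$, and set $S=\sum_k A_k^{-1}$. The claim is then
\begin{align*}
\inf\Bigl\{N^{-1}\textstyle\sum_k A_kb_k^2:\ \sum_k b_k=Na\Bigr\}=N S^{-1}a^2 .
\end{align*}

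For the finite positive case I would use Cauchy--Schwarz:
\begin{align*}
N^2a^2=\Bigl(\textstyle\sum_k A_k^{-1/2}\cdot A_k^{1/2}b_k\Bigr)^2\leq S\textstyle\sum_k A_kb_k^2 .
\end{align*}
This gives the lower bound $N^{-1}\sum_kA_kb_k^2\geq NS^{-1}a^2$. For attainment I would take $b_k=NaA_k^{-1}/S$. These values satisfy the constraint, and plugging them in gives exactly $NS^{-1}a^2$.

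For the extended cases I would argue as follows.
\begin{itemize}
\item \emph{Some $A_k=0$.} Then $S=+\infty$ and the right-hand side is $0$. Putting all the mass $b_k=Na$ on that index (and zero elsewhere) gives value $0\cdot b_k^2=0$, so the infimum is $0$.
\item \emph{All $A_k=+\infty$.} Then $S=0$, so the right-hand side is $+\infty\cdot a^2$. Under the convention this equals $0$ if $a=0$ and $+\infty$ if $a\neq0$. If $a=0$, take all $b_k=0$, which gives $0$. If $a\neq0$, some $b_k$ must be nonzero, so every admissible choice has value $+\infty$.
\item \emph{Otherwise.} Let $F=\{k: A_k<\infty\}$, which is nonempty. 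Finite value forces $b_k=0$ off $F$. The problem therefore reduces to the finite positive case on $F$, since infinite indices contribute $0$ to $S$.
\end{itemize}

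The step I expect to need the most care is this last bookkeeping with the conventions. In particular, when $a=0$ and $S=0$, one must check that $(+\infty)^{-1}$ followed by multiplication by $a^2=0$ yields $0$ and agrees with the left-hand side. The core inequality itself is routine.
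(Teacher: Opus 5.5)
Your proof is correct and follows essentially the same route as the paper. For finite positive coefficients you use Cauchy--Schwarz with equality at $b_k$ proportional to $A_k^{-1}$, and you use the same three-way case split: some $A_k=0$, all $A_k=+\infty$, and otherwise forcing $b_k=0$ wherever $A_k=+\infty$ and reducing to the finite case. One small slip in your closing remark: when all $A_k=+\infty$, the relevant quantity is $S^{-1}=0^{-1}=+\infty$, not $(+\infty)^{-1}$, and multiplying by $a^2=0$ then gives $0$ under the convention. Your actual case analysis already handles this correctly.
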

\begin{proof}
If one of the coefficients is zero, assign the value $p^na$ to a
coordinate with zero coefficient and set all other coordinates equal
to zero. Both sides of \eqref{eq:extended_parallel_minimum} are then
zero. Suppose next that there is no zero coefficient and that at least
one coefficient is finite. Every coordinate with coefficient
$+\infty$ must vanish whenever the left-hand side is finite. Applied
to the remaining positive finite coefficients, the Cauchy--Schwarz
inequality gives
\begin{align*}
    \sum_{B_{j+1}\subseteq B_j} A_{B_{j+1}}b_{B_{j+1}}^2
    \geq
    \frac{\left(\sum\limits_{B_{j+1}\subseteq B_j} b_{B_{j+1}}\right)^2}
    {\sum\limits_{B_{j+1}\subseteq B_j} A_{B_{j+1}}^{-1}}.
\end{align*}
Equality is attained by taking the non-zero coordinates proportional to
$A_{B_{j+1}}^{-1}$. Finally, if all coefficients are $+\infty$, a
finite value is possible only when every coordinate is zero. Thus the
infimum is $+\infty$ for $a\neq0$ and is zero for $a=0$, exactly as on
the right-hand side.
\end{proof}

\begin{lemma}
\label{lem:Lambda_KrBj=inf_F}
For every ball $B_j$ considered in the construction and every
$a\in\mathbb{R}$,
\begin{align}
\label{eq:Lambda_variational_strict}
    \Lambda_{K,r}(B_j)a^2
    =\frac{1}{|B_j|}
    \inf_{u\in\mathcal{A}_{B_j,r}^{a}}
    \mathcal{F}_{B_j,r}[u].
\end{align}
\end{lemma}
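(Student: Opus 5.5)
Fix $K$ and $r$; we argue by backward induction on the level $j$, from $j=K+r$ down to $j=K$, following the recursion \eqref{eq:strict_Lambda_recursion}. The basic tool is a tree decomposition of $\mathcal{F}_{B_j,r}$. For $u$ supported in $B_j$, the projection $P_{l}u$ is constant on balls of level $l$. Hence $\Delta_{j+1}u=P_{j+1}u-P_ju$ restricted to $B_j$ takes the value $\bar u_{B_{j+1}}-\bar u_{B_j}$ on each child $B_{j+1}\subseteq B_j$. The terms $\Delta_{l+1}u$ for $l\geq j+1$ split as orthogonal sums over the children. This gives
\begin{align*}
\mathcal{F}_{B_j,r}[u]
=w_j\sum_{B_{j+1}\subseteq B_j}|B_{j+1}|\bigl(\bar u_{B_{j+1}}-\bar u_{B_j}\bigr)^2
+\sum_{B_{j+1}\subseteq B_j}\sum_{l\geq j+1}w_l\|\Delta_{l+1}u\|_{L^2(B_{j+1})}^2 .
\end{align*}
The inner double sum is exactly $\sum_{B_{j+1}}\mathcal{F}_{B_{j+1},r}[u|_{B_{j+1}}]$. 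We also have $\sum_{B_{j+1}}|B_{j+1}|(b-a)^2=\sum|B_{j+1}|b^2-|B_j|a^2$ whenever the children's means $b_{B_{j+1}}$ average to $a$. The admissible class factorizes: $u\in\mathcal A^a_{B_j,r}$ if and only if $u|_{B_{j+1}}\in\mathcal A^{b_{B_{j+1}}}_{B_{j+1},r}$ with $\sum b_{B_{j+1}}=p^na$. This holds because maximal balls inside $B_j$ of level $\leq K+r$ are contained in a single child whenever $B_j\in\mathcal B_j$; if $B_j$ itself is a maximal ball, it is a leaf.

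\textbf{Base cases.} If $B_j\subseteq\Omega$ with $j>K$, then $B_j$ lies inside one maximal ball. That maximal ball has level $\le j\le K+r$, but when it strictly contains $B_j$ the recursion never descends to $B_j$. So $B_j$ is itself maximal, $u=a\boldsymbol 1_{B_j}$ is forced, $\Delta_{l+1}u$ vanishes on $B_j$ for $l\geq j$, and the infimum is $0$. If $B_j\cap\Omega=\emptyset$, or $B_j\in\mathcal B_j$ with $j=K+r$, then no maximal ball of level $\le K+r$ lies in $B_j$. The span is $\{0\}$, so the class $\mathcal A^a_{B_j,r}$ is empty for $a\ne0$ (infimum $+\infty$) and equals $\{0\}$ for $a=0$ (value $0$). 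This matches $\Lambda=+\infty$ under the stated convention for $Aa^2$.

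\textbf{Inductive step.} For $B_j\in\mathcal B_j$ with $j<K+r$, we minimize first over each child with its mean $b_{B_{j+1}}$ fixed. The induction hypothesis gives $\inf \mathcal{F}_{B_{j+1},r}=|B_{j+1}|\Lambda_{K,r}(B_{j+1})b_{B_{j+1}}^2$, with independent choices on distinct children. Dividing by $|B_j|=p^n|B_{j+1}|$ yields
\begin{align*}
\frac{1}{|B_j|}\inf_{\mathcal A^a_{B_j,r}}\mathcal F_{B_j,r}
=\inf_{\sum b=p^na}\ p^{-n}\sum_{B_{j+1}}\bigl(w_j+\Lambda_{K,r}(B_{j+1})\bigr)b_{B_{j+1}}^2-w_ja^2 .
\end{align*}
Applying \eqref{eq:extended_parallel_minimum} with $A_{B_{j+1}}=w_j+\Lambda_{K,r}(B_{j+1})$ produces $\Lambda_{K,r}(B_j)a^2$.

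The main obstacle is bookkeeping with infinite values and $w_K=0$. At $j=K$ a coefficient $A$ may vanish, and at any level some children may carry $+\infty$; both the splitting into children and the subtraction of $w_ja^2$ must remain valid there. We will handle this by checking separately the cases covered in the proof of \eqref{eq:extended_parallel_minimum}. At least one child meets $\Omega$ because $B_j\in\mathcal B_j$, but that child may still have $\Lambda=+\infty$ when no maximal ball of level $\le K+r$ lies below it. In that all-$+\infty$ case both sides equal $+\infty\cdot a^2$ under the stated convention, so the subtraction of the finite quantity $w_ja^2$ is harmless.
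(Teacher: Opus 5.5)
Your proposal is correct and takes essentially the same route as the paper. Both arguments use backward induction on $j$, with the same base cases: a maximal leaf forces $u=a\boldsymbol 1_{B_j}$, and an empty span gives $+\infty$. Both then split $\mathcal F_{B_j,r}[u]=w_j\|\Delta_{j+1}u\|_{L^2(B_j)}^2+\sum_{B_{j+1}\subseteq B_j}\mathcal F_{B_{j+1},r}[u]$, fix the child averages, and apply \eqref{eq:extended_parallel_minimum} with $A_{B_{j+1}}=w_j+\Lambda_{K,r}(B_{j+1})$. Your remarks on the factorization of the admissible class and on the cases $A=0$ (possible only at $j=K$) and all coefficients $+\infty$ simply spell out what the paper summarizes in its final sentence.
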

\begin{proof}
We use backward induction on $j$. If $B_j\subseteq\Omega$ occurs in
the construction, the ball of level $j-1$ that contains $B_j$ is not
contained in $\Omega$. Hence $B_j$ is a maximal ball of $\Omega$.
Consequently,
\begin{align*}
    \mathcal{A}_{B_j,r}^{a}=\{a\boldsymbol{1}_{B_j}\},
    \qquad
    \mathcal{F}_{B_j,r}[a\boldsymbol{1}_{B_j}]=0,
\end{align*}
which agrees with $\Lambda_{K,r}(B_j)=0$.

If $B_j\cap\Omega=\emptyset$, no maximal ball of $\Omega$ is contained
in $B_j$. Suppose instead that $j=K+r$ and
$B_j\not\subseteq\Omega$. If a maximal ball
$M_i=B_{L_i}(\boldsymbol a_i)\subseteq B_j$ satisfied
$L_i\leq K+r$, then the ball inclusions would force $L_i=K+r$ and
$M_i=B_j$, contradicting $B_j\not\subseteq\Omega$. In either case, the
span in the definition of $\mathcal{A}_{B_j,r}^{a}$ is $\{0\}$. Thus a
nonzero prescribed average is impossible, whereas the zero function
realizes the zero average. This agrees with
$\Lambda_{K,r}(B_j)=+\infty$.

Now let $K\leq j<K+r$, assume that $B_j$ meets both $\Omega$ and
$\Omega^c$, and suppose that the assertion holds for every
$B_{j+1}\subseteq B_j$. Write
\begin{align*}
    b_{B_{j+1}}=\bar u_{B_{j+1}}.
\end{align*}
Since the $p^n$ balls $B_{j+1}\subseteq B_j$ are pairwise disjoint and
their union is $B_j$,
\begin{align*}
    \sum_{B_{j+1}\subseteq B_j}b_{B_{j+1}}=p^na
\end{align*}
and
\begin{align*}
    \|\Delta_{j+1}u\|_{L^2(B_j)}^2
    =|B_j|\left(
    p^{-n}\sum_{B_{j+1}\subseteq B_j}b_{B_{j+1}}^2-a^2
    \right).
\end{align*}
Moreover, restriction to the balls $B_{j+1}\subseteq B_j$ gives
\begin{align*}
    \mathcal{F}_{B_j,r}[u]
    =w_j\|\Delta_{j+1}u\|_{L^2(B_j)}^2
    +\sum_{B_{j+1}\subseteq B_j}
    \mathcal{F}_{B_{j+1},r}[u].
\end{align*}
First fixing all the averages $b_{B_{j+1}}$ and then applying the
induction hypothesis yields
\begin{align*}
    \frac{1}{|B_j|}
    \inf_{u\in\mathcal{A}_{B_j,r}^{a}}
    \mathcal{F}_{B_j,r}[u]=\inf_{\sum\limits_{B_{j+1}\subseteq B_j} b_{B_{j+1}}=p^na}
    \left\{
    p^{-n}\sum_{B_{j+1}\subseteq B_j}
    \bigl(w_j+\Lambda_{K,r}(B_{j+1})\bigr)b_{B_{j+1}}^2
    -w_ja^2\right\}.
\end{align*}
Applying \eqref{eq:extended_parallel_minimum} with
$A_{B_{j+1}}=w_j+\Lambda_{K,r}(B_{j+1})$ gives exactly
\eqref{eq:strict_Lambda_recursion} and
\eqref{eq:Lambda_variational_strict}. This also covers every possible
occurrence of zero and $+\infty$.
\end{proof}

\begin{theorem}
For every $B_K\in\mathcal{B}_K$, the sequence
$\Lambda_{K,r}(B_K)$ is nonincreasing and
\begin{align}
\label{eq:kappa_limit_strict}
    \kappa_{B_K}=\lim_{r\to\infty}\Lambda_{K,r}(B_K).
\end{align}
\end{theorem}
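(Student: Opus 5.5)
Via Lemma~\ref{lem:Lambda_KrBj=inf_F} with $j=K$ and $a=1$, I will read $\Lambda_{K,r}(B_K)$ as a variational quantity:
\begin{align*}
\Lambda_{K,r}(B_K)=\frac{1}{|B_K|}\inf_{u\in\mathcal A^{1}_{B_K,r}}\mathcal F_{B_K,r}[u].
\end{align*}
Then I will show that these finite-dimensional infima decrease to the infimum over $\mathcal A_{B_K}$ defining $\kappa_{B_K}$. First I check that the functionals agree: for $u$ supported in $B_K$, $\mathcal F_{B_K,r}[u]=\mathcal E_{B_K}[u]$, since $\|\Delta_{l+1}u\|_{L^2(B_K)}=\|\Delta_{l+1}u\|_{L^2(\mathbb Q_p^n)}$ for $l\geq K$. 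Indeed, $P_l u$ is supported in $B_K$ when $l\geq K$, because averaging over level-$l$ balls does not leave $B_K$. So the only difference between $\Lambda_{K,r}$ and $\kappa$ is the admissible class.

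\textbf{Monotonicity.} The spans $\operatorname{span}\{\boldsymbol 1_{M_i}: M_i\subseteq B_K,\ L_i\leq K+r\}$ increase with $r$, so the classes $\mathcal A^1_{B_K,r}$ increase and the infimum is nonincreasing. Every such span lies in $\mathcal S(\Omega)\cap V\subseteq\mathcal D(\mathcal H)$, and its elements vanish outside $B_K$ and have mean $1$ there. Hence $\mathcal A^1_{B_K,r}\subseteq\mathcal A_{B_K}$, which gives $\Lambda_{K,r}(B_K)\geq\kappa_{B_K}$ for all $r$, and therefore
\begin{align*}
\lim_{r\to\infty}\Lambda_{K,r}(B_K)\geq\kappa_{B_K}.
\end{align*}

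\textbf{Reverse inequality.} This is the main step. Fix $\varepsilon>0$ and $u\in\mathcal A_{B_K}$ with $\mathcal E_{B_K}[u]\leq|B_K|(\kappa_{B_K}+\varepsilon)$. I must approximate $u$ by elements of $\mathcal A^1_{B_K,r}$ with nearly the same energy. Write $u=\sum_i c_i\boldsymbol 1_{M_i}$ over the maximal balls $M_i\subseteq B_K$; this is an $L^2$-convergent sum because $u\in V$. Let $u_r$ be the truncation to the balls with $L_i\leq K+r$. By the lemma on multiplication by $\boldsymbol 1_B$, $u_r\in V\cap H_0^{\alpha/2}(\Omega)$. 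Truncation preserves the zero-extension structure, but the mean changes from $1$ to $m_r=\bar{(u_r)}_{B_K}\to 1$, so I normalize: $v_r=u_r/m_r\in\mathcal A^1_{B_K,r}$ (assuming $m_r\neq0$, which holds for large $r$).

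It remains to show $\mathcal E_{B_K}[u_r]\to\mathcal E_{B_K}[u]$. Since $w_l\leq\lambda_{l+1}$, $\mathcal E_{B_K}$ is dominated by $\mathcal H$, so it suffices that $u_r\to u$ in the form norm, i.e.\ $\mathcal H[u-u_r]\to0$. I expect this to be the main obstacle, because the tail $u-u_r=\sum_{L_i>K+r}c_i\boldsymbol 1_{M_i}$ is not obviously small in $H^{\alpha/2}$. I would try the form-orthogonal decomposition from the orthogonal-decomposition theorem together with the proof's statement that finite linear combinations of maximal-ball indicators form a core of $V\cap H_0^{\alpha/2}(\Omega)$.

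Concretely, I would pick a finite combination $g$ with $\mathcal H[u-g]<\delta$, then restrict $g$ to $B_K$ (bounded multiplication, again by the indicator lemma). Once $r$ exceeds every level occurring in $g$, $g\boldsymbol 1_{B_K}$ is an admissible competitor after renormalizing its mean, which is close to $1$ because $L^2$-closeness controls the mean. This avoids controlling the truncation tail directly: the core property replaces exact truncation, and I only need continuity of $\mathcal E_{B_K}$ and of the mean in the form norm. Letting $\delta\to0$ and then $\varepsilon\to0$ gives
\begin{align*}
\lim_{r\to\infty}\Lambda_{K,r}(B_K)\leq\kappa_{B_K},
\end{align*}
which, combined with the monotonicity step, proves \eqref{eq:kappa_limit_strict}.
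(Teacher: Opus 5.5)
Your proposal is correct and is essentially the paper's proof. Monotonicity and the bound $\kappa_{B_K}\leq\Lambda_{K,r}(B_K)$ come from the nested admissible classes $\mathcal A^{1}_{B_K,r}\subseteq\mathcal A^{1}_{B_K,r+1}\subseteq\mathcal A_{B_K}$; the reverse inequality comes from approximating $u\in\mathcal A_{B_K}$ in the form norm by finite combinations of maximal-ball indicators, multiplying by $\boldsymbol 1_{B_K}$, renormalizing the mean, and using $\mathcal E_{B_K}\leq\mathcal H$ for continuity. You were right to drop the direct truncation $u_r$, whose form-norm convergence is not automatic. The core-density argument you settle on is exactly the one the paper uses.
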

\begin{proof}
As $r$ increases, the definition of $\mathcal{A}_{B_K,r}^{1}$ permits
indicators of additional maximal balls. Hence
\begin{align*}
    \mathcal{A}_{B_K,r}^{1}
    \subseteq\mathcal{A}_{B_K,r+1}^{1},
\end{align*}
and Lemma \ref{lem:Lambda_KrBj=inf_F} shows that
\begin{align*}
    \Lambda_{K,r+1}(B_K)\leq\Lambda_{K,r}(B_K).
\end{align*}
Every function in $\mathcal{A}_{B_K,r}^{1}$ belongs to
$\mathcal{A}_{B_K}$, so
\begin{align*}
    \kappa_{B_K}\leq\Lambda_{K,r}(B_K).
\end{align*}

It remains to prove the reverse inequality in the limit. Let
$u\in\mathcal{A}_{B_K}$. By the maximal-ball averaging result in
Section 3, finite linear combinations of the functions
$\boldsymbol{1}_{M_i}$ are dense in
$V\cap H_0^{\alpha/2}(\Omega)$ in the form norm. Since multiplication
by $\boldsymbol{1}_{B_K}$ is bounded on
$H^{\alpha/2}(\mathbb{Q}_p^n)$, the approximating functions may be
chosen to vanish outside $B_K$. Thus there are finite linear
combinations $u_m$ of the functions $\boldsymbol{1}_{M_i}$ with
$M_i\subseteq B_K$ such that
\begin{align*}
    u_m\longrightarrow u
    \quad\text{in }L^2(\Omega),
    \qquad
    \mathcal{H}[u_m-u]\longrightarrow0.
\end{align*}
In particular, $\overline{u_m}_{B_K}\to1$. For all sufficiently large
$m$, replace $u_m$ by $u_m/\overline{u_m}_{B_K}$. The resulting functions have
average one and still converge to $u$ in the form norm. Since
\begin{align*}
    \mathcal{E}_{B_K}[z]\leq\mathcal{H}[z],
\end{align*}
their local energies converge to $\mathcal{E}_{B_K}[u]$.
Each such finite linear combination belongs to
$\mathcal{A}_{B_K,r}^{1}$ for all sufficiently large $r$. Therefore,
\begin{align*}
    \lim_{r\to\infty}\Lambda_{K,r}(B_K)
    \leq\frac{1}{|B_K|}\mathcal{E}_{B_K}[u].
\end{align*}
Taking the infimum over $u\in\mathcal{A}_{B_K}$ proves the reverse
inequality and hence \eqref{eq:kappa_limit_strict}.
\end{proof}

Every $u\in\mathcal{D}(\mathcal{H})$ can be written uniquely as
\begin{align*}
    u=v+\sum_{B_K\in\mathcal{B}_K}u_{B_K},
\end{align*}
where $v\in V_{\leq K}$ and, for each $B_K\in\mathcal{B}_K$,
$u_{B_K}\in\mathcal{D}(\mathcal{H})$ vanishes almost everywhere
outside $B_K$. Indeed, the maximal balls of level at most $K$ and the
maximal balls contained in the mutually disjoint balls
$B_K\in\mathcal{B}_K$ partition $\Omega$; multiplication by the
indicator of each $B_K$ preserves the strict form domain. Write
\begin{align*}
    f:=P_Ku
    =v+\sum_{B_K\in\mathcal{B}_K}
    a_{B_K}\boldsymbol{1}_{B_K}.
\end{align*}
For $f\in P_KV$, define
\begin{align*}
    \mathcal{W}_K[f]
    =\sum_{j\leq K}(\lambda_j-\lambda_{K+1})
    \|\Delta_jf\|_{L^2(\mathbb{Q}_p^n)}^2
\end{align*}
and
\begin{align*}
    \mathcal{S}_K[f]
    =\mathcal{W}_K[f]
    +\sum_{B_K\in\mathcal{B}_K}
    \kappa_{B_K}|B_K||a_{B_K}|^2.
\end{align*}

\begin{lemma}
For every $f\in P_KV$,
\begin{align}
\label{eq:strict_compression}
    \mathcal{S}_K[f]
    =\inf\{\mathcal{Q}_K[u]:\
    u\in\mathcal{D}(\mathcal{H}),\ P_Ku=f\}.
\end{align}
\end{lemma}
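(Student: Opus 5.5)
The plan is to split $\mathcal{Q}_K[u]$ into a coarse part that depends only on $f=P_Ku$ and a sum of independent local parts, one for each ball $B_K\in\mathcal{B}_K$. Each local part will then be minimized separately, with its average on $B_K$ held fixed.

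\textbf{Step 1: split the form.} By the lemma expressing $\mathcal{E}_{\Omega,\mathcal D}$ through the $\Delta_j$, and using $\|u\|^2=\sum_j\|\Delta_ju\|^2$ (the shells partition frequency space and $u\in L^2$ forces $P_ju\to0$ as $j\to-\infty$), we get
\begin{align*}
\mathcal{Q}_K[u]=\sum_{j\in\mathbb Z}(\lambda_j-\lambda_{K+1})\|\Delta_ju\|_{L^2(\mathbb{Q}_p^n)}^2 .
\end{align*}
For $j\leq K$ we have $\Delta_ju=\Delta_jP_Ku=\Delta_jf$. These terms therefore add up to exactly $\mathcal{W}_K[f]$, which depends only on $f$.

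The remaining terms, with $j\geq K+1$, are $\sum_{l\geq K}w_l\|\Delta_{l+1}u\|^2$. Write $u=v+\sum_{B_K}u_{B_K}$ as in the displayed decomposition. Since $v\in V_{\leq K}$, Lemma~\ref{lem:P_K_prpoerty<=K} gives $\Delta_{l+1}v=0$ for $l\geq K$. By \eqref{eq:P_Ku}, for $l\geq K$ the function $\Delta_{l+1}u_{B_K}=P_{l+1}u_{B_K}-P_lu_{B_K}$ is supported in $B_K$, because it is built from averages over sub-balls of $B_K$. These pieces have pairwise disjoint supports, so
\begin{align*}
\sum_{l\geq K}w_l\|\Delta_{l+1}u\|^2=\sum_{B_K\in\mathcal B_K}\mathcal E_{B_K}[u_{B_K}].
\end{align*}

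\textbf{Step 2: identify the constraint $P_Ku=f$.} This constraint says exactly two things: the $v$-component equals the $V_{\leq K}$-part of $f$, and $\bar{(u_{B_K})}_{B_K}=a_{B_K}$ for every $B_K$. Both hold since $P_Ku_{B_K}=\bar{(u_{B_K})}_{B_K}\boldsymbol 1_{B_K}$ and $P_Kv=v$. Consequently the infimum in \eqref{eq:strict_compression} factorizes as
\begin{align*}
\mathcal W_K[f]+\sum_{B_K}\inf\{\mathcal E_{B_K}[u_{B_K}]:\ u_{B_K}\in\mathcal D(\mathcal H),\ u_{B_K}=0\text{ off }B_K,\ \bar{(u_{B_K})}_{B_K}=a_{B_K}\}.
\end{align*}
The factorization is legitimate because the pieces can be chosen independently. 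Indeed, multiplication by $\boldsymbol 1_{B_K}$ preserves $\mathcal D(\mathcal H)$ by the lemma on indicator multiplication, and a finite sum of admissible pieces stays in $\mathcal D(\mathcal H)$; the sum is finite because $\mathcal B_K$ is finite.

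\textbf{Step 3: evaluate each local infimum by homogeneity.} For $a\neq0$, the map $u\mapsto u/a$ is a bijection onto $\mathcal{A}_{B_K}$ (working in the real subspace, as the paper permits), and $\mathcal E_{B_K}$ is quadratic. Hence the local infimum equals $|a|^2|B_K|\kappa_{B_K}$. For $a=0$, the zero function shows the infimum is $0$, which again matches $\kappa_{B_K}|B_K|\,|a|^2=0$; finiteness of $\kappa_{B_K}$ is guaranteed by the earlier lemma. Summing over $B_K$ gives $\mathcal S_K[f]$.

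\textbf{Main obstacle.} The delicate point is the support and orthogonality claim in Step 1 for a general element of the strict form domain, rather than just for finite combinations of indicators. I would first verify it on finite linear combinations of maximal-ball indicators, where \eqref{eq:P_Ku} makes it a direct computation. I would then pass to limits using the form-norm density of such combinations in $V\cap H_0^{\alpha/2}(\Omega)$, established in Section~3, together with the $L^2$ continuity of each $\Delta_{l+1}$.
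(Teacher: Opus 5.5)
Your proof is correct and follows essentially the paper's route: you split $\mathcal Q_K[u]$ into $\mathcal W_K[f]$ plus the mutually orthogonal local energies $\mathcal E_{B_K}[u_{B_K}]$, then minimize each local energy separately with prescribed average $a_{B_K}$ by homogeneity (the paper writes your factorization of the infimum as a lower bound together with an $\varepsilon$-near-minimizer construction). The density argument you flag as the main obstacle is not needed, because the averaging formula \eqref{eq:P_Ku} holds for every $u\in L^2$ and so gives $\operatorname{supp}\Delta_{l+1}u_{B_K}\subseteq B_K$ for $l\geq K$ directly.
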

\begin{proof}
The constraint set is nonempty. Indeed, every basis function in
$V_{\leq K}$ is the indicator of a maximal ball of $\Omega$ and hence
belongs to $\mathcal{S}(\Omega)$. Moreover, if
$B_K\in\mathcal{B}_K$ and $M_i\subseteq B_K$ is a maximal ball, then
\begin{align*}
    P_K\left(\frac{|B_K|}{|M_i|}\boldsymbol{1}_{M_i}\right)
    =\boldsymbol{1}_{B_K}.
\end{align*}

Let $u\in\mathcal{D}(\mathcal{H})$ satisfy $P_Ku=f$ and use the
decomposition above. For $j\geq K$, the function
$\Delta_{j+1}u_{B_K}$ vanishes outside $B_K$. Since the balls in
$\mathcal{B}_K$ are pairwise disjoint, the corresponding functions are
orthogonal. Also, $\Delta_{j+1}v=0$ for $j\geq K$. Hence
\begin{align*}
    \mathcal{Q}_K[u]
    &=\sum_{j\leq K}(\lambda_j-\lambda_{K+1})
    \|\Delta_jf\|_{L^2(\mathbb{Q}_p^n)}^2+\sum_{B_K\in\mathcal{B}_K}
    \mathcal{E}_{B_K}[u_{B_K}].
\end{align*}
Since $\bar u_{B_K}=a_{B_K}$, the definition of $\kappa_{B_K}$,
applied after the homogeneous rescaling by $a_{B_K}$, gives
\begin{align*}
    \mathcal{Q}_K[u]\geq\mathcal{S}_K[f].
\end{align*}

Conversely, fix $\varepsilon>0$. For every
$B_K\in\mathcal{B}_K$, choose $g_{B_K}\in\mathcal{A}_{B_K}$ such
that
\begin{align*}
    \mathcal{E}_{B_K}[g_{B_K}]
    \leq |B_K|(\kappa_{B_K}+\varepsilon).
\end{align*}
The set $\mathcal{B}_K$ is finite, and therefore
\begin{align*}
    u_{\varepsilon}
    =v+\sum_{B_K\in\mathcal{B}_K}a_{B_K}g_{B_K}
    \in\mathcal{D}(\mathcal{H}),
    \qquad P_Ku_{\varepsilon}=f.
\end{align*}
The same orthogonal decomposition yields
\begin{align*}
    \mathcal{Q}_K[u_{\varepsilon}]
    \leq\mathcal{S}_K[f]
    +\varepsilon\sum_{B_K\in\mathcal{B}_K}
    |B_K||a_{B_K}|^2.
\end{align*}
Letting $\varepsilon\downarrow0$ proves
\eqref{eq:strict_compression}.
\end{proof}

Let $n_-$ denote the number of strictly negative eigenvalues, counted
with multiplicity, of a finite-dimensional Hermitian quadratic form.

\begin{lemma}
\label{lem:ind_Q_K=n-S_K}
\begin{align*}
    \operatorname{ind}_{-}(\mathcal{Q}_K)=n_-(\mathcal{S}_K).
\end{align*}
\end{lemma}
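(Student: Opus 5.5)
The plan is to prove the two inequalities $\operatorname{ind}_-(\mathcal Q_K)\geq n_-(\mathcal S_K)$ and $\operatorname{ind}_-(\mathcal Q_K)\leq n_-(\mathcal S_K)$ separately. Both rest on the compression identity \eqref{eq:strict_compression} and on the finite dimensionality of $P_KV=V_{\leq K}\oplus\operatorname{span}\{\boldsymbol 1_{B_K}:B_K\in\mathcal B_K\}$ given by Lemmas \ref{lem:P_K_prpoerty>K} and \ref{lem:P_K_prpoerty<=K}. A preliminary observation is that $\mathcal S_K$ is a genuine Hermitian form on the finite-dimensional space $P_KV$. Indeed, $\mathcal W_K$ is a finite sum of squared norms of linear images of $f$, and the $\kappa$-part is diagonal in the coordinates $a_{B_K}$ with finite coefficients $\kappa_{B_K}\in[0,\infty)$. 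Hence $n_-(\mathcal S_K)$ is well defined.

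For the lower bound, let $W\subseteq P_KV$ be the span of the eigenvectors of $\mathcal S_K$ with negative eigenvalues, so that $\dim W=n_-(\mathcal S_K)$ and $\mathcal S_K[f]\leq-c\|f\|^2$ on $W$ for some $c>0$. I would build a linear lift $T:W\to\mathcal D(\mathcal H)$ with $P_KT=\mathrm{id}$. Writing $f=v+\sum a_{B_K}\boldsymbol 1_{B_K}$, set
\begin{align*}
Tf=v+\sum_{B_K\in\mathcal B_K}a_{B_K}g_{B_K},
\end{align*}
where the $g_{B_K}\in\mathcal A_{B_K}$ are fixed $\varepsilon$-near-minimizers as in the proof of \eqref{eq:strict_compression}. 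This lift is linear, because the near-minimizer is chosen per ball and then scaled by $a_{B_K}$. Using the orthogonal splitting from that proof, $\mathcal Q_K[Tf]\leq\mathcal S_K[f]+\varepsilon\sum|B_K||a_{B_K}|^2\leq(-c+\varepsilon)\|f\|^2$. Choosing $\varepsilon<c$ makes $\mathcal Q_K$ negative on $T(W)$, and $T$ is injective because $P_KT=\mathrm{id}$. Hence $\operatorname{ind}_-(\mathcal Q_K)\geq\dim W$.

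For the upper bound, let $U\subseteq\mathcal D(\mathcal Q_K)$ be a subspace on which $\mathcal Q_K<0$ off zero; it suffices to treat finite-dimensional $U$. First, $P_K$ is injective on $U$. If $u\in U$ satisfies $P_Ku=0$, then every $\Delta_ju$ with $j\leq K$ vanishes, so $\mathcal Q_K[u]=\sum_{j\geq K+1}(\lambda_j-\lambda_{K+1})\|\Delta_ju\|^2\geq0$, which forces $u=0$. Second, by \eqref{eq:strict_compression}, every $f=P_Ku\in P_KU$ with $u\neq0$ satisfies $\mathcal S_K[f]\leq\mathcal Q_K[u]<0$. Thus $\mathcal S_K$ is negative definite on $P_KU$, a subspace of dimension $\dim U$, and so $\dim U\leq n_-(\mathcal S_K)$.

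The main obstacle is the upper bound when $\kappa_{B_K}$ is only an infimum that is not attained. The argument above avoids needing a minimizer, since the inequality $\mathcal S_K\leq\mathcal Q_K$ on lifts holds without one. The one point that needs care is injectivity of $P_K$ on $U$, which uses the nonnegativity of the weights $w_j$ for $j\geq K$. In the lower bound, the point to check is that the $\varepsilon$-error is controlled by $\|f\|^2$ uniformly; this holds because $\mathcal B_K$ is finite and all norms on $P_KV$ are equivalent.
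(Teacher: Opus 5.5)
Your proof is correct and follows the paper's argument essentially step for step: injectivity of $P_K$ on a strictly negative subspace combined with the compression inequality $\mathcal S_K[P_Ku]\le\mathcal Q_K[u]$ gives the upper bound, and the linear lift $f\mapsto v+\sum a_{B_K}g_{B_K}$ built from $\varepsilon$-near-minimizers gives the lower bound. One small remark: you do not need norm equivalence for the error term, because $v$ and the indicators $\boldsymbol 1_{B_K}$ have disjoint supports, so $\sum_{B_K}|B_K||a_{B_K}|^2\le\|f\|_{L^2}^2$ holds exactly (as the paper uses), and this is what makes your displayed bound $(-c+\varepsilon)\|f\|^2$ literally correct.
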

\begin{proof}
Let $W\subseteq\mathcal{D}(\mathcal{Q}_K)$ be a subspace on which
$\mathcal{Q}_K$ is strictly negative. The restriction of $P_K$ to
$W$ is injective. Indeed, if $0\neq u\in W$ and $P_Ku=0$, the
orthogonal decomposition used above gives
\begin{align*}
    \mathcal{Q}_K[u]
    =\sum_{B_K\in\mathcal{B}_K}
    \mathcal{E}_{B_K}[u_{B_K}]\geq0,
\end{align*}
a contradiction. By \eqref{eq:strict_compression}, $P_KW$ is a
strictly negative subspace for $\mathcal{S}_K$. Thus
\begin{align*}
    \operatorname{ind}_{-}(\mathcal{Q}_K)
    \leq n_-(\mathcal{S}_K).
\end{align*}

Conversely, let $F\subseteq P_KV$ be a maximal strictly negative
subspace for $\mathcal{S}_K$. Since $F$ is finite-dimensional, there
is a constant $\gamma>0$ such that
\begin{align*}
    \mathcal{S}_K[f]\leq-\gamma\|f\|_{L^2(\mathbb{Q}_p^n)}^2,
    \qquad f\in F.
\end{align*}
Choose the functions $g_{B_K}$ as in the preceding proof, and, for
\begin{align*}
    f=v+\sum_{B_K\in\mathcal{B}_K}
    a_{B_K}\boldsymbol{1}_{B_K},
\end{align*}
define
\begin{align*}
    L_{\varepsilon}f
    =v+\sum_{B_K\in\mathcal{B}_K}a_{B_K}g_{B_K}.
\end{align*}
Then $P_KL_{\varepsilon}f=f$, so $L_{\varepsilon}$ is injective, and
\begin{align*}
    \mathcal{Q}_K[L_{\varepsilon}f]
    &\leq\mathcal{S}_K[f]
    +\varepsilon\sum_{B_K\in\mathcal{B}_K}|B_K||a_{B_K}|^2\leq-(\gamma-\varepsilon)\|f\|_{L^2(\mathbb{Q}_p^n)}^2.
\end{align*}
Here we used
\begin{align*}
    \sum_{B_K\in\mathcal{B}_K}|B_K||a_{B_K}|^2
    \leq\|f\|_{L^2(\mathbb{Q}_p^n)}^2.
\end{align*}
For $0<\varepsilon<\gamma$, $L_{\varepsilon}F$ is a strictly
negative subspace for $\mathcal{Q}_K$. Hence
\begin{align*}
    \operatorname{ind}_{-}(\mathcal{Q}_K)
    \geq n_-(\mathcal{S}_K).
\end{align*}
\end{proof}

With respect to the basis $\boldsymbol{e}_{B_j}$ used above, write
the matrix of $\mathcal{W}_K$ as
\begin{align*}
    \boldsymbol{W}_K=
    \begin{pmatrix}
        \boldsymbol{W}_{00}&\boldsymbol{W}_{01}\\
        \boldsymbol{W}_{10}&\boldsymbol{W}_{11}
    \end{pmatrix},
\end{align*}
where $\boldsymbol{W}_{00}$ is negative definite and has size
$\sum\limits_{i\leq K}n_i$. If this sum is zero, set
$\boldsymbol{W}_{00}=\emptyset$. Define, as before,
\begin{align*}
    \boldsymbol{\kappa}_K
    =\operatorname{diag}(\kappa_{B_K}\mid B_K\in\mathcal{B}_K)
\end{align*}
and
\begin{align*}
    \boldsymbol{R}_K=
    \begin{cases}
        \boldsymbol{W}_{11}+\boldsymbol{\kappa}_K,
        &\sum_{i\leq K}n_i=0,\\[1mm]
        \boldsymbol{W}_{11}+\boldsymbol{\kappa}_K
        -\boldsymbol{W}_{10}\boldsymbol{W}_{00}^{-1}
        \boldsymbol{W}_{01},
        &\sum_{i\leq K}n_i>0.
    \end{cases}
\end{align*}
The matrix of $\mathcal{S}_K$ is
\begin{align*}
    \boldsymbol{S}_K=
    \begin{pmatrix}
        \boldsymbol{W}_{00}&\boldsymbol{W}_{01}\\
        \boldsymbol{W}_{10}&\boldsymbol{W}_{11}+\boldsymbol{\kappa}_K
    \end{pmatrix}.
\end{align*}
If $\boldsymbol{W}_{00}\neq\emptyset$, then
\begin{align*}
&\begin{pmatrix}
I&0\\-\boldsymbol{W}_{10}\boldsymbol{W}_{00}^{-1}&I
\end{pmatrix}
\boldsymbol{S}_K
\begin{pmatrix}
I&-\boldsymbol{W}_{00}^{-1}\boldsymbol{W}_{01}\\0&I
\end{pmatrix}=
\begin{pmatrix}
\boldsymbol{W}_{00}&0\\0&\boldsymbol{R}_K
\end{pmatrix}.
\end{align*}
The matrix on the left is obtained from $\boldsymbol{S}_K$ by a
congruence transformation. Sylvester's law of inertia therefore gives
\begin{align*}
    n_-(\boldsymbol{S}_K)
    =\sum_{i\leq K}n_i+n_-(\boldsymbol{R}_K).
\end{align*}
The same identity is immediate when
$\boldsymbol{W}_{00}=\emptyset$.

Combining \eqref{eq:condition_Polya_conjecture_holds},
\eqref{eq:ind_Q_K=NH_lambda_K+1}, and Lemma
\ref{lem:ind_Q_K=n-S_K} proves the following theorem.

\begin{theorem}[Pólya's conjecture for the Dirichlet operator]
\label{thm:Polya_strict_Schur_criterion}
Let $\Omega\subseteq\mathbb{Q}_p^n$ be bounded, open, and Jordan
measurable. Then Pólya's conjecture holds if and only if
\begin{align*}
    n_-(\boldsymbol{R}_K)
    \leq\left[|\Omega_{>K}|p^{nK}\right],
    \qquad K\in\mathbb{Z}.
\end{align*}
\end{theorem}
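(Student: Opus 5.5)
The plan is to assemble the criterion from the reductions already established, and to handle the integer-part step carefully. Fix $K\in\mathbb{Z}$.

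By \eqref{eq:condition_Polya_conjecture_holds}, Pólya's conjecture is equivalent to the family of inequalities
\begin{align*}
N_{\boldsymbol H}^{-}(\lambda_{K+1})\leq|\Omega_{>K}|p^{nK}+\sum_{i\leq K}n_i,\qquad K\in\mathbb{Z}.
\end{align*}
This reduction uses monotonicity of $N_{\boldsymbol H}$ on $[\lambda_K,\lambda_{K+1})$, the identity \eqref{eq:remainder_all}, and the observation that the left limit at $\lambda_{K+1}$ counts only eigenvalues strictly below $\lambda_{K+1}$. Next, \eqref{eq:ind_Q_K=NH_lambda_K+1} identifies $N_{\boldsymbol H}^{-}(\lambda_{K+1})$ with $\operatorname{ind}_-(\mathcal Q_K)$. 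Lemma \ref{lem:ind_Q_K=n-S_K} then identifies this index with $n_-(\mathcal S_K)=n_-(\boldsymbol S_K)$.

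The Schur-complement congruence displayed above, together with Sylvester's law of inertia, gives
\begin{align*}
n_-(\boldsymbol S_K)=\sum_{i\leq K}n_i+n_-(\boldsymbol R_K).
\end{align*}
This requires $\boldsymbol W_{00}$ to be negative definite, hence invertible. I would verify this directly. On $V_{\leq K}$ we have $P_Kf=f$, so $\Delta_jf=0$ for $j>K$. Therefore $\mathcal W_K[f]=\mathcal H[f]-\lambda_{K+1}\|f\|^2\leq(p^{\alpha K}-p^{\alpha(K+1)})\|f\|^2<0$ for $f\neq0$, by the Fourier-support estimate used in the proof of Theorem \ref{thm:remainder_estimate}. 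Substituting, the $K$-th condition becomes
\begin{align*}
n_-(\boldsymbol R_K)\leq|\Omega_{>K}|p^{nK}.
\end{align*}

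Since the left side is an integer, this inequality is equivalent to the same bound with the right side replaced by $[|\Omega_{>K}|p^{nK}]$, which is exactly the stated criterion. Quantifying over all $K$ proves both directions at once.

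The main obstacle is not algebraic but foundational: making sure each identification holds on the correct form domain $V\cap H_0^{\alpha/2}(\Omega)$, with $\mathcal B_K$ finite so that $\boldsymbol S_K$ is a finite matrix. Finiteness of $\mathcal B_K$ is Lemma \ref{lem:P_K_prpoerty>K}, and $P_KV$ is finite dimensional. Two further points need checking:
\begin{itemize}
\item The index of $\mathcal Q_K$ must be finite, so that the min--max identification \eqref{eq:ind_Q_K=NH_lambda_K+1} is legitimate. This follows from the compact resolvent of $\boldsymbol H$.
\item The degenerate case $\sum_{i\leq K}n_i=0$ must be treated separately. There $\boldsymbol S_K=\boldsymbol R_K$ directly, so no Schur complement is needed.
\end{itemize}
Both points are routine once stated.
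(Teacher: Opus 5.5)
Your proposal is correct and follows the paper's proof: you reduce to $N_{\boldsymbol H}^{-}(\lambda_{K+1})\leq|\Omega_{>K}|p^{nK}+\sum_{i\leq K}n_i$, identify the left side with $\operatorname{ind}_-(\mathcal Q_K)$ and then with $n_-(\boldsymbol S_K)$, apply the Schur-complement inertia count, cancel $\sum_{i\leq K}n_i$, and use that $n_-(\boldsymbol R_K)$ is an integer. Your explicit check that $\boldsymbol W_{00}$ is negative definite (indeed $\mathcal W_K[f]\leq-(\lambda_{K+1}-\lambda_K)\|f\|_2^2$ on all of $P_KV$) supplies a detail the paper only asserts.
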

\begin{proof}
By the three identities cited above, condition
\eqref{eq:condition_Polya_conjecture_holds} is equivalent to
\begin{align*}
    \sum_{i\leq K}n_i+n_-(\boldsymbol{R}_K)
    \leq |\Omega_{>K}|p^{nK}+\sum_{i\leq K}n_i.
\end{align*}
After cancelling the common term, this becomes
\begin{align*}
    n_-(\boldsymbol{R}_K)\leq|\Omega_{>K}|p^{nK}.
\end{align*}
Since the left-hand side is an integer, the latter inequality is
equivalent to the asserted inequality with the floor function.
\end{proof}

For fixed $p$, $n$, and $\alpha$, this criterion is determined by the maximal
balls, the mixed balls, and their inclusion relations.

\begin{corollary}
\label{cor:bigeometry_condition}
Suppose that, for every $K\in\mathbb{Z}$ and every
$B_K\in\mathcal{B}_K$, either
\begin{align*}
    \kappa_{B_K}<\delta_K
    \qquad\text{or}\qquad
    \kappa_{B_K}\geq\lambda_{K+1},
\end{align*}
where $\delta_K=\lambda_{K+1}-\lambda_K$. Define
\begin{align*}
    \mathcal{G}_K
    =\{B_K\in\mathcal{B}_K:\ \kappa_{B_K}<\delta_K\}.
\end{align*}
Then Pólya's conjecture holds if and only if
\begin{align*}
    \#\mathcal{G}_K
    \leq\left[|\Omega_{>K}|p^{nK}\right],
    \qquad K\in\mathbb{Z}.
\end{align*}
\end{corollary}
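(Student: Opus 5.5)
The plan is to show that, under the dichotomy hypothesis, $n_-(\boldsymbol R_K)=\#\mathcal G_K$ for every $K$. The corollary then follows at once from Theorem~\ref{thm:Polya_strict_Schur_criterion}. The inertia identity
\begin{align*}
n_-(\boldsymbol S_K)=\sum_{i\leq K}n_i+n_-(\boldsymbol R_K)
\end{align*}
reduces this to proving
\begin{align*}
n_-(\mathcal S_K)=\sum_{i\leq K}n_i+\#\mathcal G_K .
\end{align*}
I will get this from a two-sided estimate on $\mathcal W_K$ and two test subspaces of $P_KV$.

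\textbf{Step 1: two-sided bound on $\mathcal W_K$.} For $j\leq K$ the coefficients satisfy
\begin{align*}
-\lambda_{K+1}<\lambda_j-\lambda_{K+1}\leq\lambda_K-\lambda_{K+1}=-\delta_K .
\end{align*}
Every $f\in P_KV$ has Fourier support in $\{\|\boldsymbol\xi\|_p\leq p^K\}$. Since $f\in L^2$, the set $\{\boldsymbol\xi=\boldsymbol 0\}$ is null, so $\sum_{j\leq K}\|\Delta_jf\|_2^2=\|f\|_2^2$. Hence
\begin{align*}
-\lambda_{K+1}\|f\|_2^2\leq\mathcal W_K[f]\leq-\delta_K\|f\|_2^2 .
\end{align*}
Write $f=v+\sum_{B_K\in\mathcal B_K}a_{B_K}\boldsymbol 1_{B_K}$. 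By Lemma~\ref{lem:P_K_prpoerty<=K}, $\mathcal U_K\cap\Omega_{\leq K}=\emptyset$, so the pieces have disjoint supports and
\begin{align*}
\|f\|_2^2=\|v\|_2^2+\sum_{B_K\in\mathcal B_K}|B_K||a_{B_K}|^2 .
\end{align*}

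\textbf{Step 2: lower bound on the index.} Take the subspace $F_-=V_{\leq K}\oplus\operatorname{span}\{\boldsymbol 1_{B_K}:B_K\in\mathcal G_K\}$. For nonzero $f\in F_-$, Step 1 gives
\begin{align*}
\mathcal S_K[f]\leq-\delta_K\|v\|_2^2+\sum_{B_K\in\mathcal G_K}(\kappa_{B_K}-\delta_K)|B_K||a_{B_K}|^2<0 .
\end{align*}
This is strictly negative because $\kappa_{B_K}<\delta_K$ on $\mathcal G_K$. Therefore
\begin{align*}
n_-(\mathcal S_K)\geq\dim F_-=\sum_{i\leq K}n_i+\#\mathcal G_K .
\end{align*}

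\textbf{Step 3: upper bound on the index.} Take $F_+=\operatorname{span}\{\boldsymbol 1_{B_K}:B_K\in\mathcal B_K\setminus\mathcal G_K\}$. Here $v=0$ and, by the hypothesis, $\kappa_{B_K}\geq\lambda_{K+1}$. Step 1 then gives
\begin{align*}
\mathcal S_K[f]\geq-\lambda_{K+1}\sum|B_K||a_{B_K}|^2+\sum\kappa_{B_K}|B_K||a_{B_K}|^2\geq0 .
\end{align*}
So $F_+$ is a nonnegative subspace of codimension $\sum_{i\leq K}n_i+\#\mathcal G_K$ in $P_KV$. A strictly negative subspace meets $F_+$ only in $\{0\}$, so its dimension is at most that codimension. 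This gives the matching upper bound, and hence $n_-(\boldsymbol R_K)=\#\mathcal G_K$.

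The only delicate point is Step 1: the identity $\sum_{j\leq K}\|\Delta_jf\|_2^2=\|f\|_2^2$, i.e.\ that $P_jf\to0$ as $j\to-\infty$. I would verify this directly from \eqref{eq:P_Ku}. For compactly supported $f$ and very negative $j$, $P_jf$ is the average of $f$ over a huge ball, so
\begin{align*}
\|P_jf\|_2^2=p^{nj}\Bigl|\int f\Bigr|^2\to0 .
\end{align*}
Everything else is a direct substitution into Theorem~\ref{thm:Polya_strict_Schur_criterion}.
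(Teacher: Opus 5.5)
Your proposal is correct and takes essentially the same route as the paper. The paper uses the same negative subspace $V_{\leq K}\oplus\operatorname{span}\{\boldsymbol 1_{B_K}:B_K\in\mathcal G_K\}$ for the lower bound, the same nonnegative subspace spanned by $\boldsymbol 1_{B_K}$ with $B_K\in\mathcal B_K\setminus\mathcal G_K$ together with a codimension count for the upper bound, and then concludes via Theorem~\ref{thm:Polya_strict_Schur_criterion}; your Step~1 just makes explicit the bounds $-\lambda_{K+1}\|f\|_2^2\leq\mathcal W_K[f]\leq-\delta_K\|f\|_2^2$ that the paper uses implicitly.
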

\begin{proof}
On
\begin{align*}
    V_{\leq K}\oplus
    \operatorname{span}\{\boldsymbol{1}_{B_K}|
    B_K\in\mathcal{G}_K\},
\end{align*}
we have
\begin{align*}
    \mathcal{W}_K[f]\leq-\delta_K\|f\|_2^2.
\end{align*}
\par\noindent
The inequalities $\kappa_{B_K}<\delta_K$ then imply
$\mathcal{S}_K[f]<0$ for every nonzero $f$ in this subspace.
Consequently,
\begin{align*}
    n_-(\boldsymbol{S}_K)
    \geq\sum_{i\leq K}n_i+\#\mathcal{G}_K.
\end{align*}
\par\noindent
On the other hand, if
\begin{align*}
    f\in\operatorname{span}\{\boldsymbol{1}_{B_K}|
    B_K\in\mathcal{B}_K\setminus\mathcal{G}_K\},
\end{align*}
then
\begin{align*}
    \mathcal{S}_K[f]
    \geq\mathcal{W}_K[f]+\lambda_{K+1}\|f\|_2^2=\sum_{j\leq K}\lambda_j
    \|\Delta_jf\|_{L^2(\mathbb{Q}_p^n)}^2\geq0.
\end{align*}
This is a subspace of codimension
$\sum_{i\leq K}n_i+\#\mathcal{G}_K$ in $P_KV$.
Every strictly negative subspace for $\mathcal{S}_K$ has trivial
intersection with it, and hence has dimension at most this
codimension. Therefore,
\begin{align*}
    n_-(\boldsymbol{S}_K)
    =\sum_{i\leq K}n_i+\#\mathcal{G}_K.
\end{align*}
The assertion now follows from the preceding theorem.
\end{proof}

\begin{lemma}[Fixed-scale geometric reduction]
Fix a single scale $K\in\mathbb{Z}$. The assertion below concerns only
this fixed $K$. Suppose that, for every
$B_K\in\mathcal{B}_K$, there is a maximal ball
$B_{K+1}\subseteq B_K\cap\Omega$. Then
\begin{align*}
    \kappa_{B_K}=0,
    \qquad B_K\in\mathcal{B}_K.
\end{align*}
Moreover, the $K$-th inequality in Theorem
\ref{thm:Polya_strict_Schur_criterion} holds if and only if
\begin{align*}
    \#\mathcal{B}_K=|\Omega_{>K}|p^{nK}.
\end{align*}
\end{lemma}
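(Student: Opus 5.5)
The plan is to prove the two assertions in order. First show that every local coefficient vanishes, $\kappa_{B_K}=0$, using an explicit test function. Then show that the Schur complement $\boldsymbol R_K$ is negative definite, and compare $\#\mathcal B_K$ with $|\Omega_{>K}|p^{nK}$.

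\emph{Step 1: $\kappa_{B_K}=0$.} Fix $B_K\in\mathcal B_K$ and a maximal ball $M=B_{K+1}\subseteq B_K\cap\Omega$. Take
\begin{align*}
u=\frac{|B_K|}{|M|}\boldsymbol 1_M\in\mathcal A_{B_K}.
\end{align*}
This is the admissible function already used to show that $\mathcal A_{B_K}$ is nonempty. Since $u$ is constant on balls of level $K+1$, its Fourier transform is supported in $\{\|\boldsymbol\xi\|_p\leq p^{K+1}\}$, as in Lemma \ref{lem:P_K_prpoerty<=K}. Hence $\Delta_{j+1}u=0$ for every $j\geq K+1$. The only possibly nonzero term of $\mathcal E_{B_K}[u]$ is therefore $j=K$, and its weight is $w_K=\lambda_{K+1}-\lambda_{K+1}=0$. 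So $\mathcal E_{B_K}[u]=0$. Because every $w_j\geq0$, we also have $\kappa_{B_K}\geq0$, and together these give $\kappa_{B_K}=0$.

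\emph{Step 2: inertia at scale $K$.} With $\boldsymbol\kappa_K=0$ we have $\mathcal S_K=\mathcal W_K$ on $P_KV$. For $f\in P_KV$ we have $P_Kf=f$. Since $f\in L^2$, $P_jf\to0$ in $L^2$ as $j\to-\infty$, and so $\|f\|_2^2=\sum_{j\leq K}\|\Delta_jf\|_2^2$. Each weight satisfies $\lambda_j-\lambda_{K+1}\leq-\delta_K<0$ for $j\leq K$. Hence
\begin{align*}
\mathcal W_K[f]\leq-\delta_K\|f\|_2^2,
\end{align*}
so $\boldsymbol S_K$ is negative definite on the whole space. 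This is exactly the fixed-$K$ part of the argument in Corollary \ref{cor:bigeometry_condition}, with $\mathcal G_K=\mathcal B_K$. It follows that
\begin{align*}
n_-(\boldsymbol S_K)=\dim P_KV=\sum_{i\leq K}n_i+\#\mathcal B_K,
\end{align*}
using Lemmas \ref{lem:P_K_prpoerty>K} and \ref{lem:P_K_prpoerty<=K}. By the Sylvester identity preceding Theorem \ref{thm:Polya_strict_Schur_criterion}, $n_-(\boldsymbol R_K)=\#\mathcal B_K$.

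\emph{Step 3: equality criterion.} The $K$-th inequality of Theorem \ref{thm:Polya_strict_Schur_criterion} now reads $\#\mathcal B_K\leq[|\Omega_{>K}|p^{nK}]$. Since $\#\mathcal B_K$ is an integer, this is equivalent to $\#\mathcal B_K\leq|\Omega_{>K}|p^{nK}$. For the reverse inequality, the corollary after Lemma \ref{lem:P_K_prpoerty<=K} gives $\Omega_{>K}=\Omega\cap\mathcal U_K$. Therefore
\begin{align*}
|\Omega_{>K}|p^{nK}\leq|\mathcal U_K|p^{nK}=\#\mathcal B_K,
\end{align*}
because $\mathcal U_K$ is the disjoint union of the $\#\mathcal B_K$ balls of volume $p^{-nK}$. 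Combining the two directions, the $K$-th inequality holds if and only if $\#\mathcal B_K=|\Omega_{>K}|p^{nK}$.

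The only delicate point is the identity $\|f\|_2^2=\sum_{j\leq K}\|\Delta_jf\|_2^2$ in Step 2, namely that no mass sits at the frequency $\boldsymbol\xi=\boldsymbol0$. This holds because $\{\boldsymbol 0\}$ has Haar measure zero, so $\|P_jf\|_2^2=\int_{\|\boldsymbol\xi\|_p\leq p^j}|\hat f|^2\to0$ as $j\to-\infty$ by dominated convergence.
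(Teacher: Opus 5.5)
Your proposal is correct and follows the paper's proof step for step. The same test function $p^n\boldsymbol 1_{B_{K+1}}$, whose only surviving energy term carries the vanishing weight $w_K=0$, gives $\kappa_{B_K}=0$; negative definiteness of $\mathcal W_K$ on $P_KV$ together with the Schur/Sylvester identity gives $n_-(\boldsymbol R_K)=\#\mathcal B_K$; and the inclusion $\Omega_{>K}\subseteq\mathcal U_K$ turns the floor inequality into the stated equality. Your additional checks (that $\kappa_{B_K}\geq0$, and that $\|f\|_2^2=\sum_{j\leq K}\|\Delta_jf\|_2^2$ because $\{\boldsymbol 0\}$ is Haar-null) only make explicit two points the paper uses without comment.
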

\begin{proof}
For each $B_K\in\mathcal{B}_K$, let
$B_{K+1}\subseteq B_K\cap\Omega$ be a maximal ball of $\Omega$ and set
\begin{align*}
    g=p^n\boldsymbol{1}_{B_{K+1}}.
\end{align*}
Since $B_{K+1}$ is a compact open subset of $\Omega$,
$g\in\mathcal{S}(\Omega)\cap V$ and
\begin{align*}
    P_Kg=\boldsymbol{1}_{B_K},
    \qquad
    \Delta_{j+1}g=0\quad(j>K).
\end{align*}
The remaining term in $\mathcal{E}_{B_K}$ has coefficient
\begin{align*}
    w_K=\lambda_{K+1}-\lambda_{K+1}=0.
\end{align*}
Thus $\mathcal{E}_{B_K}[g]=0$ and $\kappa_{B_K}=0$. Hence
$\mathcal S_K[f]=\mathcal W_K[f]$ on $P_KV$. For every nonzero
$f\in P_KV$,
\begin{align*}
    \mathcal S_K[f]
    =\sum_{j\leq K}(\lambda_j-\lambda_{K+1})
      \|\Delta_jf\|_{L^2(\mathbb Q_p^n)}^2\leq-(\lambda_{K+1}-\lambda_K)
      \|f\|_{L^2(\mathbb Q_p^n)}^2<0.
\end{align*}
Consequently,
\begin{align*}
    n_-(\boldsymbol S_K)
    =\dim P_KV
    =\sum_{i\leq K}n_i+\#\mathcal{B}_K.
\end{align*}
The Schur complement identity gives
\begin{align*}
    n_-(\boldsymbol R_K)=\#\mathcal{B}_K.
\end{align*}

On the other hand, the ball inclusions give
\begin{align*}
    \Omega_{>K}\subseteq\mathcal{U}_K
    =\bigsqcup_{B_K\in\mathcal{B}_K}B_K,
\end{align*}
and therefore
\begin{align*}
    |\Omega_{>K}|p^{nK}\leq\#\mathcal{B}_K.
\end{align*}
Thus the inequality
\begin{align*}
    \#\mathcal{B}_K
    \leq\left[|\Omega_{>K}|p^{nK}\right]
\end{align*}
holds if and only if
\begin{align*}
    \#\mathcal{B}_K=|\Omega_{>K}|p^{nK}.
\end{align*}
\end{proof}

\begin{remark}
If the hypothesis in the preceding fixed-scale lemma were imposed
simultaneously for every $K\in\mathbb{Z}$, no nonempty bounded
$\Omega$ could satisfy it. Indeed, choose a ball
$B_L(\boldsymbol a)$ containing $\Omega$. If $K<L-1$, let $B_K$ be the
ball of level $K$ containing $B_L(\boldsymbol a)$. Then
\begin{align*}
    \Omega\subseteq B_L(\boldsymbol a)\subsetneq B_K,
\end{align*}
so $B_K\in\mathcal{B}_K$. If a maximal ball $B_{K+1}$ of $\Omega$
satisfied $B_{K+1}\subseteq B_K\cap\Omega$, then it would meet
$B_L(\boldsymbol a)$. Since $K+1<L$, the inclusion property of
$p$-adic balls would give
\begin{align*}
    B_L(\boldsymbol a)\subsetneq B_{K+1}
    \subseteq\Omega\subseteq B_L(\boldsymbol a),
\end{align*}
a contradiction. Thus the geometric hypothesis is necessarily a
condition on sufficiently fine scales only.
\end{remark}

\Needspace{17\baselineskip}
The following corollary is the global formulation used below.

\begin{corollary}[Fine scales and finitely many coarse scales]
Suppose that there exists $K_0\in\mathbb{Z}$ such that, for every
$K\geq K_0$ and every $B_K\in\mathcal{B}_K$, there is a maximal ball
$B_{K+1}\subseteq B_K\cap\Omega$. Then Pólya's conjecture holds if
and only if the following two conditions are satisfied:
\par\smallskip
\begin{enumerate}
    \item $\#\mathcal{B}_K=|\Omega_{>K}|p^{nK}$ for every $K\geq K_0$;
    \item $n_-(\boldsymbol{R}_K)
    \leq\left[|\Omega_{>K}|p^{nK}\right]$ for every $K<K_0$.
\end{enumerate}
The second condition is automatic for all sufficiently negative $K$.
Hence only finitely many coarse scales require a separate verification.
\end{corollary}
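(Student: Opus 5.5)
The plan is to combine Theorem~\ref{thm:Polya_strict_Schur_criterion} with the fixed-scale geometric reduction lemma, scale by scale. By Theorem~\ref{thm:Polya_strict_Schur_criterion}, Pólya's conjecture holds if and only if $n_-(\boldsymbol R_K)\leq[|\Omega_{>K}|p^{nK}]$ for every $K\in\mathbb Z$. We split the integers into $K\geq K_0$ and $K<K_0$.

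For each $K\geq K_0$ the hypothesis of the corollary is exactly the hypothesis of the fixed-scale lemma at that single scale. That lemma states that the $K$-th inequality is equivalent to $\#\mathcal B_K=|\Omega_{>K}|p^{nK}$. For $K<K_0$ we keep the inequality of Theorem~\ref{thm:Polya_strict_Schur_criterion} unchanged. Conjoining the two families gives the stated equivalence. This part is purely bookkeeping.

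The remaining claim is that condition 2 is automatic for all sufficiently negative $K$; this is the only step needing an argument. I would use the strictly positive Dirichlet spectral bottom from the Poincaré-type inequality \eqref{eq:Dirichlet_Poincare}. Choose a ball $B_L(\boldsymbol a)\supseteq\Omega$. Then
\begin{align*}
\mathcal H[u]\geq c_0\|u\|_2^2,\qquad c_0=p^{\alpha L}\frac{1-p^{-n}}{1-p^{-\alpha-n}}>0 .
\end{align*}
Take $K$ so negative that $\lambda_{K+1}=p^{\alpha(K+1)}\leq c_0$. Then $\mathcal Q_K[u]\geq0$ on $\mathcal D(\mathcal H)$, so $\operatorname{ind}_-(\mathcal Q_K)=0$. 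By Lemma~\ref{lem:ind_Q_K=n-S_K} this gives $n_-(\boldsymbol S_K)=0$.

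The Sylvester identity gives $n_-(\boldsymbol S_K)=\sum_{i\leq K}n_i+n_-(\boldsymbol R_K)$. Hence $n_-(\boldsymbol R_K)=0$, and the required inequality holds trivially. (Here one also gets $\sum_{i\leq K}n_i=0$, consistent with all maximal balls having level at least some $i_0$.)

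It follows that only the finitely many $K$ with $p^{\alpha(K+1)}>c_0$ and $K<K_0$ need separate verification. The main point to check carefully is that the fixed-scale lemma is applied only at scales where its hypothesis holds. The preceding remark shows that the hypothesis cannot hold at all scales, which is why $K_0$ is finite and the coarse-scale condition 2 cannot be dropped.
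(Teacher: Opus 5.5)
Your proof is correct and follows essentially the same route as the paper: the scales $K\geq K_0$ are handled by the fixed-scale lemma, the scales $K<K_0$ by the general Schur criterion, and the automatic validity at very negative $K$ comes from the Poincaré lower bound $\mathcal H[u]\geq c_0\|u\|_2^2$. The only difference is cosmetic. You pass through $\operatorname{ind}_-(\mathcal Q_K)=0$ and Sylvester's law to obtain $n_-(\boldsymbol R_K)=0$ directly, whereas the paper observes $N_{\boldsymbol H}^{-}(\lambda_{K+1})=0$ and invokes the equivalence of the counting condition with the Schur criterion, which rests on those same two identities.
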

\begin{proof}
For $K\geq K_0$, the general criterion at this scale is equivalent to the
first condition by the preceding lemma.
For $K<K_0$, the second condition is precisely the general Schur
criterion.

It remains to prove the last assertion. Choose a ball
$B_L(\boldsymbol{a})$ containing $\Omega$. The singular-integral
representation of the Dirichlet form and the zero extension give
\begin{align*}
    \mathcal{E}_{\Omega,\mathcal{D}}[u]
    &\geq
    \frac{p^{\alpha}-1}{1-p^{-\alpha-n}}
    \int_{\Omega}|u(\boldsymbol{x})|^2
    \left(\int_{B_L(\boldsymbol{a})^c}
    \frac{d\boldsymbol{y}}
    {\|\boldsymbol{x}-\boldsymbol{y}\|_p^{n+\alpha}}
    \right)d\boldsymbol{x}\\
    &=p^{\alpha L}
    \frac{1-p^{-n}}{1-p^{-\alpha-n}}
    \|u\|_{L^2(\Omega)}^2.
\end{align*}
Hence the bottom of the spectrum of $\boldsymbol{H}$ is at least
\begin{align*}
    p^{\alpha L}\frac{1-p^{-n}}{1-p^{-\alpha-n}}>0.
\end{align*}
For every sufficiently negative $K$, we therefore have
\begin{align*}
    N_{\boldsymbol{H}}^{-}(\lambda_{K+1})=0.
\end{align*}
Condition \eqref{eq:condition_Polya_conjecture_holds}, and hence the
equivalent Schur condition, is automatic at all such scales. There are
only finitely many remaining integers $K<K_0$.
\end{proof}
\begin{example}[The single-missing-digit self-similar domain]
Assume that
\begin{align*}
    \#I=p^n-1,\qquad \#J_{\mathrm{in}}=1,
    \qquad J_{\mathrm{ex}}=\emptyset,
\end{align*}
and let $\Omega$ be the domain constructed in Section~5. Then
\begin{align*}
    \Omega=\mathbb Z_p^n\setminus\partial\Omega,\qquad
    d=\dim_M(\partial\Omega)=\log_p(p^n-1),\qquad
    |\partial\Omega|=0.
\end{align*}
For the Dirichlet realization associated with
$H_0^{\alpha/2}(\Omega)$, Pólya's conjecture holds if and only if
\begin{equation}
\label{eq:Polya-self-similar-strict}
    \sum_{m=1}^{\infty}
    \frac{1}{p^n(1-p^{-n})^m(p^{\alpha m}-1)}
    \leq
    \frac{p^{n+\alpha}-1}{p^\alpha-1}.
\end{equation}
A divergent series is understood not to satisfy
\eqref{eq:Polya-self-similar-strict}. In particular, Pólya's
conjecture holds whenever
\begin{align*}
    p^\alpha\geq\frac{2p^n}{p^n-1}.
\end{align*}
\end{example}

\begin{proof}
The boundary consists of the points
\begin{align*}
    \sum_{k=0}^{\infty}\boldsymbol a_kp^k,
    \qquad \boldsymbol a_k\in I\quad\text{for every }k.
\end{align*}
At level $k$, exactly $(p^n-1)^k$ balls meet $\partial\Omega$, so
$|\partial\Omega|=0$ and
$d=\log_p(p^n-1)$. Since
$\Omega=\mathbb Z_p^n\setminus\partial\Omega$, it also follows that
$|\Omega|=1$.

We first verify all scales other than $K=-1$. For $K\geq0$,
\begin{align*}
    \mathcal{B}_K
    =\{B_K(\boldsymbol a):\boldsymbol a\in I^K\}.
\end{align*}
Every $B_K(\boldsymbol a)\in\mathcal{B}_K$ contains the unique maximal
ball of level $K+1$ obtained by appending the unique digit in
$J_{\mathrm{in}}$. Consequently,
\begin{equation}
\begin{aligned}
\label{eq:self-similar-fine-scale-counts}
    &\#\mathcal{B}_K=(p^n-1)^K,\\
    &|\Omega_{>K}|=(1-p^{-n})^K,\\
    &|\Omega_{>K}|p^{nK}=\#\mathcal{B}_K.
\end{aligned}  
\end{equation}
The fixed-scale geometric lemma therefore verifies the strict Schur
criterion for every $K\geq0$. Notice that the first two identities in
\eqref{eq:self-similar-fine-scale-counts} are asserted only for
$K\geq0$; similarly, $n_K=(p^n-1)^{K-1}$ only for $K\geq1$.

For $K\leq-2$, domain monotonicity and the spectrum of the unit ball
give
\begin{align*}
    \inf\sigma(D_{\Omega,\mathcal D}^{\alpha})
    \geq\frac{1-p^{-n}}{1-p^{-\alpha-n}}
    >p^{-\alpha}\geq p^{\alpha(K+1)}.
\end{align*}
Indeed,
\begin{align*}
    \frac{1-p^{-n}}{1-p^{-\alpha-n}}-p^{-\alpha}
    =\frac{(p^\alpha-1)((p^n-1)p^\alpha-1)}
    {p^\alpha(p^{n+\alpha}-1)}>0.
\end{align*}
Thus the criterion is automatic for every $K\leq-2$.

It remains to consider $K=-1$. The only ball in
$\mathcal{B}_{-1}$ contains $\mathbb Z_p^n$ as its only level-zero
sub-ball meeting $\Omega$, and $\mathbb Z_p^n$ still meets both
$\Omega$ and $\Omega^c$. Hence this ball contains no maximal ball of
level zero, and the fine-scale lemma does not apply. Moreover,
\begin{align*}
    |\Omega_{>-1}|=|\Omega|=1,
    \qquad
    \left[|\Omega_{>-1}|p^{-n}\right]
    =[p^{-n}]=0.
\end{align*}
The strict Schur criterion at this scale is therefore equivalent to
the nonnegativity of $\mathcal Q_{-1}$.

Finite linear combinations of maximal-ball indicators form a core of
$V\cap H_0^{\alpha/2}(\Omega)$. Average such a finite combination over
all permutations of the $p^n-1$ continuing sub-balls contained in each
mixed ball. These permutations preserve Haar measure, all ball
distances, and $\Omega$; the averaging is therefore an orthogonal
projection for both the $L^2$ norm and the Dirichlet form. All
components removed by this averaging occur with the positive
coefficients $p^{\alpha(k+1)}-1$. More precisely, the original function
is the orthogonal sum of the averaged function and the components
removed by the successive averages, and the corresponding value of
$\mathcal Q_{-1}$ is the sum of its value on the averaged function and
the nonnegative quantities
\begin{align*}
    (p^{\alpha(k+1)}-1)
    \|\Delta_{k+1}u\|_{L^2(\mathbb Q_p^n)}^2
\end{align*}
for the removed components. Hence, for a prescribed average at level
zero, the infimum of $\mathcal Q_{-1}$ is obtained from functions having
the same average on all mixed balls of a fixed level.
The averaging is first performed on each finite form core. Since these
averaging projections are contractions in both norms, they extend through
the form-norm closure to the strict form domain.

Denote this common average by $a_k$ and put
$d_k=a_k-a_{k+1}$. For a finite linear combination one has
$a_k=0$ for all sufficiently large $k$, and hence
$a_0=\sum\limits_{k\geq0}d_k$. In each mixed ball of level $k$, the average on each of the $p^n-1$ mixed sub-balls of level $k+1$ is $a_{k+1}$, whereas the average on the remaining sub-ball, which is contained in $\Omega$, is $p^na_k-(p^n-1)a_{k+1}$. Since the level-$k$ mixed balls have total
measure $(1-p^{-n})^k$, it follows that
\begin{align*}
    \|\Delta_{k+1}u\|_{L^2(\mathbb Q_p^n)}^2
    =(p^n-1)(1-p^{-n})^k|d_k|^2.
\end{align*}
The part with levels not exceeding zero is
$a_0\boldsymbol 1_{\mathbb Z_p^n}$, and the constant eigenvalue of the
Dirichlet realization on $\mathbb Z_p^n$ gives
\begin{align*}
    \sum_{j\leq0}(p^{\alpha j}-1)
    \|\Delta_ju\|_{L^2(\mathbb Q_p^n)}^2
    =\left(\frac{1-p^{-n}}{1-p^{-\alpha-n}}-1\right)|a_0|^2.
\end{align*}
Combining these orthogonal decompositions, according to the ball
inclusions $B_{k+1}\subseteq B_k$, gives
\begin{equation}
\label{eq:Q-minus-one-radial}
\begin{aligned}
    \mathcal Q_{-1}[u]
    =&\left(\frac{1-p^{-n}}{1-p^{-\alpha-n}}-1\right)|a_0|^2+\sum_{k=0}^{\infty}(p^n-1)(1-p^{-n})^k(p^{\alpha(k+1)}-1)|d_k|^2.
\end{aligned}
\end{equation}
This identity first holds on the averaged finite core and then on its closure in the strict form norm. The orthogonal components removed by the averaging contribute precisely the nonnegative terms described above. Consequently, nonnegativity on the averaged closure is equivalent to nonnegativity on the
full strict form domain.

For finitely supported sequences satisfying $\sum d_k=a_0$, the
weighted Cauchy--Schwarz inequality gives
\begin{align*}
\sum_{k=0}^{\infty}(p^n-1)(1-p^{-n})^k
(p^{\alpha(k+1)}-1)|d_k|^2
\geq
\frac{|a_0|^2}{\displaystyle
\sum_{k=0}^{\infty}
\frac{1}{(p^n-1)(1-p^{-n})^k(p^{\alpha(k+1)}-1)}}
\end{align*}
when the denominator is finite. Equality is approached by taking $d_k$
proportional to the reciprocal weights and then truncating and normalizing.
If the denominator diverges, the same finite truncations show that the
infimum is zero. Since the finite sequences correspond to functions in the
strict form core, no almost-everywhere support relaxation is used in this
minimization.

It follows from \eqref{eq:Q-minus-one-radial} that
$\mathcal Q_{-1}$ is nonnegative if and only if
\begin{align*}
\sum_{k=0}^{\infty}
\frac{1}{(p^n-1)(1-p^{-n})^k(p^{\alpha(k+1)}-1)}
\leq\frac{p^{n+\alpha}-1}{p^\alpha-1}.
\end{align*}
Using
\begin{align*}
    (p^n-1)(1-p^{-n})^{m-1}
    =p^n(1-p^{-n})^m
\end{align*}
and changing the index by $m=k+1$ gives precisely
\eqref{eq:Polya-self-similar-strict}. The verification of every other
scale proves the asserted equivalence. The series converges exactly
when $p^\alpha(1-p^{-n})>1$, thus the convergence threshold is already contained in \eqref{eq:Polya-self-similar-strict}.

Finally,
\begin{align*}
    p^{\alpha m}-1
    \geq p^{\alpha(m-1)}(p^\alpha-1)
\end{align*}
implies
\begin{align*}
    \sum_{m=1}^{\infty}
    \frac{1}{p^n(1-p^{-n})^m(p^{\alpha m}-1)}
    \leq
    \frac{p^\alpha}
    {p^n(p^\alpha-1)((1-p^{-n})p^\alpha-1)}.
\end{align*}
If $p^\alpha\geq2p^n/(p^n-1)$, the last expression is at most one.
Indeed, with $x=p^\alpha$,
\begin{align*}
    p^n(x-1)((1-p^{-n})x-1)-x
    =(p^n-1)x^2-2p^nx+p^n\geq0,
\end{align*}
because the quadratic polynomial on the right is increasing for
$x\geq2p^n/(p^n-1)$ and has value $p^n$ at the left endpoint. Moreover,
$(p^{n+\alpha}-1)/(p^\alpha-1)>p^n\geq2$.
This proves the stated sufficient condition.
\end{proof}
\section*{Acknowledgements}
The author is grateful to his supervisor, Bobo Hua, for his invaluable
guidance and constant support throughout this research.
\section*{Data availability}
Data sharing is not applicable to this article as no datasets were generated or analysed during the current study.
\section*{Competing interests}
The author has no relevant interests to disclose.

\bibliographystyle{unsrt}
\bibliography{references}

@article {MR1511670,
    AUTHOR = {Weyl, Hermann},
     TITLE = {Das asymptotische {V}erteilungsgesetz der {E}igenwerte
              linearer partieller {D}ifferentialgleichungen (mit einer
              {A}nwendung auf die {T}heorie der {H}ohlraumstrahlung)},
   JOURNAL = {Math. Ann.},
  FJOURNAL = {Mathematische Annalen},
    VOLUME = {71},
      YEAR = {1912},
    NUMBER = {4},
     PAGES = {441--479},
      ISSN = {0025-5831,1432-1807},
   MRCLASS = {99-04},
  MRNUMBER = {1511670},
       DOI = {10.1007/BF01456804},
       URL = {https://doi.org/10.1007/BF01456804},
}

@article {MR1580880,
    AUTHOR = {Weyl, H.},
     TITLE = {\"Uber die {R}andwertaufgabe der {S}trahlungstheorie und
              asymptotische {S}pektralgesetze},
   JOURNAL = {J. Reine Angew. Math.},
  FJOURNAL = {Journal f\"ur die Reine und Angewandte Mathematik. [Crelle's
              Journal]},
    VOLUME = {143},
      YEAR = {1913},
     PAGES = {177--202},
      ISSN = {0075-4102,1435-5345},
   MRCLASS = {99-04},
  MRNUMBER = {1580880},
       DOI = {10.1515/crll.1913.143.177},
       URL = {https://doi.org/10.1515/crll.1913.143.177},
}

@article {MR564330,
    AUTHOR = {Ivrii, V. Ja.},
     TITLE = {The second term of the spectral asymptotics for the
              {L}aplace-{B}eltrami operator on manifolds with boundary and
              for elliptic operators acting in vector bundles},
   JOURNAL = {Dokl. Akad. Nauk SSSR},
  FJOURNAL = {Doklady Akademii Nauk SSSR},
    VOLUME = {250},
      YEAR = {1980},
    NUMBER = {6},
     PAGES = {1300--1302},
      ISSN = {0002-3264},
   MRCLASS = {58G25 (35P20)},
  MRNUMBER = {564330},
MRREVIEWER = {P.\ G\"unther},
}

@incollection {MR573438,
    AUTHOR = {Melrose, R. B.},
     TITLE = {Weyl's conjecture for manifolds with concave boundary},
 BOOKTITLE = {Geometry of the {L}aplace operator ({P}roc. {S}ympos. {P}ure
              {M}ath., {U}niv. {H}awaii, {H}onolulu, {H}awaii, 1979)},
    SERIES = {Proc. Sympos. Pure Math.},
    VOLUME = {XXXVI},
     PAGES = {257--274},
 PUBLISHER = {Amer. Math. Soc., Providence, RI},
      YEAR = {1980},
      ISBN = {0-8218-1439-7},
   MRCLASS = {58G25 (35P20)},
  MRNUMBER = {573438},
MRREVIEWER = {P.\ G\"unther},
}

@incollection {MR556688,
    AUTHOR = {Berry, M. V.},
     TITLE = {Distribution of modes in fractal resonators},
 BOOKTITLE = {Structural stability in physics ({P}roc. {I}nternat.
              {S}ymposia {A}ppl. {C}atastrophe {T}heory and {T}opological
              {C}oncepts in {P}hys., {I}nst. {I}nform. {S}ci., {U}niv.
              {T}\"ubingen, {T}\"ubingen, 1978)},
    SERIES = {Springer Ser. Synergetics},
    VOLUME = {4},
     PAGES = {51--53},
 PUBLISHER = {Springer, Berlin},
      YEAR = {1979},
   MRCLASS = {70K50},
  MRNUMBER = {556688},
       DOI = {10.1007/978-3-642-67363-4\_7},
       URL = {https://doi.org/10.1007/978-3-642-67363-4_7},
}

@article {MR834484,
    AUTHOR = {Brossard, Jean and Carmona, Ren\'e},
     TITLE = {Can one hear the dimension of a fractal?},
   JOURNAL = {Comm. Math. Phys.},
  FJOURNAL = {Communications in Mathematical Physics},
    VOLUME = {104},
      YEAR = {1986},
    NUMBER = {1},
     PAGES = {103--122},
      ISSN = {0010-3616,1432-0916},
   MRCLASS = {58G25 (35P20 58G32)},
  MRNUMBER = {834484},
MRREVIEWER = {Pierre\ B\'erard},
       URL = {http://projecteuclid.org/euclid.cmp/1104114935},
}

@article {MR994168,
    AUTHOR = {Lapidus, Michel L.},
     TITLE = {Fractal drum, inverse spectral problems for elliptic operators
              and a partial resolution of the {W}eyl-{B}erry conjecture},
   JOURNAL = {Trans. Amer. Math. Soc.},
  FJOURNAL = {Transactions of the American Mathematical Society},
    VOLUME = {325},
      YEAR = {1991},
    NUMBER = {2},
     PAGES = {465--529},
      ISSN = {0002-9947,1088-6850},
   MRCLASS = {58G25 (28A75 35J25 35P20)},
  MRNUMBER = {994168},
MRREVIEWER = {Harold\ Donnelly},
       DOI = {10.2307/2001638},
       URL = {https://doi.org/10.2307/2001638},
}

@article {MR1189091,
    AUTHOR = {Lapidus, Michel L. and Pomerance, Carl},
     TITLE = {The {R}iemann zeta-function and the one-dimensional
              {W}eyl-{B}erry conjecture for fractal drums},
   JOURNAL = {Proc. London Math. Soc. (3)},
  FJOURNAL = {Proceedings of the London Mathematical Society. Third Series},
    VOLUME = {66},
      YEAR = {1993},
    NUMBER = {1},
     PAGES = {41--69},
      ISSN = {0024-6115,1460-244X},
   MRCLASS = {58G18 (11M06 35P05 47F05 58G25)},
  MRNUMBER = {1189091},
MRREVIEWER = {Robert\ Brooks},
       DOI = {10.1112/plms/s3-66.1.41},
       URL = {https://doi.org/10.1112/plms/s3-66.1.41},
}

@article {MR1356166,
    AUTHOR = {Lapidus, Michel L. and Pomerance, Carl},
     TITLE = {Counterexamples to the modified {W}eyl-{B}erry conjecture on
              fractal drums},
   JOURNAL = {Math. Proc. Cambridge Philos. Soc.},
  FJOURNAL = {Mathematical Proceedings of the Cambridge Philosophical
              Society},
    VOLUME = {119},
      YEAR = {1996},
    NUMBER = {1},
     PAGES = {167--178},
      ISSN = {0305-0041,1469-8064},
   MRCLASS = {58G25 (35P20)},
  MRNUMBER = {1356166},
MRREVIEWER = {Vadim\ A.\ Ka\u imanovich},
       DOI = {10.1017/S0305004100074053},
       URL = {https://doi.org/10.1017/S0305004100074053},
}

@book {MR66321,
    AUTHOR = {Polya, G.},
     TITLE = {Induction and analogy in mathematics. {M}athematics and
              plausible reasoning, vol. {I}},
 PUBLISHER = {Princeton University Press, Princeton, NJ},
      YEAR = {1954},
     PAGES = {xvi+280},
   MRCLASS = {02.0X},
  MRNUMBER = {66321},
MRREVIEWER = {E.\ W.\ Beth},
}

@article {MR129219,
    AUTHOR = {P\'olya, G.},
     TITLE = {On the eigenvalues of vibrating membranes},
   JOURNAL = {Proc. London Math. Soc. (3)},
  FJOURNAL = {Proceedings of the London Mathematical Society. Third Series},
    VOLUME = {11},
      YEAR = {1961},
     PAGES = {419--433},
      ISSN = {0024-6115,1460-244X},
   MRCLASS = {73.35 (35.80)},
  MRNUMBER = {129219},
MRREVIEWER = {H.\ F.\ Weinberger},
       DOI = {10.1112/plms/s3-11.1.419},
       URL = {https://doi.org/10.1112/plms/s3-11.1.419},
}

@article {MR4635832,
    AUTHOR = {Filonov, Nikolay and Levitin, Michael and Polterovich, Iosif
              and Sher, David A.},
     TITLE = {P\'olya's conjecture for {E}uclidean balls},
   JOURNAL = {Invent. Math.},
  FJOURNAL = {Inventiones Mathematicae},
    VOLUME = {234},
      YEAR = {2023},
    NUMBER = {1},
     PAGES = {129--169},
      ISSN = {0020-9910,1432-1297},
   MRCLASS = {35P15 (11P21 35P20)},
  MRNUMBER = {4635832},
MRREVIEWER = {Gianpaolo\ Piscitelli},
       DOI = {10.1007/s00222-023-01198-1},
       URL = {https://doi.org/10.1007/s00222-023-01198-1},
}

@article {MR5028853,
    AUTHOR = {Filonov, Nikolay and Levitin, Michael and Polterovich, Iosif
              and Sher, David A.},
     TITLE = {P\'olya's conjecture for {D}irichlet eigenvalues of annuli},
   JOURNAL = {J. Lond. Math. Soc. (2)},
  FJOURNAL = {Journal of the London Mathematical Society. Second Series},
    VOLUME = {113},
      YEAR = {2026},
    NUMBER = {2},
     PAGES = {Paper No. e70425, 37},
      ISSN = {0024-6107,1469-7750},
   MRCLASS = {35P15 (11P21 33C10 35P20 65N25)},
  MRNUMBER = {5028853},
       DOI = {10.1112/jlms.70425},
       URL = {https://doi.org/10.1112/jlms.70425},
}

@article {MR3900781,
    AUTHOR = {Kwa\'snicki, Mateusz and Laugesen, Richard S. and Siudeja,
              Bart\l omiej A.},
     TITLE = {P\'olya's conjecture fails for the fractional {L}aplacian},
   JOURNAL = {J. Spectr. Theory},
  FJOURNAL = {Journal of Spectral Theory},
    VOLUME = {9},
      YEAR = {2019},
    NUMBER = {1},
     PAGES = {127--135},
      ISSN = {1664-039X,1664-0403},
   MRCLASS = {35R11 (35P15)},
  MRNUMBER = {3900781},
       DOI = {10.4171/JST/242},
       URL = {https://doi.org/10.4171/JST/242},
}

@article {MR226394,
    AUTHOR = {Taibleson, Mitchell},
     TITLE = {Harmonic analysis on {$n$}-dimensional vector spaces over
              local fields. {I}. {B}asic results on fractional integration},
   JOURNAL = {Math. Ann.},
  FJOURNAL = {Mathematische Annalen},
    VOLUME = {176},
      YEAR = {1968},
     PAGES = {191--207},
      ISSN = {0025-5831,1432-1807},
   MRCLASS = {46.35 (42.00)},
  MRNUMBER = {226394},
MRREVIEWER = {M.\ Hasumi},
       DOI = {10.1007/BF02052825},
       URL = {https://doi.org/10.1007/BF02052825},
}

@article {MR971464,
    AUTHOR = {Vladimirov, V. S.},
     TITLE = {Generalized functions over the field of {$p$}-adic numbers},
   JOURNAL = {Uspekhi Mat. Nauk},
  FJOURNAL = {Akademiya Nauk SSSR i Moskovskoe Matematicheskoe Obshchestvo.
              Uspekhi Matematicheskikh Nauk},
    VOLUME = {43},
      YEAR = {1988},
    NUMBER = {5(263)},
     PAGES = {17--53, 239},
      ISSN = {0042-1316},
   MRCLASS = {46P05 (22E50 46Fxx 81E99)},
  MRNUMBER = {971464},
MRREVIEWER = {Witold\ Wi\polhk es\l aw},
       DOI = {10.1070/RM1988v043n05ABEH001924},
       URL = {https://doi.org/10.1070/RM1988v043n05ABEH001924},
}

@article {MR1092528,
    AUTHOR = {Vladimirov, V. S.},
     TITLE = {On the spectrum of some pseudodifferential operators over the
              field of {$p$}-adic numbers},
   JOURNAL = {Algebra i Analiz},
  FJOURNAL = {Algebra i Analiz},
    VOLUME = {2},
      YEAR = {1990},
    NUMBER = {6},
     PAGES = {107--124},
      ISSN = {0234-0852},
   MRCLASS = {47S10 (11S80 47A10 47G30 47N50 81Q10)},
  MRNUMBER = {1092528},
MRREVIEWER = {Eugen\ Belokolos},
}

@article {MR1209031,
    AUTHOR = {Kochubei, A. N.},
     TITLE = {The differentiation operator on subsets of the field of
              {$p$}-adic numbers},
   JOURNAL = {Izv. Ross. Akad. Nauk Ser. Mat.},
  FJOURNAL = {Izvestiya Rossiiskoi Akademii Nauk. Seriya Matematicheskaya},
    VOLUME = {56},
      YEAR = {1992},
    NUMBER = {5},
     PAGES = {1021--1039},
      ISSN = {1607-0046,2587-5906},
   MRCLASS = {11S80 (11Z50 47S10 81Q99)},
  MRNUMBER = {1209031},
       DOI = {10.1070/IM1993v041n02ABEH002262},
       URL = {https://doi.org/10.1070/IM1993v041n02ABEH002262},
}

@article {MR1252936,
    AUTHOR = {Haran, Shai},
     TITLE = {Quantizations and symbolic calculus over the {$p$}-adic
              numbers},
   JOURNAL = {Ann. Inst. Fourier (Grenoble)},
  FJOURNAL = {Universit\'e{} de Grenoble. Annales de l'Institut Fourier},
    VOLUME = {43},
      YEAR = {1993},
    NUMBER = {4},
     PAGES = {997--1053},
      ISSN = {0373-0956,1777-5310},
   MRCLASS = {22E35 (11S80 22E50)},
  MRNUMBER = {1252936},
MRREVIEWER = {David\ Manderscheid},
       DOI = {10.5802/aif.1363},
       URL = {https://doi.org/10.5802/aif.1363},
}

@article {MR3708861,
    AUTHOR = {Chac\'on-Cort\'es, L. F. and Z\'u\~niga-Galindo, W. A.},
     TITLE = {Heat traces and spectral zeta functions for {$p$}-adic
              {L}aplacians},
   JOURNAL = {Algebra i Analiz},
  FJOURNAL = {Rossi\u iskaya Akademiya Nauk. Algebra i Analiz},
    VOLUME = {29},
      YEAR = {2017},
    NUMBER = {3},
     PAGES = {144--166},
      ISSN = {0234-0852},
   MRCLASS = {35S05 (35R11)},
  MRNUMBER = {3708861},
MRREVIEWER = {Luigi\ Rodino},
       DOI = {10.1090/spmj/1505},
       URL = {https://doi.org/10.1090/spmj/1505},
}

@book {MR3793137,
    AUTHOR = {Khrennikov, Andrei Yu. and Kozyrev, Sergei V. and
              Z\'u\~niga-Galindo, W. A.},
     TITLE = {Ultrametric pseudodifferential equations and applications},
    SERIES = {Encyclopedia of Mathematics and its Applications},
    VOLUME = {168},
 PUBLISHER = {Cambridge University Press, Cambridge},
      YEAR = {2018},
     PAGES = {xv+237},
      ISBN = {978-1-107-18882-2},
   MRCLASS = {35-02 (35S05 37P99 47-02 47G30 60H99 60J99)},
  MRNUMBER = {3793137},
MRREVIEWER = {Anatoly\ N.\ Kochubei},
       DOI = {10.1017/9781316986707},
       URL = {https://doi.org/10.1017/9781316986707},
}

@article {MR1555153,
    AUTHOR = {Ostrowski, Alexander},
     TITLE = {\"Uber einige {L}\"osungen der {F}unktionalgleichung
              {$\psi(x)\cdot\psi(x)=\psi(xy)$}},
   JOURNAL = {Acta Math.},
  FJOURNAL = {Acta Mathematica},
    VOLUME = {41},
      YEAR = {1916},
    NUMBER = {1},
     PAGES = {271--284},
      ISSN = {0001-5962,1871-2509},
   MRCLASS = {99-04},
  MRNUMBER = {1555153},
       DOI = {10.1007/BF02422947},
       URL = {https://doi.org/10.1007/BF02422947},
}

@article {MR1918846,
    AUTHOR = {Kozyrev, S. V.},
     TITLE = {Wavelet theory as {$p$}-adic spectral analysis},
   JOURNAL = {Izv. Ross. Akad. Nauk Ser. Mat.},
  FJOURNAL = {Izvestiya Rossiiskoi Akademii Nauk. Seriya Matematicheskaya},
    VOLUME = {66},
      YEAR = {2002},
    NUMBER = {2},
     PAGES = {149--158},
      ISSN = {1607-0046,2587-5906},
   MRCLASS = {42C40 (47A10 47S10)},
  MRNUMBER = {1918846},
MRREVIEWER = {B.\ S.\ Rubin},
       DOI = {10.1070/IM2002v066n02ABEH000381},
       URL = {https://doi.org/10.1070/IM2002v066n02ABEH000381},
}
\noindent Yaojia Sun, 261101800043@m.fudan.edu.cn\\
\emph{School of Mathematical Sciences, Fudan University, Shanghai, 200433, P.R. China}\\[-8pt]
\end{document}